\newtheorem{thm}{Theorem}[section]
\newtheorem{lem}[thm]{Lemma}
\newtheorem{prop}[thm]{Proposition}
\newtheorem{thmA}{Theorem A}
\newtheorem{thmB}{Theorem B}
\newtheorem{rem}[thm]{Remark}
\newtheorem{defn}[thm]{Definition}
\newcommand\bR{{\mathbb{R}}}
\newcommand\bC{{\mathbb C}}
\newcommand\bZ{{\mathbb Z}}
\newcommand\rp{\bR P}
\newcommand\SL{{\rm SL}}
\newcommand\Hom{{\rm Hom}}
\newcommand\SI{{\bf S}}
\newcommand\clo{{\rm Cl}}
\newcommand\ra{\rightarrow}
\newcommand\eps{\epsilon}
\newcommand\ovl{\overline}
\newcommand\Idd{{\rm I}}
\newcommand\CP{{\rm CP}}
\newcommand\tri{\triangle}
\newcommand\SOThr{\mathrm{SO(3)}}
\newcommand\SUTw{\mathrm{SU(2)}}
\newcommand\rep{\mathrm{rep}}
\newcommand\SLThr{\mathrm{SL(3,\mathbb{R})}}
\begin{document}

\markboth{The $\SOThr$-character spaces}
{The $\SOThr$-character spaces}

%\title
\title[The $\SOThr$--representations of the surface groups]
{Spherical triangles and the two components of the $\SOThr$-character space of the fundamental group of a closed surface
of genus $2$}
\author{Suhyoung Choi}
\address{Department of Mathematics \\ KAIST \\
Daejeon 305-701, South Korea}

\maketitle 

%\email{schoi@math.kaist.ac.kr}
%\date{January 14, 2009}
%\subjclass{Primary 57M50; Secondary 53A20, 53C15}

%\thanks{This work was supported by the Korea Research Foundation Grant funded 
%by the Korean Government (MOEHRD)" (KRF-2007-314-C00025)}

\begin{abstract}
We use geometric techniques to explicitly find the topological
structure of the space of
$\SOThr$-representations of the fundamental group of a closed surface of
genus $2$ quotient by the conjugation action by $\SOThr$. 
There are two components of the space. 
We will describe the topology of both components and describe the corresponding 
$\SUTw$-character spaces by parametrizing them by spherical triangles. 
There is the sixteen to one branch-covering for each component,
and the branch locus is a union of $2$-spheres or $2$-tori.
Along the way, we also describe the topology of both spaces.
We will later relate this result to future work into
higher-genus cases and the $\SL(3,\bR)$-representations.
%In this paper, we need the smoothness result of Huebschmann.

\end{abstract}

%\ccode{Mathematics Subject Classification 2000: 57M50, 58D29, 20F29}
%\keywords{$\SOThr$, $\SUTw$, representation of discrete groups}

%% January 28 8:12 Checking v_is now.... Follow Untitle4.odt
%\maketitle

%Outline

%History
It is well-known that the set of homomorphisms of a finitely presented group $\Gamma$ 
to a Lie group $G$ has a topology as a subspace of $G^n$, where $n$ is the number of generators of $\Gamma$, 
defined by a number of polynomial equations corresponding to the relations.
%The space of representations of the fundamental group $\pi$ of a surface
%to a Lie group $G$ is the algebraic set of the space of homomorphism $\pi$ in to 
%$G$ quotient out by the natural conjugation action.
Let $\pi$ be the fundamental group of a compact surface and $G$ a compact Lie group.  
Then the space of homomorphisms $\mathrm{Hom}(\pi,G)$ is an algebraic set for which $G$ acts by conjugation.  
Generally, the quotient space by this action $\mathrm{Hom}(\pi,G)/G$ is a semi-algebraic set, 
called the {\em $G$-character space} of $\pi$. 

The study of this space constitutes an interesting subject with a long history and involves many other principal 
fields such as algebraic geometry, topology, geometry, and mathematical physics,
particularly when the Lie group is the special unitary group $\SUTw$. (See Goldman \cite{Grep} for a part of the beginning 
of the subject.)

We denote by $\SL(3, \bR)$ the special linear group, and $\SOThr$ 
the special orthogonal group for $\bR^3$ with a fixed Euclidean metric. 
%The main purpose of this paper is to aid in the study of 
%the $\SL(3,\bR)$-character spaces later. 
We are more interested in
how this space is related to the $\SLThr$-character space. 
The $\SOThr$-character space will embed inside the corresponding $\SLThr$-character 
space. The two components will embed in the two components of the 
later space respectively and the Hitchin-Teichmuller component will not contain
any of the $\SOThr$-characters. 
We plan to use our work in this paper to describe the topology of 
the $\SLThr$-character space.

%We will find the natural 
%coordinate parameterization extending one for $\SOThr$-character
%spaces l
%Clearly, the direction of the research is to 
%study the topology and geometry of $G$-characters for 
%general Lie groups $G$. 

In this paper, we will find a detailed cell-decomposition of 
the $\SOThr$-character space of the fundamental group $\pi_1(\Sigma)$ of 
a closed orientable genus $2$-surface $\Sigma$ by parametrizing the representations 
by spherical triangles. 
The cell-decomposition is one adopted to the later study 
of the $\SLThr$-character space for $\pi_1(\Sigma)$, which is 
the purpose of this paper. This cellular decomposition allows us to obtain 
the topological characterization of the $\SOThr$-character space of $\pi_1(\Sigma)$. 
The topology of the two components of this space is described.

The classical work of Narashimhan, Ramanan, Seshadri, Newstead and so on 
\cite{NR}, \cite{NS},\cite{New2} show that 
the space of $\SUTw$-characters for a genus-two closed surface is diffeomorphic to $\CP^3$. 
We will show that this space $16$ to $1$ branch-covers the component of our space 
containing the identity representation.
The other component is also branch-covered by the space of $\SUTw$-characters for 
a punctured genus two surface with the puncture holonomy $-\Idd$ 
in a $16$ to $1$-manner. We will show that this second space is a real $6$-dimensional manifold, 
so-called an octahedral manifold,
obtained by a completion of a toric variety over the octahedron.
We will obtain the precise $\bZ_2^4$-group action inducing the branch coverings.

Newstead \cite{New} and others worked on determining cohomology rings of moduli spaces arising 
in this manner. Huebschmann, Jeffrey and Weitsmann \cite{JW} \cite{JW2} researched extensively on the
spaces of characters to $\SUTw$, and showed that they have toric structures by finding 
the open dense sets where $n$-dimensional torus act on. 
Huebshmann \cite{Hueb} also showed that this space branch-covers the $\SOThr$-character spaces; 
%however, we show more details. 

One can study this area using the Yang-Mills and Higgs bundle techniques as initiated by 
Atiyah-Bott \cite{AB}, Donaldson \cite{Do}, \cite{Do2}, 
Hitchin \cite{Hit}, Simpson \cite{Sim}, and Corlette \cite{Cor}. 
There are now extensive accomplishments in this area using these techniques. 
(See Bradlow, Garcia-Prada, and Gothen \cite{BGG1} and \cite{BGG2}.)

These spaces were originally studied by Ramanan (See Theorem 1 of  \cite{Ra} and \cite{DR}) following Newstead's work 
\cite{New2} and describes the spaces as the intersections of two quadrics but without 
details of the topology of the space as was pointed out by Oscar Garcia-Prada to the author. 
If we compare this paper to the papers using algebraic geometry or Yang-Mills fields, our 
method is more elementary and geometric. However, the steps and the details
to check seem more here since we are not using already established theories, and also
the arguments are not totally geometrical yet. 
The main point of our method seems to be that we have more direct way to 
relate the $\SOThr$-character space with the $\SUTw$-character space
with cell-structures preserved under the branching map. 

For $G$-character spaces of free groups for many Lie groups $G$, 
see Florentino and Lawton \cite{FL}. In some cases, the cohomology groups and explicit topological structures are known
when the fundamental group are free groups of rank two and three due to T. Goodwillie \cite{Good}
(see Baird \cite{Ba} also). 

We also remark that 
the branching property is of course known by Ramanan \cite{Ra} and Huebshmann \cite{Hueb}; but we find 
the exact branch locus here and the finite group action. 
One can find a much shorter proof but with no results on the cell structures or the branch loci using 
a generalization of the arguments of the recent paper of Florentino and Lawton \cite{FL} for the free-group representations; 
however, the author is not able to obtain the proof. %However, their work appeared later than this one.

A major disadvantage of our method is that we need to make use of the smoothness result of Huebshmann \cite{Hueb}. 
However, we have worked on a geometric way to avoid this problem and directly prove our theorems. 
We will continue our geometric methods and present the method if successfully accomplished. 
Our method seems to be able to replicate these results. We are currently interested 
in working on free groups of rank four and so on.

%Another main result of the method is that we could figure out the topology of 
%the second component of the $\SOThr$-character space of the fundamental group of the genus $2$ closed surface.
%Presumably, this result might have been already found by different methods using algebraic geometry; however, the author 
%is unable to find any.
%The author is not aware of another proof as of yet. Hopefully, these techniques can be generalized.

%The general philosophy here is to make many little  
%steps in the hopes of getting much bigger picture completed. 
%The advantage is that the smaller steps are less likely to be
%error prone. 

% Introduction

By a {\em representation}, we mean a group homomorphism into a specified Lie group such as $\SOThr$ or 
$\SUTw$. By a {\em character}, we mean an equivalence class of the representation by conjugations by elements of the Lie group. 
Also, any adjective modifying a representation modifies the corresponding character and vice versa in our definitions
by the convention that a character has a property if and only if every representation in the character has the property. 

Let $\Sigma$ be a closed surface of genus $2$ and $\pi_1(\Sigma)$ its
fundamental group and let $\SOThr$ denote the group of
special orthogonal matrices with real entries.
The space of homomorphisms $\pi_1(\Sigma) \rightarrow \SOThr$ admits an
action by $\SOThr$ given by
\[ h(\cdot) \mapsto g\circ h(\cdot) \circ g^{-1}, \hbox{ for } g \in \SOThr \mbox{ and a homomorphism } h. \] 
We can realize the set of homomorphisms
$\Hom(\pi_1(\Sigma), \SOThr)$ as an algebraic subset of $\SOThr^{4}$ where $4$ is
the number of generators of $\pi_1(\Sigma)$.
We denote by $\rep(\pi_1(\Sigma), \SOThr)$ the topological quotient space of the space
under the action of $\SOThr$. This is called the {\em space of $\SOThr$-characters} of $\pi_1(\Sigma)$. 
Sometimes, we say that a representation {\em belong to} it if the corresponding character belongs to it.

There are two topological components of $\rep(\pi_1(\Sigma), \SOThr)$. 
There is the identity component ${\mathcal C}_0$ containing the equivalence 
class of the identity representation, i.e., the one sending every element of 
$\pi_1(\Sigma)$ to the identity. The representations in 
the other component ${\mathcal C}_{1}$ correspond to the flat $\SOThr$-bundles 
with the Stiefel-Whitney numbers equal to $1 \in \bZ_2$.

%By a {\em rotation}, we mean a transformation which can be locally conjugated to be 
%an orientation-preserving isometry in a Euclidean space with a set of fixed point a subspace of codimension two.
%A {\em rotation angle} is measured by taking angle from the fixed subspace in counterclockwise direction
%once the fixed subspace is given an orientation. 
%An {\em involutionary rotation} is a rotation of order two, i.e., a rotation of angle $\pi$.

We define a solid tetrahedron $G$ in the positive octant of $\bR^{3}$
by the equation $x+y+z \geq \pi$, $x \leq y+z - \pi$,
$y \leq x+z -\pi$, and $z \leq x+z -\pi$.
%Let $a_x$ denote an axis in $G$ defined by $y=\pi/2, z=\pi/2$, 
%$a_y$ the axis in $G$ defined by $x=\pi/2, z=\pi/2$, 
%and $a_z$ the axis is $G$ defined by $x=\pi/2, y=\pi/2$.
%This has vertices $(\pi,0,0)$, $(0,\pi,0)$,$(0,0,\pi)$, and
%$(\pi,\pi,\pi)$ and is a regular tetrahedron with
%edge lengths $\sqrt{2}\pi$.
%Recall that the {\em Klein four-group} $K$ is a four element group $\{\Idd, i, j, k\}$
%where $ij=k,jk=i,ki=j$ isomorphic to $\bZ_2 \oplus \bZ_2$.
There is a natural action of the Klein four-group on $G$ by isometries
generated by three involutions each fixing a maximal segment in $G$
(See Figure \ref{fig:tetr1}.)

For a natural number $n$, 
let $T^{n}$ denote the $n$-dimensional torus, i.e., a product of $n$ circles $\SI^1$, 
and let $\bZ_{2}^{n}$ denote the direct sum of $n$ copies of
the cyclic group $\bZ_{2}$ of order $2$. For each $n$,
let $\SI^n$ denote the unit sphere in Euclidean space $\bR^{n+1}$. 

%%% Dec 21 3:05
We will denote the Klein four-group by $V$ whenever possible. 
Actually $V$ is isomorphic to $\bZ_2^2$, which will be called a Klein four-group as well.
By a {\em double Klein four-group}, we mean a product of two copies of the Klein four-group,
which is isomorphic to $\bZ_2^4$.

We will realize $\CP^3$ as a torus fibration over $G$ where fibers over 
the interior points are $3$-dimensional tori, fibers over the interior of faces $2$-dimensional
tori, fibers over the interior of edges circles, and the fiber over a vertex a point. 
(See Section \ref{subsec:cp3} for more details.)

%\begin{figure}

%\centerline{\includegraphics[height=7cm]{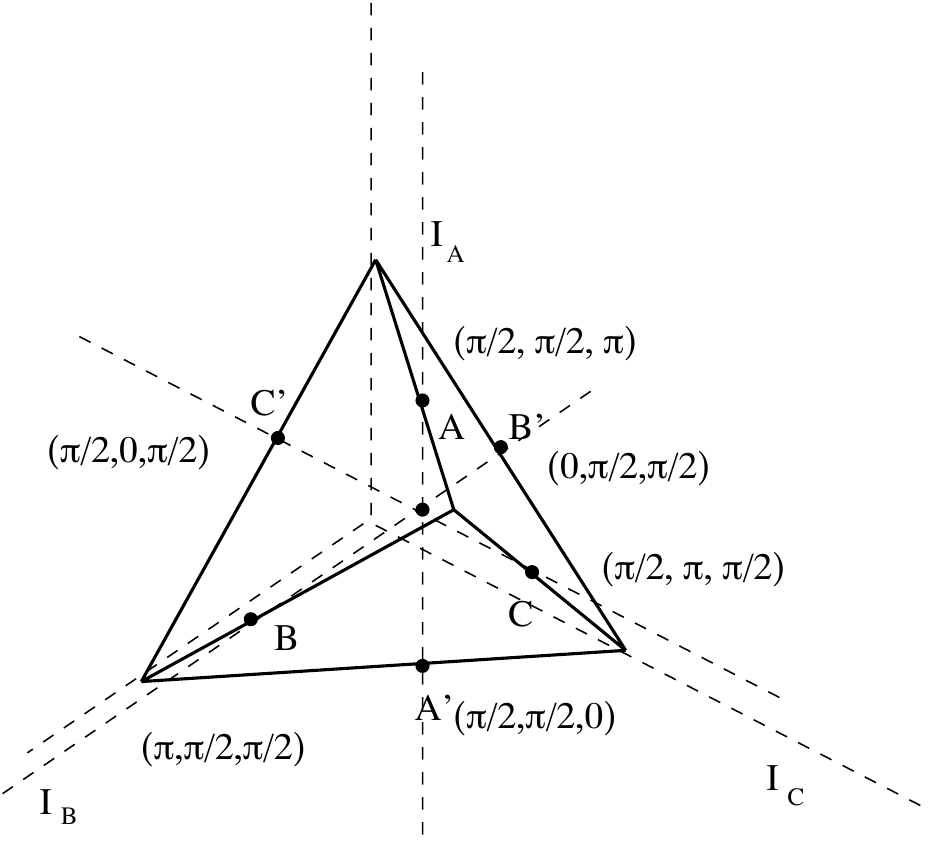}}
%\caption{The tetrahedron and the Klein four-group-symmetries. 
%The three edges in front are labeled $A,B$, and $C$ 
%in front and the three opposite edges are labeled $A',B'$, and $C'$.}
%\label{fig:tetr1}
%\end{figure}

%A {\em blown-up tetrahedron} $\tilde G$ is topologically
%obtained by first truncating small tetrahedra $G$ at the vertices of $G$
%with base parallel to the respective opposite side of the vertices
%and then truncating a thin prism along the remaining six edges
%so that a vertex of one of the six newly formed rectangles
%is a vertex of another rectangle. 
%We use this object as purely
%combinatorial and topological object but ignore exact coordinates.
%We can think of $\tilde G$ as $G$ with vertices and edges
%blown up radially. Since blowing up is what we actually do,
%we will use the term blown-up tetrahedron.
%(See Figure \ref{fig:trtet}.) By a {\em region}, we will mean 
%the subdomain in the boundary of $\tilde G$ or 
%$\tilde G$ times a three-dimensional torus and their quotient images also.

%\begin{figure}

%\centerline{\includegraphics[height=6cm]{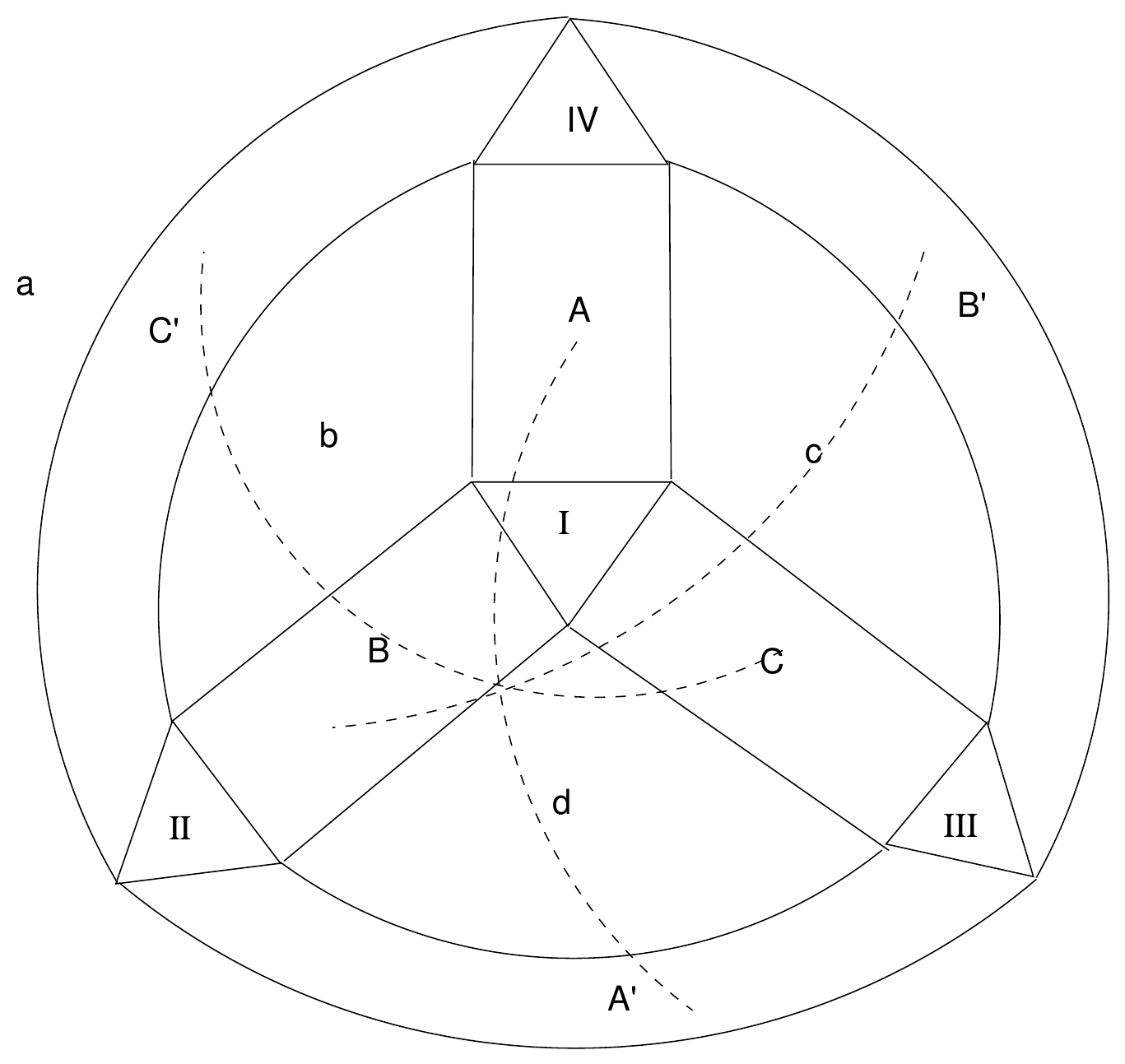}}
%\caption{The face diagram of blown-up tetrahedron and
%regions to be explained later. % and the axes of symmetries are
%indicated under the faces.}
%}
%\label{fig:trtet}
%\end{figure}

%%main theorem  1 Properness

\begin{thmA}\label{thm:A}
Let $\pi_1(\Sigma)$ the fundamental group of a closed surface $\Sigma$ of
genus $2$.
\begin{itemize}
\item[(i)] The identity component ${\mathcal C}_0$ of $\rep(\pi_1(\Sigma),\SOThr)$ is
homeomorphic to the quotient space of $\rep(\pi_1(\Sigma),\SUTw)$ 
by a double Klein four-group action with nonempty fixed points.
%\item[(ii)] $\rep(\pi_1(\Sigma),\SUTw)$ branch-covers a component ${\mathcal C}_0$ of $\rep(\pi_1(\Sigma),\SOThr)$
%containing the identity representation in a $16$ to $1$-manner. 
%Actually, the map is induced by a quotient map under the action of an 
%abelian group of order $16$ isomorphic to $\bZ_2^4$.
\item[(ii)] $\rep(\pi_1(\Sigma),\SUTw)$ is homeomorphic to $\CP^3$.
%seen as a torus fibration over the interior of a solid tetrahedron $T$ and as a solid torus bundle over 
%the interior of edges of $T$ and $3$-balls over each vertex of $T$.
%and hence we have 
%a $T^3$-equivariant cell decomposition. 
\item[(iii)] The quotient by the double Klein four-group induces 
a 16-to-1 branch-covering of $\rep(\pi_1(\Sigma),\SUTw)$ 
onto ${\mathcal C}_0$. % The action preserves $T^3$-cell structure. 
\item[(iv)] $\rep(\pi_1(\Sigma),\SOThr)$ has an orbifold structure with singularities in a union of 
six $2$-spheres meeting transversally. % and has a $T^3$-cell structure. 
\end{itemize}
\end{thmA}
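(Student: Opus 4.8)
The plan is to obtain the orbifold structure directly from the branched covering of (iii) and to locate the singular set as the image of the points of nontrivial isotropy. By (i)--(iii), the identity component ${\mathcal C}_0$ is the quotient of the smooth closed manifold $\rep(\pi_1(\Sigma),\SUTw)\cong\CP^3$ by the double Klein four-group $\bZ_2^4=H^1(\Sigma;\bZ_2)$, acting by twisting an $\SUTw$-representation by the sixteen characters $\pi_1(\Sigma)\to\{\pm\Idd\}$; the component ${\mathcal C}_1$ is the analogous quotient of the octahedral manifold. A quotient of a smooth manifold by a finite group is canonically an orbifold, and its non-manifold points are exactly the images of the points whose isotropy group is nontrivial. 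Thus it suffices to compute, for each of the fifteen nonidentity elements $\eta\in\bZ_2^4$, the fixed-point set $\mathrm{Fix}(\eta)$ and the local action transverse to it, and then to assemble the images in the quotient.

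First I would pin down the local picture. Each $\eta$ acts on $\CP^3$ through a linear involution of $\bC^4$; since the twisting action is the Schr\"odinger (Heisenberg) representation of the level-two theta group, every nonidentity $\eta$ splits $\bC^4$ into two two-dimensional $(\pm1)$-eigenspaces, so $\mathrm{Fix}(\eta)$ is a disjoint pair of projective lines $\CP^1\cong\SI^2$, and $\eta$ acts as $-\Idd$ on the two complex normal directions to each line. Consequently the quotient is locally modeled along such a line on $\SI^2\times(\bC^2/\{\pm\Idd\})$, so every component of the singular set is a $2$-sphere carrying a transverse $\bC^2/\{\pm\Idd\}$ singularity. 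This already shows the singular strata are $2$-spheres; what remains is to count them.

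To exhibit the six, I would use the realization, announced above, of $\CP^3$ as the $T^3$-fibration over the tetrahedron $G$, in which the big torus acts torically and the double Klein four-group appears as the product of the Klein four-group of isometries of $G$ (the three involutions fixing the maximal segments, acting on the base) with the Klein four-group of $2$-torsion fibre translations (acting by sign changes). The three fibre involutions fix precisely the six toric boundary curves lying over the six edges of $G$; these six copies of $\SI^2$ meet one another, three at a time, in the four toric fixed points lying over the vertices of $G$, and the meeting is transverse because the corresponding coordinate lines of $\CP^3$ have independent tangent directions at each coordinate point. I would then argue that these six toric $2$-spheres are the whole singular set of ${\mathcal C}_0$: the fixed loci of the remaining (base and mixed) involutions must be shown not to create additional spherical strata, either by identifying their images with the six under the group action or by locating them in the lower-dimensional strata over the vertices and edges.

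The main obstacle is exactly this last reduction. A priori the fifteen nonidentity elements contribute fifteen pairs of fixed $\CP^1$'s, so the heart of the matter is to control all of them and to prove that, after passing to the quotient, exactly six distinct $2$-spheres remain and that no singular points lie off their union. This is where the detailed cell-decomposition by spherical triangles and the smoothness result of Huebschmann \cite{Hueb} enter: Huebschmann's theorem bounds the singular set from above, and the explicit toric coordinates identify it with the six edge-curves. Finally, for ${\mathcal C}_1$ one runs the same argument on the octahedral manifold; there the analogous fixed loci are $2$-tori rather than $2$-spheres, consistently with the abstract, so they contribute no further spherical components and the six $2$-spheres of (iv) account for the entire singular set.
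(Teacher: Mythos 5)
Your overall strategy for (iv) -- derive the orbifold structure from the branched covering of (i)--(iii) and locate the singular set as the image of the nontrivial-isotropy locus -- is the same as the paper's, but your route to the fixed-point analysis is genuinely different. You linearize the $\bZ_2^4$-action through the Narasimhan--Ramanan identification and the Heisenberg (theta-group) representation on $\bC^4$, whereas the paper never leaves its triangle/pasting-angle model $\tilde G \times T^3_2/\bZ_2/\sim$ and computes fixed loci there by spherical geometry (Proposition \ref{prop:prequot} for $i_a,i_b,i_c$, Theorem \ref{thm:cp3} for $I_A,I_B,I_C$, assembled in Remark \ref{rem:finalorbstr}). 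Be aware that your route needs more than the paper's Theorem A(ii): the homeomorphism $\rep(\pi_1(\Sigma),\SUTw)\cong \CP^3$ constructed in the paper is purely topological, so to know the twisting action is projectively linear you must additionally invoke Narasimhan--Seshadri to convert twisting by characters into tensoring by $2$-torsion line bundles. That is legitimate borrowed machinery, but it is exactly the kind of input the paper avoids.

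The genuine gap is the step you yourself call ``the main obstacle'' and then never carry out: reducing the fixed loci of all fifteen involutions to six singular $2$-spheres. Worse, your own setup shows this reduction cannot be achieved by the means you propose. Since $\bZ_2^4$ is abelian, any group translate of $\mathrm{Fix}(\eta)$ lies again in $\mathrm{Fix}(\eta)$, so the image of $\mathrm{Fix}(\eta_1)$ in the quotient is contained in the image of $\mathrm{Fix}(\eta_2)$ only if $\mathrm{Fix}(\eta_1)\subset\mathrm{Fix}(\eta_2)$; and the fixed loci of the nine mixed involutions are two-dimensional and are \emph{not} contained in the twelve spheres attached to the six pure ones. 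This can be checked without the Heisenberg picture, directly in the paper's coordinates: for example $I_A i_a$ sends $(x,\phi_0,\phi_1,\phi_2)$ to $(I_A(x), \phi_0+2\pi, 2\pi-\phi_1, -\phi_2)$ modulo $4\pi$ and the $\bZ_2$-identification, so it fixes the points over the interior axis of $I_A$ with $\phi_0$ arbitrary, $\phi_1\in\{0,2\pi\}$, $\phi_2\in\{\pi,3\pi\}$ -- a two-dimensional set disjoint from $\mathrm{Fix}(I_A)$ (which requires $\phi_1,\phi_2\in\{0,2\pi\}$) and from the edge spheres fixed by $i_a,i_b,i_c$. Hence the images of these mixed fixed loci are singular strata that are not absorbed into your six toric edge-spheres, and the appeal to Huebschmann is not a mechanism that can remove them: his theorem gives smoothness of the $\SUTw$-space upstairs and says nothing about which strata of the finite quotient are singular. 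For comparison, the paper's own proof also computes fixed loci only for the six generators $i_a,i_b,i_c,I_A,I_B,I_C$ and assembles (iv) from those twelve spheres without ever addressing the mixed elements; so the difficulty you isolate is real and your flagging of all fifteen involutions is more careful than the paper's treatment, but as a proof your proposal stops exactly where the decisive (and, as set up, unachievable) step would have to occur.
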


%The space $\CP^3$ can be considered a toric fiber space over a tetrahedron 
%with fiber $3$-dimensional tori in the interior of the tetrahedron 
%degenerating into $2$-torus over the faces 
%and to circles at the edges and to a point over vertices. 

%In this paper, we will be able to see ${\mathcal C}_0$ as a topological space branched covered by $\CP^3$ 
%by the group $\bZ_2^4$.
More precisely, we will see $\rep(\pi_1(\Sigma),\SUTw)$ as $\CP^3$ by 
inserting into $\CP^3$ the four $3$-balls corresponding to the vertices 
and inserting solid tori at the circles over the interior of edges. 
The parameters of solid tori over the open edges 
will converge to $3$-balls as they approach the fibers above the vertices.
The subspace of abelian characters consist of $2$-tori over 
the interior of faces and the boundary $2$-tori of the solid tori over edges and 
the boundary sphere of the vertex $3$-balls.

\begin{rem}\label{rem:mainA}
We can describe the branch locus 
exactly with two Klein four-groups acting differently. The two generates the double Klein four-group action. 
\begin{itemize}
\item[(i)] 
%More precisely, $\rep(\pi_1(\Sigma),\SUTw)$ is homeomorphic to $\CP^3$
%seen as a torus fibration over a tetrahedron.
The Klein four-group $\bZ_2^2$ is acting on each of the torus fibers with
the branch locus equal to the union of six $2$-spheres over the six edges of the tetrahedron $G$. 
Two of the six $2$-spheres meet
at a point if and only if their edges do so at a vertex. Three of them meet at a unique point 
if and only if their edges do so.
\item[(ii)] The next Klein four-group  $\bZ_2^2$ 
acts on the tetrahedron $G$ and inducing the corresponding action on $\rep(\pi_1(\Sigma),\SUTw)$
preserving the torus fibration. The branch locus is a union of six $2$-spheres over three
mutually disjoint maximal segments in $G$. 
There are two disjoint $2$-spheres over each maximal segment. 
Over the same segment, the $2$-spheres are disjoint, and 
the spheres over distinct axes meet at a point if their segments do so. 
\item[(iii)] The spheres in (i) and the spheres in (ii) meet at two points if and only 
if the corresponding edges of $G$ and the maximal segments meet.
This gives the complete intersection pattern.
\item[(iv)] The images of the spheres are three for the type (ii) spheres and three for the type (iii) ones.
\end{itemize} 
\end{rem}

See Remark \ref{rem:finalorbstr} for more details. 
%We also see that 
%$\rep(\pi_1(\Sigma),\SOThr)$ has an orbifold structure with singularities given 
%by the above six spheres.

Consider a solid octahedron in $\bR^3$. Let $B^3$ be the $3$-dimensional unit ball 
and $\SI^3$ the unit sphere in $\bR^4$.
An {\em octahedral manifold} is a manifold obtained from the toric variety over 
the solid octahedron by removing the six singularities 
and gluing in six submanifolds homeomorphic to $B^3\times \SI^3$ 
to make it into a manifold. %We will say more details later.

The octahedral manifold is a torus fibration over 
an octahedron so that over the interior of the octahedron the fibers are 
$3$-dimensional tori and over the interior of faces the fibers are $2$-dimensional tori 
and over the interior of the edges the fibers are circles and the over 
the vertex the fibers are $3$-spheres. We will define this precisely later. 

Let $\Idd$ be the identity matrix in $\SUTw$ and define $-\Idd = (-1)\Idd$. 
Let $\Sigma_1$ denote a surface of genus two with one puncture, and 
$\rep_{-I}(\pi_1(\Sigma), \SUTw)$ be the quotient space of the subspace of 
$\Hom(\pi_1(\Sigma_1),\SUTw)$ determined by the condition that 
the holonomy of the boundary curve $-I$ under the usual conjugation action.

\begin{thmB}\label{thm:B}
\begin{itemize}
\item[(i)] ${\mathcal C}_1$ is homeomorphic to the double Klein four-group quotient of 
an octahedral manifold.
\item[(ii)] $\rep_{-I}(\pi_1(\Sigma_1),\SUTw)$ is homeomorphic to the octahedral manifold 
and branch-covers ${\mathcal C}_1$ in a $16$ to $1$ manner by an action of 
$\bZ_2^4$ and has a cell structure. 
%\item[(iii)] More precisely, $\rep_{-I}(\pi_1(\Sigma_1,\SUTw)$ is homeomorphic to an octahedral 
%manifold seen as a torus fibration over an octahedron except at the vertices. 
\item[(iii)] There is a Klein four-group $\bZ_2^2$-action preserving each of the torus fibers of the space, where
the branch locus is a union of three mutually disjoint $2$-tori.
There is another Klein four-group  $\bZ_2^2$-action on the space where 
the branch locus is the union of the three $2$-tori. 
They generate a double Klein group action with the branch locus the union of six $2$-tori. 
(See  Section \ref{subsec:axes}.)
\item[(iv)] The quotient of $\rep_{-I}(\pi_1(\Sigma_1,\SUTw)$ under the double Klein four-group action 
gives us ${\mathcal C}_1$ and has an equivariant $T^3$-cell decomposition. 
\end{itemize}
\end{thmB}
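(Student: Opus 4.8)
The plan is to mirror the proof of Theorem A, tracking the single algebraic difference that distinguishes the non-identity component. Write $\pi_1(\Sigma)=\langle a_1,b_1,a_2,b_2\mid [a_1,b_1][a_2,b_2]=1\rangle$ and split $\Sigma$ along the separating curve $c$, with $\rho(c)=\rho([a_1,b_1])$, into two one-holed tori. Since $[-\tilde a,\tilde b]=[\tilde a,\tilde b]$ in $\SUTw$, the commutator of two elements of $\SOThr$ has a canonical lift to $\SUTw$ that is independent of the sign choices made for the two factors; hence a representation $\rho\colon\pi_1(\Sigma)\to\SOThr$ lifts the boundary word to a well-defined element $\pm\Idd$, and this sign is exactly the Stiefel-Whitney number separating $\mathcal{C}_0$ from $\mathcal{C}_1$. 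On $\mathcal{C}_1$ the boundary lifts to $-\Idd$, so the four independent sign choices for the lifts of $a_1,b_1,a_2,b_2$ produce a $\bZ_2^4$-action on $\rep_{-I}(\pi_1(\Sigma_1),\SUTw)$, free over the generic locus, whose quotient is $\mathcal{C}_1$. This gives the asserted $16$-to-$1$ branch covering, establishing (i) and the covering assertion of (ii).

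Next I would put the octahedral-manifold structure on $\rep_{-I}(\pi_1(\Sigma_1),\SUTw)$. Exactly as $\CP^3$ arises in Theorem A, I use the pair-of-pants decomposition of $\Sigma$ by three curves and the moment map recording the three rotation angles of the holonomies around them, landing in the polytope cut out by the spherical-triangle inequalities for each pants. The $-\Idd$ twist at the separating curve shifts these inequalities by $\pi$ and replaces the tetrahedron $G$ by the octahedron. Over the interior the three twist parameters give $T^3$ fibers, collapsing to $T^2$ over open faces and to circles over open edges as before. The crucial new phenomenon is the vertex fiber: where $\CP^3$ had a single point, the $-\Idd$ condition forces the reducible locus to fill an entire $\SI^3$, so I must show that completing the toric variety over the octahedron by excising the six toric singularities and gluing in copies of $B^3\times\SI^3$ yields the octahedral manifold described before the theorem. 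The homeomorphism with $\rep_{-I}(\pi_1(\Sigma_1),\SUTw)$ is then verified fiber by fiber, and its smoothness, hence the manifold and cell-structure claims of (ii), is where I invoke Huebschmann's result.

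With the fibration in hand, parts (iii) and (iv) become equivariant bookkeeping. The sign-change group $\bZ_2^4$ splits into two Klein four-groups. The first preserves every torus fiber---these are the sign changes fixing the rotation angle of each pants curve---and a fixed-point computation on the fibers shows its branch locus to be three pairwise disjoint $2$-tori. The second consists of the remaining sign changes, which act on the three angle coordinates as the isometric involutions of the octahedron, each fixing an axis through a pair of opposite vertices, and lift to the total space preserving the fibration; its branch locus is three further $2$-tori, with the vertex- and edge-adjacency intersection pattern computed exactly as in Remark \ref{rem:mainA}, now with tori replacing spheres. Together the two groups generate the double Klein four-group, the union of all six $2$-tori is the full branch locus, and the quotient is $\mathcal{C}_1$ equipped with an equivariant $T^3$-cell decomposition, giving (iii) and (iv).

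The principal obstacle is the analysis of the vertex fibers. In the closed case the vertices of $G$ correspond to honest points of $\CP^3$, but the $-\Idd$ boundary condition makes the holonomies there reducible along an entire $\SI^3$ of representations, so the toric variety over the octahedron is genuinely singular at all six vertices. The main work is to show that the natural completion by the $B^3\times\SI^3$ charts is compatible, simultaneously at every vertex, with the representation-theoretic gluing and produces a smooth manifold. This is the step where a purely geometric argument is hardest, and where the cited smoothness of Huebschmann is used as a substitute; removing that dependence is precisely the improvement the introduction flags as not yet achieved.
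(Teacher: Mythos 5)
Your outline is correct and, for its geometric core, follows the paper's own route: the pants decomposition turns the spherical-triangle inequalities, twisted by $-\Idd$ at $c_1$ (i.e.\ $\theta_1\leftrightarrow\pi-\theta_1$), into the octahedron $O$; the character space is a $T^3$-fibration over the blown-up $\tilde O$ degenerating over faces and edges; the vertex fibers are $3$-spheres; and Huebschmann's smoothness is what licenses gluing in the six copies of $B^3\times\SI^3$ (this is Sections \ref{sec:other}--\ref{sec:C1} of the paper). Where you genuinely differ is the derivation of the $16$-to-$1$ covering. You get the $\bZ_2^4$-action abstractly from lifting theory: $-\Idd$ is central, so commutators of lifts are lift-independent, the boundary sign is the Stiefel--Whitney invariant, and the conjugacy classes of lifts of a fixed $\SOThr$-character form a single orbit of the sign-change group $\Hom(\pi_1(\Sigma_1),\bZ_2)\cong\bZ_2^4$. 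The paper never argues this way; it constructs ${\mathcal C}_1\cong\tilde O\times T^3/\sim$ (Theorem \ref{thm:C1}) and $\rep_{-I}(\pi_1(\Sigma_1),\SUTw)\cong\tilde O\times T^3_2/\bZ_2/\sim$ (Theorem \ref{thm:filledSU}) separately via triangle and pasting-angle parametrizations, and only afterwards exhibits the deck group as $\{I,i_a,i_b,i_c\}$ (signs on $d_1,d_2$, preserving each torus fiber) together with $\{I,I_A,I_B,I_C\}$ (signs on $c_1,c_2$, acting as the octahedral involutions fixing axes through opposite vertices); your two Klein four-groups coincide exactly with these. Your route is shorter and cleaner for (i) and the covering claim of (ii), but you still need the explicit parametrization to locate the branch tori in (iii), so the total work is comparable; what the paper's version buys is that the cell structures on the two spaces are matched under the branching map, which is its stated purpose.

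One correction to your final paragraph: the vertex $\SI^3$'s are not a ``reducible locus.'' The space $\rep_{-I}(\pi_1(\Sigma_1),\SUTw)$ contains no reducible (abelian) characters at all --- that is precisely why the Goldman/Huebschmann smoothness argument applies and the space is a closed manifold. Over a vertex region such as $A_A$ the restrictions to the two pairs of pants do become abelian lune representations, but the glued surface-group representation remains irreducible, and the $\SI^3$ parametrizes the residual gluing holonomy $h(d_2)\in\SUTw$ (Section \ref{subsub:AA}). If those $\SI^3$'s really consisted of reducibles, the smoothness result you invoke would fail exactly there, so the distinction is not cosmetic.
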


%Here, $\rep_{-I}(\pi_1(\Sigma_1),\SUTw)$ is to be seen as an octahedral manifold 
%obtained from the variety by replacing four isolated singular points with  
%$3$-dimensional spheres. The $3$-dimensional tori acts on everywhere 
%except at the replaced $3$-spheres. 

%\begin{rem}\label{rem:mainB}
%\begin{itemize}
%\item[(iii)] More precisely, $\rep_{-I}(\pi_1(\Sigma_1,\SUTw)$ is homeomorphic to an octahedral 
%manifold seens as a torus fibration over an octahedron except at the vertices. 
%By a Klein four-group $\bZ_2^2$-action preserving torus fibration, 
%this branch covers  $\CP^3$ with branch locus a union of
%three mutually disjoint $2$-tori.
%\item[(iv)] There is a Klein four-group  $\bZ_2^2$-action which do not preserve fibers. 
%The branch locus is the union of six $2$-tori. 
%(See  Section \ref{subsec:axes}.)
%\end{itemize} 
%\end{rem}

We remark that the space ${\mathcal C}_1$ has been already studied by Ramanan \cite{Ra}
to some depth and he describes the branch maps. 
For the $\SUTw$-character variety $\rep_{-I}(\pi_1(\Sigma_1),\SUTw)$, 
Newstead \cite{New2} studied it as the space of stable bundles of rank two and odd degrees over a curve of genus 2
and he show that it is homeomorphic to the intersection of two quadrics. 
(See also Desale and Ramanan \cite{DR}.) Miles Reid communicated to the author by an e-mail that 
the author's description as a toric variety with six singular points replaced by three dimensional spheres  
is homeomorphic to the intersection description. (See his thesis \cite{Re} to understand the issues here.)

%Hubueschmann \cite{Hueb} of course knew the branch-map for the $\SUTw$-character spaces of 
%the fundamental groups of closed surfaces of genus $2$ and higher 
%but did not write down the branch locus explicitly. 

%Outline
We briefly outline this paper: 
We start with $\rep(\pi_{1}(P), \SOThr)$ for a pair of pants $P$.
We show that there is a dense set where the characters are
obtainable by gluing two isometric triangles. Every
character is obtained by gluing two isometric triangles which
may be degenerate ones. The choices involved give us the
$V$-action.

The surface $\Sigma$ decomposes into two pairs of pants.
The space $\rep(\pi_1(\Sigma), \SOThr)$ is then constructed using
the gluing maps between two pairs of pants.
We obtain the two components in this manner
and realize them as quotient spaces of $T^{3}$-bundles over a
blown-up tetrahedron or a blown-up octahedron
under a $V$-action.

In Section \ref{sec:glimit}, we compactify the space of isometry classes of spherical triangles
 by adding generalized triangles including hemispheres, segments, 
and points. This space can be viewed as a tetrahedron truncated at vertices and edges
in a parallel manner. %Actually, it is the blown-up tetrahedron. 

In Section \ref{sec:rpp},  we will describe the $\SOThr$-character space of 
the fundamental group of a pair of pants by relating spherical triangles with 
the $\SOThr$-characters. Each of them is a tetrahedron with 
the above described action by $V$.
(This is an old idea which also appeared in 
Thurston's book \cite{Thbook} and similar to Goldman \cite{Gconv}.)

In Section \ref{sec:sosu}, we discuss the relationship of $\SUTw$ with $\SOThr$ and find 
a useful way to describe elements. 
We redescribe the $\SOThr$- and $\SUTw$-character spaces of 
a free group of rank 2 for the purposes of using the idea in the later sections.
%Actually, the spaces are described in Section 2 but we are 
%describing them again for other purposes. 

In Section \ref{sec:ch}, we describe the $\SOThr$-character space for the 
fundamental group of the surface $\Sigma$ of genus $2$, which has 
two components ${\mathcal C}_0$, containing the identity representation, and 
the other component ${\mathcal C}_1$.   

In major Section \ref{sec:C0}, we show how we can consider ${\mathcal C}_0$ as a quotient space of the 
$T^3$-bundle over the blown-up tetrahedron above, and
we describe the explicit quotient relations for ${\mathcal C}_0$ by going over 
each of the faces of the blown-up tetrahedron. This will be a long section; however, the ideas here 
are very repetitive and we write them for the convenience of the readers and for future uses.

In Section \ref{sec:SUT}, we move to the $\SUTw$-character space of the fundamental group of 
$\Sigma$ and we consider the geometric representations of such characters using 
the spherical triangles again. The character space is again considered a quotient 
space of the $T^3$-bundle over the blown-up tetrahedron. 
We determine the topology using the quotient relation. Here, we make use of the smoothness
result of this space. (See Huebshmann \cite{Hueb}.)

In Section \ref{sec:topC0}, we determine the topology of ${\mathcal C}_0$ and describe the $\bZ_2^4$-action 
on the $\SUTw$-character space of the fundamental group of 
$\Sigma$ to branch-cover ${\mathcal C}_0$. 

In Section \ref{sec:other}, we now turn our attention to ${\mathcal C}_1$. Here, we show that our space 
is the quotient space of an octahedron blown-up at vertices times $T^3$. 
We describe the equivalence relations. 

In Section \ref{sec:su2o}, we show that the space $\rep_{-I}(\pi_1(\Sigma_1), \SUTw)$ of $\SUTw$-characters of 
the fundamental group $\pi_1(\Sigma_1)$ of the once-punctured genus two surface $\Sigma_1$ with 
the puncture holonomies equal to $-\Idd$ is homeomorphic to an octahedral manifold.

In Section \ref{sec:C1}, we determine the topology of ${\mathcal C}_1$ and describe the $\bZ_2^4$-action 
on the above manifold to branch-cover ${\mathcal C}_1$.

\section{The geometric limit configuration space}\label{sec:glimit}

In this section, we will define the solid tetrahedron $G$ precisely and the Klein four-group action on it.
The above tetrahedron is not enough 
for our purpose. We need to consider the sequence of spherical triangles converging 
to the boundary. We will use the geometric-limit configuration spaces for the next
section when we are discussing the surface of genus $2$.
%We blow up the tetrahedron to study the representations
%of the surface group since the transition maps come into play.

First, we describe a tetrahedron $G$ and the blown-up tetrahedron $\tilde G$.
Next, we will describe the geometric-limit configuration space of 
generalized triangles and its metric and topology. 
We show that the geometric-limit configuration space can be identified with 
a blown-up tetrahedron by describing each face of $\tilde G$ with explicit space of 
generalized triangles. We show that the Klein four-group $V$ acts on $\tilde G$ extending the action on $G^o$.
% and show the natural identification by the Klein four-group $V$ action to identify these. 
Finally, we present the standard position for generalized triangles. 
%Finally, we generalize the geometric multiplication of the previous section to the generalized triangles. 

We remark that the author cannot find a way to identify $\tilde G$ as a blown-up tetrahedron in 
a direct and more algebraic way without using the topological methods below. 
We believe that there should naturally be a known real algebraic blow-up construction along some divisers 
to see $\tilde G$ as being blown-up from $G$. 
\subsection{A blown-up tetrahedron}

By a {\em rotation}, we mean a transformation which can be locally conjugated to be 
an orientation-preserving isometry in the unit $n$-sphere or a Euclidean $n$-space 
with a set of fixed point a subspace of codimension two for $n \geq 2$.
A {\em rotation angle} is measured by taking angle from the fixed subspace in counterclockwise direction
once the fixed subspace is given an orientation. 
%An {\em involution} is a rotation of order two, i.e., a rotation of angle $\pi$.

We define a tetrahedron $T$ in the positive octant of $\bR^{3}$
by the equation $x+y+z \geq \pi$, $x \leq y+z - \pi$,
$y \leq x+z -\pi$, and $z \leq x+z -\pi$.
This has vertices $(\pi,0,0)$, $(0,\pi,0)$,$(0,0,\pi)$, and
$(\pi,\pi,\pi)$ and is a regular tetrahedron with
edge lengths $\sqrt{2}\pi$.

Recall that the {\em Klein four-group} $V$ is a four element group $\{\Idd, i, j, k\}$
where $ij=k,jk=i,ki=j$ isomorphic to $\bZ_2 \oplus \bZ_2$.
We can identify each element of $V$ with an involutionary rotation with an axis through
$(\pi/2, \pi/2, \pi/2)$ parallel to one of the axis (see Figure \ref{fig:tetr1}).
(Here an {\em axis} means a maximal segment fixed by an involution)
We denote by $I_A$ the involution about the axis parallel to the $z$-axis,
$I_B$ the one about the axis parallel to the $x$-axis, and $I_C$ the one about 
the axis parallel to the $y$-axis. 

Actually $V$ is isomorphic to $\bZ_2^2$, which will be called a Klein four-group as well.
We will denote the Klein four-group by $V$ whenever possible. 
By a {\em double Klein four-group}, we mean a product of two Klein four-groups,
which is isomorphic to $\bZ_2^4$.

We will realize $\CP^3$ as a torus fibration over $G$ where fibers over 
the interior points are $3$-dimensional tori, fibers over the interior of faces $2$-dimensional
tori, fibers over the interior of edges circles, and the fiber over a vertex a point. 
(See Section \ref{subsec:cp3} for more details.)

\begin{figure}
\centerline{\includegraphics[height=7cm]{}}
\caption{The tetrahedron and the Klein four-group-symmetries. 
The three edges in front are labeled $A,B$, and $C$ 
in front and the three opposite edges are labeled $A',B'$, and $C'$.}
\label{fig:tetr1}
\end{figure}

A {\em blown-up tetrahedron} $\tilde T$ is combinatorially 
obtained by first truncating small tetrahedron $T$ at each of the vertices of $T$
with base parallel to the respective opposite side of the vertex
and then truncating a thin prism along each of the remaining six edges by a plane parallel to 
one of the $xy$-, $yz$-, or $zx$-plane
so that a vertex of one of the six newly formed rectangles
is a vertex of another rectangle:
\begin{multline}
 x+y+z \geq \pi, y \leq x+z -\pi,  x \leq y+z - \pi,  z \leq x+z -\pi, \\
 \eps \leq x \leq \pi-\eps, \eps \leq y \leq \pi-\eps, \eps \leq z \leq \pi-\eps \\
 x+y+z \leq 3\pi- 4\eps, x \leq y+z +\pi - 4\eps, y \leq x+z +\pi - 4\eps, z \leq x+y+\pi-4\eps 
\end{multline}
where $\eps >0$ can be any number less than say $1/10$.
We label as follows (see Figure \ref{fig:trtet}): 
\begin{multline} 
a: x+y+z=\pi, b: y=x+z-\pi, c: x+y+z -\pi, d: z = x+y -\pi \\
A: z = \pi-\eps, A': z = \eps, B: x =\pi-\eps, B':x =\eps, C: y = \pi- \eps, C': y = \eps \\  
I: x+y+z = 3\pi - 4\eps, II: x = y+z + \pi - 4\eps, III: y=x+z+\pi -\eps, IV: z=x+y+\pi - 4\eps 
\end{multline}
The faces are given by $\tilde T$ intersected with the planes of the respective equations.
(Here of course the geometry does not matter and only the combinatorial structure is needed for 
our purposes. More precisely, $\tilde T$ is the combinatorial polytope obtained from the region )

%We use this object as purely
%combinatorial and topological object but ignore exact coordinates.

We can think of $\tilde T$ as $T$ 
with each vertex replaced by a triangle and 
each point of the interior of edge replaced by a segment. 
This replacement is reminiscent of algebraic blowing-up operations: 
%To see precisely, see the proof of Theorem \ref{thm:T}. 
(Unfortunately, the author cannot really identify the process with algebraic geometric blow-up processes 
presently.)

We will use the term blown-up tetrahedron.
(See Figure \ref{fig:trtet}.) By a {\em region}, we will mean 
the subdomain in the boundary of $\tilde T$ or 
$\tilde T$ times a three-dimensional torus and their quotient images under 
the quotient maps that we will introduce later.
\begin{figure}

\centerline{\includegraphics[height=6cm]{}}
\caption{The face diagram of blown-up solid tetrahedron and
regions to be explained later. % and the axes of symmetries are
%indicated under the faces.}
}
\label{fig:trtet}
\end{figure}

%% Feb 11. 5:53 Grammar check up to here

\subsection{Generalized spherical triangles and geometric-limit configuration spaces}

We will be using the Hausdorff distances between closed subsets of $\SI^2$: 
Denote by $N_\eps(A)$ the $\eps$-neighborhood of a subset $A$ for a real number $\eps> 0$. 
Let $K_1$ and $K_2$ be the two closed subsets of $\SI^2$, and hence they are compact. 
We define the {\em Hausdorff distance} between $K_1$ and $K_2$ by the condition
$d_H(K_1, K_2) < \eps$ if $K_1 \subset N_\eps(K_2)$ and $K_2 \subset N_\eps(K_1)$.

A geodesic in $\SI^2$ is an arc in a great circle in $\SI^1$. 
A {\em short geodesic} is a geodesic of length $\leq \pi$. 
A {\em great segment} is a geodesic segment of length equal to $\pi$. 

%%% Dec 21 9:08 Must continue...
By a {\em triangle} in $\SI^2$, we mean a disk 
bounded by three short geodesics and with vertices to be denoted by 
$v_0, v_1, v_2$ and edges $l_0, l_1, l_2$ opposite to these respectively. 
They are spherical triangles in the usual terminology outside this paper. 
We will always assume that $v_i$s and $l_i$s appear in the clockwise direction. 

A {\em lune} is the closed domain in $\SI^2$ bounded by two segments
connecting two antipodal points forming an angle $< \pi$.
A {\em hemisphere} is the closed domain bounded by a great circle
in $\SI^{2}$.

We say that ordinary triangles to be {\em nondegenerate triangles}. 
We define pointed-lunes, pointed-hemispheres,
pointed segments, and pointed-points here. These are {\em degenerate triangles}.
\begin{itemize}
\item A {\em pointed-lune} is a lune with three ordered points where two of them
are antipodal vertices of the lune, and the third one is either
on an edge or identical with one of the vertices.
If, in the boundary, the three points appear in clockwise
direction, then the pointed lune is positively oriented. If
two points are identical, the pointed-lune is considered {\em degenerate}.
\item A {\em pointed-hemisphere} is a hemisphere with three ordered points
on the boundary great circle where a segment between any two not
containing the other is of length $\leq \pi$.
Again, a {\em pointed-hemisphere} is positively oriented if
three points appear in the clockwise direction.
Note that two of the points could be identical and
now two distinct points are antipodal. In this case,
the pointed-hemisphere is said to be {\em degenerate}.
\item A {\em pointed-segment} is a segment of length $\leq \pi$
with three ordered points where two
are the endpoints and one is on the segment.
Here again, the third point could be identical with one of the endpoint,
and the {\em pointed-segment} is {\em degenerate}.
%The points will be denoted by $v_{0}, v_{1}$, and $v_{2}$ and
%the angles are denoted by $\alpha, \beta,$ and $\gamma$ respectively
%and the length $l_{i}$ denote the distance between
%$v_{i+1}$ and $v_{i+2}$ for each $i = 0,1,2$ where additions
%are to be understood in $\mod 3$.
%By an abuse of notation $l_i$ also denote the segment itself in the boundary. 
\item A {\em pointed-point} is a point with three identical vertices.
\end{itemize}

Sometimes, we will drop the ``pointed'' in using the terminology.

A nondegenerate or degenerate triangle is a {\em generalized triangle}. 
A {\em triangle} is just a generalized triangle from now on.
The three ordered points are said to be {\em vertices} and 
are denoted by $v_0, v_1, v_2$.

The notion of edges for nondegenerate triangles is the same as in the ordinary spherical 
geometry. One can define notions of edges of the degenerate triangles 
as the closure of the appropriate component of the complement of the 
pair of vertices if the pair of vertices is distinct, and the set of the pair if 
the pair is not distinct. 
Thus, we can form generalized edges $l_0, l_1$, and $ l_2$ 
as being opposite $v_0,v_1$, and $v_2$ respectively.
Again, the orderings must follow the clockwise direction.

The notion of angles for nondegenerate triangles is the same as the ordinary notions.
We now associate angles to each of the three vertices of degenerate triangles 
by the following rules, where one can assign more than one set of angles to a given degenerate triangle.
The angles are numbers in $[0,\pi]$. Let us use indices in $\bZ_3$:
\begin{itemize} 
\item If a vertex $v_i$ has two nonzero length edges $l_{i-1}$ and $l_{i+1}$ ending 
at $v_i$, then we define the angle $\theta_i$ at $v_i$ to be the interior angle between the edge
vectors oriented away from $v_i$. 
\item If a vertex $v_i$ is such that exactly one of $l_{i-1}$ or $l_{i+1}$ has a zero length, say $l_{i-1}$ without loss of generality, 
then we choose an arbitrary great circle $\SI^1_{i-1}$ containing $v_i$ so that the generalized triangle is contained  
in the closure of one of the hemisphere $H$ bounded by it. 
We take the counter-clockwise unit tangent vector for $\SI^1_{i-1}$, to be called the {\em direction vector} at $v_i$ for $l_{i-1}$, 
and we take the unit tangent vector for $l_{i+1}$ at $v_i$ and measure the angle between them. 
%Here $l_{i-1}$ is a degenerate segment with a vector tangent to $\SI^1_{i-1}$ at $v_i$ whose 
%direction is clockwise direction from the inside of the triangle. The vector is a {\em direction vector}.
The a degenerate segment $l_{i-1}$ with the projective class of the direction vector is said to be an {\em infinitesimal edge}.
If $l_{i+1}$ has a zero length, then we choose $\SI^1_{i+1}$ in the above manner and the direction vector at $v_i$
is chosen to be in the clockwise direction.
\item If a vertex $v_i$ is such that both of $l_{i-1}$ or $l_{i+1}$ are zero lengths, then
we have a pointed-point, and the angles to the three vertices are given arbitrarily so that 
they sum up to $\pi$.
\end{itemize}

%An idea here is that in the generalized triangles 
%two vertices that are identical nevertheless has well-defined 
%infinitesimal edge between them and hence direction at least.
%By choosing the directions, we obtain angles.
%Thus, generalized triangles have well-defined infinitesimal edges. 
%(However, if we go to the associated representations, this looses meaning.)

The following are the consequences of the above angle assignment method:
\begin{itemize}
\item For the pointed-lune with three distinct vertices, 
the angles at the vertices of the lune are the ones
given by the segments, i.e., the angle of the lune itself,
and the vertex in the interior of
a segment is given the angle $\pi$.
\item For the pointed-lune with two vertices equal,
the vertex which are distinct from others are given
the angle of the lune, and the two vertices are given
angles greater than or equal to the lune-angle by the above procedure.
We can show that the three angles have a sum equal to $\pi$ plus $2$ times the angle of the lune.
\item For the pointed hemisphere, the vertices are given angles $\pi$.
\item For the pointed segment with three distinct vertices,
the angles $0, 0, \pi$ are given to the endpoints of the segment
and the vertex in the middle.
\item For the pointed segment with two vertices equal,
the vertex which are distinct from the others is given the angle $0$
and the other two are given angles which sums to $\pi$.
\item For a pointed-point, the three vertices are given three angles summing up to
$\pi$. These configurations with angles assigned are said to be {\em degenerate triangles}.
\end{itemize}

Let $\hat G$ denote the space of all triangles, pointed-lunes,
pointed-hemispheres, pointed-segments and pointed-points on $\SI^{2}$
with angles assigned.
Let $\hat G$ be given a metric defined by
letting $D(L, M)$ to be maximum of
the Hausdorff distance between the closed domains of elements $L$ and $M$ of $\hat G$
and the Hausdorff distances between corresponding points and segments of $L$ and $M$
and the absolute values of the differences between the corresponding angles
respectively. 

\begin{prop}\label{prop:dense0} 
$\hat G$ is compact under the metric,
and the subspace of nondegenerate triangles are dense in $\hat G$.
\end{prop}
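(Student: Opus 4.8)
The plan is to prove the two assertions---compactness and density---separately, and I expect compactness to require the bulk of the work while density is comparatively routine.

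For \textbf{density}, I would argue that each type of degenerate triangle is a Hausdorff-limit of nondegenerate ones together with convergence of the auxiliary data (marked points and angles). The strategy is to exhibit, for each degenerate configuration, an explicit one-parameter family of honest triangles converging to it in the metric $D$. For a pointed-segment one takes a thin triangle collapsing onto the segment; for a pointed-lune one opens a genuinely two-dimensional triangle near the lune; for a pointed-hemisphere one fattens a triangle toward a hemisphere; and for a pointed-point one shrinks a triangle to a point. The only delicate part is matching the \emph{angle} data in the limit, since the angle-assignment rules for degenerate triangles (especially the direction-vector conventions at vertices with a zero-length edge) were deliberately designed so that they arise as limits of interior angles of nearby nondegenerate triangles. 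So the real content of density is to check, case by case, that the angle-assignment conventions in the excerpt are exactly the ones forced by taking limits. I would present this as a short verification organized by the same itemized list of degenerate types already laid out before the proposition.

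For \textbf{compactness}, since $\hat G$ carries a metric, it suffices to show it is sequentially compact: every sequence in $\hat G$ has a subsequence converging in $D$ to a point of $\hat G$. First I would take a sequence of generalized triangles and extract convergent subsequences of all the bounded pieces of data: the closed domains lie in the hyperspace of closed subsets of the compact space $\SI^2$, which is itself compact under Hausdorff distance (Blaschke selection); the three marked points $v_0,v_1,v_2$ lie in the compact $\SI^2$; the three edges are closed subsets of $\SI^2$ hence also subconverge; and the three angles lie in the compact interval $[0,\pi]$. Passing to a common subsequence, all of these converge. The heart of the argument is then to show that the resulting limiting data \emph{is} a generalized triangle with a valid angle assignment, i.e.\ that the limit domain is one of the admitted shapes (triangle, lune, hemisphere, segment, point), that the limit points sit on it correctly in clockwise order, and that the limit angles are compatible with the degenerate angle rules.

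The \textbf{main obstacle} will be this last step: controlling the angles in the limit. As edges shrink to zero length, the ordinary interior angle is no longer continuous, and one must verify that the limiting angle triple is a legitimate assignment under the degenerate conventions---for instance that the ``direction vector'' at a collapsing vertex is captured by a subsequential limit of tangent directions and yields an angle in $[0,\pi]$, and that constraints like ``the three angles of a degenerate lune sum to $\pi$ plus twice the lune-angle'' are preserved. I would handle this by noting that the direction vectors live in a circle (compact), so they subconverge too, and that each summation/ordering constraint is a closed condition, hence survives passage to the limit. The clockwise-ordering condition likewise is closed and so is inherited by the limit, possibly with vertices coalescing, which is precisely what the degenerate cases allow. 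Assembling these observations shows the limit lies in $\hat G$, completing sequential compactness and therefore compactness.
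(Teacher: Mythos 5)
Your proposal is correct and is essentially the paper's own argument: the paper proves compactness by realizing $\hat G$ as a closed subset of the compact metric space $\mathcal K \times (\SI^2)^3 \times \mathcal K^3 \times [0,\pi]^3$ (with $\mathcal K$ the hyperspace of compact subsets of $\SI^2$ under the Hausdorff metric), which is exactly your subsequence-extraction-plus-limit-verification argument phrased in product form, the verification that the limit lies in $\hat G$ being the closedness assertion. For density the paper likewise perturbs a degenerate triangle, using the great circles containing its (possibly infinitesimal) edges, to produce a nondegenerate triangle in any small neighborhood --- the same perturbation idea you organize case by case.
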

\begin{proof}
The space $\mathcal K$ of all compact subsets of $\SI^2$ with the Hausdorff metric is compact. 
$\hat G$ is a closed subset of the compact metric space $\mathcal K \times (\SI^2)^3 \times \mathcal K^3 \times [0, \pi]^3$.
Thus it is compact. 

Given a degenerate triangle $T'$, we take three great circles $\SI^1_i$ containing $l_i$ for $i=0,1,2$ 
and by a perturbation, we can obtain a nondegenerate triangle $T''$ in any small neighborhood of $T'$.
\end{proof} 

\begin{defn}\label{defn:edgecircle}
Given a generalized triangle, for each edge, degenerate or not, is contained in
a unique great circle determined by the direction vector and the vertices of the edge.
When we say an edge of a triangle is {\em contained} in a great circle, we mean only this situation.
\end{defn}

An element of $\hat G$ can be put using
an orientation-preserving isometry to a {\em polar position}: 
Given a generalized triangle with vertices $v_0, v_1,$ and $v_2$, 
if $v_0\ne v_1$, then by
setting $v_0$ be the north pole $[0,0,1]$ and the edge $l_2$ connecting
$v_0$ to $v_1$ in the boundary to lie on the $xz$-plane intersected with $\SI^2$.
If $v_0 = v_1$, and not equal to $v_2$, then we move $v_0$ to
the north pole and move the edge $l_1$ to lie in the $yz$-plane intersected with
$\SI^2$ by isometries. If $v_0=v_1=v_2$, then we simply put every thing to the north pole.
This agrees with the clock-wise orientation of $v_0,v_1$, and $v_2$ on 
the boundary of the triangle.

The isometry group $\SOThr$ acts properly on $\hat G$.
The quotient topological space is denoted by $\tilde G$.
This is a compact metric space with metric induced from $\hat G$
by taking the Hausdorff distances between the orbits.
We will denote by $G^o$ the quotient space of the space of 
nondegenerate triangles by the $\SOThr$-action.

\begin{prop}\label{prop:Go} 
Let $G^o$ be the set of isometry classes of nondegenerate triangles. 
Then there is a map sending 
a spherical triangle to its angles: $f(t)= (\theta_1, \theta_2, \theta_3)$ for 
the angles $\theta_1, \theta_2,$ and $\theta_3$ of $v_1, v_2,$ and $v_3$.
The space $G^o$ of isometry classes of nondegenerate
triangles is homeomorphic with  interior of $T$ by $f$.
\end{prop}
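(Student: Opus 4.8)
The plan is to establish the homeomorphism between $G^o$ and the interior of $T$ by showing that $f$ is a continuous bijection and then upgrading to a homeomorphism. The key classical input is the \emph{spherical law of cosines for angles}: for a nondegenerate spherical triangle with angles $\theta_1, \theta_2, \theta_3$, the side lengths are recovered from the angles, so that (up to orientation-preserving isometry) a nondegenerate triangle is determined by its angle triple. This is the spherical analogue of the fact that similar Euclidean triangles have equal angles, except that on the sphere the angles \emph{already} fix the size. First I would verify that $f$ is well-defined on isometry classes (angles are isometry invariants) and continuous in the metric $D$ restricted to $G^o$, which is immediate since $D$ dominates the angle differences.

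Next I would identify the precise image of $f$. A triple $(\theta_1,\theta_2,\theta_3) \in (0,\pi)^3$ is realized by a nondegenerate spherical triangle if and only if it satisfies the two families of inequalities characterizing the interior of $T$: the angle sum exceeds $\pi$ (spherical excess is positive), namely $\theta_1+\theta_2+\theta_3 > \pi$, and each angle is less than the sum of the other two minus $\pi$ is \emph{not} the right reading --- rather the dual polar inequalities $\theta_i < \theta_{i-1} + \theta_{i+1} - \pi$ coming from applying the triangle inequality to the polar (dual) triangle whose angles are $\pi - \theta_i$. Concretely, passing to the polar triangle converts the angle constraints into the side-length triangle inequalities, and these translate exactly into the defining inequalities $x+y+z > \pi$, $x < y+z-\pi$, etc., of $\inte T$ after the substitution $(x,y,z)=(\theta_1,\theta_2,\theta_3)$. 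So I would prove realizability and uniqueness together by constructing, for each admissible triple, the polar triangle from its side lengths (via the ordinary spherical construction, which exists and is unique up to isometry precisely under the triangle inequalities), and then dualizing back; this simultaneously shows $f$ surjects onto $\inte T$ and is injective.

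Finally I would promote the continuous bijection $f \colon G^o \to \inte T$ to a homeomorphism. The clean route is to exhibit a continuous inverse directly: the construction in the previous paragraph produces, from each angle triple in $\inte T$, a canonical nondegenerate triangle in polar position (as defined just before the proposition), and one checks that this assignment varies continuously with the angles, since the side lengths depend continuously (indeed real-analytically) on the angles through the polar law of cosines, and placing vertices from side data in polar position is continuous. Continuity of this explicit inverse, combined with continuity and bijectivity of $f$, yields the homeomorphism. Alternatively, one could invoke invariance of domain, but I prefer the explicit inverse because it will be reusable for the boundary (degenerate) strata when identifying $\tilde G$ with the blown-up tetrahedron.

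I expect the main obstacle to be the bookkeeping in the second step: getting the \emph{orientation} and \emph{labeling} conventions right so that the clockwise-ordered vertices $v_0,v_1,v_2$ and their opposite edges produce exactly the inequalities defining $\inte T$ with the stated vertex coordinates, rather than a permuted or reflected copy. The polar-duality argument is standard, but matching it cleanly to the paper's specific coordinate description of $T$ (with vertices $(\pi,0,0),(0,\pi,0),(0,0,\pi),(\pi,\pi,\pi)$) requires care to confirm that the admissible angle region is genuinely $\inte T$ and not merely a region combinatorially equivalent to it.
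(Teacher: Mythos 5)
Your proposal is correct in substance, but it takes a genuinely different route from the paper's. The paper argues in four separate steps: injectivity is quoted from classical spherical geometry (angles determine the triangle up to isometry); the containment $f(G^o)\subset T^o$ is proved by the \emph{colunar} trick --- for a lune $L$ whose sides contain $l_1$ and $l_2$, the closure of $L-t$ is a triangle with angles $\theta_0,\pi-\theta_1,\pi-\theta_2$, and the angle-sum inequality applied to this triangle yields the face inequalities; surjectivity is a sweeping construction, rotating a segment of fixed angle $\theta_1$ across a lune of angle $\theta_0$ and tracking the third angle by an intermediate-value argument; and continuity of the inverse is asserted from that construction. You instead reduce everything to the side-side-side existence and uniqueness theorem via polar duality, which delivers existence, uniqueness, and the identification of the image in one stroke, and your inverse is explicitly (real-analytically) given by the polar law of cosines --- cleaner than the paper's sweep, where the inverse's continuity is only sketched. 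What the paper's route buys in exchange is that its lune constructions reappear verbatim in the later analysis of the degenerate strata, whereas polar duality degenerates precisely there; so your closing remark that the explicit inverse will be "reusable for the boundary (degenerate) strata" is optimistic.

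Three bookkeeping slips need repair, of exactly the kind you flagged as the main obstacle. First, the polar triangle of a triangle with angles $\theta_i$ has \emph{sides} $\pi-\theta_i$ (its angles are $\pi-l_i$); you wrote "whose angles are $\pi-\theta_i$", and the triangle inequality you invoke must be applied to those sides. Second, applying the triangle inequality to the sides $\pi-\theta_i$ gives $\theta_i > \theta_{i-1}+\theta_{i+1}-\pi$, not $<$: as written, your inequalities (like the paper's own printed ones, which carry the same sign error both in the definition of $T$ and in the line "$\theta_0 < \theta_1+\theta_2-\pi$" of its proof) cut out the empty set, since summing the three of them forces $\theta_0+\theta_1+\theta_2>3\pi$, impossible for angles in $(0,\pi)$; the tetrahedron with the stated vertices $(\pi,0,0),(0,\pi,0),(0,0,\pi),(\pi,\pi,\pi)$ is the one cut out by the reversed inequalities. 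Third, spherical existence from side data requires the triangle inequalities \emph{and} the perimeter bound $<2\pi$, which you omitted; this is harmless here because under duality the perimeter condition $\sum_i(\pi-\theta_i)<2\pi$ is precisely the angle-sum condition $\sum_i\theta_i>\pi$ that you listed separately, but the translation should be stated accurately: the four inequalities of $T^o$ correspond exactly to the three triangle inequalities \emph{plus} the perimeter bound for the polar sides, not to the triangle inequalities alone.
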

\begin{proof} 
This map is one-to-one since angles determine a triangle up to isometry by classical spherical geometry.
Furthermore, $f$ is continuous since the metric $D$ contains the angle differences. 

Also, the image of $f$ is in $T^o$:
Let us choose an element in $G^o$ and realize it by a nondengenerate triangle $t$.
We know that the angles $\theta_1, \theta_2$, and $\theta_3$ sum to a number $\geq \pi$
by spherical geometry.

Also, constructing a lune $L$ whose sides contain two sides $l_1$ and $l_2$ of the triangle $t$, we see that the closure of 
$L-t$ is another nondegenerate triangle $t'$ with angles $\theta_0, \pi-\theta_1, \pi-\theta_2$. 
Thus $\theta_0+ \pi-\theta_1+ \pi-\theta_2 > \pi$ or $\theta_0 < \theta_1 + \theta_2 -\pi$. 
By considering the indices to be in $\bZ_3$ and the symmetry of indices, we see that the image is in $T^o$. 

Conversely, given an element of $T^o$, we first construct a lune $L$ with angle $\theta_0$ at a point and let the point be the vertex $v_0$.
Then we choose a point $v'_1$ on a right edge $e_1$ of $L$ and take a maximal segment $s_1$ in $L$ from $v'_1$ of angle $\theta_1$. 
Let $v'_2$ to be the endpoint of $s_1$ in $e_2$ and let $\theta'_2$ be the angle at $v'_2$ 
in the triangle with vertices $v_0, v'_1, v'_2$. 

We first assume $\theta_0 \geq \theta_1$ by symmetry. 
We temporarily assume that $\theta_0+\theta_1 < \pi$
Moving $v_1$ on $e_1^o$, we can realize the variable $\theta'_2$ can realize 
any value in the open interval $(\pi-\theta_0-\theta_1, \pi-\theta_0+\theta_1)$. 
We now suppose that $\theta_0 + \theta_1 \geq \pi$. 
Moving $v_1$ on $e_1^o$, we can realize the variable $\theta'_2$ can realize 
any value in $(-\pi+\theta_0+\theta_1, \pi-\theta_0+\theta_1)$.
We conclude that over the region $[0, \pi] \times [0, \pi]$ with $\theta_0 \geq \theta_1$ 
$\theta'_2$ can achieve any angle in $T^o$.

By symmetry between $0$ and $1$, we obtain that any point of $T^o$ is realizable as a triangle. 
Also, our argument constructs a continuous map $T^o \ra G^o$ by construction of a triangle from angles. 
This is a homeomorphism as can be verified by the above construction. 
\end{proof}

%%% Feb 11 4:57

As discussed above, each triangle or degenerate triangle has an angle or length associated with 
each vertex and edge. We denote by $v(0),v(1), v(2)$ the angles 
at the vertices $v_0, v_1, v_2$ respectively and $l(0), l(1), l(2)$ the length of 
the edges $l_0, l_1, l_2$ respectively. 
	
\begin{prop}\label{prop:conv}
A sequence of elements of $\tilde G$ converges to
an element of $\tilde G$ if and only if the corresponding sequences of angles and
lengths converge to those of the limit. Hence, we can imbed $\tilde G$ as
the compact closure of the image of $G^o$ in $\bR^6$ by the function
$\iota: \tilde G \ra \bR^6$ given by sending an isometry class of 
a generalized triangle to the associated angles and lengths
$(v(0),v(1),v(2), l(0),l(1), l(2))$.
\end{prop}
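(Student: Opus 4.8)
\section*{Proof proposal}

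The plan is to recognize $\iota$ as a continuous injection of the compact space $\tilde G$ into the Hausdorff space $\bR^6$, so that it is automatically a homeomorphism onto its image; both assertions of the proposition then follow formally. First I would note that $\iota$ is well-defined on $\tilde G$ because the angle at each vertex and the length of each edge are invariant under the $\SOThr$-action, that $\tilde G$ is compact (as established before the statement), and that $G^o$ is dense in $\tilde G$: indeed nondegenerate triangles are dense in $\hat G$ by Proposition~\ref{prop:dense0}, and the quotient map $\hat G \ra \tilde G$ is continuous and surjective, so it carries this dense set to the dense set $G^o$. With these in hand the whole argument reduces to two points, the continuity of $\iota$ and the injectivity of $\iota$.

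For continuity, the three angle coordinates are $1$-Lipschitz in the metric $D$, since by definition $D$ dominates the absolute values of the differences of the corresponding angles. For the three length coordinates I would use that $D$ also dominates the Hausdorff distance between corresponding edges, that each generalized edge is a geodesic arc of length $\leq \pi$ contained in a great circle, and the standard fact that Hausdorff convergence of such arcs forces convergence of their endpoints and hence of their lengths. The only points requiring care are arcs of length exactly $\pi$ (great segments) and degenerate edges of length $0$, where I would verify directly that the length coordinate still varies continuously and that the direction-vector data used to assign the limiting angle is consistent with the angle-assignment rules. This yields continuity of $\iota$, which is precisely the forward implication of the stated equivalence.

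The main obstacle is injectivity: I must show that the six numbers $(v(0),v(1),v(2),l(0),l(1),l(2))$ form a complete isometry invariant of a generalized triangle. On the nondegenerate stratum this is immediate from Proposition~\ref{prop:Go}, since the angles already determine the isometry class and therefore the lengths. On the degenerate strata I would argue case by case: a pointed-hemisphere has all angles $\pi$ and is determined up to isometry by the three arc-lengths between consecutive vertices; a pointed-lune is determined by the lune angle, recorded among the angle coordinates, together with the positions of the three points, equivalently the edge-lengths; a pointed-segment is determined by its total length and the position of the middle vertex, hence by the lengths; and a pointed-point carries all lengths $0$ and an angle triple summing to $\pi$, which is recorded verbatim. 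The key observation making this work is that the angle-assignment freedom permitted for degenerate triangles is exactly the data stored in the angle coordinates, so two elements of $\hat G$ lying over the same geometric figure but with different assigned angles already receive different $6$-tuples; and the distinct strata occupy disjoint loci of $\bR^6$, detected by which coordinates equal $\pi$ or $0$ and by the linear relations the coordinates satisfy, which rules out collisions across strata. Assembling these cases gives injectivity.

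Finally I would conclude in the standard way: a continuous injection from the compact space $\tilde G$ into the Hausdorff space $\bR^6$ is a homeomorphism onto its image, so a sequence converges in $\tilde G$ if and only if its $\iota$-image converges in $\bR^6$, that is, if and only if the corresponding angles and lengths converge, which is the claimed equivalence. Since $\tilde G$ is compact, $\iota(\tilde G)$ is compact and hence closed in $\bR^6$; and since $G^o$ is dense in $\tilde G$ and $\iota$ is continuous, $\iota(\tilde G) = \overline{\iota(G^o)}$. Thus $\iota$ embeds $\tilde G$ as the compact closure of the image of $G^o$, as asserted. I expect the degenerate-stratum injectivity check to be the part demanding the most care, while continuity and the compactness-to-homeomorphism step are essentially routine.
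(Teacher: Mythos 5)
Your proof is correct, but it reverses the logical order of the paper's argument and replaces its main geometric step. The paper proves the convergence criterion first and directly: for the ``if'' direction it normalizes the generalized triangles into polar position and argues by spherical geometry, with a case division according to whether the lengths denoted $l(2)$ and $l(1)$ stay bounded away from zero or degenerate, that the configurations converge in $\hat G$; the ``only if'' direction comes from compactness of $\hat G$ and of $\SOThr$ via a subsequence argument. The embedding statement is then read off at the end, with injectivity dismissed in a single sentence. You instead build the embedding first --- continuity directly from the fact that the metric $D$ dominates angle differences and Hausdorff distances of corresponding edges, injectivity by the stratum-by-stratum verification that the $6$-tuple is a complete isometry invariant --- and recover the convergence criterion as a formal corollary of the compact-to-Hausdorff principle. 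What your route buys: the polar-position limit analysis becomes logically unnecessary, and injectivity, which is the genuine content on the degenerate strata (where the assigned angles are extra data beyond the underlying point set, so that the angle-assignment freedom must be seen to match the angle coordinates exactly) and which the paper's proof only asserts, is actually proved. What the paper's route buys: its explicit normalization shows concretely how degenerating sequences behave, and that picture is reused later (compare Remark \ref{rem:rays} and the proof of Theorem \ref{thm:T}), whereas your abstract argument yields no such description. Both proofs ultimately rest on the same two pillars --- compactness of $\tilde G$, coming from Proposition \ref{prop:dense0}, and the density of $G^o$ --- so the difference is one of arrangement and of where the care is spent rather than of underlying substance.
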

\begin{proof}
If a sequence of 6-tuples of angles and lengths converges and
the corresponding sequence of lengths denoted by $l(2)$ is bounded below by
a small positive number, then
the corresponding elements of $\hat G$ can be put in $\SI^2$
in polar positions so that the sequence of 6-tuples converges geometrically
to an element of $\hat G$ by spherical geometry.

If the sequence of lengths denoted $l(2)$ is going to zero but
the sequence of lengths denoted $l(1)$ is not, then we change
the polar position to ones where the segments denoted by $l(1)$ are put on
a great circle defined by $x=0$ and putting the vertex denoted by $v_0$ on the north pole.
The arguments are the same. If the sequences of lengths $l(1)$ and $l(2)$ both go to 
zero, then the sequence of triangles converges to a pointed point, and 
the sequences of angles and lengths all converge.  Hence, we have a convergence 
in $\tilde G$.

Conversely, if a sequence of elements of $\tilde G$ converges
to say $e$, then we can again put the elements to polar forms.
Since $\hat G$ is compact, the sequence has a subsequence
converging in $\hat G$. Since $\SOThr$ is compact, the limit is in the equivalence class of
the limit $e$. The corresponding sequences of lengths and angles converge also clearly.

The map $\iota$ is continuous because of the above paragraph. 
$\iota$ is injective since two degenerate or nondegenerate triangles are clearly 
not isometric if they have different angles or different edge lengths.  
 Since $\tilde G$ is compact and $\bR^6$ is Hausdorff, the map $\iota$ is an imbedding.

\end{proof}

The main aim of the section is to prove the following proposition. 

\begin{thm}\label{thm:T}
    The geometric-limit configuration space $\tilde G$
     is homeomorphic to a blown-up solid tetrahedron.
    \end{thm}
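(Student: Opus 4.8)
The plan is to build an explicit homeomorphism $\Phi\colon \tilde G \to \tilde T$ between the geometric-limit configuration space and the combinatorial blown-up tetrahedron, matching the stratification of $\tilde G$ by triangle type to the face structure of $\tilde T$. By Proposition~\ref{prop:conv} we already have $\tilde G$ embedded in $\bR^6$ via $\iota$, with coordinates $(v(0),v(1),v(2),l(0),l(1),l(2))$, and by Proposition~\ref{prop:Go} the open stratum $G^o$ of nondegenerate triangles is homeomorphic to $T^o$ through the angle map $f$. The strategy is therefore to extend $f$ to the boundary strata of $\tilde G$ in a continuous and injective way, and to verify that the image is precisely the boundary $\partial\tilde T$ with its prescribed faces.

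First I would set up the dictionary between degenerate-triangle types and the labeled faces of $\tilde T$. The four ``vertex-truncation'' triangles $A,A',\dots$ (the $z=\eps$, $z=\pi-\eps$ planes, etc.) should correspond to the degenerations where one edge length $l(i)$ shrinks to $0$ or grows to $\pi$—i.e.\ the infinitesimal-edge and great-segment configurations—since in the limit of a collapsing vertex the triangle acquires a direction vector but loses an edge, giving the extra triangular facet. The four ``original'' faces $a,b,c,d$ (carrying $x+y+z=\pi$ and the three inequalities $y=x+z-\pi$ etc.) correspond to the degenerate triangles for which one of the Proposition~\ref{prop:Go} angle inequalities $\theta_0<\theta_1+\theta_2-\pi$ or $\theta_1+\theta_2+\theta_3=\pi$ becomes an equality—these are the pointed-lunes and pointed-hemispheres, whose angle sums were computed in the bulleted consequences following the angle-assignment rules. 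The prism faces $I,\dots,IV$ (combinatorially the edge replacements of $T$) should match the pointed-segments, where \emph{two} quantities degenerate simultaneously; the clockwise-ordering convention and the arbitrary direction-vector choices for zero-length edges supply exactly the extra one-dimensional parameter that turns an edge of $T$ into a segment, consistent with the ``each point of an edge replaced by a segment'' description of $\tilde T$.

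The core technical step is to assemble these local identifications into a single continuous map on all of $\tilde G$ and check it is a homeomorphism. Since $\tilde G$ is compact (Proposition~\ref{prop:dense0}, carried through the $\SOThr$-quotient) and $\tilde T$ is Hausdorff, it suffices to produce a continuous bijection: continuity and injectivity of $\iota$ reduce this to showing that the parametrization of each face by generalized triangles is continuous, bijective onto the corresponding facet of $\tilde T$, and that these match up consistently along shared lower-dimensional strata. The consistency along the codimension-two corners—where a vertex facet meets a prism facet, or two original faces meet—is where the clockwise-ordering and direction-vector conventions must be checked to agree from both sides. I expect \textbf{this gluing/consistency check to be the main obstacle}: verifying that as a nondegenerate triangle simultaneously collapses an edge and flattens toward a lune the limiting direction vectors assigned by the angle rules produce the \emph{same} point of $\partial\tilde T$ regardless of the order in which the degenerations are taken. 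This is essentially a continuity argument near the corners of $\tilde G$, and by density (Proposition~\ref{prop:dense0}) it can be reduced to tracking limits of sequences of nondegenerate triangles, but the bookkeeping over the many face-intersection types is the delicate part. Once continuity and bijectivity are established, compactness of $\tilde G$ versus Hausdorffness of $\tilde T$ upgrades the continuous bijection to a homeomorphism, completing the proof.
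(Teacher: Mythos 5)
Your high-level skeleton --- build a stratum-by-stratum continuous bijection $\tilde G \to \tilde T$ and upgrade it using compactness of $\tilde G$ against Hausdorffness of $\tilde T$ --- is sound in principle, and it is genuinely different from the paper's route: the paper never constructs a global map to $\tilde T$, but instead shows that $\tilde G$ is a compact $3$-manifold with boundary by exhibiting explicit half-ball charts built from triples of the functions $v(i), l(i)$ near the interior of every region, along every arc where two regions meet, and at each of the twelve corner points; it identifies $\partial \tilde G$ as a $2$-sphere carrying the face structure of $\partial\tilde T$, and then concludes $\tilde G \cong B^3$ by a disk-cutting argument at the corners. However, your proposal has two genuine gaps, and they sit exactly where the content of the theorem lies.

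First, your dictionary between degeneration types and facets of $\tilde T$ is scrambled. The facets $A, A', B, B', C, C'$ (the planes $z=\pi-\eps$, $z=\eps$, etc.) are the six \emph{edge} truncations --- rectangles, not ``four vertex-truncation triangles'' --- and they are parameterized by lunes with a marked vertex of angle $\pi$ on an edge (for $A,B,C$) and by pointed segments with two coincident vertices (for $A',B',C'$), coordinatized by an angle \emph{and a length} $(v(i),l(i))$. The four \emph{vertex} truncations are $I,II,III,IV$, with $I$ consisting of the pointed hemispheres and $II,III,IV$ of the pointed segments, while the original faces $a,b,c,d$ carry only pointed points and pointed lunes. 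You place the pointed hemispheres on $a,\dots,d$ and call $I,\dots,IV$ ``edge replacements,'' so the map you would write down fails to match strata to facets even set-theoretically. Second, the proposed mechanism --- ``extend $f$ to the boundary strata of $\tilde G$ in a continuous and injective way'' --- cannot work: $f$ records only angles, and every tie in an edge region (for instance the family in $A$ with fixed $v(0)$ and varying $l(0)$) has constant angle triple $(v(0),v(0),\pi)$, so any continuous extension of $f$ collapses each tie to a single point of an edge of $T$. This collapsing is the very reason the blow-up is needed. Repairing this forces you to define the map using the length coordinates as well, i.e.\ the full embedding $\iota$ of Proposition \ref{prop:conv}; and once you do, the step you explicitly defer --- checking that the angle and length coordinates give consistent, locally injective parameterizations where edge regions, vertex regions and faces meet --- is precisely the work the paper carries out with charts such as $(l(1),v(0),v(2))$ along $A\cap b$ and $(l(0),l(1),l(2))$ along $A\cap I$. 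Without that verification the proposal is a plan, not a proof.
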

    
    We prove this in Section \ref{subsec:proofT}.
But first, we need to describe the $2$-dimensional domains classifying the above degenerate triangles: 
  
  \subsection{Parameterizing the degenerate triangles}

%We describe each point of the boundary of the blown-up
%tetrahedron $\tilde G$ as a pointed lune, a pointed hemisphere, a pointed segment,
%or a point. We will go each region by region. 
We will classify the degenerate triangles according to their types and show that the
collection form nice topology of triangles and rectangles, i.e., $2$-cells. 
(See Figures \ref{fig:figabcd},  \ref{fig:figABC}, and \ref{fig:regI}  as reference.)
Let us denote by $l(i)$ the coordinate function measuring length of $l_i$ for $i=0,1,2$,
and $v(j)$ the coordinate function measuring the angle of $v_i$ for $i=0,1,2$.

\begin{figure}

\centerline{\includegraphics[height=6cm]{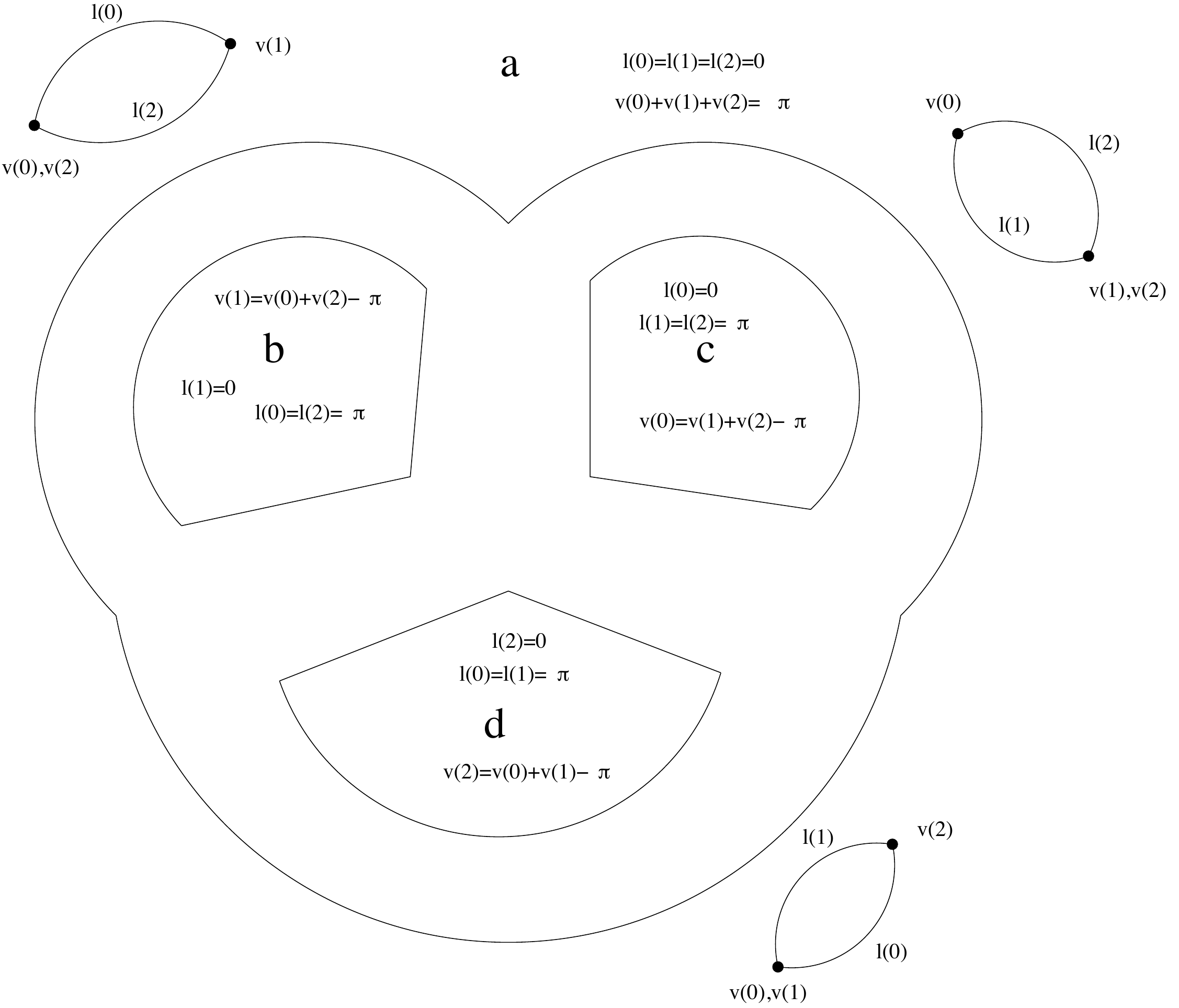}}
\caption{Regions $a,b,c$, and $d$ }
\label{fig:figabcd}
\end{figure}

First, we discuss regions $a, b, c$, and $d.$ (See Figure \ref{fig:figabcd}.)
These are formed by degenerate triangles where two vertex are the same and the other 
is either equal to the first ones or antipodal to them. These are pointed points or pointed lunes.
There are four classes given by $l(0)=l(1)=l(2)=0$ (the face $a$), 
$l(0)=l(2)=\pi$ and $l(1)=0$ (the face $b$), $l(1)=l(2)=\pi$ and $l(0)=0$ (the face $c$), 
and $l(0)=l(1)=\pi$ and $l(2)=\pi$ (the face $d$). 

%These are copies of the faces of the given tetrahedron.
%On these regions original functions $v(0),v(1),v(2)$ have the same
%values before blowing-up.

Each point of region $a$
corresponds to a pointed-point with three vertices which are
all equal to one another. Here lengths $l(0), l(1)$, and $l(2)$ are all zero
and vertices $v_0, v_1$, and $ v_2$ are given angles $v(0),v(1)$, and $v(2)$
summing up to $\pi$. Two of the angles parametrize the space as we can
see from the definition of our metric $D$ above. 
They satisfy $v(0)+v(1)+v(2)=\pi$ and $0 \leq v(i) \leq \pi$ for $i=0,1,2$.

Each point of region $b$ corresponds to a lune with
vertices $v_1$ and $v_0=v_2$. The vertices are
assigned angles $v(0),v(1),v(2)$ satisfying
$v(1)=v(0)+v(2) -\pi$ with $0 \leq v(i) \leq \pi$ for $i=0,1,2$.

Regions $c$ and $d$ are described in the similar way in Figure \ref{fig:figabcd}.

The regions $a,b,c$, and $d$ are still said to be {\em faces}.

The angles parameterize each of $a, b, c, d$ and produce a one-to-one correspondences. 
Since the parametrizing rectangles are compact and $\tilde G$ is Hausdorff, we conclude that 
they are closed imbedded $2$-dimensional disks in $\tilde G$ and are mutually disjoint. 

\begin{figure}

\centerline{\includegraphics[height=10cm]{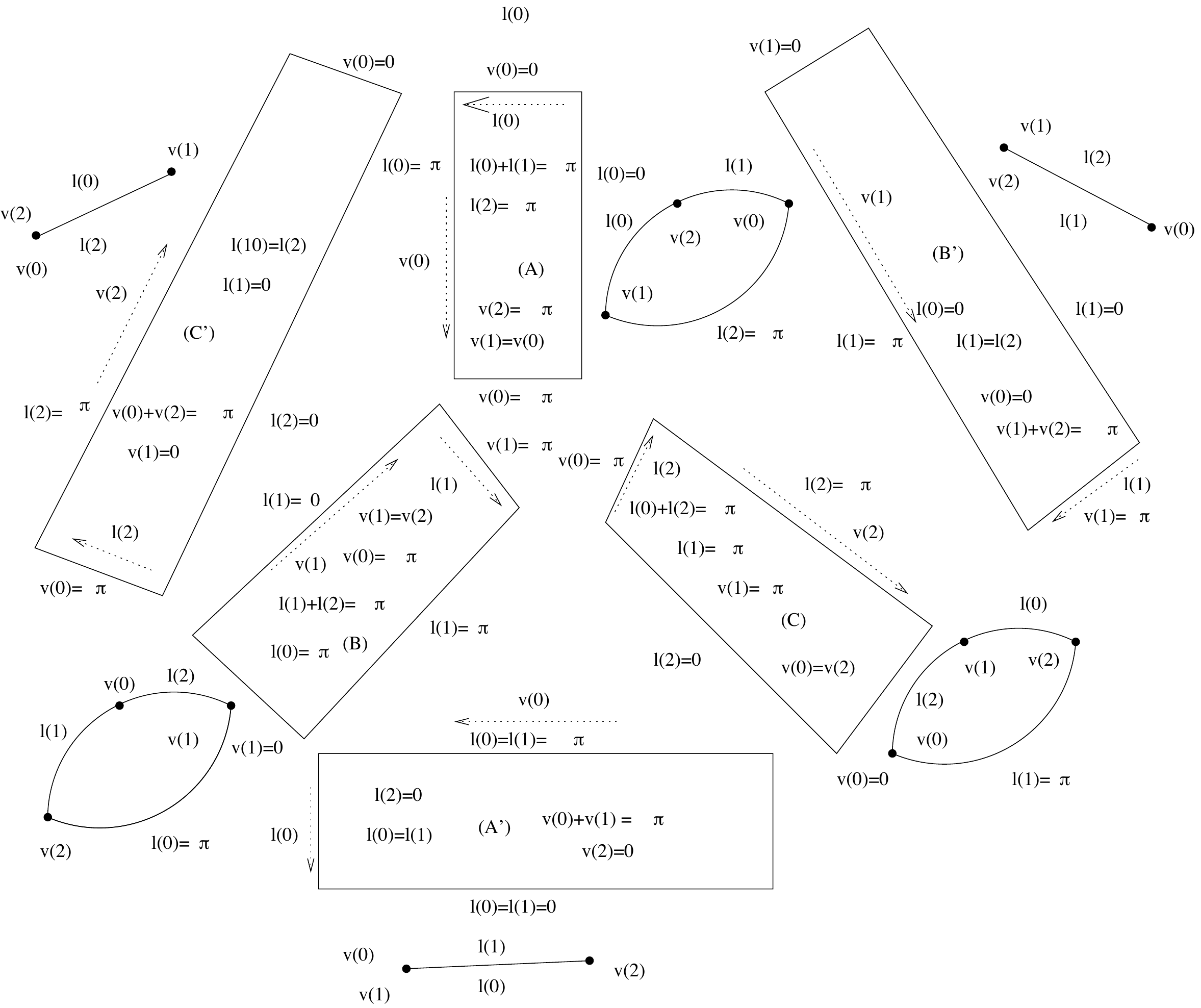}}
\caption{Regions $A, A', B, B', C$, and $C'$ }
\label{fig:figABC}
\end{figure}

%%% August 9th 2010 10:18 pm
For regions $A, B,$ and $C$, we classify lunes with one vertex of angle $\pi$
and the lune angles in $(0, \pi)$; a segment with one vertex of angle $\pi$ and two antipodal vertices or;
a hemisphere with one angle $\pi$ and the other two vertices are antipodal. 
For regions $A', B'$ and $C'$, we classify a segment with one vertex of angle $0$ and 
two other vertices identical including a pointed point with one vertex angle $0$, or 
a segment of length $\pi$, with one end vertex  angle $0$ and the other two vertices equal:

Each point of ${A}$ corresponds to a lune with vertices $v_0$ and $v_1$ and
the vertex $v_2$ is on a segment of length $\pi$ between them.
Thus $l(2)=\pi$ and $v(2)=\pi$ here. 
For region ${A}$, there are two coordinate functions $l(0)$ increasing from $0$ to $\pi$
as indicated by the top dotted arrow and $v(0)$ increasing from $0$ to $\pi$
indicated by the down dotted arrow in the left of the square ${A}$.
The values of $l(0)$ and $v(0)$ determine the configuration
uniquely. The coordinate system is given by $(v(0),l(0))$.

A {\em tie} is a segment in the edge region defined by the lune angles set constant.
For example, in $A$, it is given by the equation $v(0)=\theta$ for $\theta \in [0, \pi]$
and $l(0)$ can take any values in $[0, \pi]$. An {\em end tie} is a tie in 
the end of the edge region. A {\em mid-tie} is a tie given by setting 
the angle parameter exactly in the middle. For $A$, it is given by $v(0)=\pi/2$.

For region ${A}'$, each point on the square corresponds
to a segment of length $l(0)$ with vertices $v_0 = v_1$ and $v_2$.
We have $l(2)=0$ and the two coordinate functions are $l(0)$ and $v(0)$
represented by top dotted arrow pointing right and
the left dotted arrow pointing up. They take values in $[0,\pi]$ as above. 
$v_0$ is given an angle $v(0)$ and $v_1$  given the angle $v(1)$ where
$v(0)+v(1) = \pi$ and $v_2$ is given the angle $0$. Here $l_2$ is
a degenerate segment, i.e., a point. 
%The angles at $v_0$ and $v_1$ serve
%only the decorative purposes.
The coordinate system is given by $(v(0),l(0))$.

There is actually a one-to-one correspondence between ${A}$ and ${A}'$
by taking the lune and moving $v_0$ to $v_1$ and
taking the segment between $v_0$ and $v_2$.
This changes the associated angle at $v_0$ to be $\pi$ minus the original angle
and similarly for the angle at $v_2$.

For region $B$, each point of the rectangle corresponds to a lune with
vertices $v_1$ and $v_2$ with $v_0$ on the segment between
$v_1$ and $v_2$ as a generalized triangle. We have  $v(0)=\pi$. 
We have two parameterizing functions $l(1)$ and $v(1)$.
%The segments $l_0, l_1$ and $l_2$ are given
%lengths $l(0),l(1),$ and $l(2)$ respectively.

For region $B'$, each point of the rectangle, we associate
a segment with vertices $v_0$ and $v_1=v_2$ of length $l(1)$. 
%We have two functions $v(1)$ and $l(1)$.
%and we associate angles $v(1)$ to $v_1$ and $v(2)$ to $v_2$ and
%$0$ to $v_0$. 
For regions $B$ and $B'$, the coordinate systems are given by
$(v(1), l(1))$.

There is a one-to-one correspondence between $B$ and $B'$ given
by sending $v_1$ to $v_2$ in the lune for $B$ and changing the
angles at $v_1$ and $v_0$ to $\pi$ minus the original angles
respectively.

The analogous descriptions hold for regions $C$ and $C'$.
The coordinates are given by $(v(2),l(2))$.
Equivalently, we can use $(v(0),l(0))$.

These domains from the edges of $G$ are said to be {\em edge regions}.

We note that the edge regions are disjoint except at the vertices. 
For each of the three cycles in $A, B, C, A', B', C'$, we collapse six vertices to three vertices 
as indicated in Figure \ref{fig:trtet}. This can be easily verified using 
the metric $D$ and looking at the configurations. 

%%% Dec 24th 9:01... I need to work on completing this arguments.... 

For region $I$, pointed-hemispheres are classified here. Here $v(0)=\pi, v(1)=\pi, v(2)=\pi$.
We identify each point $(l(0), l(1), l(2))$ with a closed hemisphere with
vertices $v(0), v(1), v(2)$ where $l(i)$ is the distance between
$v(i+1)$ and $v(i+2)$ for each $i=0,1,2$.
The functions $l(0), l(1)$, and $l(2)$ are extended as
in the Figure \ref{fig:regI} satisfying relations $l(0)+l(1)+l(2)=2\pi$
where $0 \leq l(i) \leq \pi$ for each $i=0,1,2$.
%Each point correspond to a pointed hemispheres with boundary 
%a union of segments of the given lengths. 
Actually, the equation gives a plane in $(l(0),l(1),l(2))$-space intersected
with the closed positive octant. 
%Therefore, each pointed hemisphere is coordinatized here.

\begin{figure}

\centerline{\includegraphics[height=9cm]{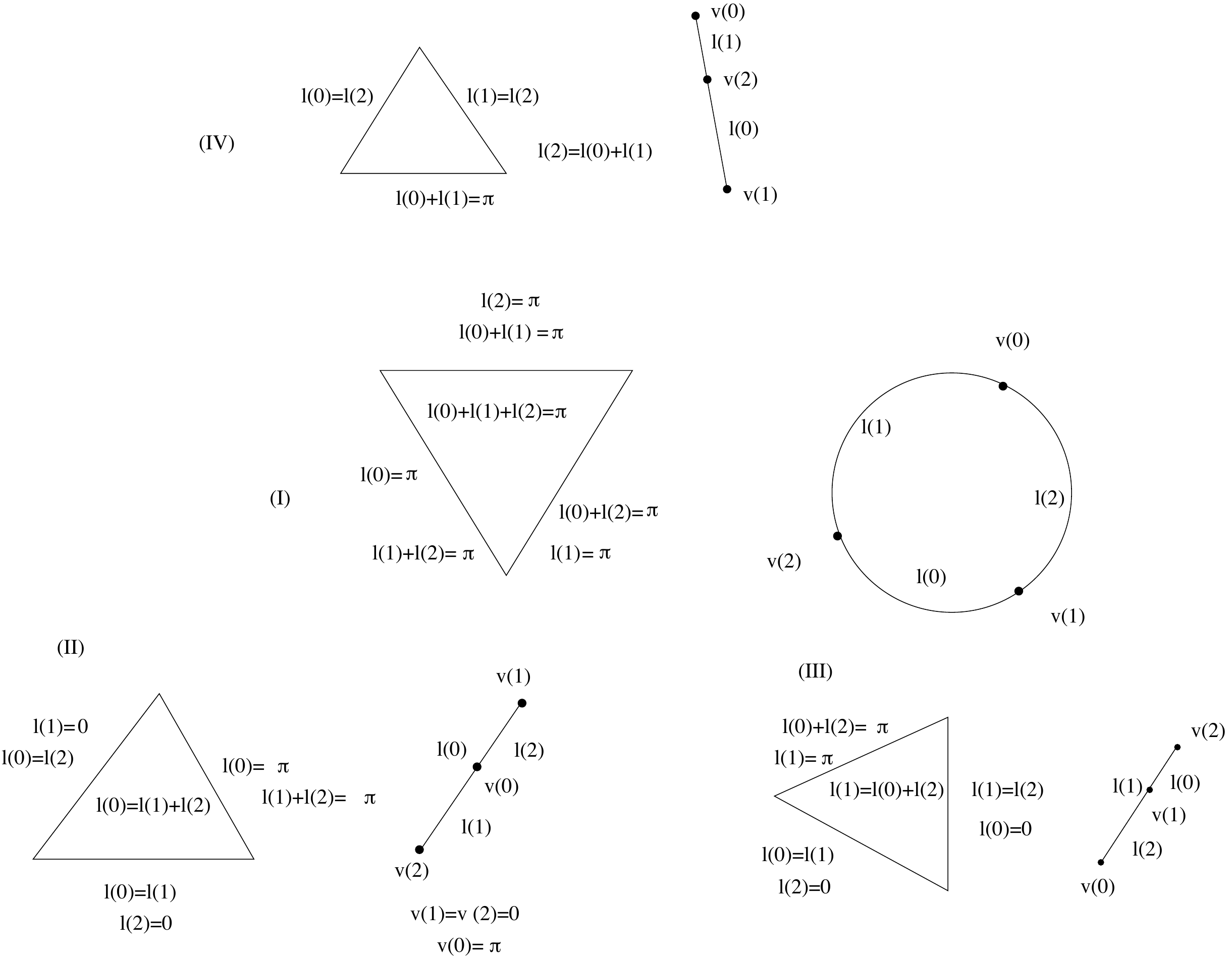}}
\caption{Regions $I$, $II$, $III$, and $IV$ with configurations in the right sides}
\label{fig:regI}
\end{figure}

For region $II$, we have $v(0)=\pi, v(1)=v(2)=0$. 
$l(0) = l(1)+l(2)$ and the bounds as above for each $l(i)$.
Each point $(l(0), l(1), l(2))$ is identified with
a segment of length $l(0)$ with vertices $v(1), v(2)$
and $v(0)$ on it. Clearly $l(1)$ is the distance between
$v(0)$ and $v(2)$ and $l(2)$ one between $v(0)$ and $v(1)$.

For region $III$, $v(1)=\pi, v(0)=v(2)=0$ and $l(1)=l(0)+l(2)$.
Each point $(l(0), l(1), l(2))$ is identified with
a segment of length $l(1)$ with vertices $v(0), v(2)$
and $v_1$ on it.

For region $IV$, $l(2) = l(0) + l(1)$. Each point
$(l(0), l(1), l(2))$ is identified with a segment of length $l(2)$
with vertices $v_0, v_1$ and $v_2$ on it.

Notice for regions $I$, $II$, $III$, and $IV$,
the points are coordinatized by $(l(0), l(1))$
for $0\leq l(0),l(1) \leq \pi$. These are said to be 
{\em vertex regions}.
%Note that the regions $I, II, III$, and $IV$ are homeomorphic to triangles.
The regions $I, II, III,$ and $IV$ are mutually disjoint disks as we can see from $D$. 

Finally, the union of the regions, $a, b, c, d, A, B, C, A', B', C', I, II, III,$ and $IV$, 
is a $2$-sphere being identified as in Figure \ref{fig:trtet}:
This follows since each region is homeomorphic to disks and 
the identifications are along arcs as in Figure \ref{fig:trtet}.
The union is homeomorphic to the identification space of the regions. 
This fact can be seen by looking at $D$ and the configurations themselves
as some of the pointed degenerate triangles are identical.

\subsection{The proof of Theorem \ref{thm:T}}\label{subsec:proofT}

%% Dec 25, 10:44

  \begin{proof}
  Since $\tilde G$ is compact and $\bR^6$ is Hausdorff and $\iota$ is continuous, 
    $\tilde G$ is imbedded as the closure of $\iota(G^o)$. We only need to show that 
    $\tilde G$ has the structure of a blown-up solid  tetrahedron.
   To do that, we show that $\tilde G$ is a manifold with boundary consisting of 
   degenerate triangles or the union of the above regions. 
   
   First, the union of $a, b, c, d, A, B, C, A', B', C', I, II, III,$ and $IV$ is homeomorphic to $\SI^2$. 
   Each point of the interior of each region has a neighborhood homeomorphic to 
   an open ball intersected with closed upper half-space. 
   For region $I$, we can use coordinate functions $l(0), l(1),$ and $l(2)$ now 
   satisfying $l(0)+l(1)+l(1)\leq 2\pi$, $l(0) \geq 0, l(1) \geq 0, l(2) \geq 0$, $l(i)+l(i+1) \geq l(i+2)$ for a cylic index $i=0,1,2 \mod 3$. 
   Then we see that they parameterize a sufficiently small neighborhood $N$ of 
   a point of $I$. More precisely, $(l(0), l(1), l(2))$ form an injective continuous map $\eta$ from 
   a neighborhood $N$ to an open subset $N'$ of $\bR^3$ defined by $l(0)+ l(1) + l(2) \leq 2\pi$, $l(0)\geq 0, l(1) \geq 0, l(2) \geq 0$,  
   $l(i)+l(i+1) \leq l(i+2)$ for a cyclic  index $i=0,1,2 \mod 3$. 
   (Here, $\bR^3$ has coordinates $(l(0), l(1), l(2))$.)
   Conversely, from a sufficiently small open set in $N'$, 
   we can construct elements of $\tilde G$ in $N$, using given the length informations and polar positions. 
   This forms a continuous inverse map to $\eta$ as we can see from the polar position conditions. 
     
   Hence a neighborhood of $I^o$ is a manifold. 
   The similar consideration holds for each of $II^o, III^o,$ or $IV^o$. 
   
   For the region $A^o$, there are coordinate functions $l(0)$ and $v(0)$. 
   We use coordinate functions $l(0), v(1), v(2)$. Then these functions parameterize
   a neighborhood of an interior point of $A^o$. 
   Let $\bR^3$ have coordinates $(l(0), v(1), v(2))$.
   More precisely, 
   $(l(0), v(1), v(2))$ defines a homeomorphism $\eta$ from a neighborhood $N$ of 
   a point of $A^o$ to an open subset in the subset of $\bR^3$ defined by $ 0< v(2)\leq \pi, 0< l(0)< \pi, 0< v(1) < \pi$
   since $l(0), v(1), v(2)$ determine a generalized triangle uniquely.
   This shows that a neighborhood of $A^o$ is a manifold. 
   (Here, $\bR^3$ has coordinates denoted by $l(0), v(1), v(2)$. The homeomorphism 
   property of $\eta$ again follows as above by geometric constructions to obtain $\eta^{-1}$.)

   Similar considerations shows that some respective neighborhoods of 
   $B^o, C^o, A^{\prime o}, B^{\prime o}, C^{\prime o}$ are manifolds.
   %%%%% 
   
   For the region $a^o$, we use coordinate functions $(v(0), v(1), v(2))$ 
   to find a function $\eta$ from a neighborhood of a point of $a^o$ 
   to a small open subset of  the subspace of $\bR^3$ defined by $v(0)+v(1)+v(2) \geq \pi$, $0< v(i) < \pi$ for $i=0,1,2$. 
   This implies as above 
   that $a^o$ has a manifold neighborhood. 
   (Here $\bR^3$ has coordinates denoted by $v(0), v(1), v(2)$. The homeomorphism property 
   of $\eta$ is similar to above.)
   
   Similar consideration shows that some respective neighborhoods of 
   $b^o, c^o, d^o$ are manifolds. 
   
   We now discuss the interiors of the intersecting edges and show that 
   they have manifold neighborhoods. First, consider the edge $I_{A, b}:=A \cap b$.
   Let $p \in I_{A,b}^o$. Here $l(0)=l(2)=\pi$ and $l(1)=0$ and $v(1)=v(0)+v(2) -\pi$ from $b$ 
   and $l(0)+l(1)=\pi$, $l(2)=\pi$, $v(0)=v(1)$, $v(2)=\pi$ from $A$. 
   Thus, we have $l(1)=0, l(0)=l(2)=\pi$ and $v(0)=v(1), v(2)=\pi$ on the edge. 
   We use $(l(1), v(0), v(2))$ as coordinate functions in a sufficiently small neighborhood $N$ of $p$. 
   That is, they define an injective function $\eta$ from $N$ to a small neighborhood of a corner point of
   a subset $\bR^3$ defined by $l(1) \geq 0$ and $v(2) \leq \pi$.   
   Then $b\cap N$ is characterized by $l(1)=0$, and $A\cap N$ is characterized by $v(2)=\pi$. 
     (Here $\bR^3$ has coordinates denoted by $l(1), v(0), v(2)$. The homeomorphism
     property of $\eta$ is similar to above.)
This implies that $I_{A, b}^o$ has a manifold neighborhood.  
   
   Next, consider the edge $I_{A, I} := I \cap A$. Here, we use coordinate functions $l(0), l(1), l(2)$
   on a small neighborhood $N$ of a point of $I_{A, I}^o$. 
   $A$ is determined by $l(2)=\pi$ and $I$ is determined by $l(0)+l(1)+l(2)=\pi$. 
   There is a homeomorphism $\eta$ from $N$ to a small open subset of 
   $\bR^3$ defined by $l(2) \leq \pi, l(0)+l(1)+l(2) \leq 2\pi$. 
   
   For all other edges, similar considerations show that there are some respective 
   manifold neighborhoods. 
   
   Next, we consider $12$ vertices, i.e., points where three regions meet. 
   Remove these points from $\tilde G$.
   For each point, we can find a disk $D$ with $\partial D$ in $\tilde G - G^o$ 
   to obtain a contactible manifold $N$ homeomorphic to compact $3$-ball $B^3$ with one point removed.  
   Hence, with the point added back, the manifold $N$ becomes a compact manifold homeomorphic to $B^3$. 
   The disks can be chosen to be mutually disjoint and each containing one of $12$ vertices in one side.
   Doing these for each point, we have shown that $\tilde G$ is homeomorphic to $B^3$ and has the boundary structure of 
   a blown-up tetrahedron.

			\end{proof}
			
		%% August 5 9:11 pm 2010

\begin{rem} 
From now on, we can identify the blown-up tetrahedron $T$ with $\tilde G$
and the interior of $T$ with the interior $G^o$. 
Thus, when we say $\tilde G$, we will think of a blown-up tetrahedron. 
\end{rem}			
	
\begin{rem} \label{rem:rays}
Let us give some ``intuitive idea'' here. 

Given each smooth arc $\alpha: [0, 1) \ra G^o$ starting from a point of $G^o$ and 
does not have a compact closure in $G^o$ and has well-defined limits of derivatives as $t \ra 1$,
it follows that $\alpha$ ends at a unique point of $\tilde G$. 
This can be seen from the angle and length formulas of spherical geometry
and L'Hospital's rules. Setting $\alpha$ as a half-open segment is most illustrative here. 

This gives us a natural way to view $\tilde G$ as a blown-up tetrahedron. 
Each ray ending at a common point the interior of an edge of $G$ will have different limits in $\tilde G$ according 
to its direction. Hence, we see that a point is replaced by an arc, i.e. a tie. 
Hence, an open edge is replaced by an open rectangle. 

Each ray ending at a vertex of $G$ will have different limits in $\tilde G$ according to 
its direction. Here, we see that a triangle replaces a point. 

The author is not yet able to find natural way to use these facts to prove Theorem \ref{thm:T} but probably 
our process is a natrual blow-up construction in real algebraic scheme theory. 
\end{rem}

%Extensions of I_{{A}},I_{{B}},I_{{C}}

\subsection{An extension of the Klein four-group action}

We now extend the discussions to the boundary
and study identifications in the boundary:
The map $I_{{A}}$ in $\tilde G^o$ can be described as 
first find an element $\mu$ in $\tilde G^o$ and representing it as a triangle with vertices $v_0, v_1, v_2$ 
and taking a triangle with vertices $v'_0=-v_0$ and $v'_1=-v_1$ and $v'_2=v_2$. 
That is, we change our triangle with vertices $v_0, v_1$, and $v_2$ and edges $l_0, l_1,$ and $l_2$ 
to one with vertices $-v_0, -v_1$, and $v_2$ and edges $l'_0$ with endpoints $-v_1, v_2$ 
in the great circle containing $l_0$ and 
$l'_1$ endpoints $-v_0, v_2$ in the one containing $l_1$ and $l'_2=-l_2$ with endpoints 
$-v_0, -v_1$ in the one containing $l_2$. 
(Here, the word ``contain'' is according to Definition \ref{defn:edgecircle}.)
Then take the isometry class of the second triangle as the image $I_A(\mu)$. 
This gives the changes in parameter as follows 
\[(v(0), v(1), v(2), l(0), l(1), l(2)) \ra (\pi-v(0),\pi-v(1), v(2), \pi-l(0),\pi-l(1),l(2)).\]

Similarly, the map $I_{{B}}$ changes the triangle with vertices $v_0, v_1$, and $v_2$ 
to one with $v_0, -v_1$, and $-v_2$:
\[(v(0), v(1), v(2), l(0), l(1), l(2)) \ra (v(0),\pi-v(1), \pi-v(2), l(0),\pi-l(1), \pi-l(2)).\]

Similarly, the map $I_{{C}}$ changes the triangle 
with vertices $v_0, v_1$, and $v_2$ to one with 
$-v_0, v_1,$ and $-v_2$:
\[(v(0), v(1), v(2), l(0), l(1), l(2)) \ra (\pi-v(0),v(1), \pi-v(2), \pi-l(0),l(1),\pi-l(2)).\]

For our geometric degenerate triangles, we do the same. 
For regions $a, b, c$, and $d$, the transformations are merely the linear extensions 
or equivalently extensions with respect to the metrics. 

We first describe $I_{{A}}$: 
\begin{itemize} 
\item[${A}, {A}'$] Regions ${A}$, ${A}'$ are preserved under $I_{{A}}$,
and $(l(0), v(0))$ is mapped to \break $(\pi-l(0), \pi-v(0))$.
\item[${B},{B}'$] The region $B$ is mapped to $B'$ and vice versa. $(l(1), v(1))$ is mapped 
to $(\pi-l(1), \pi-v(1))$.
\item[$C,C'$] The region $C$ is mapped to $C'$ and vice versa. $(l(2), v(2))$ is 
mapped to $(l(2), v(2))$.
\item[$I,IV$] The region $I$ is mapped to $IV$ and vice versa. $(l(0),l(1),l(2))$ 
is mapped to $(\pi-l(0),\pi-l(1),l(2))$. 
\item[$II,III$] The region $II$ is mapped to $III$ and vice versa. 
$(l(0),l(1),l(2))$ is mapped to $(\pi-l(0),\pi-l(1), l(2))$. 
\end{itemize}

The transformations $I_{{B}}$ and $I_{{C}}$ are analogous. 
In fact, recall that one can identify $\tilde G$ by the tetrahedron that closure of 
$\tilde G^o$ truncated by planes parallel to sides at vertices and edges: 
more precisely, at a vertex $v$, we take a plane parallel to the 
opposite side and passing through the tetrahedron of distance $\eps$ to $v$ 
for a sufficiently small $\eps> 0$. 
We cut off the neighborhood of $v$ by the plane. 
We do the same steps at each vertex for the same $\eps$. 
At an edge $e$, we take a plane  parallel to $e$ and the edge opposite $e$
which is at the distance $\eps/3$ from the line containing $e$.
We truncated every edge. This gives us an imbedding of 
$\tilde G$ with incorrect coordinates. 

Each face of $\tilde G$ has a metric induced by the parameterization maps into $\bR^2$ or $\bR^3$ with 
Euclidean metrics. In this identification, 
$I_{{A}}$ is realized as an involutive isometry of $G^o$ and each of the faces of $\tilde G$ of order two 
with the set of fixed points the axis trough the center of ${A}$ and ${A}'$. 
$I_{{B}}$ is realized as one with the fixed axis though the center of $B$ and $B'$
and $I_{{C}}$ one with the fixed axis though the center of $C$ and $C'$.

% April 11 11:04
%% Standard positions

\subsection{Standard positions for generalized triangles}

For later purposes, we introduce {\em standard positions} for 
generalized triangles. Heuristically speaking, although the edges maybe degenerate, we can 
still read angles with respect to the degenerate edges with infinitesimal directions only. 
We use the idea: 

Let $H$ be the hemisphere containing $[1,0,0],[0,1,0]$ in the boundary 
great circle $\SI^1$ and $H^o$ containing $[0,0,1]$.

We are given a generalized triangle with vertices $v_0, v_1, v_2$, and we find 
the standard position to put this triangle in $\SI^1$: By an isometry, 
the vertex $v_0$ is to be put on $[1,0,0]$ and 
$v_1$ on the $\SI^1$ in the closure $L$ of the component of 
$\partial H -\{[1,0,0],[-1,0,0]\}$ containing $[0,-1,0]$, and
$v_2$ on $H$. As a consequence, 
the interior of the triangle to be in $H^o$ and $l_2 \subset L$.
(Denote by $-L$ the antipodal segment to $L$, i.e., the set of antipodes to points of $L$.)
(We remark that the triangle is then put in the right orientation.) 

For any nondegenerate triangle, our process determines the triangle 
in a standard position uniquely. 
For generalized triangles in the regions $I$, $II$, $III$, and $IV$, the standard position 
is determined by the above conditions as well. In fact, in $III, IV$, the triangle is a subset of $L$ 
and in $II$, our triangle is a segment in $\partial H$ with a point of it $[1,0,0]$.

For a generalized triangle in the regions $a,b,c$, and $d$, if the vertices satisfy $v_0=v_1=v_2$, 
then we let these be $[1,0,0]$ all (the case $a$).

Suppose that $v_0=v_2$ and are antipodal to $v_1$  (the case $b$). Then we 
let $v_0=v_2=[1,0,0]$ and $v_1=[-1,0,0]$ and let 
$l_1$ be the degenerate segment equal to $\{v_0=v_2\}$ 
and $l_0$ is  the segment of length $\pi$ determined by angle $\theta_0$ with $L$ at $v_0$ 
and $l_2 = L$.
%and $l_0=\{v_0=v_2\}$.

Suppose $v_1=v_2$ and are antipodal to $v_0$ (the case $c$). Then we let 
$v_0=[1,0,0]$ and $v_1=v_2=[-1,0,0]$. 
We let $l_2 = L$ and $l_1$ be the segment of length $\pi$ determined by the angle $\theta_0$ 
from $L$ at $v_0$ and let $l_0$ be the degenerate segment 
equal to $\{v_1=v_2\}$.

Finally, suppose that $v_0=v_1$ and $v_2$ 
is antipodal to these (the case $d$). Then we let $v_0=v_1=[1,0,0] $ and $v_2=[-1,0,0]$ 
and let the edge $l_1$ be the segment of length $\pi$ determined by the angle $\theta_0$ 
from $L$ at $v_0$, $l_0$ be one determined by the angle 
$\pi-\theta_1$ from $L$ at $v_1$, and $l_2$ is the degenerate segment 
equal to $\{v_0=v_1\}$.

Now we consider the interiors of regions ${A}$,${A}'$,$B$,$B'$,$C$,$C'$: for triangles there we assign
$v_0=[1,0,0]$.
\begin{itemize}
\item[(${A}$)] We put $v_1 = [-1,0,0]$ and $l_2 = L$, and $l_0\cup l_1$ is  the segment of length $\pi$ determined by 
$\theta_0$ with $L$ at $v_0$. 
\item[(${A}'$)] We put $v_1=[1,0,0]$ and $l_2=\{v_0=v_1\}$, and 
$l_1=l_0$ is  the segment of length $\leq \pi$ determined by angle $\theta_0$ from $L$ at $v_0$
as above. 
\item[(B)] We put $v_1 \in L$, $v_2=-v_1 \in -L$, and $l_2 \subset L$, and 
$l_1\subset -L$. $l_0$ is determined by $\theta_1$ 
at $v_1$. 
\item[(B')] We put $v_1=v_2$ and $l_1=l_2$ are in $L$ and $l_0=\{v_1=v_2\}$. 
%$l_0$ is a degenerate segment at $v_1$ with a direction vector having angle $\theta_1$ with 
%the segment from $v_1$ to $v_0$ in $L$. 
\item[(C)] $v_1$ is on $L$ and $v_2=[-1,0,0]$ and $l_0$ and $l_2$ are in $L$,
and $l_1$ is the segment connecting $[1,0,0]$ to $[-1,0,0]$ of angle $\theta_0$ 
with $L$ at $v_0$. 
\item[(C')] We have $v_2=v_0$, and $l_0=l_2$ are in $L$, and $l_1=\{v_0=v_2\}$.
%a degenerate segment with a direction vector having angle $\theta_0$ with $L$ at $v_0$.
%the segment connecting 
%$[1,0,0]$ and $[-1,0,0]$ determined by angle $\theta_0$ with $L$
%and $l_1 =\{v_0=v_2\}$ .
 \end{itemize}
 
 Let $\mathcal S \subset \hat G$ denote the subspace of generalized triangles in standard positions.
 \begin{prop}\label{prop:stdp} 
 There is a homeomorphism $\tilde G \ra \mathcal S \subset \hat G$.
 \end{prop}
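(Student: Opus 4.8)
The plan is to construct the map $\tilde G \ra \mathcal S$ by sending each isometry class to its representative in standard position, and to show this is a well-defined continuous bijection between compact Hausdorff spaces, hence a homeomorphism. First I would verify well-definedness: the standard-position recipe of the preceding subsection specifies, for every type of generalized triangle (nondegenerate, and each of the boundary types $a,b,c,d$, $A,A',B,B',C,C'$, $I,II,III,IV$), a concrete placement in $\SI^2$ with $v_0=[1,0,0]$, $v_1\in L$, interior in $H^o$, and $l_2\subset L$. I would check that for every element of $\hat G$ there is an orientation-preserving isometry carrying it into $\mathcal S$, so the induced map $\mathcal S \hookrightarrow \hat G \ra \tilde G$ is onto, and that two standard-position triangles in the same $\SOThr$-orbit must coincide. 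The latter uniqueness is the crux of well-definedness: once $v_0$ is pinned at $[1,0,0]$, $v_1$ forced onto the prescribed component $L$, and the orientation fixed by requiring the interior in $H^o$, the only residual isometry freedom is killed, so each orbit meets $\mathcal S$ in exactly one point.

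Next I would run through the boundary types to confirm uniqueness does not fail at degenerate configurations. Here the subtlety is that when an edge degenerates to a point the usual ``put $l_2$ on $L$'' clause is vacuous, and the standard position is instead pinned using the infinitesimal direction vectors and the hemisphere-angle data attached to degenerate triangles in Section~\ref{sec:glimit}. I would invoke the explicit case analysis already laid out: in regions $a,b,c,d$ the vertices go to $[1,0,0]$ and $[-1,0,0]$ with the length-$\pi$ edges determined by the assigned angles $\theta_i$ relative to $L$; in $A,A',B,B',C,C'$ and $I$--$IV$ the stated placements determine everything. Because the extra angle data is exactly what the metric $D$ records, distinct orbits give distinct standard representatives, so the recipe is a genuine section of the quotient $\hat G \ra \tilde G$ restricted to $\mathcal S$.

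For continuity and its inverse I would argue as follows. The map $\mathcal S \ra \tilde G$ is the restriction of the quotient projection $\hat G \ra \tilde G$, hence continuous, and it is a bijection by the above. To see the inverse $\tilde G \ra \mathcal S$ is continuous it suffices that $\tilde G$ is compact and $\mathcal S$ is Hausdorff (as a subspace of the metric space $\hat G$): a continuous bijection from a compact space to a Hausdorff space is a homeomorphism. Compactness of $\tilde G$ is Proposition~\ref{prop:conv} (it imbeds in $\bR^6$ as a compact closure), and $\mathcal S\subset\hat G$ inherits the metric $D$. So once the bijection is in hand the topological conclusion is automatic; I would record the map $\tilde G \ra \mathcal S$ as ``take the orbit to its unique standard representative'' and conclude.

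The main obstacle I anticipate is \emph{not} the abstract compact-to-Hausdorff argument but the case-by-case verification that the standard-position prescription is continuous across the strata where edge lengths collapse. Concretely, as a family of nondegenerate triangles in $G^o$ limits onto, say, region $II$ (where $v_1=v_2$ become antipodal endpoints with $v_0$ on the segment and angle data $v(0)=\pi$), one must confirm that the chosen isometries placing $v_0$ at $[1,0,0]$ and forcing $l_2\subset L$ converge so that the standard representatives converge in $D$ to the standard representative of the limit. This is exactly the kind of spherical-geometry limit already controlled in the proof of Proposition~\ref{prop:conv} via polar positions and L'Hospital-type estimates (cf.\ Remark~\ref{rem:rays}), so I would cite that machinery rather than redo it, and the essential content is matching the standard-position normalization to the polar-position normalization used there. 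Since the excerpt ends at the proposition statement, I expect the author's proof to do precisely this reduction.
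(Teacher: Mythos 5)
Your construction of the bijection (well-definedness of the standard-position section, uniqueness of the representative in each $\SOThr$-orbit, the case check over the degenerate strata) is sound and is what the paper leaves implicit under ``the injectivity is clear.'' The genuine gap is in your topological step: you have a continuous bijection $q|_{\mathcal S}\colon \mathcal S \ra \tilde G$ (the restriction of the quotient map), and to conclude it is a homeomorphism by the compact-to-Hausdorff lemma you need the \emph{domain} $\mathcal S$ to be compact and the \emph{codomain} $\tilde G$ to be Hausdorff. You instead invoke compactness of $\tilde G$ and Hausdorffness of $\mathcal S$. With those hypotheses the lemma gives nothing: applied to the inverse map $\tilde G \ra \mathcal S$ it presupposes exactly the continuity you are trying to establish (a bijection from a compact space to a Hausdorff space need not be continuous), so as written the argument is circular.

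The gap is repairable in two ways, and the second is what the paper actually does. Either (i) show that $\mathcal S$ is \emph{closed} in the compact metric space $\hat G$, i.e.\ that a $D$-limit of standard-position generalized triangles is again the standard-position representative of its class; this is precisely the stratum-by-stratum verification you postponed to your final paragraph, so that check is not an anticipated refinement but the missing hypothesis of your own lemma. Or (ii) prove directly that the normalizing isometry $i_t$ carrying a generalized triangle $t$ into $\mathcal S$ depends continuously on $t$, by a sequence argument using the polar-position machinery behind Proposition \ref{prop:conv}; this is the paper's route, which establishes continuity of $\tilde G \ra \mathcal S$ without needing to know in advance that $\mathcal S$ is compact. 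Either way, the geometric limit verification across the degenerate strata is the real content of the proposition, and your write-up currently treats it as optional rather than as the step that completes the proof.
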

 \begin{proof} 
 The injectivity is clear. The continuity follows by showing that puting a generalized 
 triangle $t$ to one in $\mathcal S$, we need an isometry $i_t$ that depends continuously on $t$.
 A sequence argument will show. 
 \end{proof}

%% January 16th 9:47pm

\section{The character space of the fundamental group of
a pair of pants}\label{sec:rpp}

The character space of the fundamental group of a pair of pants will be analyzed geometrically using triangles 
in this section and will be shown to be the quotient of a solid tetrahedron with some Klein four-group $V$ axial 
action. That is, for each representation we obtain a triangle determined up to the $V$-action. 
The basic idea is similar to that of Goldman \cite{Gconv} modified to our setting and is 
related to matrix-multiplication by geometry. 
(In the next Section \ref{sec:sosu}, we generalize these results to $\SUTw$.)

\subsection{Matrix-multiplications by geometry}
We will coordinatize $\SI^1$ by $[0,2\pi]$ with $0$ and $2\pi$ identified. 
In fact, we will think of a number in $[0,2\pi]$ as a number mod $2\pi$ if our group is $\SOThr$ 
and without mod $2\pi$ if our group is $\SUTw$. 
%For a real number will denote a point in $\SI^1$ mod $2\pi$ in this paper. 

An isometry $\iota$ in $\SOThr$ fixes at least one point $v$ in
$\SI^{2}$. Its antipodal point $-v$ is also fixed and
with respect to a coordinate system with $v$ as one of
the frame vector, a matrix of $\iota$ equals
\[ \left(\begin{matrix}
1 & 0 & 0 \\
0 & \cos(\theta) & -\sin(\theta) \\
0 & \sin(\theta) & \cos(\theta)
\end{matrix}\right), 0 \leq \theta \leq 2\pi. \]
Here, $\theta$ is the counter-clockwise rotation angle of $\iota$ and is determined
by the choice of the fixed point $v$
and will be $2\pi-\theta$ if we chose the other
fixed point. Notice that the rotation angle is measured
in the counter-clockwise direction with respect to
the chosen fixed point.

An element of $\SOThr$ can be written as $R_{x,\theta}$ where 
$x$ is a fixed point and $\theta$ is the angle of rotation
in the counter-clockwise direction where $0\leq \theta \leq 2\pi \mod 2\pi$.  
For the identity element, $x$ is not determined but $\theta=0$. 
For any nonidentity element, $x$ is determined up to antipodes:
$R_{x,\theta} = R_{-x,2\pi-\theta}$. 

We begin with ``multiplication by geometry": 
Let $w_0,w_1,$ and $w_2$ be vertices of a triangle oriented in
the clockwise direction. 
Let $e_0, e_1,$ and $ e_2$ denote the opposite edges. 
Let $\theta_0,\theta_1,$ and $\theta_2$ 
be the respective angles. Then 
\[R_{w_2,2\theta_2} R_{w_1,2\theta_1} R_{w_0,2\theta_0} = \Idd :\]
Draw three adjacent isometric triangles obtained by reflecting isometrically on 
the great circles containing edges $e_0,e_1,$ and $e_2$ respectively. (See Definition \ref{defn:edgecircle}.)
each edge. Call them $T_0,T_1,$ and $T_2$ while they are obtained from reflecting. 
Since their edge lengths are the same, they are congruent. Moreover, 
by considering the congruent edges and the angles between them, we obtain that 
$R_{w_0,2\theta_0}$ fixes $w_0$ and sends $T_1$ to $T_2$,
$R_{w_1,2\theta_1}$ fixes $w_1$ and sends $T_2$ to $T_0$, and 
$R_{w_2,2\theta_2}$ fixes $w_2$ and sends $T_0$ to $T_1$. 
The composition the three isometries sends each vertex to itself and hence is 
the identity map. 
\begin{figure}
\centerline{\includegraphics[height=7cm]{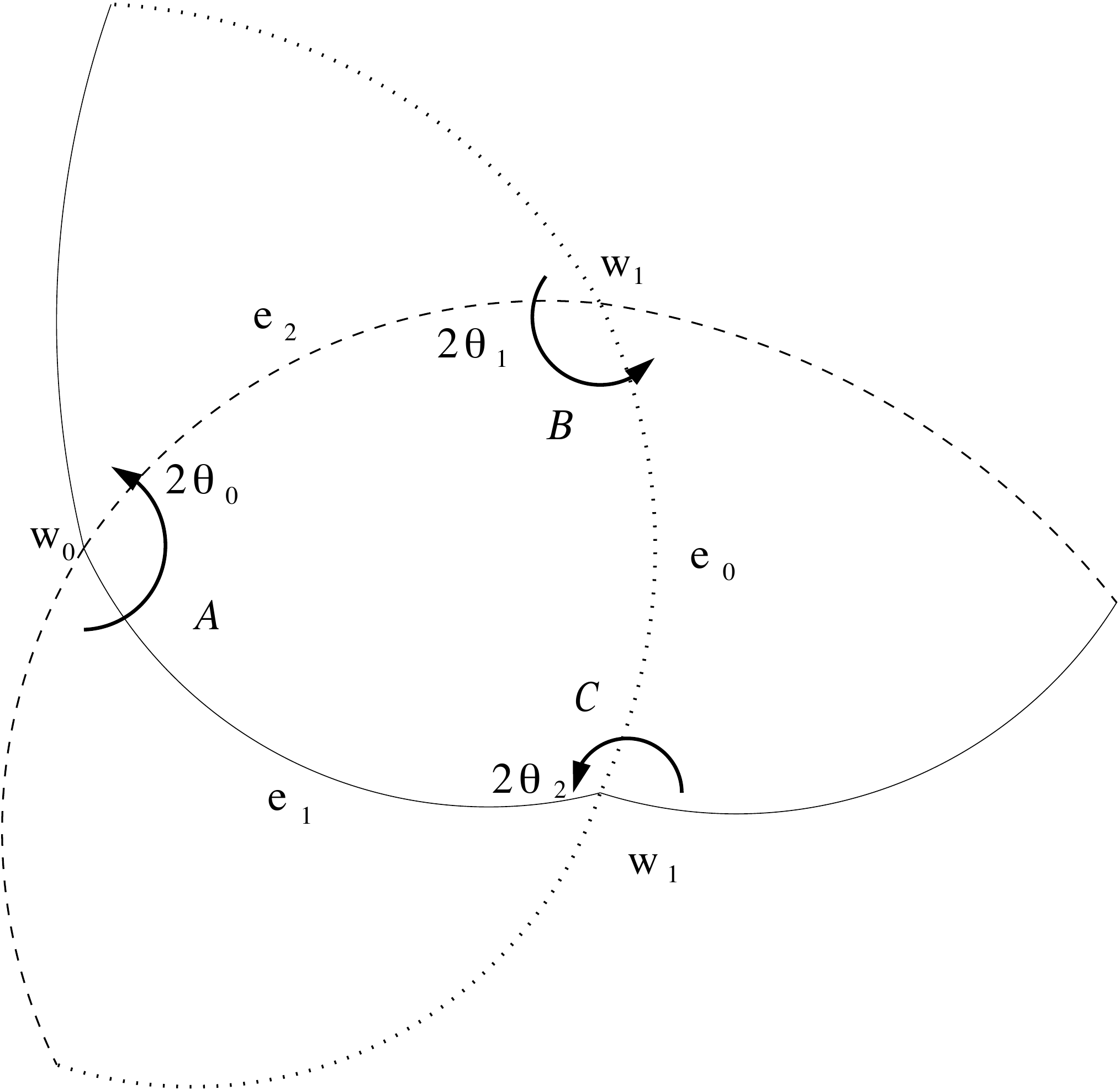}}
\caption{Triangular representations}
\label{fig:triangleisom}
\end{figure}

%Using the equation
%\[R_{w_2,2\theta_2}\circ R_{w_1,2\theta_1}\circ R_{w_0,2\theta_0}=\Idd,\]
%for example. 
Finally, denoting the rotation at $w_0,w_1,w_2$ by $\mathcal{A}, \mathcal{B}, \mathcal{C} $ respectively,
we obtain 
\begin{equation}\label{eqn:cba}
{\mathcal{CBA}}=\Idd, {\mathcal C}^{-1}={\mathcal{BA}}, {\mathcal C}={\mathcal A}^{-1}{\mathcal B}^{-1}.
\end{equation}
We will generalize the idea when the triangles are degenerate.
(See Section \ref{subsec:gmult}.)

Of course, careful considerations of orientations 
are important: the triangle is ordered clockwise and the rotation directions 
are counter-clockwise.

%\begin{rem}\label{rem:gmult} 
\subsection{Mutiplications by geometry extended} \label{subsec:gmult}
We now extend the idea of ``multiplication by geometry" to degenerate cases
(which is applicable in $\SUTw$ as well):
Let $w_0,w_1$, and $w_2$ be vertices of a degenerate triangle oriented in
the clockwise direction. 
Let $e_0, e_1$, and $e_2$ denote the opposite edges. 
Even in a degenerate case, since we can put the generalized triangle in
the standard position, we have a well-defined great circle containing 
each of the edges for any generalized triangle. Thus, we can reflect these 
by the great circles containing the edges and obtain the same constructions as above. 

Let $\theta_0,\theta_1$, and $\theta_2$ be the respective angles in $[0,\pi]$. Then 
\[R_{w_2,2\theta_2} R_{w_1,2\theta_1} R_{w_0,2\theta_0} = \Idd\]
will hold. This is checked by using the same argument as above. 

%A similar equation also holds in $\SUTw$ with the same interpretation:
%\[R_{w_2,2\theta_2}\circ R_{w_1,2\theta_1}\circ R_{w_0,2\theta_0} = - \Idd.\]

Using this equation, we can compute 
\[R_{w_2,2\pi-2\theta_2}= R_{w_1,2\theta_1} R_{w_0,2\theta_0},\]
for example. 
Finally, denoting the rotation at $w_0,w_1,w_2$ by $\mathcal{A},\mathcal{B},\mathcal{C}$ respectively,
we obtain 
\begin{equation}\label{eqn:gcba}
{\mathcal{CBA}}=\Idd, {\mathcal C}^{-1}={\mathcal{BA}}, {\mathcal C}={\mathcal A}^{-1}{\mathcal B}^{-1}.
\end{equation}

We should remark that when $\theta_2=0,\pi$, then $R_{x,2\theta}$ is the identity map. 
However, the equation still holds. 

When the angle at $w_2$ is $0$ or $\pi$ so that $\mathcal C$ equals $I$ (respectively $I$ or $-I$ in $\SUTw$), 
the vertex $w_2$ can be chosen arbitrarily on the boundary of the triangle.

%We remark that if we are working in $\SUTw$, we need to use
%\begin{equation}\label{eqn:gcba2}
%{\mathcal{CBA}}=-\Idd, {\mathcal C}^{-1}=-{\mathcal{BA}}, {\mathcal C}=-{\mathcal A}^{-1}{\mathcal B}^{-1}.
%\end{equation}
%(For more details, See Section \ref{sec:sosu}.)

We note that under quotient relation, $\mathcal C$ is a smooth function of $\mathcal A$ and $\mathcal B$.

As an illustration, we give some examples here. The readers can fill in all 
remaining cases using permutations. The triangle is clockwise
oriented in the boundary by the order of $v_0, v_1,$ and $v_2$ 
appearing with respective angles $\theta_0, \theta_1,$ and $ \theta_2$.
%Here, we assume that $A$, $B$, $v_0$, $v_1$ are given and we obtain $C$ and $v_2$.
\begin{itemize} 
\item The triangle is a point with $v_0=v_1=v_2$. Then $\theta_2 = \pi-\theta_0-\theta_1$ 
and $\mathcal C$ is a rotation at $v_2$ of the angle $2\theta_2$.  
\item The triangle is a segment with $v_0=v_1$ and $v_2$ with nonzero angles 
$\theta_0,\theta_1$ and $\theta_2=0$. Then $\theta_1=\pi-\theta_0$ naturally. 
Thus ${\mathcal B}={\mathcal A}^{-1}$ and ${\mathcal C}=I$. Actually, we can choose $v_2$ to be any point in $\SI^2$.
\item The triangle which is a lune with $v_0$ and $v_2$ as two vertices of the lune and $v_1$ in the interior of an edge. 
Then the lune angle $\theta$ equals $\theta_0$
and $\mathcal C$ has a fixed point $v_2=-v_0$ and the angle of rotation is $2\theta_0$. 
\item The triangle which is a lune with $v_0$ and $v_1$ as the two vertices of the lune 
with lune angle $\theta$. Then
the lune angle $\theta$ equals smaller of $\theta_0$ or $\theta_1$
which can be seen from the classification of generalized triangles.
\begin{itemize}
\item If $\theta_0=\theta$ and $\theta_1=\theta$, 
then $v_2$ can be any point on the segment of a lune and
$\theta_2=\pi$ and $\mathcal C$ is $I$.
\item If $\theta_0=\theta$ and $\theta_1> \theta$, 
then $v_2=v_1$ and $\theta_2=\theta_0+\pi-\theta_1$.
\item If $\theta_1=\theta$ and $\theta_0 > \theta$, then 
$v_2=v_0$ and $\theta_2=\theta_1+\pi-\theta_0$. 
\end{itemize}
\item If $\theta_0=\theta_1=\pi$, then the triangle is a hemisphere, and $v_2$ can be any point on 
a segment with endpoints $-v_0,-v_1$ and $\theta_2=\pi$. 
In this case, ${\mathcal{A}}, {\mathcal{B}}$, and $\mathcal C$ are all identity. 
\end{itemize}
(In $\SUTw$ case, we only need some obvious sign changes.)

\begin{prop}\label{prop:multrule}
From above, we obtain the following rules. Suppose that we have a generalized triangle with vertices $v_0, v_1$, and $v_2$
occurring in the clockwise order with angles $\phi_0, \phi_1, \phi_2$ respectively.
Let $\mathcal A$ be a transformation with a fixed point $v_0$ and rotation angle $\phi_0$, 
$\mathcal B$ be one with $v_1$ and $\phi_1$, and $\mathcal C$ be one with $v_2$ and $\phi_2$ respectively
satisfying $\mathcal C \mathcal B \mathcal A =\Idd$.
\begin{itemize}
\item  Suppose that $v_0$ and $v_1$ do not coincide and do not form an antipodal pair. 
Then $\pm v_0, \pm v_1$ form four points on a great circle. 
\begin{itemize}
\item Suppose that $\phi_0, \phi_1$ are both not zero nor $\pi$. Then so is $\phi_2$ and $v_0, v_1$, and $v_2$ form the vertices of 
a nondegenerate triangle. 
\item Suppose that $\phi_0=0$ and $\phi_1 \ne 0, \pi$. Then $v_2=v_1$ and $\phi_2=\pi-\phi_1$. 
\item Suppose that $\phi_0=\pi$ and $\phi_1\ne 0, \pi$. Then $v_2=-v_1$ and $\phi_2=\phi_1$. 
\item Suppose that $\phi_0=0$ and $\phi_1 =0$. Then $\mathcal C=\Idd$ and $v_2$ can be any point on the segment $\ovl{v_0v_1}$ and $\phi_2=\pi$.
\item Suppose that $\phi_0=0$ and $\phi_1=\pi$. Then $\mathcal C=\Idd$  and $v_2$ can be any point on the segment $\ovl{v_0({-v_1})}$ and $\phi_2=0$.
\item Suppose that $\phi_0=\pi$ and $\phi_1=0$. Then $\mathcal C=\Idd$ and  $v_2$ can be any point on the segment $\ovl{(-v_0)v_1}$ and $\phi_2=0$.
\item Suppose that $\phi_0=\pi, \phi_1=\pi$. Then $\mathcal C=\Idd$ and  $v_2$ can be any point on the segment $\ovl{(-v_1)(-v_2)}$, $\phi_2=\pi$, 
and $v_0,v_1$, and $v_2$ are the vertices of a pointed hemisphere. 
\end{itemize}
\item Suppose that $v_0= v_1$. Then $v_2=v_0=v_1$ and the angle $\phi_2=\pi-\phi_0-\phi_1$
if $\phi_0 \ne 0, \pi, \phi_1 \ne 0, \pi$. If $\phi_0=0, \phi_1=\pi$ or $\phi_0=\pi, \phi_1=0$, then $\phi_0=0$
and $\mathcal C = \Idd$ and $v_2$ can be any point of $\SI^2$.
If $\phi_0=\phi_1=0$, then $\phi_2=\pi$ and $v_2=v_0$. 
if $\phi_0=\phi_1=\pi$, then $\phi_2=\pi$ and $v_2=-v_0$. 
\item Suppose that $v_0 = -v_1$. Then 
\begin{itemize}
\item If $\phi_0 > \phi_1$, then $v_2=v_0$ and $\phi_2 = \pi-\phi_0+\phi_1$. 
\item If $\phi_1>\phi_0$, then $v_2=v_1$ and $\phi_2=\pi+\phi_0-\phi_1$. 
\item If $\phi_0=\phi_1$, then $\mathcal C=\Idd$ and $\phi_2=\pi$ and $v_2$ can be any point of $\SI^2$. 
\end{itemize}
\end{itemize}
\end{prop}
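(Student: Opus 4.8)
The plan is to read each rule off the extended ``multiplication by geometry'' identity \eqref{eqn:gcba}, whose content is exactly that a relation $\mathcal{C}\mathcal{B}\mathcal{A} = \Idd$, with $\mathcal{A}$ fixing $v_0$ and $\mathcal{B}$ fixing $v_1$, realizes $\mathcal{A}, \mathcal{B}, \mathcal{C}$ as the three vertex-rotations $\mathcal{A} = R_{v_0, 2\phi_0}$, $\mathcal{B} = R_{v_1, 2\phi_1}$, $\mathcal{C} = R_{v_2, 2\phi_2}$ of a generalized triangle carrying the prescribed two vertices and two angles. Thus it suffices, in each configuration, to exhibit the remaining vertex $v_2$ and angle $\phi_2$ from the classification of generalized triangles of Section \ref{sec:glimit} and then to record $\mathcal{C} = (\mathcal{B}\mathcal{A})^{-1}$. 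The only algebraic fact I would use repeatedly is the antipode identity $R_{x,\theta} = R_{-x, 2\pi-\theta}$, which reconciles the two admissible vertex--angle labels of one and the same element of $\SOThr$.

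First I would dispatch the two coaxial cases, where no honest triangle is constructed and each reduces to a one-line rotation computation. If $v_0 = v_1$, then $\mathcal{B}\mathcal{A} = R_{v_0, 2\phi_0 + 2\phi_1}$, so $\mathcal{C} = R_{v_0, 2(\pi - \phi_0 - \phi_1)}$; the pointed-point entry of the classification (angles summing to $\pi$) gives $v_2 = v_0$ and $\phi_2 = \pi - \phi_0 - \phi_1$, while the boundary values $\phi_0,\phi_1\in\{0,\pi\}$ all yield $\mathcal{C}=\Idd$, labelled by the appropriate pointed point or pointed segment. If $v_0 = -v_1$, I would first rewrite $\mathcal{B} = R_{-v_0, 2\phi_1} = R_{v_0, -2\phi_1}$, so $\mathcal{B}\mathcal{A} = R_{v_0, 2(\phi_0 - \phi_1)}$ and $\mathcal{C} = R_{v_0, 2(\phi_1 - \phi_0)}$; comparing the sign of $\phi_0 - \phi_1$ and applying the antipode identity reproduces the three subcases, with $\phi_0 = \phi_1$ giving $\mathcal{C} = \Idd$ exactly as the equal-angle lune predicts.

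The generic case, $v_0, v_1$ distinct and non-antipodal, is geometric: $\pm v_0, \pm v_1$ lie on the great circle through $v_0, v_1$, and when $\phi_0, \phi_1 \in (0,\pi)$ the edges issued at these angles close into a nondegenerate triangle, so $v_2$ and $\phi_2 \in (0,\pi)$ are furnished directly by Proposition \ref{prop:Go} and classical spherical geometry. The step I expect to be the main obstacle is the boundary values $\phi_0, \phi_1 \in \{0,\pi\}$: there $\mathcal{A}$ or $\mathcal{B}$ collapses to $\Idd$, so its ``fixed point'' is no longer determined as a group element, yet the generalized triangle still records a definite value $\phi_2 \in \{0,\pi\}$ and a definite \emph{segment} of admissible positions for $v_2$. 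I would resolve this by placing each configuration in the standard position of Section \ref{sec:glimit}, so that the collapsed edge still has a well-defined direction vector and containing great circle, and then invoking the pointed-segment and pointed-hemisphere entries of the classification; the four sign patterns $(\phi_0,\phi_1) \in \{0,\pi\}^2$ then select among the segments $\ovl{v_0 v_1}$, $\ovl{v_0(-v_1)}$, $\ovl{(-v_0)v_1}$, and $\ovl{(-v_0)(-v_1)}$ according to which antipodes the vanishing angles flip. A final comparison with the worked examples preceding the statement confirms the bookkeeping in every case.
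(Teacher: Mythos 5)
Your proposal is correct and takes essentially the same route as the paper: the paper supplies no separate argument beyond ``From above, we obtain the following rules,'' meaning the proposition is exactly the summary of the extended multiplication by geometry \eqref{eqn:gcba}, the worked examples preceding it, and the classification of generalized triangles — precisely the ingredients you assemble. Your reading of the convention (the rotations are $R_{v_i,2\phi_i}$, so $\phi_i\in\{0,\pi\}$ makes the rotation trivial) is the one forced by the paper's examples, and your case analysis via the antipode identity $R_{x,\theta}=R_{-x,2\pi-\theta}$, standard positions, and the degenerate-triangle classification is the intended fleshing-out of the paper's implicit proof.
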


%\begin{prop}\label{prop:gentri} 
%There is a one-to-one correspondence between 
%the space of $\SOThr$-representations of $\pi_1(P)$ 
%and the space of generalized triangles in $\SI^2$. 
%This extends the one-to-one correspondence between 
%the space of triangular $\SOThr$-representations of $\pi_1(P)$
%and the space of triangles in $\SI^2$. 
%\end{prop}
%\begin{proof}
%To prove this, notice the following facts:
%If the degenerate triangle is a pointed point, this is obvious. We suppose that 
%the triangle is not a pointed point. 
%Since the degenerate triangles have edge directions at least, 
%there is a unique great circle containing each edge in the generalized 
%sense. We can reflect along the great circles and the argument is 
%the same. 
%\end{proof} 

%%% Jan 31, 2010 11:56 pm
%%% April 12 12:14

\subsection{The $\SOThr$-character space of the fundamental group of a pair of pants}
Let $P$ be a pair of pants and let $\tilde P$ be the universal cover.
Let $c_0, c_1,$ and $c_2$ denote three boundary components of
$P$ oriented using the boundary orientation.
Let $\pi_{1}(P)$ denote the fundamental group of $P$ seen as a group
of deck transformations generated by three elements
$\mathcal A, \mathcal B,$ and $\mathcal C$ parallel to the boundary components of $P$
satisfying ${\mathcal{CBA}} = I$. $\mathcal A$ acts on a lift $\tilde c_0$ of $c_0$
in $\tilde P$ and $\mathcal B$ on $\tilde c_1$ a lift of $c_1$ and
$\mathcal C$ on $\tilde c_2$ a lift of $c_2$.

%Let $\SI^{2}$ be the unit sphere in $\bR^{3}$.
%$\SI^{2}$ has a Riemannian metric of constant curvature
%invariant under the standard action of $\SOThr$.%
Take a triangle on the sphere $\SI^{2}$ with geodesic edges
so that each edge has length $< \pi$ so that the
vertices are ordered in a clockwise manner in the boundary
of the triangle.
Such a triangle is classified by their angles
$\theta_{0}, \theta_{1}, \theta_{2}$ satisfying
\begin{eqnarray}
    \theta_{0}+\theta_{1}+\theta_{2} & >&  \pi \\
    \theta_{i} & < & \theta_{i+1} + \theta_{i+2} - \pi, i =0,1,2.
    \label{e:tri}
    \end{eqnarray}
where the indices are considered to be in $\bZ_{3}$.
The region gives us an open tetrahedron in the positive octant of
$\bR^{3}$ with vertices
\[(\pi,0,0), (0,\pi,0), (0,0,\pi), (\pi,\pi,\pi)\]
and thus we have $0< \theta_{i} < \pi$.
This is a regular tetrahedron with edge lengths all equal to
$\sqrt{2}\pi$. %Let us denote the closure by $G$. 

Let $\tri_{1}$ be a nondegenerate spherical triangle with vertices removed
and $\tri_{2}$ an isometric triangle with vertices also removed. We can glue $\tri_{1}$
and $\tri_{2}$ along corresponding open edges
and obtain an open pair of pants $P'$ with an elliptic metric.
We can attach the boundary component circles to $P'$ with
labeling ${\mathcal{A}}, {\mathcal B},$ and $\mathcal C$ corresponding to the vertices.
This gives us a geometric structure on $P'$ modeled on
$(\SI^{2}, \SOThr)$. Hence, there is an associated representation
$h: \pi_{1}(P') \ra \SOThr$ determined up to conjugation.
Such a representation is said to be a {\em triangular representation}.
This approach was used in Thurston's book \cite{Thbook}
and Goldman \cite{Gconv}.
\begin{figure}

\centerline{\includegraphics[height=7cm]{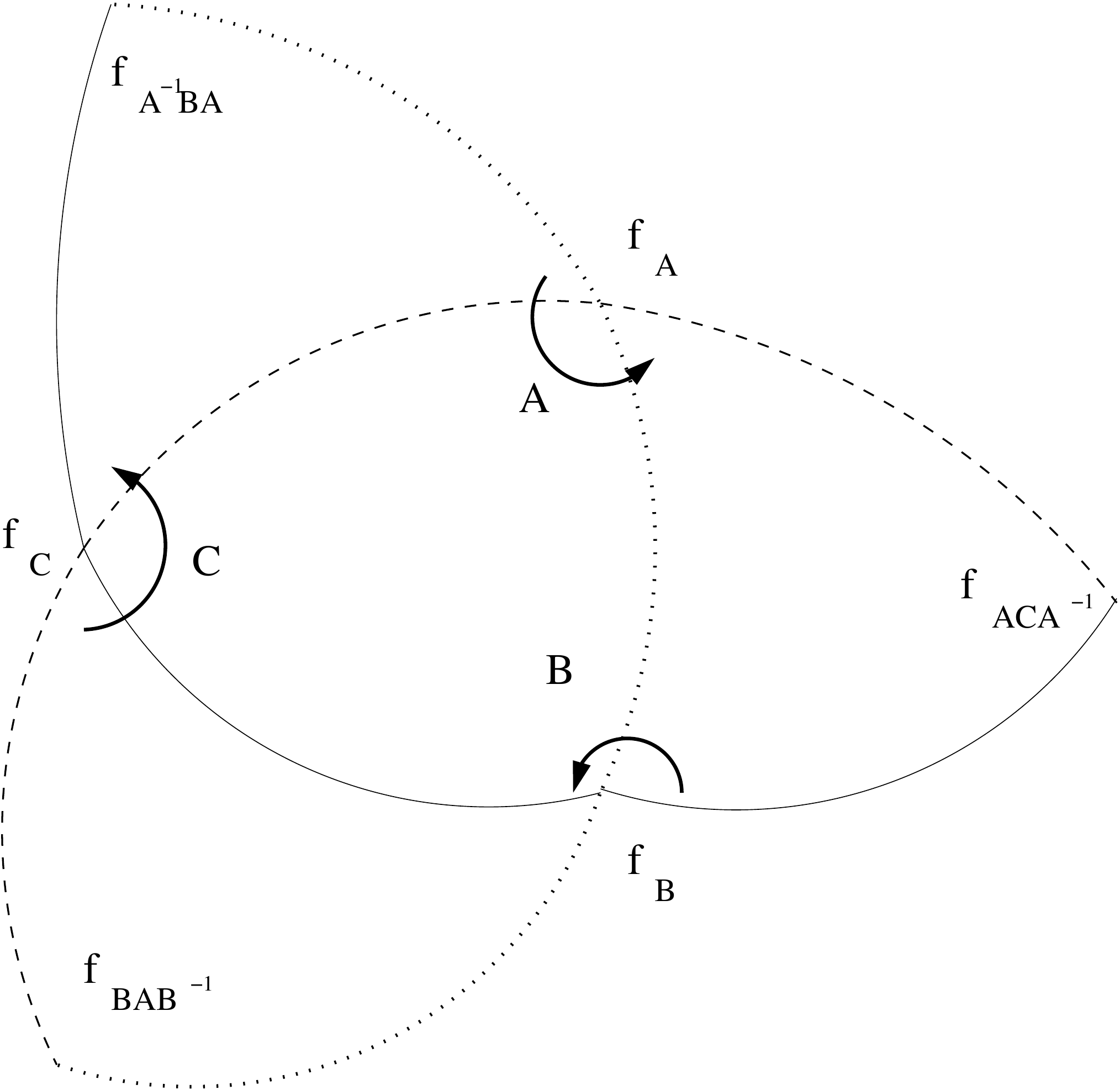}}
\caption{Triangular representations}
\label{fig:triangleisom2}
\end{figure}

\begin{lem}\label{lem:perturb}
    $\rep(\pi_{1}(P), \SOThr)$ contains a dense open set where
    each character is triangular. In fact, one can always 
    find a path from any character to triangular characters 
    where each point except the beginning point of the path is triangular. 
    \end{lem}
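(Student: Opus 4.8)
The plan is to exhibit a representation-theoretic analog of the geometric picture developed above, showing that the set of triangular characters contains a dense open set, and then promote this to the stronger path-connectivity statement. First I would recall that a representation $h\colon\pi_1(P)\to\SOThr$ is determined up to conjugation by the pair $(\mathcal A,\mathcal B)=(h(\mathcal A),h(\mathcal B))$, with $\mathcal C=\mathcal A^{-1}\mathcal B^{-1}$ forced by the relation $\mathcal{CBA}=\Idd$. By the ``multiplication by geometry'' results of Section \ref{subsec:gmult}, and in particular Proposition \ref{prop:multrule}, whenever $\mathcal A$ is a rotation about an axis $v_0$ through angle $2\theta_0$ and $\mathcal B$ a rotation about $v_1$ through $2\theta_1$, with $v_0,v_1$ neither coincident nor antipodal and with $\theta_0,\theta_1\in(0,\pi)$, the product $\mathcal C=\mathcal A^{-1}\mathcal B^{-1}$ is realized as the rotation at the third vertex $v_2$ of a genuine nondegenerate spherical triangle. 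Thus such an $h$ is exactly a triangular character. So I would \emph{define} the candidate open set $U\subset\rep(\pi_1(P),\SOThr)$ to be the set of characters whose representatives $(\mathcal A,\mathcal B)$ satisfy these genericity conditions.

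Next I would verify that $U$ is open and dense. Openness is the easy direction: the conditions that $\mathcal A,\mathcal B$ are nontrivial, that their rotation angles lie strictly inside $(0,2\pi)$ away from $0$ and $2\pi$, and that their axes are distinct and non-antipodal, are all open conditions on $(\mathcal A,\mathcal B)\in\SOThr\times\SOThr$ and descend to open conditions on the quotient since they are conjugation-invariant. For density, given any character I would perturb its representative $(\mathcal A,\mathcal B)$: if either generator is the identity or a $\pi$-rotation (angle $2\pi$ in the doubled convention), or if the two axes coincide or are antipodal, a small generic perturbation within $\SOThr\times\SOThr$ moves the pair into $U$, since the excluded loci are of positive codimension. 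The key structural input is that for a pair of pants the group is free of rank two (the relation $\mathcal{CBA}=\Idd$ merely eliminates the third generator), so $\Hom(\pi_1(P),\SOThr)\cong\SOThr\times\SOThr$ is a manifold and perturbation arguments are unobstructed.

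For the second, stronger sentence — that one can connect \emph{any} character to a triangular one by a path that is triangular except at its initial point — I would argue as follows. Take an arbitrary character with representative $(\mathcal A_0,\mathcal B_0)$. Using the explicit degenerate cases catalogued in Proposition \ref{prop:multrule} and the examples preceding it, I would choose a one-parameter family $(\mathcal A_t,\mathcal B_t)$, $t\in[0,1]$, starting at $(\mathcal A_0,\mathcal B_0)$, that immediately enters $U$ for all $t>0$ while remaining continuous at $t=0$. Concretely, I would independently deform each rotation angle away from the degenerate values $0,\pi$ and, if necessary, rotate one axis slightly off the other so that the axes become distinct and non-antipodal for all positive $t$; for every $t>0$ the resulting product is then a nondegenerate triangle by the genericity part of the proposition, and for $t=0$ we recover the given character. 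This yields a path that is triangular except possibly at $t=0$.

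The main obstacle I expect is the careful bookkeeping at the degenerate initial configurations: one must check that every degenerate type (identity generators, $\pi$-rotations, coincident or antipodal axes, and the hemisphere/lune/segment cases) admits a deformation that becomes immediately triangular, \emph{and} that the chosen path depends continuously on the starting data and descends well-definedly to the quotient $\rep(\pi_1(P),\SOThr)$ rather than merely to the representation variety. The catalogue in Proposition \ref{prop:multrule} essentially hands us the case analysis, so the difficulty is not conceptual but lies in ensuring that the perturbations are compatible with the conjugation action and that no special coincidence of angles or axes obstructs the deformation into $U$; the smoothness remark that $\mathcal C$ is a smooth function of $\mathcal A$ and $\mathcal B$ under the quotient relation is what ultimately guarantees that these deformations glue into a genuine path of characters.
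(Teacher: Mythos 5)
Your proof is correct, but it takes a genuinely different route from the paper's. You work entirely inside the representation variety: using that $\pi_1(P)$ is free on $\mathcal A,\mathcal B$, you identify $\Hom(\pi_1(P),\SOThr)$ with $\SOThr\times\SOThr$, characterize the triangular locus via Proposition \ref{prop:multrule} as (an open dense part of) the locus of non-commuting pairs of nontrivial rotations, and then get openness, density, and the path statement from the fact that the excluded set (commuting pairs and pairs with a trivial entry) is a closed subvariety of positive codimension in a smooth variety, with an explicit one-parameter tilting of axes and angles furnishing the path. The paper instead perturbs on the other side of the dictionary: it first proves that \emph{every} character, triangular or not, is realized by a generalized (possibly degenerate) triangle --- this is the four-congruent-triangles argument, with degenerate triangles corresponding exactly to abelian characters --- and then deforms the degenerate triangle inside the compactified configuration space $\tilde G$, using the density of nondegenerate triangles (Proposition \ref{prop:dense0}); openness follows because representations near a triangular one still have nondegenerate triangles. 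Your argument is shorter and more self-contained for this lemma, and your path construction is more explicit than the paper's; what the paper's longer route buys is precisely the structural byproduct (every character arises from a generalized triangle, and the degenerate ones are the abelian ones) that is reused immediately in Proposition \ref{prop:poprep} to identify $\rep(\pi_1(P),\SOThr)$ with the quotient of the tetrahedron $G$ by the Klein four-group, so the paper's proof is doing double duty as setup for the rest of Section \ref{sec:rpp}. Two small points of care in your write-up: your phrase ``the identity or a $\pi$-rotation (angle $2\pi$ in the doubled convention)'' conflates two names for the same degenerate case, since in $\SOThr$ a rotation by $2\pi$ \emph{is} the identity (an honest rotation by $\pi$ is perfectly generic); and the worry about descending to the quotient is a non-issue, as any continuous path in $\Hom(\pi_1(P),\SOThr)$ pushes forward under the (open, continuous) quotient map to a path of characters.
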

\begin{proof}
    Let $h$ be a representation $\pi_{1}(P) \ra \SOThr$.
    Then $h({\mathcal A}), h({\mathcal B})$, and $h({\mathcal C})$ are elements of $\SOThr$ and hence
    contains a fixed point in $\SI^{2}$.
    Let $f_{\mathcal A}$ and $-f_{\mathcal A}$ denote the fixed points of $h({\mathcal A})$
    and $f_{\mathcal B}$ and $-f_{\mathcal B}$ those of $h({\mathcal B})$ and
    $f_{\mathcal C}$ and $-f_{\mathcal C}$ those of $h({\mathcal C})$.
    The fixed points are chosen arbitrarily if there are more than
    one fixed points up to antipodal transformations, i.e.,
    only when the matrix is the identity matrix.

    We can choose one among each  of the three antipodal pairs and
    obtain a triangle which might be degenerate. (The choice of what 
    degenerate triangle can be arbitrary. See Section \ref{sec:glimit} 
    for example of degenerate triangles. Essentially, if the holonomy of one of the generators is
    the identity, we fall into the edge regions, and if two or more are so, then we fall into to the vertex regions, 
    and finally if the holonomy is abelian with no identity for generators, then we fall into the face regions.)
    Let $\tri(f_{{\mathcal A}}, f_{{\mathcal B}},f_{{\mathcal C}})$ denote such a triangle, and
    the vertices are oriented in a clockwise direction (See
    Figure \ref{fig:triangleisom2}).  
    \begin{itemize}
    \item    $h({\mathcal A})$ fixes $f_{{\mathcal A}}$ and send $f_{{\mathcal C}}$ to 
    $h({\mathcal A})(f_{{\mathcal C}})$ chosen to be $f_{{\mathcal A}{\mathcal C}{\mathcal A}^{-1}}$ and 
    $h({\mathcal A})^{-1}(f_{{\mathcal B}})$ chosen to be $f_{{\mathcal A}^{-1}{\mathcal B}{\mathcal A}}$ to $f_{{\mathcal B}}$.
    Hence, $h({\mathcal A})$ send $\tri(f_{{\mathcal A}}, f_{{\mathcal C}}, f_{{\mathcal A}^{-1}{\mathcal B}{\mathcal A}})$ to
    $\tri(f_{{\mathcal A}}, f_{{\mathcal A}{\mathcal C}{\mathcal A}^{-1}}, f_{{\mathcal B}})$.
 \item   $h({\mathcal B})$ fixes $f_{\mathcal B}$ and send 
 $f_{\mathcal A}$ to $h({\mathcal B})(f_{\mathcal A})$ chosen to be $f_{{\mathcal B}{\mathcal A}{\mathcal B}^{-1}}$
    and send $f_{{\mathcal A}{\mathcal C}{\mathcal A}^{-1}}$ 
    to $h({\mathcal B})(f_{{\mathcal A}{\mathcal C}{\mathcal A}^{-1}})
    =h(\mathcal B) h(\mathcal A)(f_{\mathcal C}) = h(\mathcal C)^{-1}(f_{\mathcal C})=f_{\mathcal C}$
  %  =f_{{\mathcal B}{\mathcal A}{\mathcal C}{\mathcal A}^{-1}{\mathcal B}^{-1}}=f_{\mathcal C}$
    as ${\mathcal B}{\mathcal A}={\mathcal C}^{-1}$ %and ${\mathcal C}={\mathcal A}^{-1}{\mathcal B}^{-1}$,
    and hence sends $\tri(f_{{\mathcal B}}, f_{{\mathcal A}}, f_{{\mathcal A}{\mathcal C}{\mathcal A}^{-1}})$
    to $\tri(f_{{\mathcal B}}, f_{{\mathcal B}{\mathcal A}{\mathcal B}^{-1}}, f_{{\mathcal C}})$.    
     \item   $h({\mathcal C})$ fixes $f_{\mathcal C}$ and sends $f_{\mathcal B}$ 
     to $h({\mathcal C})(f_{\mathcal B}) = h({\mathcal A})^{-1}h({\mathcal B})^{-1}(f_{\mathcal B}) 
     = h({\mathcal A})^{-1}(f_{\mathcal B}) 
     =f_{{\mathcal A}^{-1}{\mathcal B}{\mathcal A}}$
     as ${\mathcal C}={\mathcal A}^{-1}{\mathcal B}^{-1}$ and 
     sends $f_{{\mathcal B}{\mathcal A}{\mathcal B}^{-1}}$ to 
     $h({\mathcal C})(f_{{\mathcal B}{\mathcal A}{\mathcal B}^{-1}}) = h({\mathcal C})h({\mathcal B})(f_{\mathcal A})
     =h({\mathcal A})^{-1}(f_{\mathcal A})=f_{\mathcal A}$
    % {\mathcal B}{\mathcal A}{\mathcal B}^{-1}{\mathcal C}^{-1}}=f_{\mathcal A}$,
       and hence sends  $\tri(f_{{\mathcal C}},f_{{\mathcal B}}, f_{{\mathcal C}^{-1}{\mathcal B}{\mathcal C}})$ to
    $\tri(f_{{\mathcal C}}, f_{{\mathcal A}^{-1}{\mathcal B}{\mathcal A}}, f_{{\mathcal A}})$.  
    
    \end{itemize}
    %Similarly, 
    %Finally,  since 
    %\[h(C)(f_{B}) = CBC^{-1}=A^{-1}BA \hbox{ and } h(C)^{-1}(f_{A}) = f_{C^{-1}AC} \hbox{ and }
    %C^{-1}AC = BAB^{-1}.\] 
    Thus we obtain the diagram in Figure
    \ref{fig:triangleisom2}.

    Since $h({\mathcal A}), h({\mathcal B})$, and $h({\mathcal C})$ are isometries, we see that
    the three adjacent triangles are congruent triangles.
    By the length consideration, the four triangles are congruent to one another.
    
    If one of the triangle is nondegenerate, then all are not and is congruent to 
    $\tri(f_{{\mathcal A}}f_{{\mathcal B}}f_{{\mathcal C}})$.

    Thus, if one of the triangle is degenerate, then all other are
    degenerate also. 
    $h$ corresponds to an abelian representation and it fixes a pair of antipodal points $x, -x$.
    A nonidentity abelian representation is determined by a three angles $\theta_0, \theta_1,$ and $\theta_2$ at a fixed point $x$ of 
    $h({\mathcal A}), h({\mathcal B})$, and $h({\mathcal C})$ where $\theta_0+\theta_1+\theta_2 = 2n\pi$ for $n=1,2$.
    If $\theta_0+\theta_1 +\theta_2 = 2\pi$, then we see that the region $a$ gives us all such abelian representation.
    If $\theta_0+\theta_1 +\theta_2=4\pi$, then we may assume without loss of generality that $\theta_0 > \pi$
    and we take as the fixed point of $h(\mathcal A)$ as $-x$. Then we see that a point of the face $c$ represents it. 
        Thus, every representation comes from a generalized triangle construction. 
    
    If the triangle corresponding to a representation is degenerate, then we can continuously change the lengths and angles
    of the triangles to make them nondegenerate by Proposition \ref{prop:dense0} and the deformation creates a
    representation near $h$ as possible by the density of the nondegenerate triangles in $\tilde G$. 
    Hence, the triangular representations are dense.

     Also, if the triangles are not degenerate, then the
    representation is triangular by definition. 
    A sufficiently nearby  representation to any triangular representation has nondegenerate triangles, and hence,
    they are triangular. Thus the set of triangular characters
    are open.

        \end{proof}
    
   %% August 23, 9:26 pm

\begin{prop}\label{prop:poprep}
    $\rep(\pi_{1}(P), \SOThr)$ is homeomorphic to the quotient
    of the tetrahedron $G$ by a $\{I, I_A, I_B, I_C\}$-action.
    {\rm (}See Figure \ref{fig:tetr1}{\rm .)}
    \end{prop}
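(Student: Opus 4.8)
The plan is to construct an explicit continuous map $\Phi\colon G\to\rep(\pi_1(P),\SOThr)$ from the solid tetrahedron $G$ to the character space, show that it is invariant under the Klein four-group $V=\{I,I_A,I_B,I_C\}$, and then deduce that the induced map $\bar\Phi\colon G/V\to\rep(\pi_1(P),\SOThr)$ is a homeomorphism by exhibiting it as a continuous bijection between a compact space and a Hausdorff space.

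First I would define $\Phi$. Given a point of $G$, viewed via Proposition \ref{prop:Go} as (the angles of) a generalized triangle, put the triangle into standard position using Proposition \ref{prop:stdp}, let $w_0,w_1,w_2$ be its vertices and $\theta_0,\theta_1,\theta_2$ its angles, and set $h(\mathcal A)=R_{w_0,2\theta_0}$, $h(\mathcal B)=R_{w_1,2\theta_1}$, $h(\mathcal C)=R_{w_2,2\theta_2}$. The extended multiplication-by-geometry identity \eqref{eqn:gcba} guarantees $\mathcal{CBA}=\Idd$ even for degenerate triangles, so $h$ is a genuine representation of $\pi_1(P)$, and its character defines $\Phi$. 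Continuity follows because the standard-position section of Proposition \ref{prop:stdp} is continuous and the rotation axes and angles depend continuously on the triangle.

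Next I would verify $V$-invariance. Recall that $I_A$ carries a triangle $(w_0,w_1,w_2)$ with angles $(\theta_0,\theta_1,\theta_2)$ to one with vertices $(-w_0,-w_1,w_2)$ and angles $(\pi-\theta_0,\pi-\theta_1,\theta_2)$, and analogously for $I_B,I_C$. Since $R_{-w,2\pi-2\theta}=R_{w,2\theta}$, each generator of $V$ leaves all three isometries $h(\mathcal A),h(\mathcal B),h(\mathcal C)$ unchanged; hence $\Phi$ is constant on $V$-orbits and descends to $\bar\Phi\colon G/V\to\rep(\pi_1(P),\SOThr)$. Surjectivity of $\bar\Phi$ is the content of the proof of Lemma \ref{lem:perturb}: every representation has fixed axes whose associated, possibly degenerate, triangle realizes it. For injectivity I would argue that if two triangles $t,t'$ give conjugate characters, a conjugating $g\in\SOThr$ sends each fixed-axis pair of $h$ to the corresponding pair of $h'$ with labels preserved; being an orientation-preserving isometry, $g$ carries the clockwise triangle $t$ to a clockwise triangle with the same angles, so $t'$ agrees with $g(t)$ up to the antipodal choices at the three vertices, and Proposition \ref{prop:Go} identifies $g(t)$ with $t$ in $G^o$. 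With $\bar\Phi$ a continuous bijection, $G/V$ compact, and $\rep(\pi_1(P),\SOThr)$ Hausdorff and compact (a quotient of a compact subset of $\SOThr^2$ by the compact group $\SOThr$), $\bar\Phi$ is a homeomorphism.

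The main obstacle will be the fixed-point bookkeeping that produces exactly the order-four group $V$ rather than the full group $\bZ_2^3$ of independent antipodal choices. The point is that flipping the axis at a single vertex reverses the clockwise orientation that the construction of \eqref{eqn:gcba} requires, so only an \emph{even} number of simultaneous flips yields another valid clockwise realization; the admissible flips are precisely $I,I_A,I_B,I_C$. Making this rigorous, and handling the reducible abelian characters on the boundary faces of $G$, where the fixed points and the triangle itself cease to be unique, is exactly where the explicit case analysis of Proposition \ref{prop:multrule} is needed, to confirm that the boundary identifications in $G/V$ match the genuine coincidences among abelian characters.
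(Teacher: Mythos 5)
Your proposal is correct and follows essentially the same route as the paper's own proof: both parametrize characters by (generalized) spherical triangles via the multiplication-by-geometry relation, identify the only ambiguity as the choice of fixed points subject to the clockwise ordering of the vertices --- which is exactly why the group is the Klein four-group $V$ rather than all of $\bZ_2^3$ --- and treat the boundary (abelian) characters as a separate, brief case. Your explicit compactness-plus-Hausdorff step upgrading the continuous bijection to a homeomorphism is a small addition of rigor that the paper leaves implicit here (it invokes the identical argument for Proposition \ref{prop:pairsu}), but it does not constitute a different method.
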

\begin{proof}
    The set of triangular representations up to conjugacy are
    classified by the open tetrahedron of angle parameters determined by equation
    \eqref{e:tri}. The ambiguity is given by the choice of fixed
    points. In the interior, this is precisely the ambiguity.
    We can choose three noncollinear fixed points of $h({\mathcal A}), h({\mathcal B})$, and 
    $h({\mathcal C})$ respectively as long as the vertices in the boundary appear
    in the clockwise-direction. Thus, we obtain that $V$ is
    generated by the maps
    \begin{eqnarray}\label{eqn:iabc}
    I_{{A}}:(v,w,z) \mapsto (v,-w,-z), \\
    I_{B}:(v,w,z) \mapsto (-v,w,-z), \\
    I_{{C}}:(v,w,z) \mapsto (-v,-w,z).
    \end{eqnarray}
    Respectively, these maps make the angles change by
    \begin{eqnarray}\label{eqn:iabc2}
    I_{{A}}:(\theta_{1},\theta_{2},\theta_{3}) \mapsto (\theta_{1},
    \pi-\theta_{2}, \pi-\theta_{3}) \\
    I_{B}:(\theta_{1},\theta_{2},\theta_{3}) \mapsto
    (\pi-\theta_{1},\theta_{2},\pi-\theta_{3})\\
    I_{{C}}:(\theta_{1}, \theta_{2},\theta_{3}) \mapsto
    (\pi-\theta_{1},\pi-\theta_{2},\theta_{3}).
    \end{eqnarray}

    In the boundary of the tetrahedron, all fixed points with angles not
    equal to $\pi$, are either identical or antipodal. Since the
    angle $\pi$, corresponds to the identity transformation, we can
    regard any two such transformations with same angles up to the
    $V$-action to be the same. As they are abelian characters,
    we see that there are no more identification other than ones obtained
    from the $V$-action.

    \end{proof}

   \section{$\SOThr$ and $\SUTw$: geometric relationships} \label{sec:sosu}
    
    Here, we discuss some well-known geometric relationship between $\SOThr$ and $\SUTw$.
  %  (for which I am indebted to Sergio Fenley about 20 years ago).
    Using this, we will show that the $\SUTw$-character space of the fundamental group of 
    a pair of pants is a tetrahedron 
    branch-covering the $\SOThr$-character space of the same group. 
    This is a result known to Hubueschmann \cite{Hueb} and recently written up by Florentino and Lawton \cite{FL} in some generalities.
    Finally, for the later purposes, we reprove these by considering the fundamental group as a free group of rank two. 
    This has different flavor and will be used later.
    
    \subsection{The covering group}
    We can characterize an element of $\SOThr$ by noting its antipodal pair of 
    fixed points and the rotation angles at the points related by $\theta$ to $2\pi-\theta$. 
    Thus, $\SOThr$ can be identified with $\rp^3$ in the following way: 
    Take $B^3$ of radius $\pi$ in $\bR^3$ and parameterize each segment from the origin to the boundary 
    by the distance to the origin. Then for each nonidentity $g \in \SOThr$, we choose the fixed point with 
    angle $\theta < \pi$ and take the point in the ray to the point in $B^3$ of distance $\theta$ from the origin. 
    If $\theta=\pi$, then we take both points in the boundary $\SI^1$ of $B^3$ in the direction 
    and identified them. 
    
    Taking the quotient space, we obtain $\rp^3$. 
    This correspondence gives a diffeomorphism $\SOThr \ra \rp^3$.
    
The space of quaternions can be describes as the space of 
matrices 
\[ \begin{pmatrix} x & y\\ -\bar y& \bar x \end{pmatrix},
x,y \in \bC. \]
Elements of $\SUTw$ can be represented as elements of the unit $3$-sphere
in the space of quaternions
and the action is given by left-multiplications.
There is a unit sphere in the purely
quaternionic subspace in the space of quaternions,
i.e.,
\[ \begin{pmatrix} i \beta & \gamma + i \delta \\ -\gamma + i \delta & - i \beta \end{pmatrix}, 
\beta, \gamma, \delta \in \bR, \beta^2+\gamma^2+\delta^2 = 1. \]
$\SUTw$ acts on $\SI^2$ by conjugations.
This gives a homomorphism from $\SUTw$ to $\SOThr$
with kernels $\{I, -I\}$ where $I$ is a $2\times 2$ identity matrix, which
provides us with a double cover $\SUTw \cong \SI^3 \ra \SOThr \cong \rp^3$.

    Since $\SUTw$ double-covers $\SOThr$, the Lie group $\SUTw$ is diffeomorphic to $\SI^3$. 
    We realize the diffeomorphism in the following way: we take the ball $B^3_2$ of radius $2\pi$ so 
    that the boundary is identified with a point. Hence, we obtain $\SI^3$ .
    Let $||v||$ denote the norm of a vector $v$ in $B^3_2$.
    Take the map from $B^3_2 \ra B^3$ 
    given by sending a vector $v$ to $v$ if $||v||\leq \pi$ or to $(\pi-||v||)v$ if $||v||> \pi$. 
    This is a double-covering map clearly.

    By lifting, we obtain an identification of $\SUTw$ to $B^3_2/\sim$ by collapsing the sphere 
    at radius $2\pi$ to a point to be regarded as $-I$.
    Thus, every element of $\SUTw$ unless $I$ or $-I$ can be identified by a choice of a fixed point
    in $\SI^2$ and the angle of rotation between $0$ and $2\pi$. Hence, it is to be regarded as an element of 
    $\SOThr$ with a choice of a fixed point and the angle of rotation as seen from that point.    
    Furthermore, $I$ can be considered a rotation with any chosen fixed point and $0$ rotation angle
    and $-I$ a rotation with any fixed point and a $2\pi$ rotation angle.
    
    Since we have $R_{x,\theta} = R_{-x, 4\pi -\theta}$,
    an element of $\SUTw$ can be considered as a fixed point of $\SI^2$ with 
    angles in $[-2\pi,2\pi]$ where $-2\pi$ and $2\pi$ are identified
    or with angles in $[0, 4\pi]$ where $0$ and $4\pi$ are identified.

    Each one parameter family of commutative group $\SUTw$ double-covers 
    a one-parameter family of a commutative group in $\SOThr$. Hence, we have a circle $\SI^1_2$ of 
    twice the circumference in $\SUTw$.
    
     We denote by $R_{x,\theta}$ for $x \in \SI^2$ and $\theta \in [-2\pi,2\pi] \mod 4\pi$ 
     or $\theta \in [0, 4\pi] \mod 4\pi$, the element 
     of $B^3_2/\sim$ represented as above. 
          This element lifts $R_{x,\theta'}$ for $\theta'=\theta \mod 2\pi$. 
     Given an element of $\SUTw$ with a fixed vector and a rotation angle in $(0, 2\pi)$,
the multiplication has the effect of taking the antipodal vector as the fixed vector:
\begin{equation}\label{eqn:minusid} 
-\Idd R_{w, \theta} =  R_{w, 2\pi} R_{w, \theta}= R_{w, 2\pi+\theta} = R_{-w, 4\pi-2\pi-\theta}=R_{-w, 2\pi-\theta}.
\end{equation}

Note that this also holds when $\theta =0, \pi$.

     (Clearly, the meaning of $R_{x,\theta}$ changes depending on the context
     whether we are in $\SOThr$ or in $\SUTw$.)
    
\begin{defn}\label{defn:normalrep}
By choosing $\theta$ to be in $(0,2\pi)$, $R_{x,\theta}$ is now a point in $B^{3,o}_2 -\{O\}$. 
Thus, each point of $\SI^3 -\{I,-I\}$, we obtain a unique rotation $R_{x,\theta}$ for 
$\theta \in (0,2\pi), x \in \SI^2$ and conversely. {\em Hence, we have a normal representation 
of each element of $\SUTw-\{I, -I\}$ as a rotation with angles in $[0, 2\pi]$ with a fixed point in $\SI^2$}.
\end{defn}
    
    The ``multiplication by geometry" is also available for $\SUTw$:
Let $w_0,w_1$, and $w_2$ be vertices of a triangle, possibly degenerate, oriented in
the clockwise direction. 
Let $e_0, e_1$, and $e_2$ denote the opposite edges. 
Let $\theta_0,\theta_1$, and $\theta_2$ 
be the respective angles for $0\leq \theta \leq 2\pi$. Then 
\[R_{w_2,\theta_2} R_{w_1,\theta_1} R_{w_0,\theta_0} = - \Idd.\]
Here the minus sign is needed. We can show this by continuity 
since we can let $w_0=w_1=w_2$ and this holds
(That is, in this case, $\theta_i$ are angles of a Euclidean triangle.
The sum of two times the angles of a Euclidean triangle is $2\pi$.)

We can extend angles further.
An {\em immersed} triangle is a disk bounded by three arcs
meeting at three-points mapping with injective differential to $\SI^2$ 
so that the arcs map as geodesics. Here the angle at the vertex is measured 
by breaking it into smaller angles so that the angles inject and summing up
the values. Actually, we restrict it so that the angles are always $\leq 2\pi$. 

If none of the angles equal to $2\pi$ or $0$, then 
the disk is either 
\begin{itemize}
\item a hemisphere $H$ with a nondegenerate triangle $T$, $T\subset H$, with an edge in the boundary 
of the hemisphere removed, 
\item a hemisphere $H$ with a nondegenerate triangle $T$, $T \subset \SI^2 -H^o$, 
with an edge in the boundary of the hemisphere added, or
\item the exterior of a nondegenerate triangle or a pointed lune,
\end{itemize}
In the first case two angles are $<\pi$ and one $>\pi$,  
in the second case two angles are $>\pi$ and one $< \pi$, and 
in the last case, all three angles are $>\pi$ or two $>\pi$ and one $=\pi$. 
If one of the angles is zero or $2\pi$, we do not have a definition at this stage. 
(There are more we can include as ``immersed triangles'' but we do not need them in this paper)

Thus, for general $\theta_i$ arising from an immersed triangle, we see that 
\[R_{w_2,2\theta_2} R_{w_1,2\theta_1} R_{w_0,2\theta_0} = - \Idd.\]
if $\theta_i$s are angles of a triangle 
oriented counter-clockwise and $w_0,w_1$, and $w_2$ are ordered in the clockwise 
orientation from the disk.
(Of course, the above equation holds with $\Idd$ replacing $-\Idd$ in the $\SOThr$-cases.)

To summarize, denoting the rotation at $w_0,w_1$, and $w_2$ by ${\mathcal A}, {\mathcal B}$, and $\mathcal C$ respectively,
where $w_0,w_1,w_2$ are in clockwise orientation as seen from inside of an immersed triangle,
we obtain 
\begin{equation}\label{eqn:cba2}
{\mathcal{CBA}}=-\Idd, {\mathcal C}^{-1}=-{\mathcal{BA}}, {\mathcal C}=-{\mathcal A}^{-1}{\mathcal B}^{-1}.
\end{equation}
Thus, $\mathcal {BA}$ is obtained from multiplying $-I$ to the geometrically 
constructed one. That is $\mathcal {BA}$ can be considered as the element of $\SUTw$ with 
fixed point the antipode of the vertex of the triangle and the angle $2\pi$ minus 
the constructed angle. 
%We remark that this idea will work even when the triangles are degenerate.
(See Section \ref{subsec:gmult}.)

Of course, if one is considering only representations, we can always find an 
imbedded triangle with all edge length $\leq \pi$ giving us 
same elements of $\SUTw$.

\begin{prop}\label{prop:multrule2} 
Suppose that we have a generalized triangle with vertices $v_0, v_1$, and $v_2$
occurring in the clockwise order with angles $\phi_0, \phi_1, \phi_2$ respectively.
Let $\mathcal A$ be a transformation with a fixed point $v_0$ and rotation angle $\phi_0$, 
$\mathcal B$ be one with $v_1$ and $\phi_1$, and $\mathcal C$ be one with $v_2$ and $\phi_2$ respectively
satisfying $\mathcal C \mathcal B \mathcal A =-\Idd$.
Then the conclusions of Proposition \ref{prop:multrule} hold.
\end{prop}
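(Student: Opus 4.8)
The plan is to deduce the proposition from its $\SOThr$ counterpart, Proposition \ref{prop:multrule}, by pushing everything through the double cover $p \colon \SUTw \to \SOThr$ and controlling the single central sign that separates the two settings. Since $p(-\Idd) = \Idd$ and $p(R_{x,\theta}) = R_{x,\theta}$ (reducing the angle mod $2\pi$), the hypothesis $\mathcal C\mathcal B\mathcal A = -\Idd$ projects to $p(\mathcal C)\,p(\mathcal B)\,p(\mathcal A) = \Idd$, which is the $\SOThr$ relation \eqref{eqn:gcba}. Hence Proposition \ref{prop:multrule} applies to $p(\mathcal A), p(\mathcal B), p(\mathcal C)$ and already determines, case by case, the fixed point $v_2$ and the angle $\phi_2$ of $p(\mathcal C)$ --- which is all the conclusion asserts, once one knows that $\mathcal C$ lifts $p(\mathcal C)$ compatibly.

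First I would isolate the one identity that does all the sign bookkeeping: equation \eqref{eqn:minusid}, $-\Idd\,R_{w,\theta} = R_{-w,2\pi-\theta}$. Together with \eqref{eqn:cba2}, which gives $\mathcal C = -\mathcal A^{-1}\mathcal B^{-1}$ in $\SUTw$, this lets me pin down the correct lift: the two $\SUTw$-lifts of $p(\mathcal C)$ are $\mathcal A^{-1}\mathcal B^{-1}$ and its product with $-\Idd$, and \eqref{eqn:minusid} says these differ by sending the fixed point to its antipode and complementing the rotation angle. I would then traverse the same list of configurations that organizes Proposition \ref{prop:multrule} --- $v_0, v_1$ in general position, then $v_0 = v_1$, then $v_0 = -v_1$, together with the degenerate angle values in $\{0,\pi\}$ --- and verify in each that the lift selected by $\mathcal C = -(\mathcal B\mathcal A)^{-1}$ is the normal representative (Definition \ref{defn:normalrep}), so that its fixed point is $v_2$ and its angle is the $\phi_2 \in [0,\pi]$ named in Proposition \ref{prop:multrule}. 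The subcases in which $p(\mathcal C) = \Idd$ are covered the same way, with the $\SOThr$ answer $\mathcal C = \Idd$ simply replaced by $\mathcal C = -\Idd$ (angle $2\pi$, i.e. $\phi_2 = \pi$) for the allowed choices of $v_2$.

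The main obstacle is exactly this lift selection: a priori the factor $-\Idd$ might yield $(-v_2, \pi - \phi_2)$ in place of $(v_2, \phi_2)$, so I must show that \eqref{eqn:minusid} always returns the normal representative and reproduces Proposition \ref{prop:multrule} on the nose. I expect to resolve this uniformly rather than by grinding each case: the $\SUTw$ multiplication rule \eqref{eqn:cba2} was itself obtained by continuity from the pointed-point configuration $v_0 = v_1 = v_2$, where the $\phi_i$ are the angles of a Euclidean triangle and the product is visibly $-\Idd$; since $-\Idd$ is isolated in the centre of $\SUTw$, the selected lift cannot jump as the configuration deforms, so anchoring at that base case fixes the sign throughout and keeps $\phi_2$ in $[0,\pi]$. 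The only extra care is for the genuinely degenerate triangles (lunes, segments, hemispheres, pointed points), but there the standard-position convention of Proposition \ref{prop:stdp} provides the great circles containing the edges, so the reflections and the computation $\mathcal C = -(\mathcal B\mathcal A)^{-1}$ proceed verbatim.
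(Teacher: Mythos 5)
Your proposal is correct and is essentially the paper's own (largely implicit) argument: the paper likewise treats Proposition \ref{prop:multrule2} as a direct transfer of Proposition \ref{prop:multrule} through the double cover $\SUTw \to \SOThr$, with the only new content being the central sign $-\Idd$, which it too pins down by continuity from the coincident-vertex configuration where the $\phi_i$ are Euclidean triangle angles. Your covering-map packaging, with equation \eqref{eqn:minusid} controlling the antipodal flip between the two lifts and connectedness forbidding a jump of the central factor, is just a more explicit rendering of the paper's remark that in the $\SUTw$ case only ``obvious sign changes'' are needed.
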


    By Proposition \ref{prop:poprep},
    $\rep(\pi_{1}(P), \SUTw)$ can be identified with the tetrahedron: 
    Clearly, $\rep(\pi_{1}(P), \SUTw)$ maps onto $\rep(\pi_{1}(P), \SOThr)$. 
    In the interior of the second space, 
    each element corresponds to the triangle with angles between $0$ and $\pi$.
    This gives a unique character with a representation $h$ in the $\rep(\pi_{1}(P), \SUTw)$ by taking $h(c_0)$ and $h(c_1)$ as given 
    in the triangle and taking as $h(c_2)$ the $-\Idd$ times the one given by the triangle. 
    The tetrahedron $G$ parametrizing $\rep(\pi_{1}(P), \SOThr)$ now lifts to 
    $\rep(\pi_{1}(P), \SUTw)$ by lifting triangles to the $\SUTw$-characters. 
    (We need a bit of care when some of ${\mathcal A}, {\mathcal B}$, and $\mathcal C$ correspond to $I$ or $-I$.) 
    The map $\iota_G$ from the tetrahedron to $\rep(\pi_{1}(P), \SUTw)$ is one-to-one since the distinct pair of angles in $[0, 2\pi]$ 
    in the normal representation imply different $\SUTw$-elements up to conjugations. (See Definition \ref{defn:normalrep})
    The map from the tetrahedron $G$ to $\rep(\pi_{1}(P), \SOThr)$ is a branch-covering map with the branch locus equal to the union of 
    the tree axes. Removing the union of the axes from the tetrahedron, we obtain a map from the space $G'$ to the open subspace $S$
    of $\rep(\pi_{1}(P), \SUTw)$ where the angles are all distinct from $\pi$ or $\pi/2$. We know that $S$ is a cover of 
    the open subset of $\rep(\pi_{1}(P), \SOThr)$ where all the angles are distinct from $\pi$ or $\pi/2$. Since so is $G'$, 
    we see that the map $\iota_G$ is onto. 
    Since this correspondence is one-to-one map from a compact space to a Hausdorff space, we obtain the following:

\begin{prop}\label{prop:pairsu} 
$\rep(\pi_{1}(P), \SUTw)$ is homeomorphic to the tetrahedron, and map to 
$\rep(\pi_1(P),\SOThr)$ as a four to one branched covering map induced by the Klein four-group $V$-action. $\square$
\end{prop}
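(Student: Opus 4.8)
The plan is to build directly on Proposition \ref{prop:poprep}, which already identifies $\rep(\pi_1(P),\SOThr)$ with the $V$-quotient $G/V$ of the tetrahedron. The core construction is an explicit lifting map $\iota_G\colon G \to \rep(\pi_1(P),\SUTw)$ sending a triangle with angles $(\theta_0,\theta_1,\theta_2)$ to the character of the representation $h$ defined by $h(c_0)=R_{w_0,2\theta_0}$, $h(c_1)=R_{w_1,2\theta_1}$, and $h(c_2)=-\Idd\,R_{w_2,2\theta_2}$, where the $R_{w_i,2\theta_i}$ are the $\SUTw$-rotations furnished by the normal representation (Definition \ref{defn:normalrep}). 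The purpose of the inserted factor $-\Idd$ is that equation \eqref{eqn:cba2} gives $R_{w_2,2\theta_2}R_{w_1,2\theta_1}R_{w_0,2\theta_0}=-\Idd$, so that $h(c_2)h(c_1)h(c_0)=(-\Idd)(-\Idd)=\Idd$ and $h$ descends to a genuine homomorphism of $\pi_1(P)$. First I would verify that $\iota_G$ is well defined and continuous on all of $G$, using Proposition \ref{prop:multrule2} to treat the degenerate triangles on the boundary where some generator maps to $\pm\Idd$.

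Next I would prove injectivity of $\iota_G$. On the interior a point of $G$ is recorded by its angle triple $(\theta_0,\theta_1,\theta_2)\in(0,\pi)^3$; by the normal representation each $R_{w_i,2\theta_i}$ is determined up to conjugacy by its rotation angle $2\theta_i\in(0,2\pi)$, so distinct triples yield non-conjugate $\SUTw$-representations and hence distinct characters. The boundary cases are handled in the same spirit using the classification of degenerate triangles from Section \ref{sec:glimit}.

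The genuinely delicate step is surjectivity of $\iota_G$, where I must show that every $\SUTw$-character of $\pi_1(P)$ is realized by a triangle. My plan is the comparison-of-covers argument: delete the three axes from $G$ to obtain $G'$, and let $S\subset\rep(\pi_1(P),\SUTw)$ be the locus where all three rotation angles avoid both $\pi$ and $\pi/2$. Both the composite $G'\xrightarrow{\iota_G}\rep(\pi_1(P),\SUTw)\to\rep(\pi_1(P),\SOThr)$ and the covering $S\to\rep(\pi_1(P),\SOThr)$ are covering maps onto the same open base; since $G'$ already surjects onto that base by Proposition \ref{prop:poprep} and $\iota_G$ is a local homeomorphism there, I conclude $\iota_G(G')=S$. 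Surjectivity on the remaining lower-dimensional locus then follows from continuity, the density of nondegenerate triangles (Proposition \ref{prop:dense0}), and compactness of $G$. I expect this to be the main obstacle, since it is the only place where one must genuinely rule out a $\SUTw$-character escaping the triangle construction rather than merely reorganizing already-established bijections.

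With $\iota_G$ established as a continuous bijection from the compact tetrahedron $G$ onto the Hausdorff space $\rep(\pi_1(P),\SUTw)$, it is automatically a homeomorphism, giving the first assertion. For the second, I observe that the diagram $G\xrightarrow{\iota_G}\rep(\pi_1(P),\SUTw)\to\rep(\pi_1(P),\SOThr)$ commutes with the quotient $G\to G/V$ of Proposition \ref{prop:poprep}: the covering $\SUTw\to\SOThr$ sends $R_{w_i,2\theta_i}$ and $-\Idd\,R_{w_i,2\theta_i}$ to the same $\SOThr$-rotation, and the ambiguity in the choice of antipodal fixed points is exactly recorded by the generators $I_A,I_B,I_C$ of $V$. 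Transporting the $V$-action across the homeomorphism $\iota_G$ therefore exhibits $\rep(\pi_1(P),\SUTw)\to\rep(\pi_1(P),\SOThr)$ as the quotient by $V$, a four-to-one branched covering whose branch locus is the union of the three axes fixed by the involutions $I_A,I_B,I_C$.
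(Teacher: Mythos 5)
Your proposal is correct and takes essentially the same route as the paper's own proof: the paper likewise defines the lift $\iota_G$ by taking $h(c_0)$, $h(c_1)$ from the triangle and $h(c_2)$ as $-\Idd$ times the triangle-constructed rotation, proves injectivity via the normal representation (Definition \ref{defn:normalrep}), obtains surjectivity by comparing the two covers $G'$ (tetrahedron minus the three axes) and $S$ (characters with angles avoiding $\pi$ and $\pi/2$) of the same open subset of $\rep(\pi_1(P),\SOThr)$, and finishes with the compact-to-Hausdorff argument and the identification of the $V$-action. The only difference is expository: you spell out the homomorphism check via equation \eqref{eqn:cba2} and the degenerate boundary cases in more detail than the paper does.
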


We note here that given boundary components $c_0, c_1$, and $c_2$ of $P$, 
by letting $c_0$ and $c_1$ be regarded as generators of $\pi_1(P)$, 
we see that $-h(c_2)$ is obtained from triangle constructions, 
i.e., $-h(c_2)^{-1}=-h(c_1)h(c_0)$. 
Thus, if we use, the angles of 
the fixed point of $h(c_i)$ in $\SUTw$, then we must use $2\pi$ minus 
two times the angle of third vertex of the triangle since we need to take 
the antipodes. 

Hence, by true angle parameterization, $\rep(\pi_1(P), \SUTw)$ is parametrized by 
the tetrahedron $G'$ given by sending $(\theta_0,\theta_1,\theta_2)\ra (2\theta_0,2\theta_1,2\pi-2\theta_2)$
where $G'$ lies in in the positive octant of $\bR^{3}$ and is given by
by the equation $x+y \leq z$, $x+z \leq y$,
$y+z \leq x$, and $x+y+z \leq 4\pi$. (See Proposition 3.5 of \cite{JW} and also \cite{JW2}.)

\begin{rem}\label{rem:T}
However, we will use our tetrahedron $G$ to parameterize $\rep(\pi_1(P), \SUTw)$
because one can read off the characters to $\SOThr$ directly
with the understanding that $h(c_2)$s are the antipodal ones to what are obtained 
from the triangle constructions. 

It will be an important point in this paper that we will be ``pretending'' that our $\SUTw$-characters correspond 
to triangles. This will not cause any trouble in fact. 
\end{rem}

%%% January 17 6:31 pm

%%% Free group of rank 2 case
\subsection{The $\SOThr$- and $\SUTw$-character spaces of the free group of rank two} 
\label{subsec:free}

%\subsection{The $\SOThr$-character space}
This will be a short section: 
Let $F_2$ denote a free group of two generators.
We will define $H(F_2)$ as the space of
homomorphisms $F_2 \ra \SOThr$ quotient by the conjugation 
action by $\SOThr$. That is,
\[H(F_2) = \Hom(F_2,\SOThr)/\SOThr.\]

Of course $F_2$ is isomorphic to $\pi_1(P)$ for a pair of pants $P$
so the above subsections covered what we intend to redo. 
(See Propositions \ref{prop:poprep} and  \ref{prop:pairsu}).
The reason we {\em redo} this is since we definitely need some modification of the description in this subsection later.
(The descriptions as $\pi_1(P)$ using triangles is also needed at other places.)

In this section, we will use the generators directly: 

Let $g_0$ and $g_1$ be the two generators of $F_2$. 
Consider a representation $h: F_2 \ra \SOThr$. 
The $h(g_i)$ has a pair of fixed points $f_i$ and 
$-f_i$ antipodal to each other for each $i=0,1$.
Then $f_0$, $-f_0$, $f_1$, and $-f_1$ lie on a great circle
which is unique if $\{f_0,-f_0\}$ are distinct from 
$\{f_1,-f_1\}$. Since up to $\SOThr$-action, 
the configuration is determined by the distance between 
the pair, we chose a maximal arc in the complement of 
$\{f_0,-f_0,f_1,-f_1\}$. Then this pair is described by 
the length $l$ of the arc and the counter-clockwise respective angles
$\theta_0$ and $\theta_1$ of 
rotations of $g_0$ and $g_1$ at the corresponding endpoints of $l$.
Thus $0\leq l \leq \pi$ and $0 \leq \theta_i \leq 2\pi$
for $i=0,1$. Since the choice of the arc is not unique, 
there are maps
\begin{eqnarray}
T_0: (l, \theta_0, \theta_1) &\mapsto& (\pi-l, 2\pi-\theta_0, \theta_1) \nonumber\\
T_1: (l, \theta_0, \theta_1) &\mapsto & (\pi-l,\theta_0,2\pi-\theta_1) \nonumber \\
T_0T_1 = T_1T_0 : 
(l, \theta_0, \theta_1) &\mapsto & (l, 2\pi-\theta_0, 2\pi-\theta_1) \label{eqn:F21} 
\end{eqnarray}
(This is again called a {\em Klein four-group action} and is identical with 
the Klein four-group action for the character spaces of a pair of pants. 
We will need the maps later in Section \ref{subsec:ABC} and Section \ref{subsec:tildeO}.)
Also, if $\theta_i = 0$, then $l$ is not determined. Therefore,
there is another equivalence relation 
\begin{equation}\label{eqn:F22}
(l,0,\theta_1) \sim (l',0,\theta_1), 
(l,\theta_0, 0) \sim (l',\theta_0,0), 0\leq l,l'\leq \pi, 
\end{equation}
for any $\theta_0$ and $\theta_1$. 
It is straightforward to see that these are all possible equivalence 
classes under the conjugation action of $\SOThr$.

\begin{prop}\label{prop:HF2} 
$H(F_2)$ is homeomorphic to a $3$-ball. 
%The boundary is a $2$-sphere with 
%an orbifold structure with four singularities of order two.
The boundary of the $3$-ball corresponds to precisely the 
subspace of abelian characters of $F_2$.
\end{prop}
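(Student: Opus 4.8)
The plan is to reduce the statement to the tetrahedron model already obtained and then recognise the quotient as a cone. Since $F_2 \cong \pi_1(P)$ for a pair of pants $P$, Proposition \ref{prop:poprep} identifies $H(F_2)=\rep(\pi_1(P),\SOThr)$ with $G/V$, where $G$ is the solid regular tetrahedron and $V=\{\Idd,I_A,I_B,I_C\}$ acts by the three half-turns about the axes through the centroid $c=(\pi/2,\pi/2,\pi/2)$ and the midpoints of opposite edges. (The same model can be read off directly from the generator coordinates of this subsection: the raw space $[0,\pi]_l\times\SI^1_{\theta_0}\times\SI^1_{\theta_1}$ modulo the relations \eqref{eqn:F21} and \eqref{eqn:F22} is exactly $G/V$.) So it suffices to prove $G/V\cong B^3$ and to identify its boundary.

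Next I would exploit that $G$ is convex with centroid $c$, which is the unique common fixed point of $V$. Hence $G$ is star-shaped about $c$, and radial contraction exhibits $G$ as the cone $c*\partial G$ over its boundary $2$-sphere. Because $V$ acts linearly fixing $c$, it permutes the radial segments and preserves this cone structure, so the quotient inherits one: $G/V\cong c*(\partial G/V)$.

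It then remains to show $\partial G/V\cong\SI^2$. The group $V$ acts on $\partial G\cong\SI^2$ by orientation-preserving homeomorphisms, and such a quotient of a $2$-sphere by a finite group of orientation-preserving maps is again a $2$-sphere. Concretely, each nonidentity element fixes exactly the two edge-midpoints its axis meets, so there are six fixed points forming three $V$-orbits; the orbifold Euler-characteristic count $\chi(\partial G/V)=2/4+3(1-1/2)=2$ shows the underlying surface is $\SI^2$ (a sphere with three order-$2$ cone points). Therefore $G/V\cong c*\SI^2=B^3$.

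Finally, the boundary sphere is $\partial G/V$, the image of the degenerate triangles, which by Lemma \ref{lem:perturb} are precisely the representations with a common invariant axis; parametrising these by the angle pair $(\theta_0,\theta_1)$ modulo the axis-flip $(\theta_0,\theta_1)\mapsto(-\theta_0,-\theta_1)$ identifies the common-axis reducible locus with the pillowcase $T^2/\pm\cong\SI^2$, matching the boundary. The main obstacle, and the point requiring the most care in the clause ``precisely the abelian characters'', is the $V$-fixed centroid $c$: it is the octant triangle with all angles $\pi/2$, whose two generators are perpendicular half-turns, hence a reducible (abelian) character that is nevertheless a nondegenerate triangle and lands at the center of $B^3$ rather than on $\partial B^3$. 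I would therefore need to verify that this is the unique abelian character off the boundary, so that the boundary indeed accounts for all common-axis reducibles and no spurious identifications occur, and to state the boundary correspondence with this single central exception made explicit.
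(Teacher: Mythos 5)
Your proof of the first clause is correct and takes a genuinely different route from the paper's. The paper stays entirely in the coordinates $(l,\theta_0,\theta_1)$ of this subsection and assembles the ball by hand: it restricts to $[0,\pi/2]\times\SI^1\times\SI^1$, quotients the fibers by $T_0T_1$ to get $[0,\pi/2]\times\SI^2$, folds the fiber over $l=\pi/2$ along the reflection induced by $T_0$, and finally collapses the two $I$-collapsing quadrilaterals of equation \eqref{eqn:F22}. You instead quote Proposition \ref{prop:poprep} and observe that $V$ acts linearly with unique fixed point the centroid $c$, so $G/V\cong c\ast(\partial G/V)$, and $\partial G/V\cong \SI^2$ by the orbifold Euler characteristic count (which you carried out correctly). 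What the paper's route buys is the explicit coordinate model it reuses later (e.g.\ in Sections \ref{subsec:free2} and \ref{subsec:ABC}); what your route buys is brevity and exact bookkeeping of where each character lands in the quotient---which is precisely what exposes the issue below.

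Your reservation about the centroid is not a loose end to be checked; it is a genuine counterexample to the second clause as stated, and the paper's own proof overlooks it. The centroid of $G$ is the point $(l,\theta_0,\theta_1)=(\pi/2,\pi,\pi)$ in this subsection's coordinates: the generators go to half-turns about perpendicular axes, $\mathrm{diag}(1,-1,-1)$ and $\mathrm{diag}(-1,1,-1)$, which commute, so the character is abelian, yet the associated triangle is the nondegenerate octant triangle. This point is fixed by $T_0$, $T_1$, and $T_0T_1$, is untouched by the $I$-collapse, and near it the quotient is $\bR^3$ modulo the Klein four-group of coordinate half-turns, which is again homeomorphic to $\bR^3$; hence it is an interior point of the ball. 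The paper's proof fails exactly at the sentence asserting that abelian characters are the configurations with one angle zero or with segment length $0$ or $\pi$: this enumeration omits the commuting-half-turns case, even though two nontrivial rotations in $\SOThr$ commute if and only if they share an axis or are both half-turns about perpendicular axes. The corrected statement, which your argument delivers, is that the boundary sphere corresponds precisely to the abelian characters possessing a common fixed axis, and that there is exactly one additional abelian character---the Klein four-group one---lying in the interior over the centroid. The uniqueness you flagged is easy to complete: an abelian image with no common axis must be a Klein four-group, and all such representations of $F_2$ are conjugate. (The same omission recurs in Proposition \ref{prop:abelian}(i), whose proof likewise assumes every abelian representation fixes a common antipodal pair.)
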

\begin{proof}

Let $I=[0,\pi]$. 
We consider the space $I\times \SI^1 \times \SI^1$ parameterizing 
the above configurations of arc and isometries with fixed points 
at the end. There is a surjective (continuous) map 
$ Q:I\times \SI^1\times \SI^1 \ra H(F_2)$ defined 
by sending the configuration to the equivalence class of representations 
sending $g_0$ and $g_1$ to the isometries with the fixed points
at the end of the arc. 

Let $\sim$ be given by equations \eqref{eqn:F21} and \eqref{eqn:F22}.
We see that the quotient space $B = I\times \SI^1\times \SI^1/\sim$ 
is mapped to $H(F_2)$ in a one-to-one and onto manner.
There is an inverse map defined on $H(F_2)$ by sending 
each character to $(l, \theta_0, \theta_1)$ by 
doing the procedure before this proposition.
Hence $B$ is homeomorphic to $H(F_2)$. 

We now show that the quotient space $B$ is homeomorphic to $B^3$.% and 
%has the right orbifold structure. 

All points of $B$ have equivalent points on $[0,\pi/2]\times \SI^1\times \SI^1$.
We take a domain $D= [0,\pi/2]\times[0,\pi] \times [0,\pi]$.
Then by the equivalence relation generated by $T_0T_1=T_1T_0$
acting fiberwise on $[0,\pi/2]\times \SI^1\times \SI^1$, 
the quotient image $D'$ of $D$ is homeomorphic to 
$[0,\pi/2]\times \SI^2$ 
%where the sphere $\SI^2$ 
%has the orbifold-structure with four 
%singular cone-points of order two. Thus $D'$ has four arcs of 
%singularities. 

The equivalence relation generated by $T_0$ is 
the same as that of $T_1$. Only the point in $\{\pi/2\}\times \SI^2$ 
are involved here. A circle $S^1$ in $\{\pi/2\}\times \SI^2$
contains the four singular points.
The equivalence can be described by an isometric reflection on a circle
with some metric on a sphere of constant curvature $1$. 
This last identification gives us a ball $B^3$. 

The final identification corresponding to equation \eqref{eqn:F22} 
collapses two rectangular disks in $B_3$ with one edge 
in the boundary sphere $\partial B_3$ 
meeting each other at a connected boundary edge. 
The rectangular disks are foliated by arcs in $I$-directions, and hence can be considered 
strips. % where the transverse arcs in $\SI^2$-direction connect 
%two singular arcs in $\SI^2$. 
The two disks are called the $I$-collapsing
quadrilaterals. The collapsing the strips in $I$-direction gives us the final result.

Since abelian characters correspond to the configurations where 
one of the angle is zero, they are in the $I$-collapsed
part, or in where the segment is of length $0$ or $\pi$. 
We see that these points correspond to the points of boundary of the ball $B_3$
when collapsed as above. 

\end{proof}

The last collapse is said to be the {\em $I$-collapse.}

\subsection{The $\SUTw$-character space} 
\label{subsec:free2}

Now we go over to the $\SUTw$ case.
Define
\[H_\SUTw(F_2) := Hom(F_2,\SUTw)/\SUTw.\]
Then since $F_2$ is isomorphic to $\pi_1(P)$, 
the space is homeomorphic to a tetrahedron $G$.

To understand in term of the above picture, 
we look at two points in $\SI^2$ with angles in the range $(0, 2\pi)$. 
This determines a unique pair of points and the distance between them 
if the angles are not $0$ or $2\pi$. 
Parameterizing by the distance and the angle, 
we obtain $[0,\pi]\times [0,2\pi]\times[0,2\pi]$
by picking only the fixed points with angle in $[0, 2\pi]$. 
The equivalence relation is given by $(l,0,\theta_2)\sim (l',0,\theta_2)$ 
for any $l, l'$ and $(l,2\pi,\theta_2)\sim (l',2\pi,\theta_2)$ 
for any $l, l'$ and $(l,\theta_1,0)\sim (l',\theta_1,0)$ for any pair $l,l'$ and
$(l,\theta_1,2\pi)\sim (l',\theta_1,2\pi)$ for any pair $l,l'$.

These lifts the $I$-collapsing above 
and there are no other nontrivial equivalence 
relations.
This contracts sides of the cube in the $l$ direction. 
Thus, we obtain a $3$-ball. 

Since $[0,\pi]\times [0,2\pi]\times[0,2\pi]$ represents $[0,\pi]\times \SI^1\times \SI^1$ 
up to identification, 
we see that there is a four-to-one branch covering map to 
$H(F_2)$ as generated by transformations of forms $T_0$ and $T_1$ as in equation \eqref{eqn:F21}.

The following is clear:
\begin{prop}\label{prop:HFSU2} 
$H_\SUTw(F_2)$ is homeomorphic to a $3$-ball.
The boundary of the $3$-ball precisely corresponds to the 
subspace of abelian characters in $H(F_2)$.
\end{prop}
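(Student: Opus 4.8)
The plan is to imitate the proof of Proposition \ref{prop:HF2} for the $\SOThr$-case, while exploiting the fact that in $\SUTw$ the normal representation of Definition \ref{defn:normalrep} removes the fixed-point ambiguity that produced the Klein four-group action there. First I would set up the parameterizing surjection exactly as in Section \ref{subsec:free}. Given $h\colon F_2 \to \SUTw$, write $h(g_0)=R_{f_0,\theta_1}$ and $h(g_1)=R_{f_1,\theta_2}$ in normal form, so that $\theta_1,\theta_2\in (0,2\pi)$ whenever $h(g_0),h(g_1)\ne \pm\Idd$; then $f_0,-f_0,f_1,-f_1$ lie on a great circle, and up to conjugation by $\SUTw$ the configuration is determined by the arc-length $l=d(f_0,f_1)\in[0,\pi]$ together with $\theta_1,\theta_2$. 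This defines a continuous surjection $Q\colon [0,\pi]\times[0,2\pi]\times[0,2\pi]\to H_\SUTw(F_2)$.

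Next I would identify the fibres of $Q$. The crucial difference from the $\SOThr$-case is that, because the angle is pinned into $(0,2\pi)$ and $R_{x,\theta}=R_{-x,4\pi-\theta}$ with $4\pi-\theta\notin[0,2\pi]$, the choice of fixed point is forced; hence the maps $T_0,T_1$ of \eqref{eqn:F21} do not survive, and the ordered pair $(f_0,f_1)$ together with $l$ is canonically determined by $h$ when both generators are non-central. Thus on the open cube $(0,\pi)\times(0,2\pi)\times(0,2\pi)$ the map $Q$ is injective. The only collapsing occurs when a generator is central: if $\theta_1\in\{0,2\pi\}$ then $h(g_0)=\pm\Idd$, the axis $f_0$ and therefore $l$ carry no information, giving $(l,0,\theta_2)\sim(l',0,\theta_2)$ and $(l,2\pi,\theta_2)\sim(l',2\pi,\theta_2)$, and symmetrically for $\theta_2$. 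Mirroring the ``straightforward'' verification of Section \ref{subsec:free} (and using the multiplication rules of Proposition \ref{prop:multrule2}), I would check these $I$-collapses are the only nontrivial identifications, so $Q$ descends to a bijection from the quotient of the cube by these relations.

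It then remains to recognize the quotient topologically. Viewing $[0,2\pi]\times[0,2\pi]$ as a $2$-disk $D$ with the $l$-coordinate as an interval fibre over it, the relations collapse the fibre $[0,\pi]$ to a point over each point of $\partial D$ while leaving it intact over $D^o$. This is exactly the structure of a $3$-ball fibred over $D$ by the vertical segments of a strictly convex lens $\{(z,s): z\in D,\ |s|\le 1-|z|^2\}$, and an explicit fibre-preserving map realizes the quotient as such a lens; since the source is compact and $\bR^3$ is Hausdorff, the induced continuous bijection is a homeomorphism onto a set homeomorphic to $B^3$. Finally, a character is abelian exactly when $h(g_0),h(g_1)$ share an axis ($l\in\{0,\pi\}$) or one of them is central ($\theta_i\in\{0,2\pi\}$); under the identification above these are precisely the points surviving on $\partial D^3$, which gives the second assertion.

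The main obstacle I anticipate is not the topology but the bookkeeping of the fibres: verifying that distinct interior triples really give inequivalent $\SUTw$-representations and that the faces $l\in\{0,\pi\}$ are \emph{not} over-collapsed, which is exactly where one must lean on the uniqueness of the normal form. One could of course shortcut everything by noting $F_2\cong\pi_1(P)$ and invoking Proposition \ref{prop:pairsu} to identify $H_\SUTw(F_2)$ with the tetrahedron $G$, which is a $3$-ball; but the cube-with-$I$-collapse description is what is reused later, so I would carry out the direct argument.
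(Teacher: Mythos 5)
Your proposal is correct and follows essentially the same route as the paper: the paper also parameterizes by the cube $[0,\pi]\times[0,2\pi]\times[0,2\pi]$ via the normal form (distance between the forced fixed points plus the two angles), observes that the only identifications are the $I$-collapses over $\theta_i\in\{0,2\pi\}$, contracts the sides of the cube in the $l$-direction to get a $3$-ball, and reads off the abelian characters as the boundary. Your explicit lens model and the compactness/Hausdorff argument simply fill in details the paper leaves as ``clear,'' so there is no substantive difference.
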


\section{The character space of a closed surface of genus $2$.}\label{sec:ch}

We will now discuss the character space of the fundamental group of
a closed orientable surface $\Sigma$ of genus $2$.

First, we discuss the two-components  of the character space. 
Next, we discuss how to view a representation as two related representations 
of the fundamental groups of two pairs of pants glued by three pasting maps.

\begin{figure}

\centerline{\includegraphics[height=4cm]{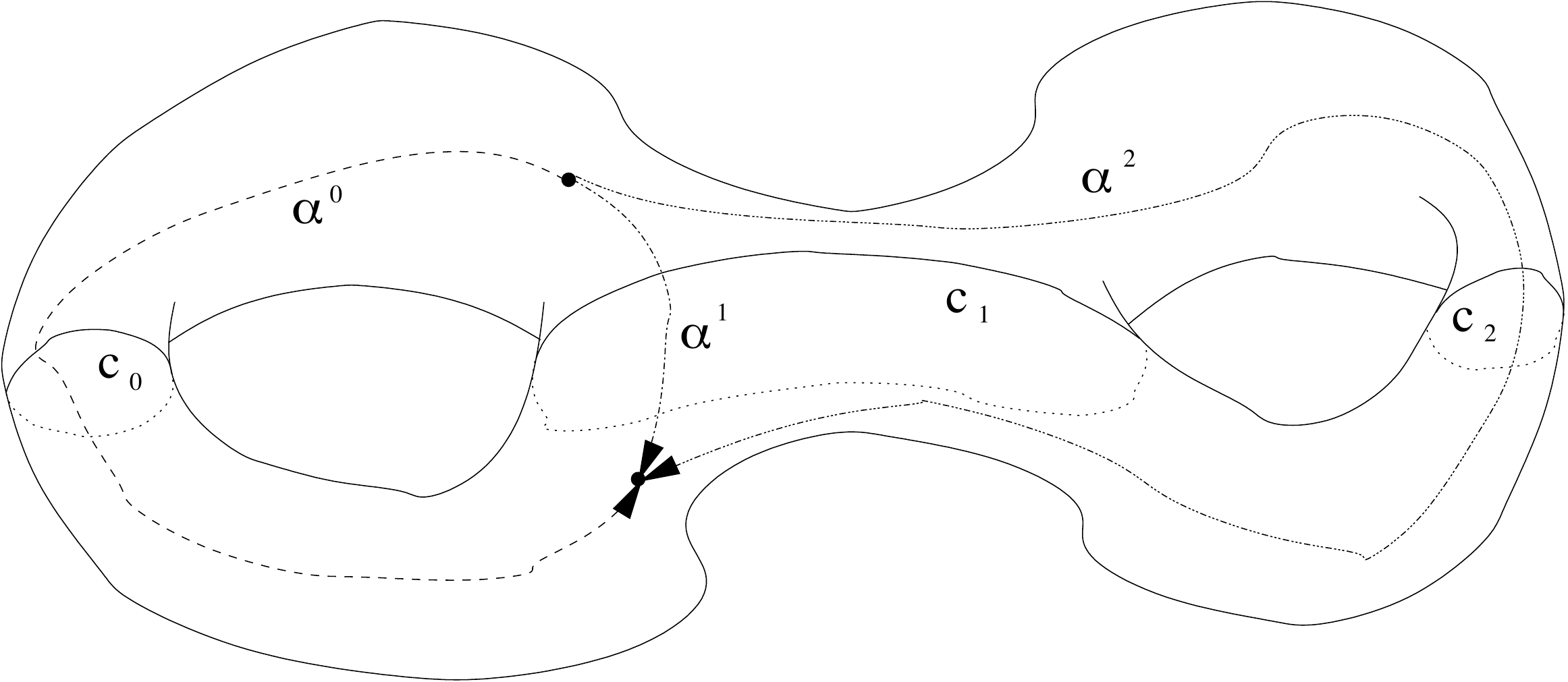}}
\caption{$\Sigma$ and closed curves. }
\label{fig:figsigma}
\end{figure}

%% Two components
\subsection{Two components}\label{subsec:twocomp}
We take three simple closed curves $c_0, c_1$, and $c_2$ on $\Sigma$
so that the closures of the two components of the complement of their union are
two pairs of pants $S_0$ and $S_1$ so that $S_0 \cap S_1 = c_0 \cup c_1 \cup c_2$.
We assume that there is an order two diffeomorphism $\phi$ acting
on $c_0$, $c_1$, and $c_2$ respectively and sending $S_0$ to $S_1$
and vice versa. We assume that $c_i$ are oriented with boundary
orientation from  $S_0$.

There are two components of $\rep(\pi_1(\Sigma), \SOThr)$
as shown by Goldman \cite{Goldman3}. To summarize,
each representation corresponds to a flat $\bR^3$-bundle over
$\Sigma$. The Stiefel-Whitney class in $H^2(\Sigma, \pi_1(\SOThr)) = \bZ_2$ of
the flat bundle classifies the component.
A component ${\mathcal C}_0$ contains the identity representation,
i.e., a representation which send every element to the identity.
Let $c_1,d_1,c_2,$ and $d_2$ are the simple 
closed curves which represents a fixed set of the standard generators of 
$\pi_1(\Sigma)$ so that $c_1, c_2$ are obtained from the boundary 
components of $S_1$ by conjugating by simply disjoint paths in $S_1$.
(There is a slight notational abuse here.)
The other component to be denoted by ${\mathcal C}_1$
contains the representation $h^*$ which sends
generators corresponding to $c_1$ and $d_1$ a simple closed curve
meeting $c_1$ once transversely and disjoint from $c_2, d_2$
to the following matrices respectively
\begin{equation}
A_1 :=
\begin{bmatrix}
1 & 0 & 0 \\
0 & -1 & 0 \\
0 & 0 & -1
\end{bmatrix}  \hbox{ and }
B_1:=
\begin{bmatrix}
-1 & 0 & 0 \\
0 & -1 & 0 \\
0 & 0 &  1
\end{bmatrix}
\end{equation}
And $h^*$ sends $c_2$ and a simple closed curve $d_2$ meeting 
$c_2$ once
transversely and disjoint from $c_1, d_1$ to
the identity matrices respectively
%\begin{equation}
%A_1 :=
%\begin{bmatrix}
%-1 & 0 & 0 \\
%0 & 1 & 0 \\
%0 & 0 & -1
%\end{bmatrix},

%B_1:=
%\begin{bmatrix}
%-1 & 0 & 0 \\
%0 & 1 & 0 \\
%0 & 0 & -1
%\end{bmatrix}
%\end{equation}

%Here $\alpha_2$ is mapped to the matrix as a consequence
%\begin{equation}
%A_2 :=
%\begin{bmatrix}
%1 & 0 & 0 \\
%0 & 0 & -1
%\end{bmatrix},
%\end{equation}

Let this representation be named the {\em other-component basis representation}.

Recall that the three-dimensional sphere $\SI^3$ sits as the unit sphere in the purely
quaternionic subspace in the space of quaternions,
i.e.,
\[ \begin{pmatrix} i b & c+ i d \\ -c + i d & - i b \end{pmatrix},
b^2+c^2+d^2 = 1. \]
Elements of $\SUTw$ can be represented as elements of the unit sphere
in the subspace of quaternions
and the action is given by left-multiplications.
$\SUTw$ acts on $\SI^2$ by conjugations.
This gives a homomorphism from $\SUTw$ to $\SOThr$
with kernels $\{I, -I\}$ where $I$ is a $2\times 2$ identity matrix.

\begin{rem}\label{rem:otherc}
The representation belongs to ${\mathcal C}_1$:

The above matrices $A_1$ and $B_1$ respectively correspond to
\[
\begin{pmatrix} i & 0 \\ 0 & -i \end{pmatrix} \hbox{ and }
\begin{pmatrix} 0 & i \\ i & 0 \end{pmatrix}  
\] 
in $\SUTw$ whose commutator equals $-I$.
This does not give a representation $\pi_1(\Sigma) \ra \SUTw$ but
will induce a representation $\pi_1(\Sigma) \ra \SOThr$ whose
Stiefel-Whitney class is not trivial.
\end{rem}

\subsection{Representations considered with pasting maps}
Let us choose a base point  $x^*$ of $\Sigma$ in the interior of $S_0$.
Given a representation $h: \pi_1(\Sigma) \ra \SOThr$,
we obtain a representations $h_0: \pi_1(S_0) \ra \SOThr$ and
$h_1: \pi_1(S_1) \ra \SOThr$. The second one is determined only
up to conjugation.
A representation $h$ defined on $\pi_1(\Sigma)$ is {\em triangular} 
if the restrictions to $\pi_1(S_0)$ and $\pi_1(S_1)$ are triangular. 
Let us restrict ourselves to triangular representation in $\rep(\pi_1(\Sigma),\SOThr)$ 
for now. 

Let $c^0_0, c^0_1$, and $c^0_2$ denote 
the oriented simple closed curves on $S_0$ with
base point $x_0^*$ that 
are freely homotopic to $c_0,c_1$, and $c_2$ respectively,
and bounding annuli with the curves respectively.
Let us choose a base point $x_1^*$ in $S_1$ and oriented
simple closed curves $c^1_0, c^1_1$, and $c^1_2$ homotopic to
$c_0, c_1$, and $c_2$.
Let $\alpha^i$ denote a simple arc from $x_0^*$ to $x_1^*$ whose interior is
disjoint from $c_j$ for $j \ne i$ and meets $c_i$ exactly once.
We can choose so that $\alpha^0*c_0^1*\alpha^{0,-1}$ is homotopic to
$c_0^0$.

Let $d_1$ denote the closed loop based at $x_0^*$ meeting $c_0$ and $c_1$ only once
transversally and disjoint from $c_2$ oriented from $c_0$ to $c_1$ direction.
Let $d_2$ denote the closed loop based at $x_0^*$ meeting $c_0$ and $c_2$ only once
transversally and disjoint from $c_1$ oriented from $c_2$ to $c_0$ direction.
We assume that $\alpha^1*\alpha^{0,-1}$ is homotopic to $d_1$,
$\alpha^2*\alpha^{0,-1}$ is homotopic to $d_2$, and
$c_1^0, c_2^0, d_1$, and $d_2$ generate the fundamental group
$\pi_1(\Sigma)$ and they satisfy the relation
\[ [c_1^{0}, d_1][c_2^{0}, d_2] = 1. \]
%up to a choice of $\pm 1$s.
(We identify $\pi_1(S_1)$ with $\alpha_0^{-1}*i_*(\pi_1(S_1))*\alpha_0$.)

If $h_0:\pi(S_0) \ra \SOThr$ and $h_1:\pi_1(S_1) \ra \SOThr$
are induced from a representation $h: \pi_1(\Sigma) \ra \SOThr$,
then $h_0(c_i^0)$ are conjugate to $h_1(c_i^1)$
by an element $P_i$ of $\SOThr$,
i.e., \[P_i h_0(c_i^0) P_i^{-1} = h_1(c_i^1) \hbox{ for } i = 0,1,2.\]
We call $P_i$ the pasting map for $c_i$ for $i=0,1,2$.

If $h_0$ and $h_1$ are obtained from restricting $h$, then $P_0 = I$.
If we change $h_1(\cdot)$ to be $h'_1(\cdot) = k \circ h_1(\cdot) \circ k^{-1}$, then
the new pasting maps become
\begin{equation}\label{eqn:change}
P'_0 = k  P_0, P'_1 = k  P_1 \hbox{ and } P'_2 = k P_2.
\end{equation}
so that
\begin{equation}\label{eqn:conj}
P'_i h_0(c_i^0) P^{\prime -1}_i = h'_1(c_i^1)
\hbox{ for } i=0,1,2.
\end{equation}

\begin{prop}\label{prop:d1d2}
We have 
\[ h(d_1)=P_0^{-1} P_1, h(d_2)=P_0^{-1} P_2. \] 
\end{prop}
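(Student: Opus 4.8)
The plan is to read the two identities off the van Kampen / graph-of-groups presentation of $\pi_1(\Sigma,x_0^*)$ coming from the decomposition $\Sigma = S_0 \cup S_1$ glued along $c_0 \sqcup c_1 \sqcup c_2$, and then apply $h$. The underlying graph has two vertices (one per pair of pants) and three edges (one per $c_i$), so its first Betti number is $2$; taking the spanning tree to be the single edge $c_0$ with connecting arc $\alpha^0$ is exactly the normalization already made in the text, namely identifying $\pi_1(S_1,x_1^*)$ with $\alpha^0 * i_*\pi_1(S_1) * \alpha^{0,-1} \subset \pi_1(\Sigma,x_0^*)$ and thereby forcing $P_0 = I$. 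Under this choice the two remaining edges $c_1,c_2$ contribute precisely the two stable letters $d_1 = \alpha^1 * \alpha^{0,-1}$ and $d_2 = \alpha^2 * \alpha^{0,-1}$.

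First I would record the edge relation for each $i$. Pushing the boundary curve $c_i^0 \subset S_0$ across the gluing annulus to $c_i^1 \subset S_1$ along $\alpha^i$ gives, in $\pi_1(\Sigma,x_0^*)$, the equality $c_i^0 = \alpha^i * c_i^1 * \alpha^{i,-1}$, which is the relation already asserted in the text for $i=0$. Inserting $\alpha^{0,-1} * \alpha^0$ on either side of $c_i^1$ regroups the arcs into two copies of $\alpha^0$ and one copy of $d_i$, turning this into $c_i^0 = d_i * \bar c_i^1 * d_i^{-1}$, where $\bar c_i^1 := \alpha^0 * c_i^1 * \alpha^{0,-1}$ is the canonical image of $c_i^1$; for $i=0$ this collapses to $c_0^0 = \bar c_0^1$, consistent with $P_0 = I$.

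Next I would apply $h$. Since $c_i^0 \in \pi_1(S_0)$ and $\bar c_i^1$ is the canonical image of $c_i^1$, one has $h(c_i^0) = h_0(c_i^0)$ and $h(\bar c_i^1) = h_1(c_i^1)$, so the regrouped relation becomes a conjugacy $h_1(c_i^1) = h(d_i)^{\pm 1}\, h_0(c_i^0)\, h(d_i)^{\mp 1}$ between the two boundary holonomies, with $h(d_i)$ as the conjugator. The pasting map, being the holonomy transport across the gluing, is canonically the holonomy of the connecting loop, so this conjugator is literally an admissible $P_i$ (this also removes the centralizer ambiguity in the bare conjugacy relation $P_i h_0(c_i^0)P_i^{-1}=h_1(c_i^1)$). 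In the normalized gauge $P_0=I$ this gives $h(d_i)=P_i=P_0^{-1}P_i$. Finally, since the expression $P_0^{-1}P_i$ is invariant under the gauge change \eqref{eqn:change} $(P_0,P_1,P_2)\mapsto(kP_0,kP_1,kP_2)$, the identity persists for every admissible choice of $h_1$, which is the assertion for $i=1,2$.

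The step I expect to be the genuine obstacle is the orientation bookkeeping that fixes the direction of conjugation: whether the stable letter acts as $d_i(\cdot)d_i^{-1}$ or $d_i^{-1}(\cdot)d_i$, and hence whether $h(d_i)$ comes out as $P_0^{-1}P_i$ or as its inverse. This is pinned down by the stated orientations, namely that $c_i$ carries the boundary orientation of $S_0$, that each $\alpha^i$ meets $c_i$ once in a fixed transverse sense, and that $d_1$ is oriented from $c_0$ toward $c_1$ while $d_2$ is oriented from $c_2$ toward $c_0$. Checking that these choices are mutually compatible — in particular that they reproduce the surface relation $[c_1^0,d_1][c_2^0,d_2]=1$ and select the sign in the statement — is the only delicate point; everything else is formal manipulation in the fundamental groupoid of $\Sigma$.
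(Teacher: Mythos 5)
Your proposal is correct and takes essentially the same route as the paper: the paper's entire proof reads ``When $k=\Idd$, the equation holds. By equation \eqref{eqn:change}, it follows,'' which is precisely your two steps --- verification in the normalized gauge $P_0=\Idd$ (your van Kampen regrouping $c_i^0 = d_i * \bar c_i^1 * d_i^{-1}$ makes explicit what the paper leaves implicit) followed by invariance of $P_0^{-1}P_i$ under the gauge change \eqref{eqn:change}. The orientation bookkeeping you flag as the one delicate point (whether the conjugator comes out as $P_0^{-1}P_i$ or its inverse) is likewise left unaddressed in the paper's own proof, so your treatment is, if anything, more careful.
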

\begin{proof} 
When $k=\Idd$, the equation holds. 
By equation \eqref{eqn:change}, it follows.
\end{proof}

%Actually, we can consider $P_0, P_1$, and $P_2$ as holonomy transformations of 
%the {\em pasting maps} associated with $c_0, c_1$, and $c_2$ respectively. 
%Then the above relation seem heuristically correct. 

The space $\rep(\pi_{1}(P), \SOThr)$ is classified as a quotient space of
a tetrahedron. Thus, a character is determined by angles
$\theta_0, \theta_1$, and $\theta_2$ in $[0,\pi]$ as the halves of the rotation angles.
% measured
%at some fixed points in the counter-clockwise directions.
The equation \eqref{eqn:conj} shows that
if $h_0$ is represented by triple $(\theta_0, \theta_1, \theta_2)$,
the $h_1$ is represented by $(\theta'_0, \theta'_1, \theta'_2)$ so that
$\theta'_i = \theta_i $ or $\theta'_i = \pi - \theta_i$ for $i = 0,1,2$.

We showed:
\begin{prop}\label{prop:h0h1}
Let $h_0$ and $h_1$ be representations of the fundamental groups of 
pairs of pants $S_0$ and $S_1$ from a $\SOThr$-representation $h$.
The angles $(\theta_0, \theta_1, \theta_2)$ of $h_0$
and $(\theta'_0, \theta'_1, \theta'_2)$ of $h_1$ satisfy the equation
\begin{eqnarray}\label{eqn:h0h1angle}
\theta'_i &=& \theta_i \hbox{ or } \\
\theta'_i &=& \pi-\theta_i \hbox{ for } i =0,1,2.
\end{eqnarray}
%More precisely, 
%it is related by a group $\bZ_2^4$ where
%the generator $I_O$ is added to the group $V$ generated by
%$I_A, I_B, I_C$ where $I_O$ is defined by
%\begin{equation}\label{eqn:id}
%I_O: (\theta_0, \theta_1, \theta_2) \mapsto (\pi-\theta_0, \theta_1, \theta_2)
%\end{equation}
\end{prop}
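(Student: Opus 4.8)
The plan is to deduce the relation purely from the fact that, for each $i$, the holonomies $h_0(c_i^0)$ and $h_1(c_i^1)$ lie in a single $\SOThr$-conjugacy class. This is exactly what the pasting relation records: equation \eqref{eqn:conj} gives $P'_i h_0(c_i^0) P_i^{\prime -1} = h'_1(c_i^1)$, so $h_0(c_i^0)$ and $h'_1(c_i^1)$ are conjugate. Geometrically this is forced because $c_i$ is a single curve of $\Sigma$ whose peripheral representatives $c_i^0$ and $c_i^1$ are freely homotopic in $\Sigma$, hence have conjugate images under any fixed $h$. Since replacing $h_1$ by the conjugate representation $h'_1 = k\,h_1(\cdot)\,k^{-1}$ leaves the angle parameters unchanged, I may work with the conjugacy class directly and ignore the choice of $k$.

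First I would record the description of conjugacy classes in $\SOThr$. Every nonidentity element is a rotation $R_{v,\phi}$; since all axes are equivalent under conjugation and $R_{v,\phi} = R_{-v,2\pi-\phi}$, two rotations are $\SOThr$-conjugate if and only if they have equal trace $1 + 2\cos\phi$, i.e.\ if and only if their rotation angles agree modulo the identification $\phi \sim 2\pi - \phi$. Equivalently, $\cos\phi$ is a complete conjugacy invariant.

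Next I would translate this into the angle coordinates supplied by Proposition \ref{prop:poprep}. By the convention that the $\theta_i$ are the halves of the rotation angles, $h_0(c_i^0)$ has rotation angle $2\theta_i$ and $h_1(c_i^1)$ has rotation angle $2\theta'_i$, with $\theta_i,\theta'_i \in [0,\pi]$ so that $2\theta_i, 2\theta'_i \in [0,2\pi]$. Conjugacy then gives $\cos 2\theta_i = \cos 2\theta'_i$, and on the interval $[0,2\pi]$ this equation has exactly the two solution branches $2\theta'_i = 2\theta_i$ and $2\theta'_i = 2\pi - 2\theta_i$, namely $\theta'_i = \theta_i$ and $\theta'_i = \pi - \theta_i$. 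Running this for $i = 0,1,2$ yields the claim.

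The step needing the most care — though it is bookkeeping rather than a genuine obstacle — is confirming that the class identification $\phi \sim 2\pi-\phi$ is precisely the reflection $\theta_i \mapsto \pi-\theta_i$, and that the degenerate values $\theta_i \in \{0,\tfrac{\pi}{2},\pi\}$, where the holonomy is $\Idd$ or a half-turn and the triangle degenerates, are consistently covered by the two branches. I would verify the boundary values directly: $\cos 2\theta_i = 1$ forces $\theta'_i \in \{0,\pi\}$ and $\cos 2\theta_i = -1$ forces $\theta'_i = \tfrac{\pi}{2}$, each accounted for by $\theta'_i = \theta_i$ or $\theta'_i = \pi-\theta_i$. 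No orientation issue intervenes, since reversing the orientation of $c_i$ merely replaces a holonomy by its inverse, whose trace, and hence whose value of $\cos 2\theta_i$, is unchanged.
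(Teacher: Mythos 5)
Your proof is correct and follows essentially the same route as the paper: both deduce from the pasting relation \eqref{eqn:conj} that $h_0(c_i^0)$ and $h_1(c_i^1)$ are $\SOThr$-conjugate, and then use the fact that a conjugacy class determines the rotation angle only up to $\phi \sim 2\pi-\phi$ (equivalently $\theta_i \sim \pi-\theta_i$ in the half-angle coordinates of Proposition \ref{prop:poprep}). You merely make explicit, via the trace invariant $1+2\cos\phi$ and the boundary checks, what the paper leaves implicit in the sentence ``equation \eqref{eqn:conj} shows that \dots''.
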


%%May 2, 10:29 pm

\section{Establishing equivalence relations on $\tilde G \times T^3$ to make it equal to ${\mathcal C}_0$.}
\label{sec:C0}

In this major section, we will study the identity component 
${\mathcal C}_0$ by finding the equivalence relation on $\tilde G\times T^3$ giving 
us the same representations up to conjugacy, i.e., to find a point of ${\mathcal C}_0$. 

This will be done by considering each of the regions of $\tilde G$: 
\[G^o, a,b,c,d, {A}, {A}', B,B', C,C',I,II,III,IV\]
individually and considering the action of 
the Klein four-group $\{I, I_{{A}}, I_{{B}}, I_{{C}}\}$ extended to $\tilde G \times T^3$.

We begin by discussing pasting angles. Next, we define a map $\mathcal T$ from 
 $\tilde G\times T^3$ to ${\mathcal C}_0$ corresponding the finding holonomy for the pasting angles. 
 We will show that the map is onto by showing the characters corresponding to nondegenerate triangles 
 are dense. 
 
 Then we discuss the equivalence relation $\sim$ on $\tilde G \times T^3$
 so that the quotient map induces a homeomorphism $\tilde G \times T^3/\sim \ra {\mathcal C}_0$.
 Next, we discuss the quotient relation for the interior and
 move to the boundary regions denoted by $a$, $b$, $c$, and $d$, 
 $I$, $II$, $III$, and $IV$ and finally regions $B$, $B'$,$A$,$A'$,$C$ and $C'$.
 Mainly, we will induce the quotient relations to make the map $\mathcal T$ 
 injective one. The final result of the section is Theorem \ref{thm:main1} establishing 
 the homeomorphism of the appropriate quotient space of $\tilde G \times T^3$ 
 to ${\mathcal C}_0$.
  
 In this section, we will not determine the topology of the quotient spaces but will only 
 explain the equivalence relations. The geometric nature of equivalence 
 relations instead of algebraic equivalences are emphasized here to gain more a geometric 
 point of view of the subject instead of algebraic one. 
 
 The equivalence relation {\em is} 
 the equivalence relation to obtain the identical $\SOThr$-characters.
 However, we chose not to simply state this algebraic fact to preserve the geometric flavor of the paper
 and to gain the topological characterizations of the subspaces in the next section. 
  
  \subsection{The density of triangular characters in ${\mathcal C}_0$.}

\begin{lem}\label{lem:nonc0}
For $i=0,1,2$, define a function 
$f_i:  \rep(\pi_1(\Sigma),\SOThr) \ra \bR$ by 
$f_i: [h] \mapsto \mathrm{tr} h(c_i)$. 
Then the triangular subset $D_0$ of ${\mathcal C}_0$ is not empty
and $f_i$ is not constant.
\end{lem}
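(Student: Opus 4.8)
The statement has two parts, that $D_0\neq\emptyset$ and that each $f_i$ is non-constant, and the plan is to settle both by producing a single well-chosen triangular character in ${\mathcal C}_0$ and then reading off non-constancy from the trace formula for rotations. First I would secure the component membership, then the nondegeneracy, and finally the trace computation.

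For the nonemptiness, the cleanest route is through the universal covering $p\colon\SUTw\ra\SOThr$. Since ${\mathcal C}_1$ is distinguished from ${\mathcal C}_0$ by a nontrivial Stiefel--Whitney class, which is exactly the obstruction to lifting an $\SOThr$-representation to $\SUTw$, the projection under $p$ of \emph{any} homomorphism $\tilde h\colon\pi_1(\Sigma)\ra\SUTw$ automatically lands in ${\mathcal C}_0$. So I would first build such a $\tilde h$: writing $\pi_1(\Sigma)=\langle a_1,b_1,a_2,b_2\mid [a_1,b_1][a_2,b_2]=1\rangle$, choose $\tilde h(a_1),\tilde h(b_1)$ with commutator an arbitrarily prescribed $M\in\SUTw$ and $\tilde h(a_2),\tilde h(b_2)$ with commutator $M^{-1}$; this is possible because the commutator map $\SUTw\times\SUTw\ra\SUTw$ is surjective (every element of $\SUTw$ is a commutator). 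The relation then holds, and $h:=p\circ\tilde h$ is a genuine $\SOThr$-character in ${\mathcal C}_0$. Next I would arrange the choice so that $h$ is triangular, i.e.\ so that the restrictions $h_0,h_1$ to the two pairs of pants $S_0,S_1$ are nondegenerate. By the multiplication-by-geometry dictionary of Section \ref{sec:sosu} (Propositions \ref{prop:multrule} and \ref{prop:multrule2}), a pair-of-pants restriction is nondegenerate-triangular precisely when the three boundary holonomies are nontrivial rotations with pairwise noncollinear fixed axes; this is an open condition holding generically, and since the prescription above leaves a positive-dimensional family of choices of $M$ and the generators, a generic $\tilde h$ makes $h_0$ and $h_1$ simultaneously nondegenerate. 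This $h$ is the desired point of $D_0$.

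Non-constancy of $f_i$ then follows from the trace formula. For a triangular character the boundary holonomy $h(c_i)$ is a rotation $R_{x_i,2\theta_i}$ whose half-angle $\theta_i\in(0,\pi)$ is a triangle angle, so $f_i([h])=\mathrm{tr}\,h(c_i)=1+2\cos 2\theta_i$. The identity character lies in ${\mathcal C}_0$ and gives $f_i=3$, whereas the nondegenerate character just constructed has $h(c_i)\ne\Idd$, hence $2\theta_i\in(0,2\pi)$ and $f_i<3$; thus $f_i$ is not constant on ${\mathcal C}_0$. If one instead wants non-constancy already on the triangular subset $D_0$, it suffices to exhibit two triangular characters in ${\mathcal C}_0$ with distinct $\theta_i$, which the freedom in choosing $\tilde h$ again supplies, since $2\theta_i$ can be prescribed to take two values with different cosines.

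The main obstacle I anticipate is the \emph{simultaneous} nondegeneracy of both restrictions: one must check that a single choice of boundary holonomies $h(c_0),h(c_1),h(c_2)$ yields noncollinear fixed axes for the triangle of $S_0$ \emph{and} for the triangle of $S_1$, the latter read off from the pasting-conjugated images $h(c_i^1)$. This is where the interplay with the pasting maps (the $T^3$ of Section \ref{sec:C0}) enters, though since noncollinearity is an open dense condition it ultimately costs only a genericity remark. Everything else, namely the covering-group argument pinning down the component and the elementary trace computation, is immediate.
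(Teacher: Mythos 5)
Your overall route (lift to $\SUTw$ via the Stiefel--Whitney obstruction, produce lifts by commutator surjectivity, then read non-constancy off the trace $1+2\cos 2\theta_i$) is legitimate and genuinely different from the paper's, but as written the nonemptiness step has a real gap: the genericity claim. From ``the prescription leaves a positive-dimensional family of choices'' you cannot conclude that a generic $\tilde h$ has both pants restrictions nondegenerate --- there are positive-dimensional subfamilies on which the condition fails identically (for instance $M=\Idd$ with commuting generators satisfies your prescription, yet no such $\tilde h$ has a triangular restriction). What your argument needs is a single explicit witness in the family for which $h_0$ and $h_1$ are simultaneously nonabelian, and you never produce one. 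Your fallback remark that noncollinearity ``is an open dense condition'' is worse than a gap in this paper's logical order: density of triangular characters in ${\mathcal C}_0$ is Proposition \ref{prop:tridense0}, which the paper proves \emph{using} Lemma \ref{lem:nonc0}; appealing to that density here would be circular.

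The repair is cheap, and it is exactly the paper's own proof: take one nondegenerate spherical triangle, let the restrictions to $\pi_1(S_0)$ and $\pi_1(S_1)$ both be the associated triangular representation for that same triangle, and glue with all three pasting maps equal to $\Idd$. Then $h(d_1)=h(d_2)=\Idd$ by Proposition \ref{prop:d1d2}, so the surface relation lifts trivially to $\SUTw$ (lift each $h(c_i)$ arbitrarily and send $d_1,d_2$ to $\Idd$; the commutators do not depend on the choice of lifts), hence $[h]\in{\mathcal C}_0$, and both pants restrictions are triangular by construction. Varying the triangle's angles then makes $f_i=1+2\cos 2\theta_i$ non-constant already on $D_0$, so you do not even need the comparison with the identity character. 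With such a witness inserted your argument closes; without it, the key step is unsupported.
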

\begin{proof} 
The triangular part of ${\mathcal C}_0$ can be constructed by taking 
two identical triangles with identity to construct representations 
corresponding to $\pi_1(S_0)$ and $\pi_1(S_1)$.
Here clearly, $c_i$ has nonconstant rotation angles in $D_0$.
Thus $f_i$ is not constant. 
\end{proof}

\begin{prop}\label{prop:tridense0} 
The subset $D_0$ of ${\mathcal C}_0$ of triangular characters 
is dense and open and moreover, any character can be deformed to 
a triangular one by an analytic path consisting of triangular character 
except at the end.
\end{prop}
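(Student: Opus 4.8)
The plan is to characterise $D_0$ through the two restriction maps to the pairs of pants and then to transport the pants-level density and deformation of Lemma \ref{lem:perturb} to the surface. Write $r_j\colon \rep(\pi_1(\Sigma),\SOThr)\to \rep(\pi_1(S_j),\SOThr)$, for $j=0,1$, for the continuous maps induced by restricting a representation to $\pi_1(S_0)$ and to $\pi_1(S_1)$ (the latter being well defined on characters since it is determined up to conjugation). By definition a character is triangular exactly when both $r_0([h])$ and $r_1([h])$ are triangular, so $D_0 = r_0^{-1}(D_0^{S_0})\cap r_1^{-1}(D_0^{S_1})$, where $D_0^{S_j}$ denotes the triangular locus of the pants character space. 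By Lemma \ref{lem:perturb} each $D_0^{S_j}$ is open, hence $D_0$ is open in ${\mathcal C}_0$; and Lemma \ref{lem:nonc0} shows $D_0\ne\emptyset$.

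For density and the deformation, note first by Proposition \ref{prop:Go} that $r_j([h])$ is triangular iff its angle triple lies in the open tetrahedron $T^o$, and by Proposition \ref{prop:h0h1} the angle triples of $r_0([h])$ and $r_1([h])$ differ only by flips $\theta_i\mapsto \pi-\theta_i$. The crucial point is that for characters in ${\mathcal C}_0$ the number of such flips is always even: the character built in Lemma \ref{lem:nonc0} from two identical triangles has zero flips, and the parity of the flip pattern (which is well defined because the fixed-point ambiguity on each side is itself even) may be identified with the $\bZ_2$ Stiefel--Whitney invariant distinguishing the two components, so it vanishes throughout the connected set ${\mathcal C}_0$. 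An even flip pattern is exactly an element of the Klein four-group $V$ of Proposition \ref{prop:poprep}, and $V$ preserves $T^o$; hence along any deformation the triangle of $r_1([h])$ lies in $T^o$ if and only if that of $r_0([h])$ does.

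Now, given $[h]\in{\mathcal C}_0$, I would apply Lemma \ref{lem:perturb} to obtain an analytic path whose $S_0$-triangle leaves $\partial T$ and enters $T^o$ for $t>0$, and realise it at the surface level. The deformed pants holonomies $h_0^t(c_i^0)$ stay conjugate to the matched $h_1^t(c_i^1)$ by Proposition \ref{prop:h0h1}, and for nonidentity rotations the conjugating pasting maps $P_i$ may be chosen to depend analytically on the two holonomies, so the $h^t$ form an analytic family of $\SOThr$-representations of $\pi_1(\Sigma)$. The path stays in ${\mathcal C}_0$ because the Stiefel--Whitney class is constant along a connected path, and by the preceding paragraph both restrictions are nondegenerate for $t>0$; thus $[h^t]\in D_0$ for $t>0$. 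Since every character of ${\mathcal C}_0$ is the initial point of such a path into $D_0$, the set $D_0$ is dense.

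The main obstacle is the third step: producing a genuine analytic family of $\pi_1(\Sigma)$-representations (rather than deforming the two pants separately) that realises the pants-level deformation. Concretely one must check that the pasting maps $P_i$ conjugating the matched boundary holonomies exist and vary analytically all along the path, including where a holonomy passes through or is forced away from the identity, which is precisely where the delicate edge- and vertex-behaviour of the blown-up tetrahedron $\tilde G$ intervenes; and one must rigorously confirm the parity claim that ties nondegeneracy of one restriction to that of the other inside ${\mathcal C}_0$.
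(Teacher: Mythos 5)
Your proposal has a genuine gap --- the one you flag yourself in the final paragraph --- and it is not a technicality but the crux. Lemma \ref{lem:perturb} only deforms the restriction $r_0([h])$ inside the pants character space; to promote this to a path in ${\mathcal C}_0$ you must simultaneously deform $h|_{\pi_1(S_1)}$ keeping its boundary holonomies conjugate to the matched ones of $h|_{\pi_1(S_0)}$, and choose the pasting maps $P_i$ analytically along the path, including at the initial point where some $h(c_i)$ may equal $\Idd$ and the conjugating element is highly non-unique (exactly the edge/vertex behaviour of $\tilde G$). That is precisely the content of the $\mathcal T$-map machinery and Proposition \ref{prop:h0h12} of Section \ref{sec:C0}, which are developed \emph{after} and \emph{using} Proposition \ref{prop:tridense0}; invoking them here would be circular. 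The same circularity infects your parity claim: the assertion that the flip pattern is trivial (or even) throughout ${\mathcal C}_0$ is essentially Proposition \ref{prop:h0h12}(a), whose proof in the paper is a continuity argument that starts from the density of $D_0$. Your substitute justification --- identifying flip parity with the Stiefel--Whitney class --- is asserted rather than proved, and the flip pattern is not even well defined where some $\theta_i\in\{0,\pi/2,\pi\}$ or where a restriction is abelian, i.e., precisely at the characters you are trying to deform away from.

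The paper avoids all of this with a short analytic argument: the non-triangular locus is contained in the union of the analytic subsets $\{f_i=3\}$, $i=0,1,2$, where $f_i([h])=\mathrm{tr}\, h(c_i)$ is real analytic and non-constant by Lemma \ref{lem:nonc0}, together with the lower-dimensional ($4$-dimensional) analytic subset accounting for abelian restrictions. The complement of such a proper analytic subset of the $6$-dimensional ${\mathcal C}_0$ is dense and open, and standard real-analytic geometry supplies an analytic path from any character into that complement meeting the analytic subset only at its initial point; no lifting of pants deformations and no choice of pasting maps is needed. Your openness argument (preimages of the open triangular loci under the two continuous restriction maps, nonempty by Lemma \ref{lem:nonc0}) is fine; it is the density-and-deformation half that cannot be completed as proposed without reproducing, and thereby short-circuiting, the later machinery of the paper.
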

\begin{proof} 
The subset where $h(c_i) = \Idd$ for the associated representations $h$ is 
given by an analytic subset of ${\mathcal C}_0$. 
Define a real analytic map $f_i: [h] \mapsto {\mathrm{tr} } h(c_i)$ for each $i=0,1,2$ which is well-defined. 
Since the function is not constant on each component of 
 ${\mathcal C}_0$, by the standard result in 
real algebraic geometry, the set of points where $f_i=3$ is 
an analytic subset whose complement is dense and open.
Finally, the set of abelian ones are of lower dimension $4$ and is an analytic subset, and the result follows. 
\end{proof}

\subsection{Pasting angles}
\begin{prop}\label{prop:h0h12}
If $h$ is in the identity component ${\mathcal C}_0$ of $\rep(\pi_1(\Sigma), \SOThr)$,
and $h_0$ and $h_1$ be obtained as above by restrictions to $S_0$ and $S_1$. 
then
\begin{itemize}
\item[{\rm (a)}] We can conjugate $h_1$ so that $h_0 = h_1$
and corresponding angles are equal.
\item[{\rm (b)}] For each representation $h$ in ${\mathcal C}_0$, we can associate
a pair of identical degenerate or nondegenerate triangles and an element of
$\SI^1 \times \SI^1 \times \SI^1$, i.e., the parameter space of the pasting angles.
%i.e., the rotation angles of the pasting maps. 
{\rm (}This is not normally a unique association for the degenerate triangle cases.{\rm )}
\end{itemize}
\end{prop}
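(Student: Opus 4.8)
The plan is to prove (a) first and then read off (b). For (a), I start from Proposition \ref{prop:h0h1}: after representing $h_0$ by a triangle $t_0$ with angle triple $(\theta_0,\theta_1,\theta_2)$, the restriction $h_1$ is represented by $(\theta'_0,\theta'_1,\theta'_2)$ with each $\theta'_i$ equal to $\theta_i$ or $\pi-\theta_i$. Let $S\subseteq\{0,1,2\}$ be the set of indices at which the flip $\theta_i\mapsto\pi-\theta_i$ occurs. The residual freedom in the triangle representing the character of $h_1$ is exactly the Klein four-group $V$-action of Proposition \ref{prop:poprep}, and in $0$-indexed form the formulas \eqref{eqn:iabc2} say that $I_{A},I_{B},I_{C}$ flip precisely the index pairs $\{1,2\},\{0,2\},\{0,1\}$ while fixing the third. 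Thus $V$ realizes exactly the four even-cardinality flip patterns, and (using \eqref{eqn:change} to convert a conjugation of $h_1$ into the change of triangle) we can conjugate $h_1$ so that its angle triple agrees with that of $h_0$ if and only if $|S|$ is even.

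So the real content of (a) is the parity statement $|S|\equiv 0 \pmod 2$ for $h\in{\mathcal C}_0$. I would prove this by identifying $|S|\bmod 2$ with the Stiefel--Whitney class $w_2(h)\in\bZ_2$ that distinguishes the two components (Goldman \cite{Goldman3}). Lift each boundary holonomy $h_0(c_i^0)=R_{w_i,2\theta_i}$ to its normal $\SUTw$-representative (Definition \ref{defn:normalrep}); by multiplication by geometry the pants relation lifts with the sign $-\Idd$ on each side (equation \eqref{eqn:cba2}). The pasting map $P_i$ conjugates $h_0(c_i^0)$ to $h_1(c_i^1)$, and by equation \eqref{eqn:minusid} an angle flip $\theta_i\mapsto\pi-\theta_i$ is precisely the statement that $P_i$ carries the chosen fixed point $w_i$ to the antipode of the fixed point used for $h_1$, which at the level of $\SUTw$-lifts inserts one extra factor of $-\Idd$. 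Feeding the lifts of $c_1^0,d_1,c_2^0,d_2$, with $h(d_1)=P_0^{-1}P_1$ and $h(d_2)=P_0^{-1}P_2$ from Proposition \ref{prop:d1d2}, into the genus-two relation $[c_1^0,d_1][c_2^0,d_2]=1$ and tracking the accumulated signs, the product of commutators evaluates to $(-1)^{|S|}\Idd$. Since $h\in{\mathcal C}_0$ exactly when this product is $+\Idd$, we obtain $|S|$ even, and (a) follows. (For normalization, note that the rep built from two identical triangles with trivial pasting in Lemma \ref{lem:nonc0} has $S=\emptyset$ and lies in ${\mathcal C}_0$, matching $(-1)^{|S|}=+\Idd$.)

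The main obstacle is this sign bookkeeping in the last step: one must verify that the \emph{only} contributions to the lifted relation are the three antipode-flips, i.e. that the two $-\Idd$ factors from the pants relations together with the conjugating factors $P_i$ reorganize into $(-1)^{|S|}$. Because commutators are insensitive to the central ambiguity $\pm\Idd$ in the lifts, the common factor $k$ and the choice $P_0$ drop out, and what survives is exactly the parity of the fixed-point/antipode choices; this is where the comparison of the obstruction class with the lifted relation (cf. Goldman \cite{Goldman3}, Huebschmann \cite{Hueb}) enters. If one prefers, one can instead argue that $|S|\bmod 2$ is a deformation invariant equal to $0$ at the base representation, but establishing local constancy across the loci $\theta_i\in\{0,\pi\}$ (where $h(c_i)=\Idd$ and the antipode choice is ambiguous) requires the same computation.

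For (b), once (a) puts us in the situation $h_0=h_1$ with a common, possibly degenerate, triangle $t$, the pasting maps satisfy $P_i\,h_0(c_i^0)\,P_i^{-1}=h_0(c_i^0)$, so each $P_i$ centralizes $R_{w_i,2\theta_i}$. When $h_0(c_i^0)\neq\Idd$ this centralizer is the circle of rotations about the axis $w_i$, so $P_i=R_{w_i,\psi_i}$ for a well-defined pasting angle $\psi_i\in\SI^1$; the triple $(\psi_0,\psi_1,\psi_2)\in\SI^1\times\SI^1\times\SI^1$ together with $t$ recovers $h$ up to conjugacy through $h(d_1)=P_0^{-1}P_1$ and $h(d_2)=P_0^{-1}P_2$. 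This is the desired association into $\tilde G\times T^3$. The non-uniqueness flagged in the statement is exactly the failure of this centralizer argument in the degenerate cases: when some $h_0(c_i^0)=\Idd$ (an edge or vertex region of $\tilde G$) the angle $\psi_i$ is unconstrained, and when the image of $h_0$ becomes abelian the common centralizer is a whole circle, so the pasting angles are determined only up to a simultaneous shift.
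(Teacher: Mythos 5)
Your route to (a) is genuinely different from the paper's. The paper proves (a) with a continuity/connectedness argument: angle equality holds at the base representation $h^0$ (two copies of the $(2\pi/5,2\pi/5,2\pi/5)$-triangle, trivial pasting maps), the angles $\theta_i,\theta'_i$ are continuous on the dense connected set $D_0$ of Proposition \ref{prop:tridense0}, so equality propagates; part (b) is then proved by a compactness/limit argument. You instead reduce (a) to the parity statement $|S|\equiv 0\pmod 2$ and identify $(-1)^{|S|}$ with the $\bZ_2$-obstruction computed by lifting the genus-two relator to $\SUTw$. That reduction is correct (the formulas \eqref{eqn:iabc2} do realize exactly the even flip patterns), and your parity formula is consistent with the two calibrating examples in the paper: the identity-type representation has $S=\emptyset$, while the other-component basis representation $h^*$ has $S=\{1\}$ and lifted commutator $-\Idd$. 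Indeed your invariant, being defined through fixed points rather than angles, is arguably more robust than the paper's argument at the walls $\theta_i=\pi/2$, where the locus of angle-equality fails to be open. However, the crux of your proof --- that the lifted relator evaluates to $o_0\,o_1\,\epsilon_0\epsilon_1\epsilon_2=(-1)(-1)(-1)^{|S|}$ --- is asserted with a plausibility argument rather than proved, as you yourself flag. To close it you must express the genus-two relator as a product of conjugates of the two pants relators and the three edge relators of the amalgam presentation, check each occurs with exponent sum $\pm 1$, and use centrality of the failure terms; this is routine but it is the entire content of (a) in your approach, so it cannot remain a remark.

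The more serious gap is in (b), together with the tail of (a), for degenerate (non-triangular) representations, and here you appear unaware of the problem. Your centralizer argument --- $P_i$ commutes with $h_0(c_i^0)$, hence is a rotation about $w_i$ --- requires $h_0(c_i^0)\neq\Idd$ for every $i$. When some $h_0(c_i^0)=\Idd$, every element of $\SOThr$ centralizes it, and the given pasting map need not fix any vertex of any chosen generalized triangle; the issue is not that ``the angle $\psi_i$ is unconstrained'' (non-uniqueness) but whether the association exists at all. One must produce \emph{some} pair of identical generalized triangles and rotations $P_i$ about their vertices with $h(d_1)=P_0^{-1}P_1$ and $h(d_2)=P_0^{-1}P_2$; for representations with all $h(c_i)=\Idd$ this is essentially the surjectivity statement of Proposition \ref{prop:calFonto}, which needs a geometric construction. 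Moreover your proof of (a) presupposes well-defined fixed points $w_i$, so $S$ is not even defined in these cases. The paper closes exactly this gap in its proof of (b): for a degenerate character $d$, take $d_i\in D_0$ converging to $d$, extract a convergent subsequence of triangle-plus-pasting-map data using compactness of $\tilde G$ and $\SOThr$, and observe that the limit data realizes $d$. Your proposal needs this limit argument (or the explicit constructions of Sections \ref{subsec:regioni}--\ref{subsec:ABC}) appended; as it stands it proves the proposition only on the dense triangular locus.
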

\begin{proof}
(a) This is proved by a continuity argument.
We look at the dense open set $D_0 \subset {\mathcal C}_0$ of triangular characters
from Proposition \ref{prop:tridense0}.
Taking a triangular representation $h_0:\pi_1(S_0) \ra \SOThr$ associated
with a triangle of angles all equal to $2\pi/5$ and $h_1(S_1) \ra \SOThr$
associated with the same triangle, then $h_0=h_1$.
Let $h^0: \pi_1(\Sigma) \ra \SOThr$ be the representation obtained from
$h_0$ and $h_1$ with pasting maps $P_i$s equal to $\Idd$.

There is an open subset of $D_0$ where
induced $h_0$ and $h_1$ have angles
$\theta'_i$ and $\theta_i$ so that $\theta'_i = \theta_i$ for $i=0,1,2$:
At $h^0$, we see that $\theta'_i = \theta_i$ for $i=0,1,2$
by construction. As a function of $D_0$, $\theta_i$ and $\theta'_i$ form
continuous functions since these are obtainable as traces of holonomies of
boundary curves. So by continuity and equation \eqref{eqn:h0h1angle},
it follows that there exists a neighborhood where the angles are equal.
Since triangular representations are determined by angles, we can
conjugate so that $h_0 = h_1$.

By continuity argument again, the equation holds for all of ${\mathcal C}_0$. 

(b) From (a), the equation is true for elements of $D_0$
as the elements of $\SI^1 \times \SI^1 \times \SI^1$ are
determined by the rotation angles of pasting maps $P_0, P_1, P_2$
at the respective vertices of the triangle.

For the element $d$ in the boundary of $D_0$,
we take a sequence $d_i \in D_0$ converging to $d$.
Associated to each $d$, we obtain a triangle, of
$\tilde G$ and an elements $P_i$ with pasting angles $(\phi_0^i, \phi_1^i, \phi_2^i)$ in
$\SI^1\times \SI^1 \times \SI^1$.
We can take a subsequence so that $(\tilde T_i, \phi_0^i, \phi_1^i, \phi_2^i)$
is convergent. Let $(\tilde T_\infty, \phi_0^\infty, \phi_1^\infty,
\phi_2^\infty)$ denote the limit.
The associated triangle $\tilde T_i$ with angles assigned converges
a generalized pointed triangle $\tilde T^\infty$
with assigned angles that are respective limits of
the sequences of angles of $P_i$.

Since $\SOThr$ is compact, each pasting map $P_j^i$, $j=0,1,2$, for $d_i$
converges to an automorphism $P_j^\infty$ with angle $\theta_j^\infty$
for $j=0,1,2$. It is straightforward to see that $d$ can be realized
as a pair of generalized triangles $\tilde T^\infty$ and its copy
and pasting maps $P_j^i$, $j=0,1,2$.

\end{proof}

\subsection{The definition of the map $\mathcal T$.}
We will now define a map from $\tilde G \times T^3$ to ${\mathcal C}_0$ which is
almost everywhere $4$ to $1$.
Let us define a map 
\[\mathcal T: \tilde G \times T^3 \ra {\mathcal C}_0 \subset \rep(\pi_1(\Sigma), \SOThr). \]
Take an element of $\tilde G$. This gives us a triangle perhaps
degenerate with angles assigned.
Actually, the map is induced from a map
\[\mathcal T': \hat G \times T^3 \ra C'_0 \subset \Hom(\pi_1(\Sigma), \SOThr)\]
where $C'_0$ is the component corresponding to ${\mathcal C}_0$ and
there is a commutative diagram:
\[
\begin{CD}
\hat G \times T^3 @>{\mathcal T'}>> C'_0 \subset \Hom(\pi_1(\Sigma), \SOThr)\\
@V{q_0}VV                                     @V{q_1}VV \\
\tilde G \times T^3 @>{\mathcal T}>>  {\mathcal C}_0 \subset \rep(\pi_1(\Sigma), \SOThr).
\end{CD}
\]
Here, $q_0$ is the quotient map under the isometric $\SOThr$-action on the metric space
of generalized triangles and $q_1$ one under the conjugation action of
the space of representations:

We define the map $\mathcal T'$ from $\hat G\times T^3$:
If the triangle is nondegenerate, then
we can glue two copies with vertices removed along the open edges
and obtain a pair of pants, from which we obtain a representation of
$\pi_1(S_0) \ra \SOThr$ for a pair of pants $S_0$.  We obtain the second pair of pants $S_1$
using the exactly the same method. We choose base points $x_i^*$ for
on $S_i$ respectively for $i=0,1$. Let $S_i$ to be a union of 
two triangles $T_i^0, T_i^1$ with vertices removed and where $T_i^0$ has the base-point of $S_i$
for $i=1,2$. We put $T_i^1$ in the standard position and
obtain the elements of $\SOThr$ corresponding to $c_j^i$.
We see that $T_i^0$ are identical with $T_i^1$ and so are the representations.
Using $(\phi_0, \phi_1, \phi_2) \in T^3$, we
assign the gluing maps to be the rotations of those angles
respectively at the fixed points of holonomies of the boundary components of
$S_0$ and $S_1$ as above. 

If the triangle is degenerate, i.e., a pointed hemisphere, a pointed lune,
a pointed segment, or a pointed singleton, we can do the similar construction
as above considering them as having infinitesimal edges if necessary.
From the constructions, one obtains the full-representations.

The map $\mathcal T'$ induces a map $\mathcal T$ since the isometry $\SOThr$-action on
$\hat G$ and pasting map results in conjugation of corresponding holonomies in
$\Hom(\pi_1(\Sigma), \SOThr)$.

%In this section, we will go over region by region of $\tilde G \times T^3$ 
%and introduce the equivalence relations so that $\mathcal T$ induces an injective map.
%The ordering will be the interior, the faces $I$, $II$, $III$, and $IV$ times $T^3$, 
%the blown-up edges $B$, $B'$, ${A}$, ${A}'$, $C$, and $C'$ times $T^3$, and the faces 
%$a$, $b$, $c$, and $d$ times $T^3$. 

%At the end of this section, we will show that the induced map from $ \mathcal T$ is actually a homeomorphism.

\subsection{Extending $I_A, I_B$, and $I_C$}

%\begin{figure}

%\centerline{\includegraphics[height=6cm]{}}
%\caption{The identification by the Klein four-group action}
%\label{fig:trtetid}
%\end{figure}
In fact, $I_{{A}},I_{{B}}$, and $I_{{C}}$ extend to self-homeomorphisms of $\tilde G\times T^3$ since our action is 
just taking different fixed point for two vertices of the triangle and then 
the pasting angle changes correspondingly also.

For example, $I_{{A}}$ can be realized given a generalized triangle with vertices $v_0, v_1, v_2$ 
as taking the opposite triangle
at the vertex $v_0$ and $I_{{B}}$ at $v_1$ and $I_{{C}}$ at $v_2$.
We can assume that our triangle is always one of the four possible ones.

The pasting maps themselves are independent of the choice of
triangles among the four. However, the fixed point used to measure
the angle become antipodal in our picture. In this case, the rotation angle
becomes $2\pi$ minus the rotation angle as measured originally
because of the orientation consideration.

We describe the action 
below where $I_{{A}},I_{{B}},I_{{C}}$ inside are the transformations on $\tilde G$ described above:
 \begin{eqnarray}\label{eqn:iabc4}
    I_{{A}}:(x, \phi_0, \phi_1, \phi_2) \mapsto 
    (I_{{A}}(x), \phi_0, 2\pi-\phi_1, 2\pi-\phi_2)\nonumber \\
    I_{{B}}:(x, \phi_0, \phi_1, \phi_2) \mapsto
    (I_{{B}}(x), 2\pi-\phi_0, \phi_1, 2\pi-\phi_2) \nonumber \\
    I_{{C}}:(x, \phi_0, \phi_1, \phi_2) \mapsto
    (I_{{C}}(x), 2\pi-\phi_0, 2\pi-\phi_1, \phi_2).
    \end{eqnarray}
    Since the actions correspond to changing the fixed points of $c_i$ and hence does not change 
    the associated representations, we have 
    ${\mathcal T}\circ I_{{A}} ={\mathcal T}\circ I_{{B}} = {\mathcal T}\circ I_{{C}} ={\mathcal T}$.
    
   In Sections \ref{subsec:intequiv}, \ref{subsec:abcd}, \ref{subsec:regioni}, and \ref{subsec:ABC}, we will introduce 
   equivalence relation based on making $\mathcal T$ injective on facial regions times $T^3$. 
   %We will need $I_A, I_B$, and $I_C$.
  
%%% January 18 0:02 pm

\subsection{The interior equivalence relation}\label{subsec:intequiv}

By the following proposition, a representation or a character is {\em over } a subset $W$ of $\tilde G$ if 
it comes from a point of the subset $W \times T^3$.

\begin{prop}\label{prop:map}
\[\mathcal T: \tilde G \times T^3 \ra {\mathcal C}_0 \subset \rep(\pi_1(\Sigma), \SOThr)\]
is onto and  $4$ to $1$ branch-covering from the interior of $\tilde G \times T^3$ 
to the dense open subset of ${\mathcal C}_0$, i.e., the triangular subspace 
given by the action of the Klein four-group described by equation \eqref{eqn:iabc4}.
\end{prop}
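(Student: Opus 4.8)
The plan is to prove the three assertions — surjectivity, the exact count of four, and the branch-covering structure — by feeding the reconstruction of Proposition \ref{prop:h0h12} through the explicit Klein four-group action of equation \eqref{eqn:iabc4}. First I would dispose of surjectivity. By Proposition \ref{prop:tridense0} the triangular characters $D_0$ are dense and open in ${\mathcal C}_0$, and by Proposition \ref{prop:h0h12}(b) every character $h \in {\mathcal C}_0$ is realized by a pair of identical triangles together with a triple of pasting angles in $T^3$; when $h \in D_0$ the associated triangle is nondegenerate, so it lies in $G^o = \inte \tilde G$. Hence $\mathcal T$ carries $G^o \times T^3$ onto $D_0$, while the degenerate-triangle cases of the same proposition give that $\mathcal T$ maps all of $\tilde G \times T^3$ onto ${\mathcal C}_0$.

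Next I would show each fiber over $D_0$ contains a full $V$-orbit. Since ${\mathcal T}\circ I_{{A}} ={\mathcal T}\circ I_{{B}} = {\mathcal T}\circ I_{{C}} ={\mathcal T}$, the map factors through the quotient by $V = \{\Idd, I_{{A}}, I_{{B}}, I_{{C}}\}$. On the interior $V$ acts freely off a lower-dimensional set: by equation \eqref{eqn:iabc4}, $I_{{A}}$ fixes $(x,\phi_0,\phi_1,\phi_2)$ only when $x$ lies on the $A$-axis of $G^o$ and $\phi_1,\phi_2 \in \{0,\pi\}$, and symmetrically for $I_{{B}},I_{{C}}$. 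Thus a generic fiber contains at least the four distinct points of one $V$-orbit.

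The heart of the argument is that the fiber is \emph{exactly} one $V$-orbit. Suppose $(t,\phi)$ and $(t',\phi')$ in $G^o \times T^3$ share the image $h$. After conjugating representatives so that the two homomorphisms literally coincide, the restriction $h_0 = h|_{\pi_1(S_0)}$ is the same for both; by Proposition \ref{prop:poprep} its $\SOThr$-character determines a point of $G^o/V$, which pins the triangle down up to the $V$-action, so I may take $t = t'$ with common boundary axes $a_0,a_1,a_2$ (pairwise non-antipodal, since the triangle is nondegenerate). The pasting maps are rotations $P_i = R_{a_i,\phi_i}$, and by Proposition \ref{prop:d1d2} one has $h(d_1)=P_0^{-1}P_1$, $h(d_2)=P_0^{-1}P_2$. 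Equality of the two images then forces $R_{a_0,\phi_0'-\phi_0}=R_{a_1,\phi_1'-\phi_1}$ and $R_{a_0,\phi_0'-\phi_0}=R_{a_2,\phi_2'-\phi_2}$; as two nontrivial rotations about distinct axes can never coincide, each side must be the identity, giving $\phi' \equiv \phi \pmod{2\pi}$ (the trivial centralizer of the nonabelian $h_0$ removing the remaining conjugation freedom). Hence $(t,\phi)$ and $(t',\phi')$ agree and the fiber is precisely a $V$-orbit of four points.

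Finally I would upgrade this count to a branch-covering. The recovery of the triangle from the traces of $h(c_i)$ and of the pasting angles from $h(d_1),h(d_2)$ is continuous on the open locus where $h_0$ is nonabelian and $V$ acts freely, furnishing a continuous local inverse; combined with the openness of $D_0$ and the compactness of $\tilde G \times T^3$, this exhibits $\mathcal T$ as a $4$-to-$1$ covering of $D_0$ off the image of the fixed-point set, branching over that set (where $x$ meets an axis and the pasting angles lie in $\{0,\pi\}$). The step I expect to be the main obstacle is exactly this last one: checking that $\mathcal T$ is a genuine local homeomorphism — that the explicit reconstruction is continuous with continuous inverse and that no sheet collapses as one approaches the free locus — rather than merely a set-theoretic $4$-to-$1$ map, and organizing the branch locus so that the covering degree is uniform on its complement.
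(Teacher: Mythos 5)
Your proposal is correct and follows essentially the same route as the paper: surjectivity via Proposition \ref{prop:h0h12} together with density of triangular characters, and the four-to-one count from the fact (Proposition \ref{prop:poprep}) that a triangular pair-of-pants representation determines its triangle only up to the Klein four-group, whose extension \eqref{eqn:iabc4} then accounts for the whole fiber. Your explicit verification that the pasting angles are pinned down once the triangle is fixed---since $R_{a_0,\phi_0'-\phi_0}=R_{a_1,\phi_1'-\phi_1}$ forces both rotations to be the identity when the axes $a_0, a_1$ are distinct and non-antipodal---is a step the paper's proof leaves implicit, as is your care about the local-homeomorphism content of ``branch-covering,'' which the paper only settles later by compactness in Theorem \ref{thm:main1}.
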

\begin{proof}
%%% Onto
First, $\mathcal T| \tilde G^o \times T^3$ is onto
the set of triangular characters in ${\mathcal C}_0$ since
a nondegenerate triangle and three pasting angles determine
a triangular character $\pi_1(\Sigma) \ra \SOThr$ up to conjugations
and hence a point in ${\mathcal C}_0$ as a consequence by Proposition \ref{prop:h0h12}.
Since the set of triangular characters in ${\mathcal C}_0$ are dense,
$\mathcal T$ is onto ${\mathcal C}_0$. 

A triangular holonomy of a pair of pants determines a triangle only
up to the action of the group of order $4$ generated by $I_{{A}}, I_{{B}}, I_{{C}}$.
Given a triangle realized by the holonomy, the other three are given
as follows: Extend the three sides into great circles
and divide the sphere into $8$ triangles. Take only four with
same orientation as the given triangle. These are the ones obtainable
by the action of the group. We take the identical triangles for $\pi_1(S_0)$ and
$\pi_1(S_1)$.
\end{proof}

\begin{rem}\label{rem:uniqueness}
%If $h$ is triangular, then the triangles can be put into unique standard position up to 
%$\{I, I_{{A}}, I_{{B}}, I_{{C}}\}$-action and the pasting angles are uniquely determined. 
By above, the set of triangular characters and $\tilde G^o\times T^3/\{I, I_{{A}}, I_{{B}}, I_{{C}}\}$ are
in one-to-one correspondence. 

But if $h$ is nontriangular, then there may be more than one way to conjugate 
$h|\pi_1(S_0)$ and $h|i_*\pi_1(S_1)$ so that they correspond to 
generalized triangles in standard position. %In fact, the centralizer of 
%the restrictions are precisely the ambiguities here as will be explained.
\end{rem}

\subsection{The equivalence over regions $a$, $b$, $c$, and $d$}\label{subsec:abcd}

We can think of the region $a$ as a subset of 
$[0,\pi]\times [0,\pi]\times[0,\pi]$ 
given by equation $\theta_0+\theta_1+\theta_2=\pi$. 
%where $\sim$ is given by 
%\begin{eqnarray}\label{eqn:cube}
%(0,\theta_1,\theta_2) &\sim& (\pi,\theta_1,\theta_2)\nonumber\\
%(\theta_0,0,\theta_2) &\sim& (\theta_0,\pi,\theta_2)\nonumber \\
%(\theta_0,\theta_1,0) &\sim& (\theta_0,\theta_1,\pi)
%\end{eqnarray}
$a$ is to be the region given by $\theta_0,\theta_1,\theta_2\geq 0$.
Consider $a \times T^3$ and the equivalence relation 
\begin{multline}
(x, \phi_0, \phi_1, \phi_2) \sim (y, \phi'_0, \phi'_1, \phi'_2) 
\hbox{ iff } x = y \hbox{ and } \\ (\phi_0, \phi_1, \phi_2) - (\phi'_0, \phi'_1, \phi'_2) = s (2\pi, 2\pi, 2\pi), s \in \bR
\end{multline}
which was determined to have the same abelian characters. 
Thus, we consider this a circle action or translations generating the equivalence.
For $b\times T^3$, the equivalence class is: 
\begin{multline}
(x, \phi_0, \phi_1, \phi_2) \sim (y, \phi'_0, \phi'_1, \phi'_2) 
\hbox{ iff } x = y \hbox{ and } \\ (\phi_0, \phi_1, \phi_2) - (\phi'_0, \phi'_1, \phi'_2) = s (2\pi, -2\pi, 2\pi), s \in \bR
\end{multline}
For $c\times T^3$, it is generated by translations by 
vectors parallel to $(-2\pi, 2\pi, 2\pi)$ as above, 
and finally for $d \times T^3$, it is generated by translations 
by vectors parallel to $(2\pi, 2\pi, -2\pi)$ as above. 
Thus, there is an $\SI^1$-action in each of the regions above these faces $a, b, c$, and $d$. 

Let us look at $a\times T^3/\sim$ in more detail: 
So the equation determining the equivalence class representatives
should be $\phi_0+\phi_1+\phi_2=0$ as $2\pi=0$ in $\SI^1\times \SI^1\times \SI^1$
with an order-three translation-group action:
We take the subset of $T^3$ given by the equation. Looking at $T^3$ 
as a quotient space of the cube $[0,2\pi]^3$, we see that the plane 
corresponds to the union of two triangles in $T^3$ with vertices 
$\tri_1=\tri([2\pi,0,0],[0,2\pi,0],[0,0,2\pi])$ and $\tri_2=\tri([2\pi,2\pi,0],[2\pi,0,2\pi],[0,2\pi,2\pi])$. 
Then two triangles form a $2$-tori $T'$ under the identifications of $T^3$. 
$T'$ is isometric with a torus in the plane $E^2$ quotient out by a lattice generated by 
translation by $(2\pi\sqrt{2},0)$ and $(\pi\sqrt{2}, \pi\sqrt{6})$. 
There is another identification by the above $\SI^1$-action for the union $T'$:
The identification is given by a translation by $(\pi, \pi\sqrt{6}/3)$ in $T'$,
which is of order three. The quotient space $T^2_a$ is homeomorphic to a $2$-torus. 
Thus, the quotient space here is in one-to-one correspondence with 
$a\times T^2_a$.

Similarly, for other faces, we obtain trivial $2$-torus fibrations as well.
%% Dec 28 9:02

\begin{prop}\label{prop:abelian} 
\begin{itemize}
\item[{\rm (i)}] The set of abelian characters is homeomorphic
with $T^4/\sim$ where $\sim$ is given by
\[(\phi_0,\phi_1,\phi_2,\phi_3)
\sim (2\pi-\phi_0,2\pi-\phi_1,2\pi-\phi_2,2\pi-\phi_3)\] 
where coordinates are rotation angles of $h(c_1),h(c_2), h(d_1), h(d_2)$ 
respectively at a common fixed point determined up to the antipodal map.
\item[{\rm (ii)}] Every abelian character $\pi_1(\Sigma) \ra \SOThr$ is 
represented as an element over one of the regions $a$, $b$, $c$, and $d$ {\rm ;}
%In fact as an element in the above $2$-torus fibration 
%of any one of the region $a$, $b$, $c$, and $d$ of your choice. 
\item[{\rm (iii)}] 
The subspace of abelian character is homeomorphic with $a \times T^2_a/\sim$ where 
$\sim$ is given on $(a \cap d) \times T^2_a$ by $ I_{{A}}$ and on
$(a\cap c)\times T_a$ by  $I_{{B}}$ and on $(a\cap b)\times T_a$ by  $I_{{C}}$ 
as in equation \eqref{eqn:iabc4}.
%\[(\theta_0,\theta_1,\theta_2,\phi_0,\phi_1,\phi_2)
%\sim (\pi-\theta_0,\pi-\theta_1,\pi-\theta_2,2\pi-\phi_0,
%2\pi-\phi_2,2\pi-\phi_2).\]
\item[{\rm (iv)}] The above quotient space is homeomorphic 
to a generalized Seifert space where the base space is 
a sphere with four singular points of order two 
with fiber homeomorphic to $T^2$ and exceptional fibers 
are spheres with four singular points. 
\end{itemize}
\end{prop}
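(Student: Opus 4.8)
The plan is to prove (i)--(iv) in that order, treating (i) as the clean algebraic model, (iii) as the bridge to the $\tilde G\times T^3$ framework of this section, and reading the Seifert description (iv) off of (i).

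First I would prove (i) by a structure lemma for commuting elements: two non-identity rotations in $\SOThr$ either share an axis or are both $\pi$-rotations about perpendicular axes, in which case the image is a Klein four-group $V\subset\SOThr$. In that exceptional case a commutator among the generators lifts to $-\Idd$ in $\SUTw$, so the Stiefel--Whitney class is nontrivial and the character lies in ${\mathcal C}_1$, not ${\mathcal C}_0$. Hence every abelian character in ${\mathcal C}_0$ has $h(c_1),h(c_2),h(d_1),h(d_2)$ all rotating about one axis $\{x,-x\}$. Since the relation $[c_1,d_1][c_2,d_2]=1$ is automatic once the images commute, each angle tuple $(\phi_0,\phi_1,\phi_2,\phi_3)\in T^4$ about a fixed axis gives a representation, and the only remaining freedom is the choice $x$ versus $-x$, which replaces every $\phi_i$ by $2\pi-\phi_i$. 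Writing $\sigma$ for this simultaneous inversion, the induced map $T^4/\sigma\ra\{\text{abelian characters}\}$ is a continuous bijection from a compact space to a Hausdorff space, hence a homeomorphism.

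For (ii) I would restrict an abelian $h$ to $\pi_1(S_0)$ and $\pi_1(S_1)$: each restriction is an abelian pair-of-pants character, so by Proposition \ref{prop:poprep} and the degenerate-triangle discussion in the proof of Lemma \ref{lem:perturb} its three boundary holonomies share an axis, i.e. the associated generalized triangle is a pointed-point or pointed-lune, which is exactly the face regions $a,b,c,d$ (with their boundary edges and vertices allowed). For (iii) the key observation is that $V=\{\Idd,I_A,I_B,I_C\}$ permutes $\{a,b,c,d\}$ simply transitively: from \eqref{eqn:iabc4} one checks $I_A:a\leftrightarrow d$, $I_B:a\leftrightarrow c$, $I_C:a\leftrightarrow b$ (with the remaining two faces swapped in each case). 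Therefore region $a$, carrying the torus fibre $T^2_a$ obtained from the $\SI^1$-action computed just above the proposition, is a fundamental domain for the combined ($V$ and circle) quotient, and the only residual identifications lie on the boundary edges of $a$, where the edge stabilizer acts: $I_A$ on $(a\cap d)\times T^2_a$, $I_B$ on $(a\cap c)\times T^2_a$, $I_C$ on $(a\cap b)\times T^2_a$. This yields $a\times T^2_a/\sim$. I would then exhibit an explicit change of coordinates matching it with $T^4/\sigma$, reading the two $S_0$-angles together with the pasting differences $h(d_1)=P_0^{-1}P_1$, $h(d_2)=P_0^{-1}P_2$ of Proposition \ref{prop:d1d2} as the four circle coordinates.

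For (iv) I would argue from the $T^4/\sigma$ model and split $T^4=T^2\times T^2$, grouping the two $c$-angles and the two $d$-angles. Since $\sigma$ acts as $-1$ on each factor, projection to the first factor is a Seifert fibration $T^4/\sigma\ra T^2/\sigma$ with generic fibre $T^2$; the base $T^2/\sigma$ is the pillowcase, a $2$-sphere with four order-two cone points (the four $2$-torsion points of $T^2$), and over each of these the fibre degenerates to $T^2/\sigma$, again a sphere with four singular points. This is the asserted generalized Seifert structure, and as a check the $2^4=16$ orbifold points of $T^4/\sigma$ appear as four fibre-singularities over each of the four base cone points. The geometric fibration of (iii) over the folded triangle realizes the same pillowcase base: the three edge-midpoints are order-two cone points, and the common image of the three vertices of $a$ is the fourth (its link has length $\pi$, so cone angle $\pi$), the torus fibre collapsing there through the $I$-collapse \eqref{eqn:F22}.

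The hard part will be (iii): rigorously verifying that region $a$ with fibre $T^2_a$ is a genuine fundamental domain, that the boundary identifications are exactly $I_A,I_B,I_C$ and no others, and then reconciling the geometric quotient $a\times T^2_a/\sim$ with the algebraic $T^4/\sigma$ of (i) --- in particular tracking how the torus fibre degenerates over the vertex image so that the two descriptions agree on the count and type of singular fibres. By contrast, the inversion lemma of (i) and the splitting argument of (iv) should be comparatively routine.
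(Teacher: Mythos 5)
Your architecture (the algebraic model in (i), the bridge to $\tilde G\times T^3$ in (iii), the pillowcase splitting for (iv)) parallels the paper's, and your (iii)--(iv) are sound --- indeed your proof of (iv), splitting $T^4=T^2\times T^2$ with $\sigma$ acting as $-\Idd$ on each factor, is more explicit than the paper's appeal to Lee--Raymond. But there is a genuine gap in your very first step, the reduction to a common axis. You correctly isolate the exceptional case of commuting elements (two $\pi$-rotations about perpendicular axes, image a Klein four-group), but your disposal of it --- ``a commutator among the generators lifts to $-\Idd$, so the character lies in ${\mathcal C}_1$'' --- is false. The Stiefel--Whitney obstruction is not the value of an arbitrary commutator of generators; it is the value of the lifted \emph{relator}, i.e.\ the product $[\tilde h(c_1),\tilde h(d_1)]\,[\tilde h(c_2),\tilde h(d_2)]$ for the paired generators. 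Concretely, take $h(c_1)=R_{[1,0,0],\pi}$, $h(d_1)=\Idd$, $h(c_2)=R_{[0,1,0],\pi}$, $h(d_2)=\Idd$. This satisfies the surface relation, its image is the full Klein four-group (so it is abelian with no common fixed axis), yet it lifts to $\SUTw$ via $c_1\mapsto i$, $d_1\mapsto 1$, $c_2\mapsto j$, $d_2\mapsto 1$ (each lifted commutator is already $1$), so it lies in ${\mathcal C}_0$. The anticommutation of $i$ and $j$ noted in Remark \ref{rem:otherc} only forces a character into ${\mathcal C}_1$ when the two perpendicular $\pi$-rotations occur as a \emph{pair} $(h(c_i),h(d_i))$; here they do not. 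Note also that this character is triangular: its restrictions to $\pi_1(S_0)$ and $\pi_1(S_1)$ come from the octant triangle with angles $(\pi/2,\pi/2,\pi/2)$ and all pasting angles $0$, so it sits over the center of $G^o$, not over $a,b,c,d$. Hence your proof of (i) fails (the abelian locus of ${\mathcal C}_0$ strictly contains the image of $T^4/\sigma$), and your (ii) inherits the same failure, since the pair-of-pants restrictions of this example are abelian but nondegenerate, contrary to your invocation of Lemma \ref{lem:perturb} and Proposition \ref{prop:poprep}.

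You should also know that this is a defect of the statement and of the paper's own proof, not only of your patch: the paper simply asserts that an abelian representation fixes a common antipodal pair and never mentions the Klein four-group case. What is actually true, and what the downstream applications (e.g.\ the injectivity argument in Theorem \ref{thm:main1}) require, is the weaker statement that the characters over the faces $a,b,c,d$ are exactly the abelian characters whose image lies in a circle subgroup, and that these form $T^4/\sim$; the exceptional Klein-four characters in ${\mathcal C}_0$ are accounted for separately as triangular characters over the interior. If you restate (i)--(ii) for common-axis abelian characters, your remaining steps go through as written: the simple transitivity of $\{\Idd,I_A,I_B,I_C\}$ on $\{a,b,c,d\}$ gives (iii), and the pillowcase computation gives (iv). But the implication ``image equals the Klein four-group $\Rightarrow$ character in ${\mathcal C}_1$'' cannot be repaired; the exceptional abelian characters in ${\mathcal C}_0$ genuinely exist and are simply outside the locus this proposition is meant to describe.
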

\begin{proof} 
(i) For abelian representations, $c_1, c_2, d_1, d_2$ will map to an isometry fixing a common pair of points. 
If we choose one fixed point in the antipodal pair of fixed points, 
then the angles are given. If we choose the other fixed point, 
then the angles are given as the complementary one to the former ones. 

The homeomorphism property follows easily. 

(ii) If a representation $h$ is abelian, then $h$ has a common pair of antipodal points 
which are fixed points. The generalized triangle determined 
is then either a point or a segment of length $\pi$,
and the gluing transformations have fixed points in the vertices. 
Thus, $h$ corresponds to an element over one of the regions $a$, $b$, $c$, and $d$. 

(iii) The angles $\theta_i$ of the triangle are half of the rotation angles. 
$a$ is the fundamental region of $\partial G \times T^3$ given 
by the above action of $\{I, I_A, I_B, I_C\}$. 
Thus, the quotient space is obtained from $a \times T^2_a$ by $\sim$.

(iv) When we take the quotient of $a \times T^2_a$, 
the four singular points come from the vertices of $a$ which identifies to a point 
and the middle point of the edges of the triangle $a$.
(See Lee-Raymond \cite{LR}.)
 
\end{proof}

%We describe what are over regions $b$, $c$, and $d$ as well.
%Again, $b$, $c$, and $d$ are considered subspaces of the quotient 
%of the cube as above given by respective equations:
%\[ \theta_1 =\theta_0+\theta_2-\pi, 
%\theta_0=\theta_1+\theta_2 -\pi, 
%\theta_2 =\theta_0+\theta_1 -\pi
%\] 
%determined by inequalities
%$\theta_0,\theta_1,\theta_2 \geq 0$.
%In fact, over the region $b$, 
%the above space is homeomorphic to 
%$b\times T^2_b/\sim$ where $T^2_b$ is a subset of 
%$T^3$ determined by the equation $\phi_1=\phi_0+\phi_2 - 2\pi$ and 
%an order-three translation-group action,
%and $\sim$ is given by the action of $\{I, I_{{A}}, I_{{B}}, I_{{C}}\}$ as in equation \eqref{eqn:iabc4}.
%The descriptions over c, d are analogous also and are quotients of $c\times T_c$
%and $d \times T_d$ by appropriate equivalences. 

\subsection{Equivalence relations for regions $I,II,III,IV$}\label{subsec:regioni}

The points over regions $I$, $II$, $III$, and $IV$ 
are ones corresponding to the characters $\pi_1(\Sigma)$ that factor into 
$F_2$ sending $c_0, c_1$, and $c_2$ to identity since there have 
triangles with angles either equal to $0$ or $\pi$.
We choose a map from $\Sigma$ to a bouquet of two circles such that
$d_1$ and $d_2$ correspond to the generators of $F_2$. 
We denote those characters by $H(F_2)$ by an abuse of notation
since $h:F_2\ra \SOThr$ corresponds to $h':\pi_1(\Sigma)\ra \SOThr$ 
so that $h'=h\circ f_*$.

The map $\mathcal T_i:  i \times T^3 \ra H(F_2)$
where $i=  I, II, III$ or $IV$ is defined by sending 
the configuration and assigned angles to the equivalence 
class of the associated representation. That is, $\mathcal T_i$ are restrictions
of $\mathcal T$. We are interested in finding the equivalence relation 
to make these maps injective.

The map $\mathcal T_i$ can be constructed as follows for any region $I$, $II$, $III$, and $IV$: 
Let $v_0, v_1,$ and $v_2$ be the three points on a great circle or a segment
corresponding to the vertices of a triangle representing an element of  $I, II, III$, or $IV$
in the clockwise orientation and $\phi_0, \phi_1,$ and $\phi_2$ the associated pasting
angles. To begin with, suppose that none of the angles are zero or $\pi$ and that 
the vertices $v_0, v_1$, and $v_2$ in these cases are mutually distinct and do not coincide with an antipode of 
the other points. 
The vertices $v_0, v_1,$ and $v_2$ give us the orientation of 
the boundary of the generalized triangle. 
For example, in case I, 
we take one triangle $\tri_1$ in the outside of
the great circle and another one $\tri_2$ inside.
In all cases, the two triangles are oriented in the clockwise direction.
They have respective edges sharing a common great circle 
passing through $v_0$. 
We choose $\tri_1$ and $\tri_2$ in the following manner:
$\tri_1$ has two vertices $v_0$ and $v_1$ 
with angles $\pi- \phi_0/2$ and $\phi_1/2$ respectively. 
The remaining vertex $w_1$ has an angle $\eta_1$, $0\leq\eta_1\leq\pi$.
Since $\tri_1$ is outside, the vertices $v_1,v_0,$ and $w_1$ are in the clockwise direction in the boundary. 
$\tri_2$ has two vertices $v_0$ and $v_2$ 
with angles $\pi- \phi_0/2$ and $\phi_2/2$ 
respectively. The remaining vertex $w_2$ has an angle 
$\eta_2$, $0\leq\eta_2\leq\pi$. 
Since $\tri_2$ is inside, $v_2, v_0, w_2$ are in the clockwise direction in the boundary. 
(See Figure \ref{fig:mapF} in the case of region $I$.)

\begin{figure}

\centerline{\includegraphics[height=6cm]{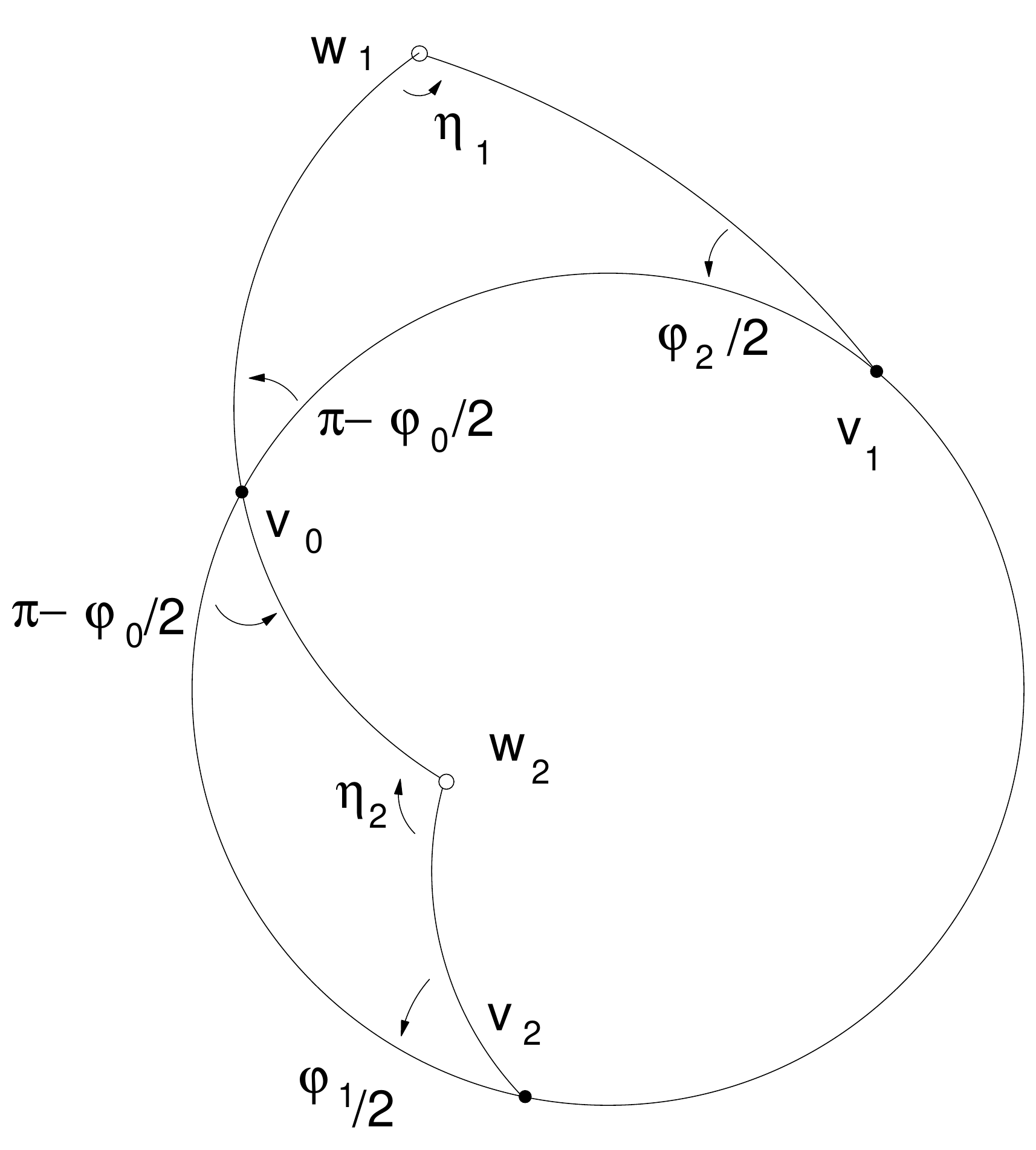}}
\caption{  Finding holonomy of $d_1$ and $d_2$ using triangles for region $I$.}
\label{fig:mapF}
\end{figure}

Recall that $R_{x,\theta}$ denote an isometry with fixed 
points $x, -x$ which rotates about the $x$ in the counter-clockwise direction of angle $\theta$. 

The isometry which is a composition of
$R_{v_0, 2\pi-\phi_0}$ and $R_{v_1, \phi_1}$ 
has a fixed point at $w_1$ and a rotation angle $2\pi-2\eta_1$.
Similarly, the isometry which is a composition of 
$R_{v_1,2\pi-\phi_0}$ and $R_{v_2,\phi_2}$ 
has a fixed point at $w_2$ and a rotation angle $2\pi-2\eta_2$, which follows 
from the consideration as in Section \ref{sec:rpp}.
In this way, we obtain two isometries corresponding to 
$d_1$ and $d_2$.

Let us illustrate more details here for region $I$ case only (The other regions 
are similarly treatable and the result follows from Proposition \ref{prop:multrule}): 
We still suppose that any two of 
$v_0, v_1,$ and $v_2$ are mutually distinct and do not coincide with an antipode of 
the other point:
If $\phi_0$ is zero, then we use $w_1$ to be $-v_1$ and 
$d_1$ is mapped to the rotation about $-v_1$ of angle $2\pi-\phi_1$ 
and $w_2$ is to be $-v_2$ and $d_2$ is a rotation about $-v_2$ of 
angle $2\pi-\phi_2$. (Here we are using lunes as degenerate triangles 
to compute these. )

If $\phi_0$ is $2\pi$ and others are not zero or $2\pi$, then we use $w_1$ to be $v_1$ and $w_2$ be $v_2$ 
and $d_1$ is a rotation about $v_1$ of angle $\phi_1$
and $d_2$ is a rotation about $v_2$ of angle $\phi_2$.

If $\phi_1$ is zero and others are not zero or $2\pi$, then we let $w_1$ be 
$v_0$ and $d_1$ is a rotation about $v_0$ of angle $2\pi-\phi_0$,
and the other triangle is constructed as above by angles 
$2\pi-\phi_0$ and $\phi_2$ and we obtain $d_2$ 
as in the above nondegenerate triangle $T_2$ case. 

If $\phi_1$ is $2\pi$ and others are not zero or $2\pi$, then we let $w_1$ be $-v_0$ and 
$d_1$ be a rotation about $-v_0$ of angle $\phi_0$
and we let $w_2$ and $d_2$ be determined as in the above nondegenerate $T_2$ case. 

If $\phi_2$ is zero and others are not zero or $2\pi$, then we let $w_2$ be 
$v_0$ and $d_2$ is a rotation about $v_0$ of angle $2\pi-\phi_0$, 
and the other information is analogously determined to as above for the nondegenerate $T_1$ case.

If $\phi_2$ is $2\pi$ and others are not zero or $2\pi$, then we let $w_2$ 
be $-v_0$ and $d_2$ be a rotation about $-v_0$ of angle $\phi_0$
and the other information is analogously determined as above for the nondegenerate $T_1$ case.

If $\phi_1=\phi_2=0$ and $\phi_0\ne 0, \pi$, then we let $w_1=w_2=v_0$ and 
$d_1$ and $d_2$ are respectively rotations of angle $2\pi-\phi_0$
at $v_0$.

If $\phi_1=0, \phi_2=2\pi$  and $\phi_0\ne 0, \pi$, then we let $w_1=v_0,w_2=-v_0$ 
and $d_1$ is a rotation about $v_0$ of angle $2\pi-\phi_0$  and $d_2$ is a rotation of angle $\phi_0$. 

If $\phi_1=2\pi, \phi_2=0$, then we let $w_1=-v_0,w_2=v_0$ 
and $d_1$ is a rotation of angle $\phi_0$ about $-v_0$ and $d_2$ is a rotation of angle 
$2\pi-\phi_0$ about $v_0$. 

Finally, suppose that some of  $v_0, v_1, v_2$ coincide or some of them coincide with an antipode of 
the other points. Then we have an abelian representation over one of the regions $a$, $b$, $c$, and $d$, 
which we studied above.

%If $v_0=v_1=v_2$, then we have an abelian representation and it is very clear 
%how to go about in this case setting the fixed points of $d_i$ to be the same point. 
%If $v_0=v_1$ and $v_2$ is distinct from it or its antipode, 
%then we construct the first triangle as an infinitesimal ones.
%Other cases when two vertices agree an the other is distinct from the point 
%and its antipode is completely similar. 

%If $v_0$ and $v_1$ are antipodal and $v_2$ is distinct from both, 
%then we can choose $w_1=v_i$ depending on which $\phi_i$ is larger for $i=0,1$. 
%(See Remark \ref{rem:gmult}.)
%If $\phi_0=\phi_1$, then we can let $w_1$ be any of $v_0, v_1$ since 
%the rotation angle is $2\pi$ in this case. 

%Everything else such as cases when angles are zero or $\pi$ 
%works pretty similar to this this discussions, and the readers 
%can figure out the details in the remaining cases. 

\begin{rem}\label{rem:contin} 
Note that our construction of $d_i$ and the fixed points $w_i$ depends 
smoothly unless $\eta_i=0,\pi$ on the data $v_i, \phi_i$. 
Hence, our construction easily extends 
to a construction on $\SUTw$ where the angles are restricted to be in $[0, 2\pi]$. 
However, to obtain $d_1$ and $d_2$ we have to multiply by $-I$ or to take the antipode to the result 
obtained from the triangle construction. (See Equation \eqref{eqn:cba2} and
Section \ref{subsec:gmult}.)
\end{rem}

\begin{prop}\label{prop:calFonto} 
The map \[\mathcal T: \bigcup_{i=I,II,III,IV} \tri_i \times T^3 \ra H(F_2)\] 
is onto. In fact, $\mathcal T_i$ is onto for each $i$.
\end{prop}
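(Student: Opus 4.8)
The plan is to use the fact that every point over the vertex regions $I,II,III,IV$ corresponds to a character of $\pi_1(\Sigma)$ which factors through the free group $F_2=\langle d_1,d_2\rangle$, since there $h(c_0)=h(c_1)=h(c_2)=\Idd$. Thus a point of the image of $\mathcal T_i$ is completely determined by the pair $(h(d_1),h(d_2))$ up to simultaneous conjugacy, and by Proposition \ref{prop:HF2} the space $H(F_2)$ is exactly the set of such pairs of rotations up to conjugacy. So it suffices to show that the explicit assignment $(v_0,v_1,v_2,\phi_0,\phi_1,\phi_2)\mapsto(h(d_1),h(d_2))$ described just before the proposition can realize \emph{every} pair of rotations of $\SI^2$ up to conjugacy. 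Recall from that construction that $h(d_1)=R_{v_0,2\pi-\phi_0}\,R_{v_1,\phi_1}$ and $h(d_2)=R_{v_0,2\pi-\phi_0}\,R_{v_2,\phi_2}$, with $v_0,v_1,v_2$ lying on a common great circle.

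The key simplification I would exploit is to set the shared pasting angle $\phi_0=0$, so that $R_{v_0,2\pi-\phi_0}=R_{v_0,2\pi}=\Idd$. Then, by the degenerate rule for $\phi_0=0$ stated in this subsection (and its analogue for the regions $II,III,IV$, which follows from Proposition \ref{prop:multrule}), the construction collapses to $h(d_1)=R_{-v_1,2\pi-\phi_1}=R_{v_1,\phi_1}$ and $h(d_2)=R_{-v_2,2\pi-\phi_2}=R_{v_2,\phi_2}$; that is, two \emph{independent} rotations about the axes $v_1$ and $v_2$ with free angles $\phi_1,\phi_2\in[0,2\pi]$.

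Now I would realize an arbitrary character. Given a target character, represent it by two rotations $R_{a,\alpha}$ and $R_{b,\beta}$. After replacing $b$ by $-b$ and $\beta$ by $2\pi-\beta$ if necessary (which does not change the rotation, since $R_{b,\beta}=R_{-b,2\pi-\beta}$), I may assume the distance between $a$ and $b$ is at most $\pi$, so they lie on the short geodesic of a great circle $\SI^1$. I then set $v_1=a$, $v_2=b$, choose $v_0$ to be a third point of $\SI^1$ placed appropriately for the region in question (anywhere on $\SI^1$ for region $I$; on the segment between $v_1$ and $v_2$ for regions $II,III,IV$), and put $\phi_0=0$, $\phi_1=\alpha$, $\phi_2=\beta$. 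This is a legitimate configuration in $\tri_i\times T^3$ whose assigned vertex angles are the appropriate multiples of $\pi$ forcing $h(c_i)=\Idd$, and by the previous paragraph $\mathcal T_i$ sends it to the character $(R_{a,\alpha},R_{b,\beta})$. Hence $\mathcal T_i$ is onto $H(F_2)$ for each $i$, and therefore so is $\mathcal T$.

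The part that needs the most care is verifying that the degenerate construction genuinely collapses to two independent rotations in \emph{each} region, not only in region $I$ where the rules were written out explicitly: for the segment-type regions $II,III,IV$ the roles of the three vertices differ, so I would check each case against Proposition \ref{prop:multrule}, confirming that $\phi_0=0$ again makes one factor trivial. A secondary point is the treatment of coincident or antipodal axes (the abelian characters, where $v_1=v_2$ or $v_1=-v_2$): these are still directly realized by the same prescription, since $v_1$ and $v_2$ are allowed to coincide on $\SI^1$ and the angles remain free, and they can also be recovered as limits within the region by continuity of $\mathcal T_i$. Once these routine case checks are in place, the surjectivity statement follows as above.
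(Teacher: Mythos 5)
Your proof is correct and follows essentially the same route as the paper: the paper's own argument also neutralizes the factor at $v_0$ (it sets the pasting angle there to $2\pi$, which in $\SOThr$ is the same trivial rotation as your choice $\phi_0=0$) and then realizes an arbitrary character by freely prescribing the distance between $v_1$ and $v_2$ and the two rotation angles. Your write-up is simply a more detailed version, with the per-region case checks and the abelian/antipodal cases made explicit where the paper leaves them implicit.
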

\begin{proof} 
Given a generalized triangle, 
we can control the distances between the vertices denoted by $v_1$ and $v_2$ and put any angles there.
By putting the angle at the vertex denoted by $v_0$ be $2\pi$, we can 
realize all segment with any given angles at vertices up to isometries.
\end{proof}

Under the $\{I, I_A, I_B, I_C\}$-action as in Equation \eqref{eqn:iabc4}, 
the regions $I, II, III, IV$ are actually identical.  
The maps ${\mathcal T}_i$s are also identical since we can easily show that $\mathcal T_i = \mathcal T_j \circ f$ holds 
for $f \in V$  sending $i$ to $j$. (That is, we show that they correspond to the same characters.)
Thus, we introduce the equivalence that element $x \in \tri_i\times T^3$ 
is equivalent to $y \in \tri_j \times T^3$ if $y=f(x)$ under $f\in V$ sending $\tri_i$ to $\tri_j$.

Finally, we introduce the equivalence relation $\sim$ so that two elements of $\tri_i\times T^3$ are equivalent 
if it maps to the same point of $H(F_2)$. Since $\tri_i$ is compact and $H(F_2)$ is a Hausdorff space, 
we obtain
\begin{prop}\label{prop:calFonto2} 
$\mathcal T_i$ induces a homeomorphism $\tri_i\times T^3/\sim \ra H(F_2) $ for each $i$, $i=I, II, III. IV$.
\end{prop}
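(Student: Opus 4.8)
The plan is to exhibit the map $\bar{\mathcal T}_i \colon \tri_i\times T^3/\sim \; \ra H(F_2)$ induced by $\mathcal T_i$ as a continuous bijection from a compact space to a Hausdorff space, and then invoke the standard fact that such a map is automatically a homeomorphism. Two of the three ingredients come for free. Injectivity of $\bar{\mathcal T}_i$ holds by the very definition of $\sim$: we declared two points of $\tri_i\times T^3$ equivalent precisely when they map to the same point of $H(F_2)$, so $\mathcal T_i$ factors through the quotient and the factored map separates distinct classes. Surjectivity is exactly the content of Proposition \ref{prop:calFonto}, which asserts that each $\mathcal T_i$ is onto. Hence $\bar{\mathcal T}_i$ is a well-defined bijection.

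The substantive step is continuity of $\mathcal T_i$ (continuity of $\bar{\mathcal T}_i$ then follows from the universal property of the quotient map, which is continuous). I would argue this by tracking how the constructed holonomies of $d_1$ and $d_2$ depend on the data $(v_0,v_1,v_2,\phi_0,\phi_1,\phi_2)$. Over the open locus where the three points are mutually distinct and non-antipodal and the pasting angles avoid $0$ and $2\pi$, the two auxiliary triangles $\tri_1$ and $\tri_2$ are honest nondegenerate triangles, their third vertices $w_1, w_2$ and angles $\eta_1,\eta_2$ vary real-analytically by spherical trigonometry, and the rotations $R_{w_1,2\pi-2\eta_1}$ and $R_{w_2,2\pi-2\eta_2}$ are continuous in the parameters; this is the smooth dependence recorded in Remark \ref{rem:contin}.

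The main obstacle, and where care is needed, is continuity across the degenerate strata: the loci where a pasting angle hits $0$ or $2\pi$, where $\eta_i\in\{0,\pi\}$ so that an auxiliary triangle collapses to a lune or a segment, and where two of $v_0,v_1,v_2$ coincide or become antipodal (the abelian case lying over $a,b,c,d$). For each such boundary case the holonomies $d_1,d_2$ were defined by a separate prescription, so I would verify that the limit of the nondegenerate formula, computed via the generalized multiplication rules of Proposition \ref{prop:multrule} together with the convergence of generalized triangles in $\tilde G$ from Proposition \ref{prop:conv}, agrees with the prescribed degenerate value. A sequence argument suffices: given parameters converging to a degenerate configuration, the isometries produced by the nondegenerate construction converge in $\SOThr$ to those produced by the degenerate prescription, because the defining fixed points $w_i$ and angles $\eta_i$ have the correct one-sided limits. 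Once continuity is established, compactness of $\tri_i$ (a closed $2$-disk) and of $T^3$ makes $\tri_i\times T^3/\sim$ compact, while $H(F_2)$ is Hausdorff since it is homeomorphic to a $3$-ball by Proposition \ref{prop:HF2}; the compact-to-Hausdorff criterion then yields that $\bar{\mathcal T}_i$ is a homeomorphism.
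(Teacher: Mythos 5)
Your proposal is correct and follows essentially the same route as the paper: the equivalence relation is defined precisely so that the induced map is injective, surjectivity is Proposition \ref{prop:calFonto}, and the homeomorphism then follows from the compactness of $\tri_i\times T^3$ together with the Hausdorff property of $H(F_2)$. The paper leaves the continuity of $\mathcal T_i$ across the degenerate strata implicit (it is simply a restriction of the globally constructed map $\mathcal T$, with smooth dependence noted in Remark \ref{rem:contin}), so your explicit limit-checking there is a welcome but not divergent addition.
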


\subsection{The equivalence relation on ${A}$,${A}'$, $B$, $B'$, $C$, $C'$}\label{subsec:ABC}

Next, we wish to consider the representations for each of which exactly one of $h(c_i)$ is identity 
and does not correspond to representations over the interior of regions $I$, $II$, $III$, and $IV$.
Define $U=I\cup II\cup III \cup IV$.
Then these representations correspond to ones over the regions 
${A}-U$,${A}'-U$,$B-U$, $B'-U$, $C-U$, and $C'-U$. 

Let us define $C^{{A}}, C^{{A}'}, C^{{B}}, C^{{B}'}, C^{{C}}$, and $C^{{C}'}$ as the subspace of 
${\mathcal C}_0$ that can be described as having the restricted $\pi(S_0)$ characters 
as described by the generalized triangles in regions ${A}$,${A}'$,$B,B',C$, and $C'$ respectively. 
In fact given a region $R$ in any of $A, B, C, A', B', C'$, we denote by $C^{R}$ the subset of 
${\mathcal C}_0$ coming from $R \times T^3$. 

We call the subspace over some intersection region, i.e., of form $X \cap y$ where 
$X$ is one of $A, A', B, B', C, C'$ and $y$ is one of $a, b, c, d$, 
the {\em abelian edges} of $X$. They correspond to abelian representations. 
In fact, every abelian representation in one of $C^{{A}}, C^{{A}'}, C^{{B}}, C^{{B}'}, C^{{C}}$, and $C^{{C}'}$
is from the abelian edges as we will see. We call them the {\em boundary abelian representations}. 
%%% Region B.

\subsubsection{Region $B - I - II$} \label{subsub:B-I-II}
We consider the region $B - I - II$ first and then we consider $B \cap I$ and $B\cap II$.
We choose an element of $(B-I-II)\times T^3$ and analyze the triangles and pasting maps:
Given a triangle here, we use the usual notations $v_i, \theta_i, l_i$s.
Then $\theta_0 = \pi$ and $\theta_1=\theta_2 \ne 0, \pi$ and $v_1$ and $v_2$ are antipodal
and $v_1$ and $v_2$ are vertices of a lune $L$ with angle $\theta_1=\theta_2$ 
at $v_1$ or $v_2$. 

This subsection is divided into two parts and the remaining subsections are similar:
(i) We will first show how to construct the representations over $B-I-II$ by pasting angles using 
geometric constructions. (ii) Next, we will find the equivalence relation on $(B-I -II)\times T^3$ that 
gives precisely the space of characters over $B-I -II$. 

(i) We start the construction process: 
We are given an element of $B-I-II$; hence, we choose a representative
generalized triangle with vertices $v_0, v_1, v_2$.
We now construct a representation $h$ so that $h(c_1)$ fixes $v_1$ and $h(c_2)$ fixes $v_2$
and has the right pasting angles given by $T^3$.  We use Proposition \ref{prop:multrule}. 

For this case, we will write the full details:

Suppose that $v_0$ is equal to $v_1$ or $v_2$. We choose $v_1$ and $v_2$ arbitrarily so that $v_2=-v_1$. 
In the former case, $h(d_1)$ is realized as a rotation about $v_1$ of angle $\phi_1-\phi_0$ 
and $h(d_2)$ as one about $v_2$ of angle $\phi_2+\phi_0$.
In the later case, $h(d_1)$ is realized as a rotation about $v_1$ of angle $\phi_1+\phi_0$ 
and $h(d_2)$ as one about $v_2$ of angle $\phi_2-\phi_0$. 
(These are abelian ones covered also in Section \ref{subsec:abcd}.)

Now suppose that $v_0$ is distinct from $v_1$ and $v_2$.
Suppose now that $0< \phi_0, \phi_1, \phi_2 < 2\pi$. %and $\phi_1, \phi_2$ are not equal to $0$ or $\pi$. 
To compute the holonomy of $h(d_1)$ and $h(d_2)$, let $l$ be the segment containing 
$v_0,v_1,v_2$ which is a side of the lune $L$. 
Then we draw a clockwise oriented triangle $T_1$ with vertices $v_0,v_1$ 
of angles $\phi_1/2$ and $\pi-\phi_0/2$ respectively. Then $h(d_1)$ is 
the rotation about the third vertex $w_1$ of $T_1$ of angle $\tau_1$
equal to $2\pi$ minus twice the vertex angle $\eta_1$. 

Similarly, we construct a clockwise oriented spherical triangle $T_2$ with 
vertices $v_0, v_2$ of angles $\pi-\phi_0/2$ and $\phi_2/2$ respectively. 
Then $h(d_2)$ is the rotation about the third vertex $w_2$ of $T_2$ of angle 
$\tau_2$ equal to $2\pi$ minus twice the vertex angle $\eta_2$. 

By allowing degenerate triangles, we can let $\phi_0 = 0$ or $2\pi$ and $0< \phi_1, \phi_2 < 2\pi$ as well, 
in which case, $w_1$ becomes $v_1$ or $v_2$ respectively and 
$T_1$ a pointed segment or a pointed lune.
Therefore, our representation is one which send 
$c_1$ and $c_2$ to rotations about $v_1$ and $v_2$ of respective rotation angles 
$\theta_1$ and $\theta_2$ and $d_1$ and $d_2$ to the rotations 
about $v_1$ and $v_2$. Thus, the characters are 
abelian ones and are over regions $a,b,c$ or $d$.
(Actually, these are only abelian ones if $v_0$ does not equal
$v_1$ or $v_2$.) The equivalence over the region were studied above
in Section \ref{subsec:abcd}.

Now see Proposition \ref{prop:multrule} when $\phi_1$ and $\phi_2$ equal $0$ or $\pi$. 

%The map
%\[{ T}_{{B}}: B \times \SI^1 \times \SI^1\times \SI^1 \ra {\mathcal C}_0^B\]
%is defined in this manner. 

\begin{figure}

\centerline{\includegraphics[height=4cm]{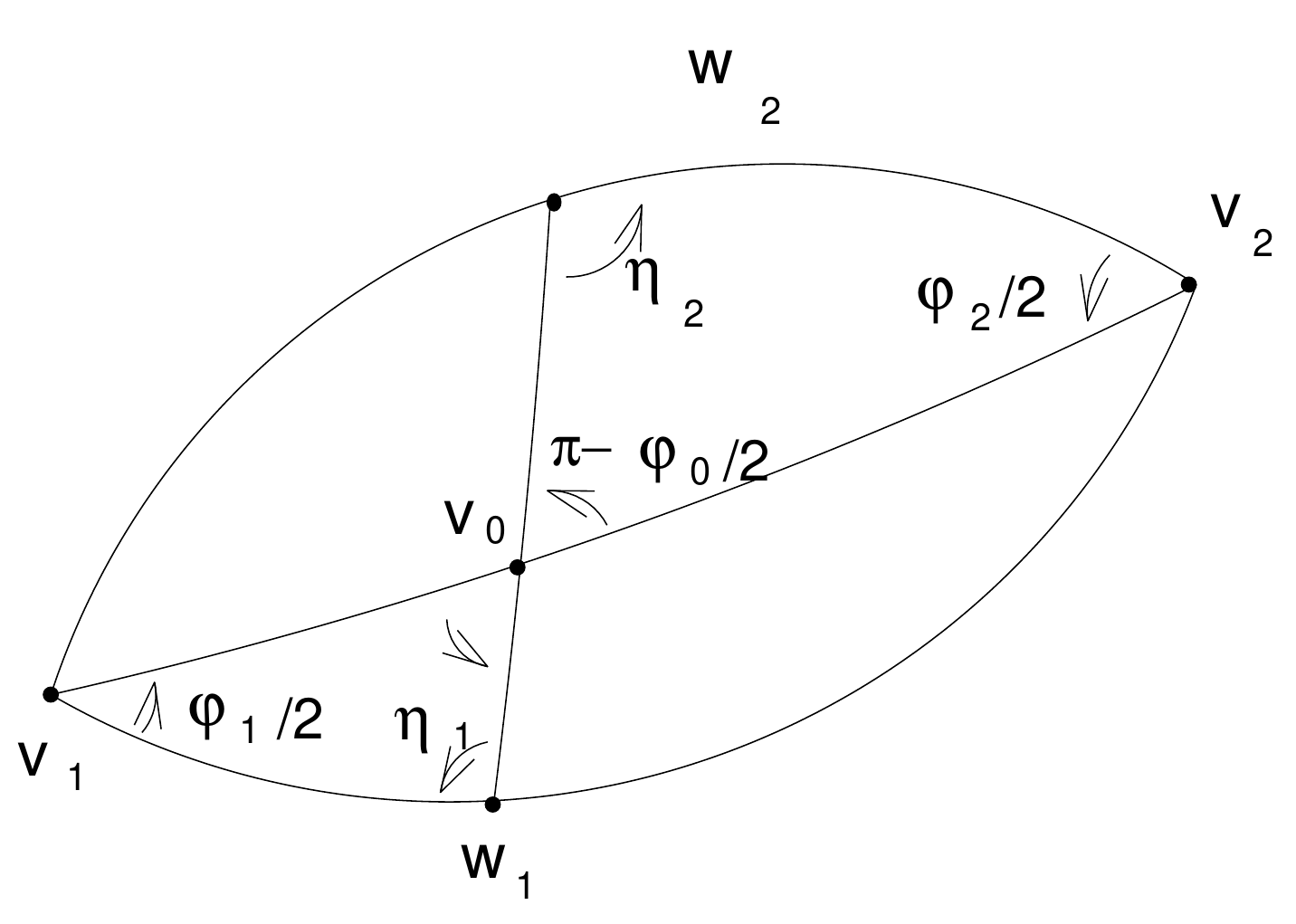}}
\caption{  Finding holonomy of $d_1$ and $d_2$ using triangles over $B-I -II$}
\label{fig:mapFB}
\end{figure}

%\begin{prop}\label{prop:Bonto}
%There is a continuous map 
%\[{\mathcal T}_{B-I -II}: (B-I -II) \times \SI^1 \times \SI^1\times \SI^1 \ra {\mathcal C}_0^{B-I -II}\]
%where ${\mathcal C}_0^{B-I -II}$ denote the subspace in ${\mathcal C}_0$ of 
%representations over the region ${B-I -II}$. Then the map is onto
%\end{prop}
%\begin{proof}

%\end{proof}

%%% Jan 4 6:17pm... Why needed??? 

%%% January 19 8:49 pm 

For a later purpose, we prove: 
\begin{prop}\label{prop:nonabelianB}
\begin{itemize}
\item A representation $h$ corresponding to an element of $C^{B-I-II}$ is nonabelian 
if and only if both of $h(d_1)$ and $h(d_2)$ are not identity and moreover have 
no fixed points in $v_1$ and $v_2$ for any configuration corresponding to an element in $(B-I-II)\times T^3$ representing $h$. 
\item Suppose that $h$ is nonabelian and is in $C^{B-I-II}$. 
Let $\SI^1$ be the great circle passing through fixed points $\pm w_1$ of $h(d_1)$ and 
$\pm w_2$ of $h(d_2)$ or the great circle passing through $w_1$ at an angle $\eta_1=\pi-\tau_1/2$ for 
the angle $\tau_1$ of rotation of $h(d_1)$ when $\pm w_1=\pm w_2$. 
Then $\SI^1$ passes through none of $v_1$ and $v_2$.
\end{itemize}
\end{prop}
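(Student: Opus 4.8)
The plan is to read the structure of $h$ directly off the construction over $B-I-II$ and then to use the surface relation $[c_1,d_1][c_2,d_2]=1$ as the main lever. A representative triangle here has $v_2=-v_1$, $\theta_0=\pi$ and $\theta_1=\theta_2\in(0,\pi)$, so $h(c_0)=\Idd$, while $h(c_1)=R_{v_1,2\theta_1}$ and $h(c_2)=R_{v_2,2\theta_2}=h(c_1)^{-1}$ are nontrivial rotations sharing the axis $\ell=\{v_1,v_2\}$; and $h(d_1)=R_{w_1,\tau_1}$, $h(d_2)=R_{w_2,\tau_2}$ are the rotations about the apexes $w_1,w_2$ of the triangles $T_1,T_2$ built over the two subedges $v_0v_1$, $v_0v_2$ of the lune segment, as produced by the construction and Proposition \ref{prop:multrule}. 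Since $\pi_1(\Sigma)$ is generated by $c_1,c_2,d_1,d_2$ and $h(c_1),h(c_2)$ already commute, $h$ is abelian precisely when $h(d_1)$ and $h(d_2)$ commute with $h(c_1)$ and with one another.

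For the first bullet I would first record the elementary fact that, when $\theta_1\neq\pi/2$, a rotation commutes with the nontrivial, non-involutive rotation $R_{v_1,2\theta_1}$ if and only if it fixes $\ell$ pointwise, i.e.\ fixes $v_1$ and $v_2$ (or is $\Idd$). Thus $[c_1,d_1]=\Idd$ iff $h(d_1)$ fixes $v_1,v_2$ or is trivial, and likewise for $d_2$. Feeding this into the relation, $[c_1,d_1]=\Idd$ forces $[c_2,d_2]=\Idd$ and conversely, so \emph{either both} $h(d_1),h(d_2)$ fix $\ell$ (or are trivial) \emph{or neither} does: in the first case all four generators rotate about $\ell$ and $h$ is abelian, while in the second $h(d_1)$ fails to commute with $h(c_1)$ and $h$ is nonabelian. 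This is exactly the claimed equivalence; and since the axis of $h(c_1)$ is intrinsic to $h$ whereas distinct configurations of $(B-I-II)\times T^3$ only conjugate the whole picture, it holds for every configuration representing $h$. The involutive locus $\theta_1=\pi/2$, where the centralizer of $h(c_1)$ is enlarged by the $\pi$-rotations about axes perpendicular to $\ell$, I would dispatch separately: it is a nowhere-dense analytic subset, so I would use the continuity of the trace functions of Lemma \ref{lem:nonc0} together with Proposition \ref{prop:abelian}(ii), which already identifies all abelian characters as lying over $a,b,c,d$.

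For the second bullet, assume $h$ is nonabelian; then by the first bullet $w_1,w_2\notin\ell$, that is $w_1,w_2\neq\pm v_1$. Placing $v_1,v_2$ at the two poles and $v_0$ on the prime meridian $C_l$, the triangles $T_1,T_2$ are erected over $C_l$ on opposite sides, and $\SI^1$ (the great circle through $w_1,w_2$, or the limiting circle through $w_1$ at angle $\eta_1=\pi-\tau_1/2$ when $w_1=\pm w_2$) contains the pole $v_1$ if and only if $\SI^1$ is a meridian, equivalently $w_1$ and $w_2$ lie on a common great circle through $v_1,v_2$. I would then compute $w_1,w_2$ from the two triangles via Proposition \ref{prop:multrule} and show that this coincidence can arise only on the degenerate locus where $w_1=\pm w_2$ is forced onto $\ell$, or where a triangle collapses, in which case $h(d_1)$ or $h(d_2)$ would fix $\ell$ (or one rotation would become trivial), contradicting nonabelianness through the relation exactly as in the first bullet. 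The hardest part will be this last step: carrying out the spherical-trigonometry bookkeeping for $w_1,w_2$ uniformly across the degenerate sub-cases (pasting angles equal to $0,\pi,2\pi$, and coincident or antipodal apexes) and matching the orientation conventions so that ``common meridian'' is correctly identified with the excluded abelian/degenerate configurations rather than with a genuine nonabelian family.
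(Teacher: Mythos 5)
Your first bullet is attacked by a genuinely different route from the paper's: you argue group-theoretically, combining the relation $[c_1,d_1][c_2,d_2]=1$ with the fact that the centralizer of a nontrivial non-involutive rotation in $\SOThr$ is exactly the circle of rotations about its axis, whereas the paper argues directly from the triangle construction, running through the cases $v_0\notin\{v_1,v_2\}$ with $0<\phi_0<2\pi$, then $\phi_0=0,2\pi$, then $v_0\in\{v_1,v_2\}$, and reading off in each case whether $h(d_1),h(d_2)$ are trivial or fix $v_1,v_2$. Away from the involutive locus your argument is correct and cleaner, since it explains why the fixed-point criterion detects nonabelianness instead of verifying it case by case.

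However, your dispatch of the locus $\theta_1=\pi/2$ is a genuine gap that your proposed argument cannot close. The proposition is a pointwise ``if and only if'' about individual representations; knowing that the locus is nowhere dense and that traces vary continuously says nothing about representations lying exactly on it, and Proposition \ref{prop:abelian}(ii) only says abelian characters also occur over $a,b,c,d$ --- it does not constrain which configurations over $B-I-II$ represent them. Worse, on this locus the claimed equivalence actually fails, exactly through the mechanism your centralizer analysis predicts. Take $\theta_1=\theta_2=\pi/2$, $v_0$ the midpoint of $s_0$, and $\phi_0=\phi_1=\phi_2=\pi$: the triangles $T_1,T_2$ are octant triangles, and the construction yields $h(c_1)=h(c_2)=R_{v_1,\pi}$ and $h(d_1)=h(d_2)=R_{w,\pi}$ with $w$ the pole of the great circle through $v_1,v_0,v_2$. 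The image is a Klein four-group, hence $h$ is abelian; yet every configuration representing this character must place $v_1,v_2$ on the axis of $h(c_1)$, so both $h(d_i)$ are nontrivial and fix neither $v_1$ nor $v_2$. No soft argument can repair this, because the statement itself fails for these commuting involutions; note that the paper's own proof has the same blind spot, since its first case asserts ``$h$ is nonabelian'' whenever $v_0\notin\{v_1,v_2\}$ and $0<\phi_0<2\pi$.

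For the second bullet, your spherical-trigonometry plan is workable but far heavier than needed, because it misses the structural fact that makes the paper's proof one line: by construction (Remark \ref{rem:L}) the union of the edges $v_0w_1$ and $v_0w_2$ lies in $\SI^1$, so $v_0\in\SI^1$ always. Hence if $\SI^1$ contained $v_1$ it would contain $v_2=-v_1$, so $\SI^1$ would be the great circle through $v_0,v_1,v_2$; then $w_1,w_2$ lie on that circle, the triangles $T_1,T_2$ degenerate, forcing $v_0\in\{v_1,v_2\}$ or $\phi_0\in\{0,2\pi\}$, whence $h$ is abelian by the first bullet --- a contradiction.
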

\begin{proof} 
To prove the first statement, suppose that $h$ represent an element of $C^{B-I-II}$. 
Since $h$ comes from a triangle construction, we choose an element of $B-I-II$ and a representing 
generalized triangle with vertices $v_0, v_1$, and $v_2$. 
If $v_0$ is not identical with $v_1$ and $v_2$ and $0< \phi_0 < 2\pi$, then 
$h(d_1)$ and $h(d_2)$ are both not identity and the fixed points $\pm w_1$ and $\pm w_2$ 
are distinct from $v_1, v_2$. Thus, $h$ is nonabelian. 

If $v_0$ is not identical with $v_1$ and $v_2$ and $\phi_0=0, 2\pi$. Then if $\phi_1$ 
is not zero or $2\pi$, then $w_1$ equals one of $v_1$ and $v_2$. 
If $\phi_1$ is zero or $2\pi$, then $h(d_1)=\Idd$. 
Since the same consideration holds for $h(d_2)$ as well, $h$ is abelian here.

If $v_0$ is identical with $v_1$ or $v_2$, then $h$ is abelian and $h(d_1)$ and $h(d_2)$ have fixed points 
at $v_1$ or $v_2$. 

This proves the first statement. 

Suppose that $\SI^1$ passes $v_1$ or $v_2$. 
Then it passes both. By construction, the intersection of 
$\SI^1$ meets the great segment with endpoints $v_1$ and $v_2$ at $v_0$. This implies $v_0 $ equals 
$v_1$ or $v_2$ or $\phi_0=0, \pi$ and hence, $h$ is abelian.  
\end{proof}

\begin{rem} \label{rem:L}
For purposes below, we look at the above construction from another angle:
We suppose that $0< \phi_0 < 2\pi$ and $v_0$ is distinct from $v_1$ and $v_2$
and $\pm w_1$ and $\pm w_2$ are distinct pairs.
$d_1$ and $d_2$ correspond to two pairs of fixed points 
$(w_1,-w_1)$ and $(w_2, -w_2)$ in $\SI^2$. 
We draw a great circle $l$ through $v_0$, $v_1$, and $v_2=-v_1$
separating $w_1$ and $-w_1$. The fact that
$0<\phi_0/2<\pi$ implies $\{w_1,-w_1 \}$ and $\{w_2, -w_2\}$ 
are distinct from $\{v_1, v_2\}$.
Let $s_0$ denote the segment of length $\pi$ with endpoints 
$v_1$ and $v_2$ through $v_0$ in $l$. 

We change a definition here temporarily: 
the {\em lune} simply means a disk bounded by two segments of 
length $\pi$ ending at a pair of antipodal points. 
If the vertex angle is $\leq \pi$, then the lune is convex. 
%In addition, a segment of length $\pi$ is considered a {\em lune} with zero vertex angle. 

We choose two segments $s_1$ and $s_2$ with vertices $v_1$ and $v_2$ 
through $w_1$ and $w_2$ respectively.
Then we obtain the lune $L$ bounded by two of these bounded by 
a union of $s_1$ and $s_2$.
We constructed a lune $L$ containing $s_0$. 
(See Figures \ref{fig:mapFB} and \ref{fig:Bcircle}.)

We see that the segment $s'$ that is a union of two edges in $T_1$ 
and $T_2$ respectively through $v_0$ ends in $w_1$ and $w_2$ and is in $\SI^1$.
Let $v_0$ is the intersection point of $s_0$ and $s'$.
The boundary of $L$ and $s_0$ and $s'$ bound four triangles 
two of which are $T_1$ with vertices $v_1, v_0,$ and $w_1$ 
and $T_2$ with vertices $v_2, v_0$, and $w_2$.

Since $\phi_1/2$ and $\phi_2/2$ are angles at $v_1$ and $v_2$ 
of triangles $T_1$ and $T_2$ 
with vertices $w_1$ and $w_2$ respectively and 
$\phi_1/2+\phi_2/2$ is the angle of the lune $L$ 
containing both triangles $T_1$ 
and $T_2$ while agreeing with them in parts of edges. 

Notice that we can change $s_0$ to another segment from $v_1$ and $v_2$ in $L$. 
We can even move $s_0$ outside $L$. Then we use  a different choice of plus-minus signs among $\pm w_1$ and 
$\pm w_2$ so that the corresponding lune contains $s_0$ and we can construct 
the triangles as above to obtain the same isometries denoted by $h(d_1)$ and $h(d_2)$.
(See Figure \ref{fig:Bcircle}.)

Now, we suppose that $0< \phi_0 < 2\pi$ and $v_0$ is distinct from $v_1$ and $v_2$
and $\pm w_1 = \pm w_2$.
In this case, we let the great circle $\SI^1$ be the great circle through them 
that has angle $\eta_1$ of $w_1$ with the segment from $w_1$ to $v_1$.
Then $v_0$ is the intersection $s_0\cap \SI^1$. We can choose two 
triangles $T_1$ and $T_2$ as above. 

When $s_0$ is chosen to pass one of $\pm w_1$, we have $\phi_1=\phi_2=0$ and 
the angles $\eta_i$ of $w_i$ satisfy $\eta_1= \eta_2$ and $\eta_i=\pi-\tau_i/2$ for
the rotation angles $\tau_i$ of $h(d_i)$ for each $i=1,2$.
If $s_0$ does not pass one of $\pm w_1$, then $w_2=-w_1$ and 
$\phi_1=\pi-\phi_2$ and $\eta_1= \pi-\eta_2$ also. 
Here, a hemisphere is divided into four triangles. Two of which are
$T_1$ and $T_2$. 

Again, we can move $s_0$ outside the hemisphere and we will 
be choosing a different pair from $\pm w_1, \pm w_2$. 
\end{rem}

We remark that 
the isometry equivalence class of the set of 
the six points $\pm w_1, \pm w_2, v_1$, and $v_2$ and the respective rotation angles of $h(d_1)$ and $h(d_2)$,
$\tau_1$ and $\tau_2$, and $\theta_1=\theta_2$ completely 
determines the conjugacy class of the representations over ${B-I -II}$ 
and vice versa. 
The isometry class space is homeomorphic to the conjugacy class of the characters over ${B-I -II}$.
This will be used later in determining the topology of the space.

(ii) We introduce the equivalence relation on $({B-I -II})\times \SI^1 \times \SI^1\times \SI^1$ by 
demanding the ${\mathcal T}_{B-I -II}$ be injective. 

To see more details, we will introduce it as
equivalence relations in a geometric form. % to make ${\mathcal T}_{{{B-I -II}}}$ one-to-one and onto.
The equivalence relation is so that two are equivalent if they correspond 
to the same $\SOThr$-character of $\pi_1(\Sigma)$ with condition 
that the configurations are on $B-I-II$.

By Proposition \ref{prop:nonabelianB}, 
the subspace of $B \times T^3$ giving us abelian characters is precisely
\begin{gather}\label{eqn:abelianB}
(v(1), 0, \phi_0, \phi_1, \phi_2), (v(1), \pi, \phi_0, \phi_1, \phi_2), \nonumber \\
(v(1), l(1), 0, \phi_1, \phi_2),\nonumber \\
(0, l(1), \phi_0, \phi_1, 2\pi-\phi_1), (\pi, l(1), \phi_0, \phi_1, 2\pi-\phi_1) \nonumber \\ 
\mbox{ for } v(1), l(1) \in [0, \pi], \phi_0, \phi_1, \phi_2 \in [0, 2\pi] .
\end{gather}

%%% January 23 9:50pm
For the region $B \times T^3$, we first introduce 
an obvious equivalence relation given by
\begin{equation} \label{eqn:obvious}
\begin{split}
& (v(1), 0, \phi_0, \phi_1, \phi_2)  \sim   (v(1), 0, \phi_0-t, \phi_1+t, \phi_2-t), \mbox{ for every } t \in \SI^1 \\
& (v(1), \pi, \phi_0, \phi_1, \phi_2)  \sim   (v(1), \pi, \phi_0-t, \phi_1-t, \phi_2+t)  \mbox{ for every } t \in \SI^1  \\
& (v(1), 0, \phi_0, \phi_1, \phi_2)  \sim   (v(1), \pi, 2\pi-\phi_0, \phi_1, \phi_2) \\
& (v(1), l(1), 0, \phi_1, \phi_2)  \sim   (v(1), l(1)', 0, \phi_1, \phi_2)  \\
& \mbox{ for an arbitrary pair of $l(1)$ and $l(1)'$ and } \\
& (0, l(1), \phi_0, \phi_1, 2\pi-\phi_1)  \sim   (0, l(1)', \phi'_0, \phi'_1, 2\pi-\phi'_1) \\
& \mbox{ for an arbitrary pair of $l(1)$ and $l(1)'$ and  the same third angles }  \\ 
& (\pi, l(1), \phi_0, \phi_1, 2\pi-\phi_1)  \sim   (\pi, l(1)', \phi'_0, \phi'_1, 2\pi-\phi'_1) \\ 
& \mbox{ for an arbitrary pair of $l(1)$ and $l(1)'$ and the same third angles. } 
\end{split}
\end{equation}

Here by the same third angle, we mean that the triangle with a side $l(1)$ and angles $\pi-\phi_0/2, \pi-\phi_1/2$ at the two 
vertices and the triangle with a side $l(1)'$ and angles $\pi-\phi'_0/2, \pi-\phi'_1/2$ has the same third angle. 
This case of course includes the generalized triangle cases, which might be ambiguous but not the third angle itself. 
Or we can use the equation from spherical trigonometry: 
\begin{multline} \label{eqn:thirdangle}
-\cos (\pi-\phi_0/2) \cos (\pi-\phi_1/2)+ \sin (\pi-\phi_0/2)\sin(\pi-\phi_1/2)\cos l(1) = \\
 -\cos (\pi-\phi'_0/2) \cos (\pi-\phi'_1/2)+ \sin (\pi-\phi'_0/2)\sin(\pi-\phi'_1/2)\cos l'(1).
\end{multline}
Note that we include the degenerate triangles and in that case the equation is not an enough condition
and the condition is a geometric one.

Note  that these elements are all of the ones in $B \times T^3$ corresponding to abelian characters 
and the the complete equivalence relations for abelian characters on $B$.
The first two are really restrictions of equivalence relations on $b \times T^3$ and $d\times T^3$, i.e., the 
$\SI^1$-action there. ( Finally, abelian characters are always equivalent to abelian edge ones
by the final three items above.)

There is also an $\SI^1$-action giving us further equivalences on the subset of $B\times T^3$ corresponding
to nonabelian characters: 
Let $v_0, v_1, v_2$, $w_1$ and $w_2$ be obtained for a nonabelian representation $h$ coming from $B \times T^3$.
By Proposition \ref{prop:nonabelianB}, 
$w_1$ and $w_2$ do not coincide with $v_1$ or $v_2$
and there exists a great circle $\SI^1$ containing $w_1$ and $w_2$ determined in Remark \ref{rem:L}
and $\SI^1$ contains none of $v_1, v_2$.
Then there exists $v_0, v_1$, and $v_2$ where $v_0$ is not identical with $v_1$ or $v_2$.
For any choice of $v_0$ in a great circle $\SI^1$ containing $w_1$ and $w_2$, 
we choose a corresponding 
$s_0$ between $v_1$ and $v_2$ in the collection of all great segments between $v_1$ and $v_2$. 
This gives us a generalized triangle with vertices
$v_0, v_1$, and $w_1$ 
%so that the angle at $w_1$ is $\pi$ minus half of the angle of $h(d_1)$ at $w_1$. 
%This can be achieved by taking a generalized triangle with
and possibly $\geq \pi$ angle at $v_1$.
(See equation \eqref{eqn:cba2}. The generalized 
triangle with one vertex of angle in $[2\pi, 4\pi]$ and the other two angles 
in $[0, 2\pi]$ also needs to be considered here in multiplications by geometry. ) 
 
If the angle at $v_2$ is $\geq \pi$, we subtract $\pi$ from it. This 
amounts to replacing $w_2$ with its antipode and the angle $\eta_2$ to be $\pi-\eta_2$
and thus $h(d_2)$ is unchanged. (See Figure \ref{fig:Bcircle}.
The generalized triangle $v_1v_0w_2$ is replaced by 
$v_1v_0(-w_2)$.)
Now the angle of the triangle at $v_2$ is in $[0, \pi)$. 
The angle of the triangle at $v_0$ is always in $[0, \pi)$, and our operation does not change 
the rotation angle of the pasting map $P_0$ at $v_0$. (This is described also in the last part of 
Remark \ref{rem:L}.)

\begin{figure}

\centerline{\includegraphics[height=7cm]{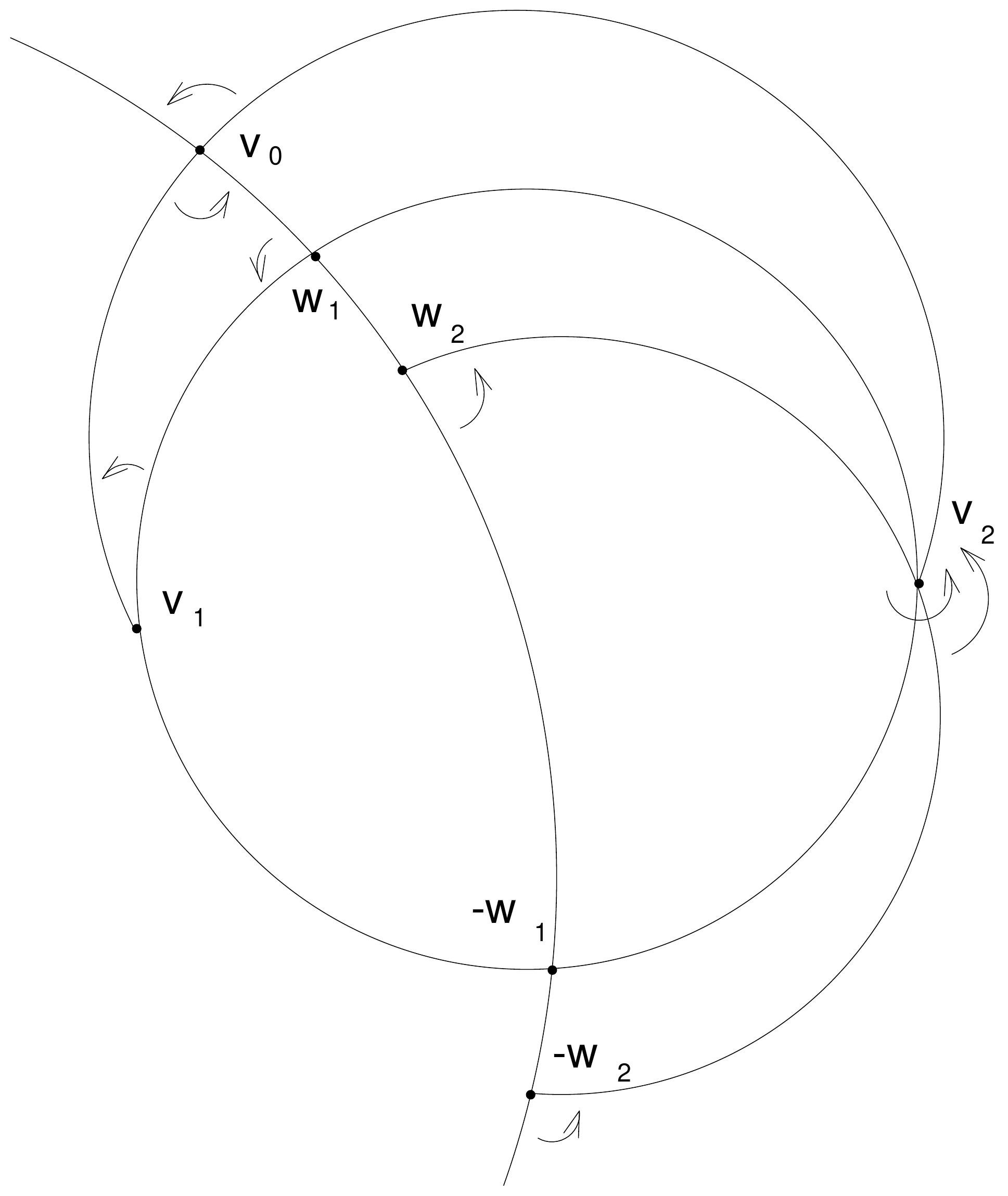}}
\caption{  The circle action.}
\label{fig:Bcircle}
\end{figure}

Similarly, we can repeat the constructions for $v_0,v_2,w_2$.
Therefore, there is an $\SI^1$-action on $(B-I-II)\times T^3$ 
by choosing one among the space of great segments, denoted by $s_0$, 
between $v_1$ and $v_2$ forming $\SI^1$. 

(Actually the choice of fixed point for $h(d_i), i=1,2$ among an antipodal pair 
gives possibilities for $s_0$ and conversely the choice of $s_0$ forces choices of 
fixed points of $h(d_i), i=1,2$. )

Suppose now that we are over the abelian edges $(B-I-II) \cap b$ or $(B-I-II) \cap d$. 
For boundary abelian characters, the angles at points denoted by $w_1$ and 
$w_2$ matter only and the equivalence relations are clear. 
This can be seen as an $\SI^1$-action on the abelian edges given by changing pasting angles
\begin{eqnarray}\label{eqn:S1abB}
(\phi_0, \phi_1, \phi_2) &\mapsto& (\phi_0 -t, \phi_1+t, \phi_2-t) \mbox{ if } l(1)=0 \\
 &\mapsto& (\phi_0 -t, \phi_1+t, \phi_2+t) \mbox{ if } l(1)=\pi \mbox{ for } t\in [0, 2\pi).
 \end{eqnarray} 
This is a continuous extension of the above $\SI^1$-action and is a restriction of 
the $\SI^1$-action on the subspace of abelian characters in Section \ref{subsec:abcd}. 

By the circle action, we can always make our situation so that $\phi_1=0$ or $\phi_2=0$. 
We call these the {\em canonical representatives}, and every representation or character over $B$ has
canonical representatives.
Then the map
\[{ \mathcal T}_{{{B-I -II}}}^1:({B-I -II})\times \SI^1\times \{0\}\times \SI^1 /\sim \ra C^{B-I -II} \hbox{ and }\] 
\[{ \mathcal T}_{{{B-I -II}}}^2:({B-I -II})\times \SI^1\times \SI^1 \times \{0\} /\sim \ra C^{B-I -II}\] 
are easily seen to be surjective.

The two maps are  generically  $4$ to $1$ since there is a Klein four-group 
action on $({B-I -II})\times T^3$ given as follows and which acts on
the domains of the above two maps:

(a) There is a $\bZ_2$-action on the initial spaces given by changing $s_0$ to $-s_0$
which goes to the trivial action in the second space. (This is a transformation in the $\SI^1$-action.)

(b) This is a transformation $I_{{B}}$ on $(B-I-II)\times T^3$ induced by sending the generalized triangle with 
vertices $v_0, v_1, v_2$ to another generalized triangle by sending $v_1$ and $v_2$ to 
their antipodal points and $v_0$ to itself. 
This sends parameters in the following way:
\begin{eqnarray} \label{eqn:ib}
(\theta_1, l(1),\phi_0,\phi_1,\phi_2) & \mapsto & (\pi-\theta_1,\pi-l(1), \phi_0, 2\pi-\phi_1, 2\pi-\phi_2) 
\end{eqnarray}

Note that $\{\Idd, I_B\}$ acts on the union of fibers above mid-tie but sends other ties in $B-I-II$ to different ones.
$I_A$ and $I_C$ do not preserve $B$ and hence do not appear in equivalences
for $B$ only.
For end ties, we have further equivalences introduced by mapping into the regions above $I,II,III$, or $IV$.

% Feb 3 2010 8:56 pm
%%% August 6 10:40 pm
\begin{prop}\label{prop:B}
The equivalence relation $\sim$ generated by all of above, i.e, by equations \eqref{eqn:obvious} 
and equation \eqref{eqn:ib} and the $\SI^1$-action, gives us 
a homeomorphism:
\[{\mathcal T}_{{{B-I -II}}}:({B-I -II})\times \SI^1\times \SI^1\times \SI^1 /\sim \ra C^{B-I -II}.\] 
Furthermore, 
\[{\mathcal T}_{{{B-I -II}}}^1:({B-I -II})\times \SI^1\times \{0\}\times \SI^1 /\sim \ra C^{B-I -II} \hbox{ and }\] 
\[{\mathcal T}_{{{B-I -II}}}^2:({B-I -II})\times \SI^1\times \SI^1 \times \{0\} /\sim \ra C^{B-I -II}\] 
are homeomorphisms where $\sim$ is given by the equation \eqref{eqn:obvious} and 
the Klein four-group action above denoted {\rm (a)} and {\rm (b)}.
\end{prop}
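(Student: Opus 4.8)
The plan is to show that ${\mathcal T}_{B-I-II}$ descends to a continuous bijection on the quotient and then upgrade this to a homeomorphism. The construction in part (i) already exhibits ${\mathcal T}_{B-I-II}$ as a surjection onto $C^{B-I-II}$, so the first task is to check that each generator of $\sim$ preserves the $\SOThr$-character, i.e.\ that $\sim$ is contained in the fibers of ${\mathcal T}_{B-I-II}$. For the relations \eqref{eqn:obvious} this is the content of the abelian analysis of Section \ref{subsec:abcd} together with Proposition \ref{prop:nonabelianB}: the first two are restrictions of the $\SI^1$-actions over $b$ and $d$, while the last three encode that the length $l(1)$ becomes irrelevant once a generator is sent to the identity. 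For \eqref{eqn:ib}, the involution $I_B$ merely replaces $v_1,v_2$ by their antipodes, a different choice of fixed points for $h(c_1),h(c_2)$, so ${\mathcal T}\circ I_B={\mathcal T}$ as already recorded in \eqref{eqn:iabc4}. For the $\SI^1$-action, Remark \ref{rem:L} shows that varying the segment $s_0$ among the great segments joining $v_1$ and $v_2$ (equivalently, varying the sign choices among $\pm w_1,\pm w_2$) leaves $h(d_1)$ and $h(d_2)$ unchanged, hence preserves the character.

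The central step is the converse: configurations mapping to the same character are $\sim$-equivalent. Here I would separate the nonabelian and abelian cases. In the nonabelian case, by Proposition \ref{prop:nonabelianB} the points $\pm w_1,\pm w_2$ are distinct from $v_1,v_2$, and the character is completely recorded by the isometry class of the six-point configuration $\pm w_1,\pm w_2,v_1,v_2$ together with $\tau_1,\tau_2$ and the common lune angle $\theta_1=\theta_2$, as noted before (ii). Given this invariant I would reconstruct every configuration in $(B-I-II)\times T^3$ producing it and verify that two such differ only by the choice of segment $s_0$ (the $\SI^1$-action), the antipodal choice of the axis $\{v_1,v_2\}$ (the involution $I_B$ of \eqref{eqn:ib}), and the residual sign choices among $\pm w_i$ that Remark \ref{rem:L} folds into the $\SI^1$-action. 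In the abelian case the relevant configurations lie over the abelian edges and inside the locus \eqref{eqn:abelianB}, and the identifications are exactly \eqref{eqn:obvious}, matching Proposition \ref{prop:abelian}.

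For the two slice statements I would use the canonical representatives: the $\SI^1$-action normalizes $\phi_1=0$ (resp.\ $\phi_2=0$), cutting a transversal to the orbits, so ${\mathcal T}_{B-I-II}^1$ and ${\mathcal T}_{B-I-II}^2$ are surjective. On such a slice the $\SI^1$-action collapses to its stabilizer, the $\bZ_2$ sending $s_0\mapsto -s_0$ of (a), which together with $I_B$ of (b) generates a Klein four-group whose orbits are the generic four-element fibers, the remaining identifications being precisely \eqref{eqn:obvious}. Continuity of each induced map is inherited from ${\mathcal T}$ through the quotient, and for the homeomorphism assertion I would produce a continuous inverse directly, since the geometric invariants (the axis $\{v_1,v_2\}$, the angle $\theta_1=\theta_2$, and the pairs $(w_i,\tau_i)$) are extracted from a character by traces and fixed-point data of the boundary and $d_i$-holonomies and depend continuously on $[h]$; this is needed because $B-I-II$ is not compact, so one cannot simply invoke the compact-to-Hausdorff argument. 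The main obstacle I expect is the converse injectivity step, in particular the book-keeping of the degenerate configurations where $s_0$ passes through one of $\pm w_1$ (forcing $\phi_1=\phi_2=0$) and where triangle angles reach $0$ or $\pi$, since there the $\SI^1$-action and the involution $I_B$ interact and one must confirm that no further identifications are forced.
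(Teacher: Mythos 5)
Your proposal reproduces the paper's proof for the set-theoretic part: surjectivity by construction, injectivity by splitting into abelian and nonabelian cases, normalizing a nonabelian configuration via the $\SI^1$-action to a canonical representative with $\phi_1=0$ (so that $w_1=v_0$), and identifying the residual ambiguity with the Klein four-group generated by $s_0\mapsto -s_0$ and the involution $I_B$ of equation \eqref{eqn:ib}; your treatment of the two slice statements, via the restriction of the $\SI^1$-action and the $\bZ_2$-action to a Klein four-group action on the slices, is also exactly the paper's. Where you genuinely diverge is the upgrade from continuous bijection to homeomorphism. You rightly note that $B-I-II$ is not compact, so the naive compact-to-Hausdorff argument fails, and you propose to build a continuous inverse out of the invariants $\{v_1,v_2\}$, $\theta_1=\theta_2$, $(\pm w_i,\tau_i)$ extracted from traces and fixed-point data. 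The paper instead exploits the fact that ${\mathcal T}_{B-I-II}$ preserves the lune-angle (tie) parameter $\theta'$: the preimage $C'_{\theta'}$ of each tie is compact, the restriction to it is a continuous bijection onto the Hausdorff set $C_{\theta'}$, and tie preservation makes the whole map proper, which globalizes the conclusion. The properness route is the safer one precisely at the degenerate locus: where $h(d_i)=\pm\Idd$ or $h$ is abelian, the fixed points $w_i$ are ill-defined and do not vary continuously with the character, so the continuity of your proposed inverse is not evident there; the relations \eqref{eqn:obvious} do absorb this ambiguity in the quotient, but verifying continuity of the induced inverse at such characters amounts to a sequential compactness argument over ties, i.e., to the paper's properness argument. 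In short, your plan is correct, but its last step, made rigorous, collapses into the paper's argument rather than replacing it; if you want a genuinely independent inverse construction, you must first prove a continuity statement for the configuration data that is uniform across the degenerate characters, which is the real content you flagged only as bookkeeping.
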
 
\begin{proof}
The surjectivity follows by definition of $C^{B-I-II}$.

The injectivity will follow if we show that two elements of $(B-I-II) \times T^3$ map to a same element, 
then they are equivalent. For abelian representations, this is easy to show from equation \eqref{eqn:obvious}
and the $\SI^1$-action.
For a nonabelian representation $h$, we have a great circle containing fixed points of $h(d_1)$ and 
$h(d_2)$ with nonzero angles by Proposition \ref{prop:nonabelianB}.
We make both of the elements have angles $0$ at $v_1$ by the $\SI^1$-action. 
Then $w_1= v_0$ in both cases. Then we see that the two give the same isometry class of fixed points 
and rotation angles if and only if they are the same or differ by taking two of $w_i$ to be their antipodes respectively, 
which amounts to changing $s_0$ to $-s_0$ and the move described by equation \eqref{eqn:ib}.

We prove that the map is a homeomorphism: 
Elements of $C^{B-I-II}$ can be grouped into a family of ties $C_{\theta}$ given by
$\theta_1=\theta_2=\theta$
where $0 < \theta < \pi$, i.e., images of the ties. Note that $C_{\theta}=C_{\pi-\theta}$. 
For each tie in $B-I-II$ where $\theta_1=\theta_2=\theta'$ for a constant $\theta'$, 
we obtain a subset $C'_{\theta'}$ in $({B-I -II})\times \SI^1\times \SI^1\times \SI^1 /\sim$
where $C'_{\theta'}=C'_{\pi-\theta'}$. 
We have a map from the compact set $C'_{\theta'}$ to $C_{\theta'}$ which is injective by the above paragraph 
and is surjective by definition.
The map ${\mathcal T}_{B-I-II}$ is a proper map by the above property of preserving $\theta'$. 
By the above injectivity and the Hausdorff property of $C^{B-I-II}$, we have a homeomorphism.

The proofs of the last two statements are from the fact that the $\SI^1$-action and the $\bZ_2$-action restrict to
an action of Klein four-group action in the two spaces. 
\end{proof}

%Thus the equivalence relation will be defined 
%as one which makes this one-to-one and onto later in Section \ref{sec:topC0}.

\subsubsection{Regions $B_\pi$ and $B_0$} \label{subsec:Bpi}

Let $B_{\pi}$ be the subset of $B$ where $\theta(1)=\theta(2)=\pi$, i.e., $B_\pi= B \cap I$,
and $B_0$ the subset of $B$ where $\theta(1)=\theta(2)=0$, i.e., $B_0=B \cap II$. 

We consider the restriction ${\mathcal T}_{B_\pi}|B_\pi \times \SI^1 \times \SI^1 \times \SI^1$.
Here the characters are in $H(F_2)$ since $h(c_i)$ are all identity. 
%This is an onto map by definition.

By considering the choice of $s_0$, we have an $\SI^1$-action, which was 
described above for the region $B$ above.
A representation corresponds to the two fixed points of $h(d_1)$ and $h(d_2)$ 
and their rotation angles. 

There is again a Klein four-group action: 
(a) For each map, there is a $\bZ_2$-action on $B_\pi \times \SI^1 \times \SI^1 \times \SI^1$
given by changing $s_0$ to $-s_0$ which goes to trivial action in the $\SOThr$-character space. 
(This is a transformation in the $\SI^1$-action.)

(b) Again there is a $\bZ_2$-action on 
$B_\pi\times \SI^1\times \SI^1\times \SI^1$
by taking the fixed points $v_1$ and $v_2$ to the antipodal ones respectively. 
\begin{eqnarray} 
(\pi, l(1),\phi_0,\phi_1,\phi_2) &\mapsto & (\pi, \pi-l(1),\phi_0, 2\pi-\phi_1, 2\pi-\phi_2) \label{eqn:ib2} 
\end{eqnarray}
We call the map an {\em augmented $I_B$-action} and the two generate
a Klein four group action. (Compare to equation \eqref{eqn:ib}.)
%There is 
%We call these the {\em restricted equivalence relation} from $B\times T^3$.

As above, $\sim$ now defines a {\em restricted equivalence relation} on $B_\pi\times \SI^1\times \SI^1\times \SI^1$
given by equation \eqref{eqn:obvious} and the $\SI^1$-action and the above $\bZ_2$-action.

We  define similar equivalence relations for $B_0$ as well, also called 
the {\em restricted equivalence relation} with a $\bZ_2$-action
including the {\em augmented $I_B$-action}:
\begin{eqnarray} 
(0, l(1),\phi_0,\phi_1,\phi_2) &\mapsto & (0, \pi-l(1),\phi_0, 2\pi-\phi_1, 2\pi-\phi_2) \label{eqn:ib3}
\end{eqnarray}

%% August 7th 2010 8:28 pm
%Let $c_{\mathcal{B}}$ denote the change of coordinate map 
%$B \times \SI^1 \times S_1\ra B \times \{0\} \times  \SI^1\times \SI^1$ given 
%by sending the balanced representative to the $\phi_1=0$ 
%representative. This is a homeomorphism.

Recall that $B_\pi$ and $B_0$ are intervals homeomorphic to $I=[0,\pi]$.
\begin{prop}\label{prop:Bpi}
We can identify $B_\pi\times \SI^1  \times \SI^1  \times \SI^1/\sim$ with $H(F_2)$ 
by sending elements to the fixed points of $h(d_1)$ and $h(d_2)$ respectively and 
rotation angles and their distances and $\sim$ is given by
equations \eqref{eqn:obvious} and \eqref{eqn:ib2} and the $\SI^1$-action.
Furthermore,  we can identify
$B_\pi\times \SI^1  \times \{0\}  \times \SI^1/\sim$ with $H(F_2)$ 
and similarly
$B_\pi\times \SI^1  \times \SI^1\times \{0\}/\sim$ with $H(F_2)$
where $\sim$ is given by equation \eqref{eqn:obvious} and the Klein four-group action.
The same statements hold with $B_\pi$ replaced with $B_0$. 
\end{prop}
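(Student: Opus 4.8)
The plan is to mirror the proof of Proposition \ref{prop:B}, the only structural difference being that over $B_\pi$ (and over $B_0$) all three boundary holonomies are trivial: on $B_\pi$ we have $\theta(0)=\theta(1)=\theta(2)=\pi$, so each $h(c_i)$ is a rotation by $2\pi$, i.e. $\Idd$ in $\SOThr$. Hence every character over $B_\pi$ factors through the free group $F_2=\langle d_1,d_2\rangle$, and the map $\mathcal T_{B_\pi}$ genuinely lands in $H(F_2)$; it records the pair $(h(d_1),h(d_2))$, equivalently the fixed points $w_1,w_2$, their rotation angles, and the distance between the fixed-point pairs---exactly the data parametrizing $H(F_2)$ in Proposition \ref{prop:HF2}.

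First I would prove surjectivity. Over $B_\pi$ the representing configuration is a pointed hemisphere with $v_1,v_2$ antipodal and $v_0$ on the great segment between them; the construction of Remark \ref{rem:L} (hemisphere case) produces $h(d_1),h(d_2)$ as rotations about the third vertices $w_1,w_2$ of the two triangles $T_1,T_2$ cut out by the pasting angles $\phi_1,\phi_2$ at $v_1,v_2$ and the angle $\pi-\phi_0/2$ at $v_0$. Running this construction backwards---given an arbitrary pair of isometries with fixed points $w_1,w_2$ and rotation angles $\tau_1,\tau_2$, reconstruct the great circle $\SI^1$ through $w_1,w_2$, place $v_0$ on it, and read off $\phi_0,\phi_1,\phi_2$---shows that every point of $H(F_2)$ is attained, just as in Proposition \ref{prop:calFonto}. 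The degenerate cases, where the $w_i$ collapse onto $v_1,v_2$ or onto each other, are precisely the abelian edges $B_\pi\cap b$ and $B_\pi\cap d$, and they cover the abelian part of $H(F_2)$ after the $I$-collapse built into equations \eqref{eqn:F21}--\eqref{eqn:F22}.

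Next I would identify the fibers of $\mathcal T_{B_\pi}$ with the orbits of $\sim$. The entire ambiguity is: the choice of the segment $s_0$ between $v_1$ and $v_2$ (the $\SI^1$-action), the choice of a fixed point within each antipodal pair $\pm w_i$ (encoded by $s_0\mapsto -s_0$, item (a)), and the augmented $I_B$-action sending $v_1,v_2$ to their antipodes, equation \eqref{eqn:ib2} (item (b)); on the abelian edges these reduce to the restriction of equation \eqref{eqn:obvious}. The argument that nothing further is identified is the one used in Proposition \ref{prop:B}: normalize by the $\SI^1$-action so that $\phi_1=0$, whereupon $w_1=v_0$, and then two configurations yield conjugate pairs $(h(d_1),h(d_2))$ iff they agree or differ by taking two of the $w_i$ to their antipodes, i.e. by $s_0\mapsto -s_0$ together with \eqref{eqn:ib2}. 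Since $B_\pi\times T^3/\!\sim$ is compact and $H(F_2)$ is Hausdorff, the resulting continuous bijection is a homeomorphism. This same normalization proves the restricted statements: forcing $\phi_1=0$ (respectively $\phi_2=0$) spends the $\SI^1$-action and leaves precisely the Klein four-group generated by (a) and (b), so $B_\pi\times\SI^1\times\{0\}\times\SI^1/\!\sim$ and $B_\pi\times\SI^1\times\SI^1\times\{0\}/\!\sim$ each map homeomorphically onto $H(F_2)$. The case $B_0=B\cap II$ is identical with the pointed hemisphere replaced by the pointed segment of region $II$ (equation \eqref{eqn:ib3} playing the role of \eqref{eqn:ib2}), and can alternatively be read off from the $B_\pi$ computation via the symmetry exchanging the two end-ties of $B$.

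The step I expect to be the main obstacle is pinning the equivalence relation down exactly on the degenerate configurations. Away from the abelian edges the $\SI^1$-action is free and the bookkeeping of the $\pm w_i$ signs is clean, but along $B_\pi\cap b$ and $B_\pi\cap d$ the fixed points $w_i$ degenerate, the $\SI^1$-action acquires the extra collapses of equation \eqref{eqn:obvious}, and one must check that these match---neither more nor less---the $I$-collapse of $H(F_2)$ from Proposition \ref{prop:HF2}. Verifying this compatibility at the corners, and confirming that the normalization $\phi_1=0$ stays valid there, is where the real care is needed.
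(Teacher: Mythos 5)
Your proposal is correct and follows essentially the same route as the paper's proof: surjectivity by running the triangle construction backwards, injectivity by normalizing with the $\SI^1$-action to $\phi_1=0$ (so $w_1=v_0$) and identifying the remaining ambiguity as exactly the Klein four-group generated by $s_0\mapsto -s_0$ and equation \eqref{eqn:ib2}, the abelian locus handled by equation \eqref{eqn:obvious}, and the homeomorphism concluded from compactness of $B_\pi\times T^3/\sim$ and Hausdorffness of $H(F_2)$. The restricted statements and the $B_0$ case are also disposed of exactly as in the paper, via the observation that the $\SI^1$-action restricts to a Klein four-group action on the slices $\phi_1=0$ and $\phi_2=0$.
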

\begin{proof} 
The existence and the continuity and the surjectivity of the map from $B_\pi \times T^3$ 
can be proved by showing every elements of $H(F_2)$ can be 
constructed by triangle constructions.

The injectivity is proved as follows: Suppose first that we have a representation $h$ for 
an element of $H(F_2)$ and $h$ is not abelian so that $h(d_1)$ and $h(d_2)$ are both not $\Idd$
by Proposition \ref{prop:nonabelianB}. Suppose that some element of $B_\pi\times T^3$ corresponds to $h$. 
Let $v_0, v_1, v_2, w_1, w_2$ denote the vertices of the corresponding lune and the respective fixed points of 
$h(d_1)$ and $h(d_2)$. 
Moreover $\pm w_1 \ne \pm w_2$ since otherwise $h$ is abelian. 
The fixed points $\pm w_1$ of $h(d_1)$ and $\pm w_2$ of $h(d_2)$ are well-defined, 
we have four choices by choosing one from each. 
Then the rotation angle at $w_1$ determines a great circle where $v_1$ lies and 
the rotation angle at $w_2$ determine a great circle where $v_2=-v_1$ lies. 
This determines $v_1$ and $v_2$. Once $v_1$ and $v_2$ are determined, then $v_0$ is determined 
to be $w_1$ if we choose a configuration where $\phi_1=0$ . This implies that the Klein four-group action on the spaces of 
quadruples and their angles give the only ambiguities. These are covered by our equivalences 
based on the $\SI^1$-action and the $\bZ_2$-action. 

For the space of abelian characters, the injectivity is followed  by equation \eqref{eqn:obvious}.
Since  $B_\pi \times T^3/\sim$ is compact, it follows that our map is a homeomorphism. 

The final statements on $B_\pi\times \SI^1  \times \SI^1\times \{0\}/\sim$ and
$B_\pi\times \SI^1  \times \SI^1\times \{0\}/\sim$ are proved using the fact that the $\SI^1$-action 
restricts to a Klein four-group action. 

\end{proof}

%%% Region B'
\subsection{Region $B'-III-IV$}
First, we describe how to obtain the corresponding character from 
an element of $(B'-III-IV) \times T^3$.
Over the region $B'-III-IV$, the triangle is a segment $s$ with a vertex $v_0$ and 
another vertex $v_1=v_2$. Drawing segments $s_1$ and $s_2$ of length $\pi$ from 
$v_1$ to $-v_1$ of angles $\phi_1/2$ and $\phi_2/2$
respectively and drawing a segment $s_0$ from $v_0$ of angle $\pi-\phi_0/2$ to the segment $s$, 
we obtain two triangles $T_1$ and $T_2$ sharing an edge $s$. 
The respective vertices $w_1$ and $w_2$ of $T_1$ and $T_2$ 
are fixed points of $h(d_1)$ and $h(d_2)$ and the respective vertex 
angles are $\pi$ minus the half of those of $h(d_1)$ and $h(d_2)$. 

The configuration space that we have to consider is given by the point $v_1=v_2$ with 
the rotation angles for $h(c_1)$ and $h(c_2)$ and 
the fixed points $w_1$ and $w_2$ and their rotation angles for 
$h(d_1)$ and $h(d_2)$ respectively. 
Again, we will find the topology of the configuration space 
and hence $C^{{B}'-III-IV}$ in Section \ref{sec:topC0}.

Let us discuss the equivalence relation to make $(B'-III-IV)\times T^3$ equal to 
$C^{B'-III-IV}$:

By a similar reason to Proposition \ref{prop:nonabelianB}, 
the subspace of $B' \times T^3$ giving us abelian characters is precisely
\begin{gather}\label{eqn:abelianBp}
(v(1), 0, \phi_0, \phi_1, \phi_2), (v(1), \pi, \phi_0, \phi_1, \phi_2), (v(1), l(1), 0, \phi_1, \phi_2), \nonumber\\
(0, l(1), \phi_0, \phi_1, \phi_1), (\pi, l(1), \phi_0, \phi_1, \phi_1)  \nonumber \\
\mbox{ for } v(1), l(1) \in [0, \pi], \phi_0, \phi_1, \phi_2 \in [0, 2\pi] 
\end{gather}

For the region ${B'} \times T^3$, we introduce 
the equivalence relation given by
\begin{equation}\label{eqn:obvious2}
\begin{split}
& (v(1), 0, \phi_0, \phi_1, \phi_2)  \sim   (v(1), 0, \phi_0+t, \phi_1+t, \phi_2+t), \mbox{ for every } t \in \SI^1  \\
& (v(1), \pi, \phi_0, \phi_1, \phi_2)  \sim   (v(1), \pi, \phi_0+t, \phi_1-t, \phi_2-t)  \mbox{ for every } t \in \SI^1  \\
& (v(1), 0, \phi_0, \phi_1, \phi_2) \sim    (v(1), \pi, 2\pi-\phi_0, \phi_1, \phi_2). \\
& (v(1), l(1), 0, \phi_1, \phi_2)  \sim   (v(1), l(1)', 0, \phi_1, \phi_2) \\
& \mbox{ for an arbitrary pair of $l(1)$ and $l(1)'$ and } \\
& (0, l(1), \phi_0, \phi_1, \phi_1)  \sim   (0, l(1)', \phi'_0, \phi'_1, \phi'_1)  \\
& \mbox{ for an arbitrary pair of $l_1$ and $l'_1$ and the same third angles}  \\ 
& (\pi, l(1), \phi_0, \phi_1, \phi_1)  \sim   (\pi, l(1)', \phi_0, \phi_1, \phi_1) \\
& \mbox{ for an arbitrary pair of $l(1)$ and $l(1)'$ and the same third angles.}
\end{split}
\end{equation}
They correspond to abelian characters. 
The same third angle condition means the triangle angle condition as indicated 
by the equation \eqref{eqn:thirdangle} with $\pi-\phi_1/2$ replaced by $\phi_1/2$ and
$\pi-\phi'_1/2$ replaced by $\phi'_1/2$

%An element of $B'\times T^3$ corresponds to a segment 
From now on, given an element of $(B'-III-IV) \times T^3$ and the corresponding representation $h$, 
we assume that the fixed points of $h(d_i)$ are well-defined and 
the angles are nonzero for $i=1,2$ since otherwise the above equivalence relation is enough. 
As above, we can choose $s_1=s_2$ differently not changing $v_1=v_2$ 
but changing $v_0$ on the great circle $\SI^1$ containing $w_1$ and $w_2$
and this changes $\phi_i$s but 
the representation itself does not change. We may have to consider 
a generalized triangle with one angle $\geq \pi$, where we have to subtract by $\pi$. 
Here $\SI^1$ is determined as in the case B. 
This gives us an $\SI^1$-action, which we will not describe in detail. 
For each abelian edge region, the $\SI^1$-action is defined similarly to equation \eqref{eqn:S1abB}
extending the $\SI^1$-action continuously 
and is a restriction of the $\SI^1$-action in Section \ref{subsec:abcd}.

%%% August 10th 8:38 pm
We also have a notion of {\em canonical representatives} 
for a representation or character over $B'$
by making $\phi_1=0$ or $\phi_2=0$
using the $\SI^1$-action
so that the restriction to $({B'-III-IV})\times \SI^1 \times \{0\} \times \SI^1$ or 
$({B'-III-IV})\times \SI^1 \times \SI^1 \times \{0\}$ are still onto.

We also describe a Klein four-group action: The first one is given by choosing $s_1=s_2$ to 
be $-s_1=-s_2$ or equivalently letting $v_0$ be its antipode $-v_0$. 

Choosing $v_1=v_2$ to be its 
antipode gives us $I_{{B}}$-transformation as above, which is a $\bZ_2$-action as well. 
The formulas are exactly the same.  (See equations \eqref{eqn:ib}.)
The above equation \eqref{eqn:obvious2} and this $\SI^1$-action and 
the $I_{{B}}$-transformation gives us the needed equivalence relation $\sim$. 
%We can choose so that $\phi_1+\phi_2=2\pi$ or $\phi_1=0$ or $\phi_2=0$. 
%The cases are said to be the {\em balanced}, $\phi_1=0$, 
%or $\phi_2=0$ representatives. 

%Again, $w_1$ and $w_2$ can be chosen to be the antipodal 
%ones and respectively. 

Thus, there is an onto map 
${\mathcal T}_{{B}'-III-IV}: ({B'-III-IV}) \times \SI^1 \times \SI^1\times \SI^1 \ra C^{{B'-III-IV}}.$
%where $C^{{B'-III-IV}}$ denotes the subspace in ${\mathcal C}_0$ of 
%characters over the region ${{B'-III-IV}}$. 
(Actually, ${B'-III-IV}$ is equivalent to $B-I-II$ under $I_{{A}}$ but we will not need 
this fact explicitly for the equivalences on ${B'-III-IV}$ itself.)

%Here $B'$ is parameterized by $l_0$ and $\theta_1$ where $\theta_1+\theta_2=2\pi$. 
%5We introduce equivalence relations:
%For the union of the region $B' \times T^3$,
%the equivalence relation is given by
%\[(v_1, l_1, 0, \theta_1, \theta_2) \sim (v_1, l_1', 0, \theta_1, \theta_2)\]
%for arbitrary pair of $l_1$ and $l'_1$ and
%\[(v_1, 0, \theta_0, \theta_1, \theta_2) \sim (v_1, \pi, 2\pi-\theta_0, 
%\theta_1, \theta_2).\]
%There is a well-defined induced map 
%${\mathcal T}_{\mathcal{B}'}: {B'}\times S_1\times SI^1/\sim \ra {\mathcal C}_0^{{B'-III-IV}}$. 

\begin{prop}\label{prop:B'} 
There is a well-defined induced homeomorphism
${\mathcal T}_{{B}'-III-IV}: ({{B'-III-IV}})\times \SI^1\times \SI^1 \times \SI^1/\sim \ra C^{{B'-III-IV}}$. 
Furthermore, the restricted maps
\[{ F}_{{{B-III -IV}}}^1:({B'-III -IV})\times \SI^1\times \{0\}\times \SI^1 /\sim \ra C^{B'-III -IV} \hbox{ and } \] 
\[{ F}_{{{B-III -IV}}}^2:({B'-III -IV})\times \SI^1\times \SI^1 \times \{0\} /\sim \ra C^{B'-III -IV}\] 
are homeomorphisms where $\sim$ is given by the Klein four-group action above. 

\end{prop}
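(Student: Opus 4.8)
The plan is to follow the template established in the proof of Proposition \ref{prop:B}, transporting each step to the region $B'-III-IV$. Since $B'-III-IV$ is carried onto $B-I-II$ by $I_{{A}}$, one could in principle obtain the statement by transport of structure, but I would instead give the intrinsic argument so that the geometric meaning of $\sim$ stays visible. Surjectivity of ${\mathcal T}_{{B}'-III-IV}$ onto $C^{{B'-III-IV}}$ is immediate from the definition of $C^{{B'-III-IV}}$ as the set of characters arising from $(B'-III-IV)\times T^3$, together with the explicit triangle-and-pasting construction described just before the proposition. The real content is therefore twofold: first, injectivity of the induced map on the quotient; second, upgrading the resulting continuous bijection to a homeomorphism.

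For injectivity I would split into abelian and nonabelian characters. The abelian characters over $B'$ are exactly those listed in \eqref{eqn:abelianBp}, and for these the identifications are all accounted for by \eqref{eqn:obvious2} together with the $\SI^1$-action on the abelian edges, which is the same bookkeeping as in Section \ref{subsec:abcd}. For a nonabelian $h$, the analog of Proposition \ref{prop:nonabelianB} guarantees that $h(d_1)$ and $h(d_2)$ are both nontrivial and that there is a well-defined great circle $\SI^1$ through their fixed points $\pm w_1, \pm w_2$ missing the coincident vertex $v_1=v_2$. Passing to a canonical representative with $\phi_1=0$ via the $\SI^1$-action forces $v_0=w_1$, exactly as in the $B-I-II$ case; the residual freedom is then the recovery of $v_1=v_2$ from the rotation data of $h(d_1), h(d_2)$ and the choice of signs among $\pm w_1, \pm w_2$. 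I would check that these choices are precisely the two generators (a) $s_0\mapsto -s_0$ (equivalently $v_0\mapsto -v_0$) and (b) the $I_{{B}}$-move of \eqref{eqn:ib} sending $v_1=v_2$ to its antipode. Hence two elements of $(B'-III-IV)\times T^3$ map to the same character if and only if they are related by the Klein four-group, which is exactly what $\sim$ declares.

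To promote this bijection to a homeomorphism I would use properness as in Proposition \ref{prop:B}. I would partition $C^{{B'-III-IV}}$ by the invariant angle $v(1)$, the conjugacy-invariant trace datum of $h(c_1)$ (here $v(2)=\pi-v(1)$), into compact slices, noting $C_{v(1)}=C_{\pi-v(1)}$ under $I_{{B}}$; the quotient space is partitioned into the corresponding compact slices $C'_{v(1)}$. Because ${\mathcal T}_{{B}'-III-IV}$ preserves this partition it is proper, and over each slice it is a continuous bijection from a compact space to the Hausdorff space $C^{{B'-III-IV}}$, hence a homeomorphism; assembling the slices yields the global homeomorphism. For the restricted maps $F^1_{{B-III-IV}}$ and $F^2_{{B-III-IV}}$ I would observe, as in the $B$ case, that on the canonical slices $\phi_1=0$ or $\phi_2=0$ the $\SI^1$-action degenerates and $\sim$ reduces to the residual Klein four-group generated by (a) and (b); the same compactness-and-properness argument then gives that each restricted map descends to a homeomorphism. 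The main obstacle I anticipate is the nonabelian injectivity step: one must handle the degenerate-triangle configurations (segments and pointed lunes, with an angle possibly exceeding $\pi$) and the antipodal sign ambiguities carefully enough to be certain that no identifications beyond the Klein four-group occur, and in particular that recovering the \emph{coincident} vertex $v_1=v_2$ — rather than the antipodal pair appearing in the $B-I-II$ analysis — introduces no extra matching.
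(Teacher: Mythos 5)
Your proposal is correct and takes essentially the approach the paper intends: the paper states Proposition \ref{prop:B'} without proof, expecting the reader to repeat the argument of Proposition \ref{prop:B} (surjectivity by construction, abelian injectivity via equation \eqref{eqn:obvious2}, nonabelian injectivity via canonical representatives with $\phi_1=0$ and the Klein four-group ambiguity, then the compact-slice/properness upgrade to a homeomorphism), which is exactly what you do, including the correct observation that the restricted maps follow because the $\SI^1$-action and $\bZ_2$-action restrict to a Klein four-group on the canonical slices. Your closing caveat about the coincident vertex $v_1=v_2$ is handled just as in the paper's proof of Proposition \ref{prop:Bpi}: the rotation angles at $w_1$ and $w_2$ determine great circles whose intersection recovers $v_1=v_2$ up to antipode, and that antipodal choice is precisely the $I_B$-move of equation \eqref{eqn:ib}.
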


\subsubsection{Regions $B'_\pi$ and $B'_0$} \label{subsec:B'pi}

Let $B'_{\pi}$ be the subset of $B$ where $\theta(1)=\pi$, i.e., $B'_\pi= B \cap IV$,
and $B'_0$ the subset of $B$ where $\theta(1)=0$, i.e., $B'_0=B \cap III$. 
The restricted equivalence relation is given in $B'_\pi \times T^3$ by the ones given by equation \eqref{eqn:obvious2} 
and the circle action and the $\bZ_2$-action of form \eqref{eqn:ib2} and \eqref{eqn:ib3}.  
We can define a restricted equivalence relation on $B'_0\times T^3$ the same way. 

Similarly, we have: 

\begin{prop}\label{prop:B'pi}
\[{\mathcal T}_{B'_\pi}:B'_\pi\times \SI^1\times \SI^1 \times \SI^1/\sim \ra H(F_2)\] 
is a homeomorphism where $\sim$ denotes the restricted equivalence relation given in $B' \times T^3$.
%$B' \times T^3$. Similarly, 
\[{\mathcal T}_{B'_0}:B'_0\times \SI^1\times \SI^1 \times \SI^1/\sim \ra H(F_2)\] 
is a homeomorphism where $\sim$ denotes the restricted equivalence relation given in $B' \times T^3$.
Furthermore,  we can identify
$B'_\pi\times \SI^1  \times \{0\}  \times \SI^1/\sim$ with $H(F_2)$ 
and similarly
$B'_\pi\times \SI^1  \times \SI^1\times \{0\}/\sim$ with $H(F_2)$
where $\sim$ is given by equation \eqref{eqn:obvious2} and the Klein four-group action.
The same statements hold with $B_\pi$ replaced with $B_0$. 
\end{prop}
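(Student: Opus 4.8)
The plan is to mirror the proof of Proposition~\ref{prop:Bpi} almost verbatim, replacing the antipodal pair $v_1,v_2=-v_1$ of region $B$ by the coincident pair $v_1=v_2$ of region $B'$ and the relation \eqref{eqn:obvious} by \eqref{eqn:obvious2}. Because $B'_\pi\subset IV$ and $B'_0\subset III$, every $h(c_i)$ is the identity, so each character over these regions factors through $F_2=\langle d_1,d_2\rangle$ and the target is $H(F_2)$, which is a $3$-ball by Proposition~\ref{prop:HF2}; in particular it is compact and Hausdorff. Since the source $B'_\pi\times T^3/\!\sim$ is compact, it then suffices to produce a continuous bijection onto $H(F_2)$.

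Continuity and surjectivity come first and are the easy half. Continuity of $\mathcal T_{B'_\pi}$ follows from the explicit triangle recipe of Section~\ref{subsec:regioni} and Remark~\ref{rem:contin}, which realizes $h(d_1),h(d_2)$ as rotations depending continuously on the point $v_1=v_2$, the auxiliary fixed points $w_1,w_2$, and the pasting angles. Surjectivity is the $B'$-analogue of Proposition~\ref{prop:calFonto}: given any $h\colon F_2\to\SOThr$ I may prescribe the fixed points $\pm w_1,\pm w_2$ of $h(d_1),h(d_2)$ and their rotation angles freely, and then solve for $v_1=v_2$ and $\phi_1,\phi_2$ realizing them, so every element of $H(F_2)$ is attained.

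The substance is injectivity, which I would treat separately on the abelian and the non-abelian loci. On the abelian locus the identifications are precisely those recorded in \eqref{eqn:obvious2} together with the edge $\SI^1$-actions, so two configurations over $B'_\pi$ give the same abelian character exactly when they are $\sim$-equivalent. For a non-abelian $h$, the $B'$-version of Proposition~\ref{prop:nonabelianB} guarantees that $h(d_1),h(d_2)$ are both non-identity with $\pm w_1\neq\pm w_2$, so $\pm w_1,\pm w_2$ and the angles $\tau_1,\tau_2$ are read off from $h$. Reconstructing as in Proposition~\ref{prop:Bpi}: pick one representative from each antipodal pair (four choices); the rotation data at $w_1$ cuts out the great circle through which $v_1$ must pass and the data at $w_2$ the circle for $v_2=v_1$, their intersection determining the point $v_1=v_2$; passing to the canonical representative $\phi_1=0$ then forces $v_0=w_1$. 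The only residual freedom is the fourfold sign choice, which is exactly the Klein four-group generated by the $\bZ_2$ inside the $\SI^1$-action (the choice $v_0\mapsto-v_0$) and the augmented $I_B$-action \eqref{eqn:ib2}. Hence two configurations share a character iff they are $\sim$-equivalent, giving injectivity.

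With bijectivity and continuity in hand, compactness of the source and the Hausdorff property of $H(F_2)$ upgrade $\mathcal T_{B'_\pi}$ to a homeomorphism. The $\{0\}$-slice statements then follow exactly as in Proposition~\ref{prop:Bpi}: fixing $\phi_1=0$ (or $\phi_2=0$) annihilates the continuous part of the $\SI^1$-action, leaving only the Klein four-group action as the identifications, and one checks the restricted map is still a bijection onto $H(F_2)$. The case of $B'_0$ is identical with $III,IV$ and the signs of \eqref{eqn:ib3} in place of \eqref{eqn:ib2}. I expect the main obstacle to be the bookkeeping in the non-abelian reconstruction: verifying that the two great circles meet in a single antipodal pair (so that $v_1=v_2$ is genuinely pinned down) and, above all, that the degenerate limits where $w_i\to v_1$ or where an angle tends to $0$ or $2\pi$ match the abelian identifications continuously, so that no spurious extra collapsing is introduced along the abelian edges of $B'_\pi$. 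As an independent check one notes that $I_A$ maps $B_\pi\times T^3$ onto $B'_\pi\times T^3$ (and $B_0$ onto $B'_0$) respecting $\sim$ and commuting with $\mathcal T$, so the statement also transfers directly from Proposition~\ref{prop:Bpi}; the argument above should reproduce this.
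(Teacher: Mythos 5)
Your proposal is correct and is essentially the paper's own argument: the paper offers no separate proof here, saying only ``Similarly, we have:'' and deferring to the proof of Proposition \ref{prop:Bpi} (surjectivity and continuity via triangle constructions, injectivity by recovering the configuration from the fixed points $\pm w_1,\pm w_2$ and rotation angles with the residual fourfold ambiguity absorbed by the Klein four-group, then compactness plus Hausdorffness, with the slice statements coming from the $\SI^1$-action restricting to a Klein four-group action), which is exactly what you adapt to $B'$. The only caveat is immaterial: your identifications $B'_\pi\subset IV$, $B'_0\subset III$ and the claim that $I_A$ carries $B_\pi$ to $B'_\pi$ simply follow the paper's own (internally shaky) labeling of the end ties, and since the argument treats the two end ties symmetrically, nothing in the proof depends on which label is which.
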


%By defining an appropriate equivalence relations we can make them 
%be homeomorphisms. The equivalence relations are of same type as for $B'-III-IV$ 
%including an $\SI^1$-action and the $I_{B}$-action.

%%% Dec 30 9:44 pm 
%%% Over A,A'.

%% January 21 8:57 pm Need to include the abelian relation by S1-action... New proof of homeom not using inverse needed....
\subsection{Region ${A}-I-IV$}

Let us now consider $C^{{A-I-IV}}$ and $C^{{A}'-II-III}$. 
(Again, they are equivalent under $I_{{B}}$ but we will not need this fact. )

We consider ${A}-I-IV$ first. Here the corresponding generalized triangle  is
a lune with vertices $v_0$ and $v_1=-v_0$ and the vertex $v_2$ is on an edge. 

Let $v_0$ and $v_1$ be an arbitrary pair of antipodal points. 
First, we discuss how to construct a representation $h$ so 
that $h(c_i)$ fix $v_i$ for $i=0,1$ and has the right pasting angles 
for an element of $C^{A-I-IV}$. 

To obtain the transformations $h(d_1)$ and $h(d_2)$, we consider 
a triangle $T_2$ with vertices $v_2, v_0$ and $w_2$ and with 
angles $\phi_2/2$ and $\pi-\phi_0/2$ at $v_2$ and $v_0$ respectively. 
The vertex $w_2$ is the fixed point of $h(d_2)$ and the vertex angle $\eta_2$ is 
the half of $2\pi$ minus the rotation angle $\tau_2$ of $h(d_2)$, and
$h(d_1)$ is a rotation about $v_1$ with angle $\phi_1+\phi_0$ and 
$w_1=v_1$. Since we defined $h(c_i)$ and $h(d_j)$ for $i=1,2$ and $j=1,2$, we are done.

Let us discuss the equivalence relation:

First, the following is obvious:
\begin{prop}\label{prop:nonabelianA}
A representation $h$ corresponding to an element of $C^{A-I-IV}$ is nonabelian 
if and only if $h(d_2)$ is not identity and does not fix $v_0$ and $v_1$ for any configuration corresponding 
to an element of $(A-I-IV) \times T^3$ representing $h$. 
\end{prop}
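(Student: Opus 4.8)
The plan is to read off the axes of all the generators directly from the construction of the representation over $A-I-IV$ that precedes the statement, and then reduce abelianness to a single condition on $h(d_2)$. Recall that over $A-I-IV$ the generalized triangle is a lune with antipodal vertices $v_0$ and $v_1=-v_0$ and with $v_2$ on the edge between them, so $\theta_2=\pi$ and hence $h(c_2)=\Idd$. By the construction, $h(c_0)$ and $h(c_1)$ are rotations about $v_0$ and $v_1$, while $h(d_1)$ is a rotation about $w_1=v_1$; thus all three share the common axis $\ell$ through the antipodal pair $v_0,v_1$, and $h(c_0)$ is in any case redundant by the pair-of-pants relation. Finally $h(d_2)$ is the rotation about $w_2$ produced by the triangle $T_2$. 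The first fact I would isolate is that removing the regions $I$ and $IV$ forces $0<\theta_0=\theta_1<\pi$, so that $h(c_1)$ is a \emph{nontrivial} rotation about $\ell$; in particular $\ell$, and hence the pair $v_0,v_1$, is pinned by the character $h$ itself and does not vary with the chosen configuration.

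With this in hand, the key reduction is that the image of $h$ is generated by $h(c_1),h(d_1),h(d_2)$ (since $h(c_2)=\Idd$), and that the first two already lie in the abelian group $A_\ell$ of all rotations about $\ell$. Consequently $h$ is abelian if and only if $h(d_2)$ centralizes $A_\ell$, i.e. if and only if $h(d_2)$ commutes with the single nontrivial element $h(c_1)$. Both directions then follow: if $h(d_2)=\Idd$ or $h(d_2)$ fixes $v_0,v_1$, then $h(d_2)\in A_\ell$, every generator lies in the abelian group $A_\ell$, and $h$ is abelian; conversely, if $h$ is abelian then $h(d_2)$ must commute with the nontrivial rotation $h(c_1)$.

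The step carrying the real content is the commuting criterion in $\SOThr$: a rotation commutes with a given nontrivial rotation about $\ell$ precisely when it is the identity or has $\ell$ itself as axis (equivalently, fixes the pair $v_0,v_1$). I would prove this by the standard observation that conjugation sends a rotation's axis to the image of that axis, so $g\,h(c_1)\,g^{-1}=h(c_1)$ forces $g$ to preserve $\ell$ setwise. Since $\ell$ is pinned by $h$ and $h(d_2)$ is determined by $h$, the resulting condition is independent of the configuration, which is exactly what the phrase ``for any configuration'' records. The main obstacle to making ``obvious'' literally correct is the half-turn case: when $h(c_1)$ is a rotation by $\pi$ (that is, $\theta_1=\pi/2$), a rotation by $\pi$ about an axis orthogonal to $\ell$ also commutes with it, so $h(d_2)$ could in principle fail to fix $v_0,v_1$ while $h$ remains abelian. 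I would handle this by restricting the biconditional to the dense coaxial stratum and noting that these half-turn coincidences form a positive-codimension set that does not affect the later topological identifications; alternatively one checks directly on that thin locus whether the extra abelian characters are already absorbed by the equivalence relation. This is the only point I expect to require genuine care.
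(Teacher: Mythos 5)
You should know at the outset that the paper contains no proof of this proposition: it is introduced with the phrase ``the following is obvious,'' so there is nothing to match your argument against except the configuration-by-configuration case analysis used for the analogous Proposition \ref{prop:nonabelianB}. Your skeleton --- $h(c_2)=\Idd$, the generators $h(c_1)$ and $h(d_1)=R_{v_1,\phi_0+\phi_1}$ lie in the rotation group about the axis $\ell$ through $v_0$, $v_1=-v_0$, the axis $\ell$ is pinned by $h$ because $0<\theta_0=\theta_1<\pi$ on $A-I-IV$, and abelianness reduces to a centralizer computation for $h(d_2)$ --- is sound, and your easy direction is complete.

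However, the half-turn case you flag at the end is not a deferrable technicality: it is a counterexample showing the proposition is false as stated. Take the element of $(A-I-IV)\times T^3$ with $v(0)=\pi/2$, $l(0)=\pi/2$ and $(\phi_0,\phi_1,\phi_2)=(0,0,\pi)$. Since $\phi_0=\phi_1=0$, Proposition \ref{prop:d1d2} gives $h(d_1)=\Idd$ and $h(d_2)=P_2=R_{v_2,\pi}$, the half-turn about the axis through $\pm v_2$, and this axis is orthogonal to $\ell$ because $d(v_1,v_2)=l(0)=\pi/2$; also $h(c_0)=h(c_1)=R_{v_1,\pi}$ and $h(c_2)=\Idd$. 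The image of $h$ is then the Klein four-group of half-turns about three mutually orthogonal axes, which is abelian; yet $h(d_2)$ is not the identity and fixes neither $v_0$ nor $v_1$, and by your pinning observation the same holds in every configuration over $A-I-IV$ representing $h$. So the implication ``condition $\Rightarrow$ nonabelian'' fails. This is the paper's error rather than yours: the proof of Proposition \ref{prop:nonabelianB} asserts nonabelianness on a locus containing the analogous Klein four-group configurations, the assertion inside the proof of Proposition \ref{prop:abelian} that every abelian representation fixes a common antipodal pair is equally false for these representations, and the list \eqref{eqn:abelianBpp} of abelian configurations omits them.

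Consequently your first remedy (restricting the biconditional to the dense coaxial stratum) amends the statement rather than proving it, and your second (checking that the thin locus is absorbed by the equivalence relation) is exactly the verification that remains open --- neither you nor the paper carries it out. The clean repair is to replace ``nonabelian'' by ``non-coaxial,'' i.e.\ the condition that the image of $h$ fixes no pair of antipodal points of $\SI^2$: since $h(c_1)$ is a nontrivial rotation about $\ell$, any common pointwise-fixed pair is forced to be $\{v_0,v_1\}$, so with that wording your two-direction argument closes with no exceptional case at all. Coaxiality, not abelianness, is also the property the construction actually uses downstream, in equation \eqref{eqn:obviousA} and in placing such characters over the faces $a$, $b$, $c$, $d$; whether the Klein four-group characters are then correctly glued by the $\SI^1$-action equivalences is a separate check that the paper never performs.
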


By Proposition \ref{prop:nonabelianA}, 
the subset of $A\times T^3$ corresponding to the abelian representations is 
\begin{gather}\label{eqn:abelianBpp}
(v(0), 0, \phi_0, \phi_1, \phi_2), (v(0), \pi, \phi_0, \phi_1, \phi_2), 
(v(0), l(0), \phi_0, \phi_1, 0), \nonumber\\
(0, l(0), \phi_0, 2\pi-\phi_0, \phi_2), (\pi, l(0), \phi_0, 2\pi-\phi_0, \phi_2) \nonumber \\
\mbox{ for } v(0), l(0) \in [0, \pi], \phi_0, \phi_1, \phi_2 \in [0, 2\pi] 
\end{gather}

For the region $A \times T^3$, we first introduce 
the equivalence relation given by
\begin{equation} \label{eqn:obviousA}
\begin{split}
& (v(0), 0, \phi_0, \phi_1, \phi_2) \sim (v(0), 0, \phi_0+t, \phi_1-t, \phi_2-t) \\
& (v(0), \pi, \phi_0, \phi_1, \phi_2) \sim (v(0), \pi, \phi_0+t, \phi_1-t, \phi_2+t) \\
& (v(0), 0, \phi_0, \phi_1, \phi_2) \sim (v(0), \pi, \phi_0, \phi_1, 2\pi-\phi_2)\\
& (v(0), l(0), \phi_0, \phi_1, 0) \sim  (v(0), l(0)', \phi_0, \phi_1, 0) \\
& \mbox{ for an arbitrary pair of $l(0)$ and $l(0)'$ and } \\
& (0, l(0), \phi_0, 2\pi-\phi_0, \phi_2) \sim  (0, l(0)', \phi'_0, 2\pi-\phi'_0, \phi'_2) \\
& \mbox{ for an arbitrary pair of $l(0)$ and $l(0)'$ and the same third angles}  \\
& (\pi, l(0), \phi_0, 2\pi-\phi_0, \phi_2)  \sim   (\pi, l(0)', \phi'_0, 2\pi-\phi'_0, \phi'_2) \\
& \mbox{ for an arbitrary pair of $l(0)$ and $l(0)'$ and the same third angles. }
%\mbox{ for arbitrary pair of $l(1)$ and $l(1)'$} 
\end{split}
\end{equation}
%Again, these correspond to abelian representations. 
By the same third angles, we mean that the third angle of the triangle with a side of length $l(1)=\pi-l(0)$ and angles $\pi-\phi_0/2$ and $\phi_2/2$ at 
the end is same as the third angle of the triangle with a side of length $l(1)'=\pi-l(0)'$ and the angles 
$\pi-\phi'_0/2$ and $\phi'_2/2$ at the end. 
\begin{multline}\label{eqn:thirdangleA}
-\cos (\pi-\phi_0/2) \cos \phi_2/2 + \sin (\pi-\phi_0/2) \sin \phi_2/2 \cos(\pi- l(0)) = \\
-\cos (\pi-\phi'_0/2) \cos \phi'_2/2 + \sin (\pi-\phi'_0/2) \sin \phi'_2/2 \cos(\pi- l(0)').
\end{multline} 
Note again that we include the degenerate triangles and in that case the equation is not an enough condition
and the condition is a geometric one.

From now on, we may assume that the fixed points $\pm w_2$ of $h(d_2)$ are well defined and 
the angle $\tau_2$ at $w_2$ is not $0$ or $2\pi$ since otherwise the above equivalences are enough. 
Also, each of $\pm w_2$ does not coincide with $v_0, v_1$.

An $\SI^1$-action is given by taking a different segment $s_2$ with endpoints $v_0$ and $v_1=-v_0$. 
Then $v_2$ is obtained as an intersection of $s_2$ with the great circle through $w_2$ 
that has angle $\eta_2= \pi-\tau_2/2$ with respect to the shortest segment to $v_0$ from $w_2$.
This again involves generalized triangles and changing the angle $\theta_1/2$ to $\theta_1/2 - \pi$ if 
$\theta_1/2 \geq \pi$. 
Again there is an $\SI^1$-action on the abelian edge regions given by restricting the $\SI^1$-action in
Section \ref{subsec:abcd}, which extends the previous one continuously. 

If we choose a segment $s_2$ with endpoints $v_1$ and $v_0$ passing 
through $w_2$, then $v_2=w_2$, $h(d_2)$ has the rotation angle 
$\phi_2$, $\phi_0=0$, and $h(d_1)$ is now a rotation about $v_1$ with angle 
$\phi_1$. We can also choose so that $\phi_1=0$ by making $\phi_0$ be equal to $2\pi$ minus 
the rotation angle of $h(d_1)$. We call these the {\em canonical} representatives for 
a representation or character.
Thus, every representation over $A$ has {\em canonical} representatives. 

Thus, the configurations that we need to classify are ones where each is given by an antipodal pair $(v_0, v_1)$ 
and a choice of fixed point $w_2, -w_2$ and the rotation angles of $h(d_2)$ and $h(d_1)$. 
%If we choose $-w_2$ instead of $w_2$, then $(0,\phi_1,\phi_2)$ 
%becomes $(0,\phi_1,2\pi-\phi_2)$ for example when we are in the canonical representative. 

Also, there is a Klein four-group action again: First, there is a $\bZ_2$-action
given by choosing $-s_2$ instead of $s_2$. This is in the $\SI^1$-action. 

Second, the different antipodal choices of $v_0, v_1$ give us an $I_{{A}}$-action: 
\begin{eqnarray} \label{eqn:ia}
(v(0), l(0), \phi_0, \phi_1, \phi_2) &\mapsto & (\pi-v(0), \pi-l(0), 2\pi-\phi_0, 2\pi-\phi_1, \phi_2).
%(0, l_0, \phi_0, \phi_1, \phi_2) &\mapsto& (0, \pi-l_0, 2\pi-\phi_0, 2\pi-\phi_1, \phi_2) \hbox{ and } \nonumber \\
%(\pi, l_0, \phi_0, \phi_1, \phi_2) &\mapsto &(\pi, \pi-l_0, 2\pi-\phi_0, 2\pi-\phi_1, \phi_2). 
\end{eqnarray}
%The last two are not considered $I_{{A}}$ action but arise in similar way to equations \eqref{eqn:ib2} and \eqref{eqn:ib3}.

We define the equivalence relation $\sim$ on $({A-I-IV}) \times \SI^1\times \SI^1\times \SI^1$
to be one generated by the equivalence relations given by equation \eqref{eqn:obviousA} 
and the $\SI^1$-action and equation \eqref{eqn:ia}.

\begin{figure}

\centerline{\includegraphics[height=7cm]{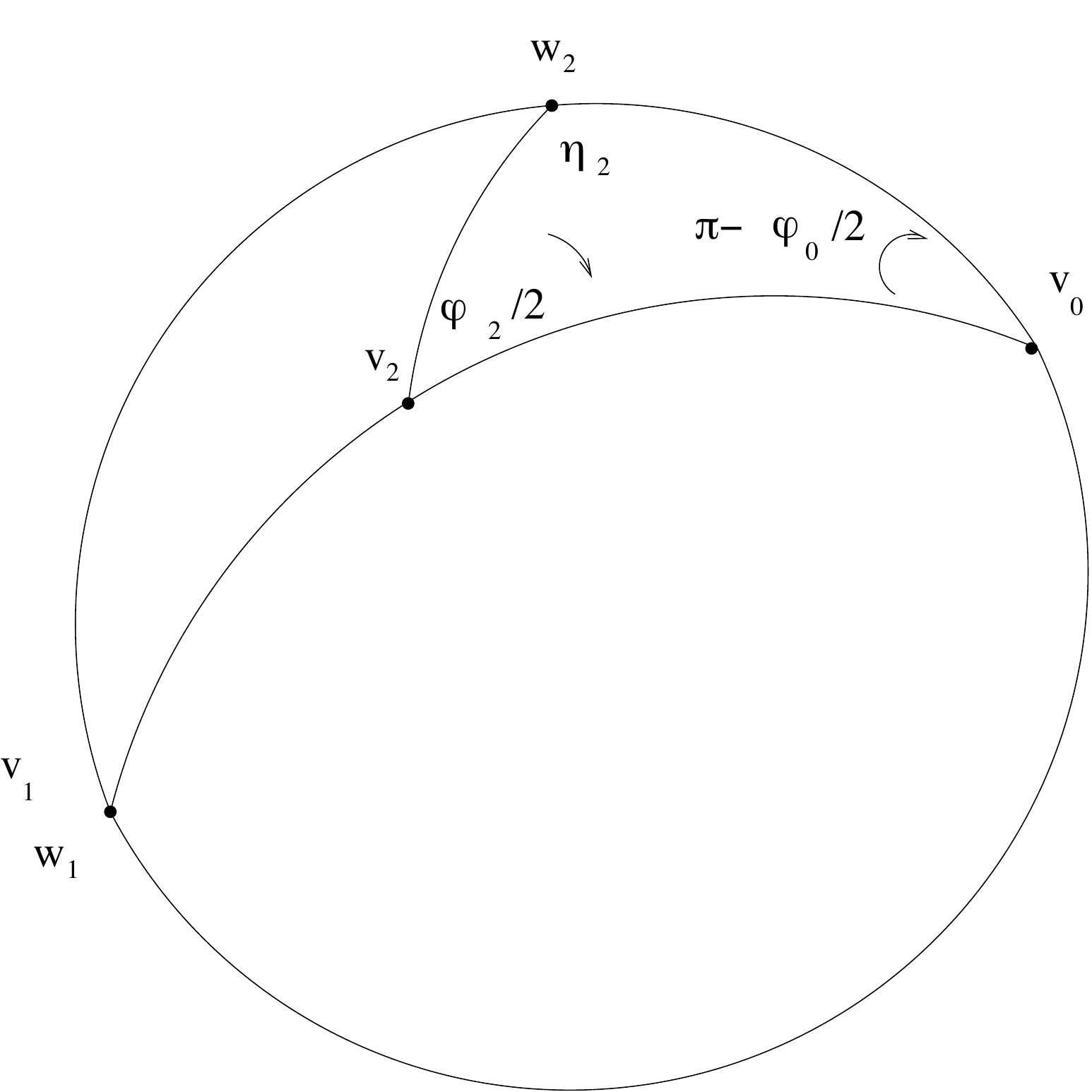}}
\caption{  Finding holonomy of $d_1$ and $d_2$ using triangles for case ${A-I-IV}$.}
\label{fig:mapFA}
\end{figure}

Therefore we define a map 
\[{\mathcal T}_{{A-I-IV}}: ({A-I-IV})\times \SI^1\times \SI^1\times \SI^1 \ra C^{{A-I-IV}},\]
which is onto by definition.
%by sending it to one of the canonical choice and then to the holonomy.
We note that $({A-I-IV})\times \{0\} \times \SI^1\times \SI^1 \ra C^{{A-I-IV}}$
and ${A-I-IV} \times \SI^1 \times  \{0\}\times \SI^1 \ra C^{{A-I-IV}}$
are onto since we can let $\phi_0=0$ or let $\phi_1=0$ by the $\SI^1$-action.

\begin{prop}\label{prop:A}
The induced map 
\[{\mathcal T}_{{A-I-IV}}: ({A-I-IV}) \times \SI^1\times \SI^1\times \SI^1/\sim \ra C^{{A-I-IV}}\] 
is a homeomorphism.
Furthermore,  the restrictions
\[{\mathcal T}_{{A-I-IV}}^0: ({A-I-IV}) \times \{0\}\times \SI^1\times \SI^1/\sim \ra C^{{A-I-IV}}\]
\[{\mathcal T}_{{A-I-IV}}^1: ({A-I-IV}) \times \SI^1 \times \{0\} \times \SI^1/\sim \ra C^{{A-I-IV}}\]
 are homeomorphisms
where $\sim$ is given by the equation \eqref{eqn:obviousA} and the Klein four-group action.
\end{prop}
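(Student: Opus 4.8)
The plan is to follow the template already established in the proof of Proposition \ref{prop:B} for the region $B-I-II$, adapting it to the lune configurations that occur over $A-I-IV$. Surjectivity of ${\mathcal T}_{{A-I-IV}}$ has already been recorded before the statement: every character in $C^{{A-I-IV}}$ is produced by a lune-plus-pasting-angle datum, so the induced map on the quotient is onto by the very definition of $C^{{A-I-IV}}$, and moreover one can normalize to $\phi_0=0$ or $\phi_1=0$ by the $\SI^1$-action so the two restricted maps remain onto. The substance of the argument is therefore (i) injectivity of the induced map and (ii) upgrading the continuous bijection to a homeomorphism by a fiberwise properness argument.

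For injectivity I would split into abelian and nonabelian characters. The abelian ones are, by Proposition \ref{prop:nonabelianA}, exactly the loci listed in \eqref{eqn:abelianBpp}, and on these the relations \eqref{eqn:obviousA} together with the $\SI^1$-action already identify any two preimages, so they cause no trouble. The nonabelian case carries the real content, and the decisive feature of region $A$ (as opposed to $B$) is the asymmetry coming from $v_1=-v_0$: here $h(d_1)$ is forced to be a rotation about $v_1$, and only $h(d_2)$ has a fixed point $\pm w_2$ genuinely distinct from $v_0,v_1$ (Proposition \ref{prop:nonabelianA}). I would first use the $\SI^1$-action to bring any preimage to a canonical representative with $\phi_0=0$, so that $v_2=w_2$ and the segment $s_2$ runs through $w_2$. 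With both preimages in this form, the remaining data are the antipodal pair $(v_0,v_1)$, the choice of fixed point among $\pm w_2$, and the rotation angles of $h(d_1),h(d_2)$; two such data give the same isometry class of fixed points and angles, hence the same character, precisely when they agree or differ by replacing $w_2$ with $-w_2$ together with the antipodal swap of $(v_0,v_1)$, i.e. by $s_2\to -s_2$ and $I_{{A}}$ of \eqref{eqn:ia}. These are exactly the generators of the Klein four-group built into $\sim$, which yields injectivity.

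To pass from continuous bijection to homeomorphism I would run the same properness argument as in Proposition \ref{prop:B}. Group $C^{{A-I-IV}}$ into the images $C_{\theta_0}$ of the ties $v(0)=\theta_0$ (the lune angle), noting $C_{\theta_0}=C_{\pi-\theta_0}$ since $I_{{A}}$ sends $v(0)$ to $\pi-v(0)$. Each corresponding $C'_{\theta_0}$ in the quotient is compact, the map preserves the tie parameter up to $\theta_0\leftrightarrow\pi-\theta_0$ and is therefore proper, and the target $C^{{A-I-IV}}\subset{\mathcal C}_0$ is Hausdorff; a proper continuous bijection onto a Hausdorff space is a homeomorphism. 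For the restricted maps ${\mathcal T}_{{A-I-IV}}^0$ and ${\mathcal T}_{{A-I-IV}}^1$, which parametrize the canonical representatives with $\phi_0=0$ and $\phi_1=0$, I would argue exactly as at the end of the proof of Proposition \ref{prop:B}: on these slices the $\SI^1$-action and the $\bZ_2$-move collapse to an honest Klein four-group action, so the same injectivity-plus-properness reasoning gives homeomorphisms.

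The step I expect to be the main obstacle is the nonabelian injectivity, and specifically the bookkeeping required when the lune or the auxiliary triangle $T_2$ degenerates, i.e. when some $\phi_i$ reaches $0$, $\pi$, or $2\pi$ and one must invoke the generalized-triangle multiplication rules (Proposition \ref{prop:multrule}) and the subtraction-of-$\pi$ convention for angles $\geq\pi$ described around \eqref{eqn:thirdangleA}. Verifying that the $\SI^1$-normalization to $\phi_0=0$ and the Klein four-group moves stay well defined and continuous across all these boundary strata, and that no extra identifications appear there, is the delicate part; the rest is a faithful transcription of the region-$B$ argument.
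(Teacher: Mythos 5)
Your proposal is correct and follows essentially the same route as the paper: the paper's own proof of Proposition \ref{prop:A} simply states that the argument is the same as for Proposition \ref{prop:B}, with the $\SI^1$-action handling the nonabelian identifications and equation \eqref{eqn:obviousA} handling the abelian ones, which is exactly the template you transcribe (canonical representatives with $\phi_0=0$, the residual Klein four-group ambiguity from $s_2\to -s_2$ and $I_A$, and the tie-based properness argument). Your write-up is in fact more detailed than the paper's, which defers all of this to the region-$B$ proof.
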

\begin{proof} 
The proof is similar to Proposition \ref{prop:B}. That is, the $\SI^1$-action gives the equivalences for
the nonabelian parts and the abelian parts are studied by equation \eqref{eqn:obviousA}.
\end{proof}

\subsubsection{Regions $A_\pi$ and $A_0$} \label{subsec:Api}

Again let ${A}_\pi$ and ${A}_0$ denote the subsets of ${A}$ defined by $\theta(0)=\pi$
and $\theta(0)=0$ respectively. 

Also, on $A_\pi\times T^3$ and $A_0\times T^3$, 
there is a $\bZ_2$-action sending $v_0, v_1$ to their antipodes respectively and not changing $v_2$.
\begin{eqnarray} \label{eqn:ia2}
%(v_0, l_0, \phi_0, \phi_1, \phi_2) &\mapsto & (\pi-v_0, \pi-l_0, 2\pi-\phi_0, 2\pi-\phi_1, \phi_2) \nonumber \\
%(0, l_0, \phi_0, \phi_1, \phi_2) &\mapsto& (0, \pi-l_0, 2\pi-\phi_0, 2\pi-\phi_1, \phi_2) \hbox{ and } \nonumber \\
(\pi, l(0), \phi_0, \phi_1, \phi_2) &\mapsto &(\pi, \pi-l(0), 2\pi-\phi_0, 2\pi-\phi_1, \phi_2). \nonumber \\
(0, l(0), \phi_0, \phi_1, \phi_2) &\mapsto &(0, \pi-l(0), 2\pi-\phi_0, 2\pi-\phi_1, \phi_2). 
\end{eqnarray}
(These can be considered an augmented $I_A$-transformation.)
${A}_{\pi}\times T^3$ has an equivalence relation given by 
equation \eqref{eqn:obviousA} and the $\SI^1$-action and the $\bZ_2$-group action 
given by equation \eqref{eqn:ia2}. Similar statements hold for $A_0\times T^3$ as well.

%There is a Klein four-group action: In addition to this $\bZ_2$-action, there is another $\bZ_2$ action 
%given by sending $s_2$ to $-s_2$ and vice versa. This is in the $\SI^1$-action.

%We define $\sim$ on ${A}_{\pi}\times \{0\}\times \SI^1\times \SI^1$ 
%and ${A}_{\pi} \times \SI^1 \times  \{0\}\times \SI^1$
%given by equation \eqref{eqn:obviousA} and the Klein four-group action. 

%Similar discussions apply to 
%${A}_{\pi} \times \SI^1 \times  \{0\}\times \SI^1 \ra C^{{A}}$; however,
%there are no nice coordinate expressions for the map.

%Finally, these statements hold for $A_\pi$ replaced by ${A}_0$ as well. 

\begin{prop}\label{prop:Api} 
\[ {\mathcal T}_{A_\pi} : {A}_{\pi}\times \SI^1 \times \SI^1\times \SI^1 /\sim \ra H(F_2)\] is 
a homeomorphism.
Furthermore, so are the following restrictions
 \[ {\mathcal T}_{A_\pi}^0 : {A}_{\pi}\times \{0\}\times \SI^1\times \SI^1/\sim  \ra H(F_2) \hbox{ and } \] 
\[ {\mathcal T}_{A_\pi}^1 : {A}_{\pi} \times \SI^1 \times  \{0\}\times \SI^1/\sim \ra  H(F_2).\]
We can replace $A_\pi$ with $A_0$ and the above statements hold again. 
\end{prop}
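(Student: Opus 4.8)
The plan is to imitate the proof of Proposition~\ref{prop:Bpi} for region $B_\pi$, adjusting for the asymmetry of region $A$: here $h(d_1)$ is forced to fix $v_1=-v_0$ while $h(d_2)$ rotates about the free vertex $w_2$. First I would record that on $A_\pi=A\cap I$ one has $v(0)=v(1)=v(2)=\pi$, so $h(c_0)=h(c_1)=h(c_2)=\Idd$ and every character over $A_\pi$ factors through $F_2=\langle d_1,d_2\rangle$; hence $\mathcal T_{A_\pi}$ genuinely lands in $H(F_2)$, and being a restriction of the continuous map $\mathcal T$ it descends to a continuous map on $A_\pi\times T^3/\sim$. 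For surjectivity I would use the triangle construction directly: given $[h]\in H(F_2)$, put $v_1$ at a fixed point of $h(d_1)$ and $v_0=-v_1$, then build the triangle $T_2$ with free vertex $w_2$ at a fixed point of $h(d_2)$; since $w_2$ and the rotation angle $\tau_2$ may be prescribed arbitrarily, every $[h]$ is realized.

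For injectivity I would split into the nonabelian and abelian cases, as in Proposition~\ref{prop:Bpi}. In the nonabelian case both $h(d_1)$ and $h(d_2)$ are nontrivial with distinct fixed axes (otherwise the image is abelian), and Proposition~\ref{prop:nonabelianA} guarantees $\pm w_2$ is distinct from $\pm v_1$. Thus from $[h]$ one recovers the antipodal pair $\pm v_1$ (so $v_0=-v_1$) and the pair $\pm w_2$, together with the two rotation angles. The only remaining freedom is the antipodal choice of $v_1$, which is exactly the augmented $I_A$-action of equation~\eqref{eqn:ia2}, and the choice of segment $s_2$ (equivalently the antipode of $w_2$), which is the geometric $\SI^1$-action together with its $\bZ_2$ part; selecting the canonical representative $\phi_0=0$ forces $v_2=w_2$ and removes the continuous ambiguity. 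In the abelian case the configurations are the ones listed for $A$ and the identifications are precisely equation~\eqref{eqn:obviousA}. Hence two elements map to the same character if and only if they are $\sim$-equivalent. Since $A_\pi\times T^3/\sim$ is compact and $H(F_2)\cong B^3$ by Proposition~\ref{prop:HF2} is Hausdorff, the continuous bijection is a homeomorphism.

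For the two restricted maps I would note that passing to the canonical representative with $\phi_0=0$ (respectively $\phi_1=0$) uses up the continuous $\SI^1$-action, so that on the slice $A_\pi\times\{0\}\times\SI^1\times\SI^1$ (respectively $A_\pi\times\SI^1\times\{0\}\times\SI^1$) the residual equivalence is generated by the two commuting involutions $s_2\mapsto -s_2$ and the augmented $I_A$-action, i.e. a Klein four-group; surjectivity of each restriction is clear because canonical representatives always exist. Consequently the slice modulo the Klein four-group is homeomorphic to $H(F_2)$. The statement for $A_0$ is verbatim the same with $\theta(0)=0$ in place of $\theta(0)=\pi$, the sign conventions being immaterial.

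The main obstacle will be the injectivity bookkeeping in the nonabelian case: one must verify carefully that the asymmetric roles of $d_1$ (pinned to $v_1$) and $d_2$ (on the free vertex $w_2$) leave no identifications beyond the $\SI^1$-action and the Klein four-group, in particular that setting $\phi_0=0$ really collapses each continuous orbit to a single configuration and that no extra coincidence is hidden in the angle relations of equation~\eqref{eqn:obviousA}. The remaining steps are routine once this is pinned down.
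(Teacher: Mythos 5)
Your proposal is correct and takes essentially the same approach as the paper: the paper's own proof just says the proposition ``can be proved as in Proposition \ref{prop:Bpi}'', which is precisely the adaptation you carry out (surjectivity by triangle constructions, injectivity split into the nonabelian case---recovering $\pm v_1$ and $\pm w_2$ and killing the ambiguity by the canonical representative $\phi_0=0$, the $\SI^1$-action, and the augmented $I_A$-action---and the abelian case via equation \eqref{eqn:obviousA}, then compactness plus Hausdorffness, with the restricted maps handled by the residual Klein four-group). The only difference is cosmetic: for the restricted maps the paper additionally records the explicit coordinate formula $(\pi, l(0), 0,\phi_1,\phi_2) \mapsto (l(0),\phi_1,\phi_2)$ onto the coordinates of $H(F_2)$ from Section \ref{subsec:free}, with the Klein four-group acting compatibly on both coordinate spaces, which gives the same conclusion more directly.
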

\begin{proof}
This can be proved as in Proposition \ref{prop:Bpi}. 

However, the second statement can be shown by 
the direct expression of coordinates and the fact that the Klein four-group
acts on both coordinate spaces can be used: 
The map \[ {\mathcal T}_{A_\pi} : {A}_{\pi}\times \{0\}\times \SI^1\times \SI^1  \ra H(F_2)\] 
is given by \[(\pi, l(0), 0,\phi_1,\phi_2) \ra (l(0),\phi_1,\phi_2),\] 
which is clearly onto.
(Recall from Section \ref{subsec:free} for the second coordinates.)
\end{proof}

%The restrictions to canonical representative gives a surjective (generic $4$ to $1$) maps.

\subsection{Region ${A}'-II-III$}
Now, consider the region ${A}'$ where a corresponding triangle is a segment 
with endpoints  $v_0=v_1$ and $v_2$. 

First, we find the representation $h$ corresponding to an element of $(A'-II-III)\times T^3$:
To obtain $h(d_1)$ and $h(d_2)$,  we note first that 
$h(d_1)$ is a rotation about $v_1$ with angle $\phi_1-\phi_0$. 
We next consider 
a triangle $T_2$ with vertices $v_2, v_0$ and $w_2$ and with 
angles $\phi_2/2$ and $\pi-\phi_0/2$ at $v_2$ and $v_0$ respectively. 
The vertex $w_2$ is the fixed point of $h(d_2)$ and the vertex angle is 
$\pi$ minus the one half of the rotation angle of $h(d_2)$. 

Similarly to Proposition \ref{prop:nonabelianA}, we show that 
the subset of $A'\times T^3$ corresponding to the abelian characters is 
\begin{gather}\label{eqn:abelianBppp}
(v(0), 0, \phi_0, \phi_1, \phi_2), (v(0), \pi, \phi_0, \phi_1, \phi_2), 
(v(0), l(0), \phi_0, \phi_1, 0), \nonumber\\
(0, l(0), \phi_0, \phi_0, \phi_2), (\pi, l(0), \phi_0, \phi_0, \phi_2)\nonumber \\
 \mbox{ for } v(0), l(0) \in [0, \pi], \phi_0, \phi_1, \phi_2 \in [0, 2\pi] 
\end{gather}

For the region $A' \times T^3$, we first introduce 
the equivalence relation given by
\begin{equation}\label{eqn:obviousA'}
\begin{split}
& (v(0), 0, \phi_0, \phi_1, \phi_2) \sim (v(0), 0, \phi_0+t, \phi_1+t, \phi_2+t) \\
& (v(0), \pi, \phi_0, \phi_1, \phi_2) \sim (v(0), \pi, \phi_0+t, \phi_1+t, \phi_2-t) \\
& (v(0), 0, \phi_0, \phi_1, \phi_2) \sim (v(0), \pi, \phi_0, \phi_1, 2\pi-\phi_2) \\
& (v(0), l(0), \phi_0, \phi_1, 0) \sim  (v(0), l(0)', \phi_0, \phi_1, 0)  \\ 
& \mbox{ for an arbitrary pair of $l(0)$ and $l(0)'$, } \\
& (0, l(0), \phi_0, \phi_0, \phi_2) \sim  (0, l(0)', \phi'_0, \phi'_0, \phi'_2)  \\ 
& \mbox{ for an  arbitrary pair of $l(0)$ and $l(0)'$ and the same third angles}  \\
& (\pi, l(0), \phi_0, \phi_0, \phi_2)  \sim   (\pi, l(0)', \phi'_0, \phi'_0, \phi'_2)  \\ 
& \mbox{ for an  arbitrary pair of $l(0)$ and $l(0)'$ and the same third angles.}
\end{split}
\end{equation}
%Again, these correspond to abelian representations. 
The same third angle condition means the triangle angle condition as indicated by equation \eqref{eqn:thirdangleA}.

From now on, we may assume that the fixed points of $h(d_i)$ are well-defined and 
the angles are nonzero for $i=1,2$ since otherwise the above equivalences are enough. 

There is an $\SI^1$-action also given by choosing a different segment from $v_0$ to $-v_0$
and taking $v_2$ as intersection of a great circle $\SI^1$ through $w_2$ having $\pi$ minus the $1/2$ of 
the angle of $h(d_2)$ with the segment from $v_0$ to $w_2$. 

If we choose a segment $s$ with endpoints $v_1$ and $v_0$ passing 
through $w_2$, then $v_2=w_2$ and $h(d_2)$ has the rotation angle 
$\phi_2$. Now, we obtain $\phi_0=0$. We can also make $\phi_1=0$. 
We call these the {\em canonical} representatives. 
Thus, every representation over $A'$ has canonical representatives. 
For each abelian edge region, the $\SI^1$-action is given by restricting one in Section \ref{subsec:abcd}.

Therefore we define a map 
${\mathcal T}_{{A}'}:{{A}'} \times  \SI^I\times \SI^1\times \SI^1\ra C^{{A}'}$. 
The map is onto by same reasoning as above. 

%We see that ${{A}'} \times  \{0\} \times \SI^1\times \SI^1 \ra C^{{A}'}$ is onto 
%since we can let $\phi_0=0$. 
%There are equivalence relations on ${{A}'} \times  \{0\}\times \SI^1\times \SI^1 $:
%If we choose $-w_2$ instead of $w_2$, then $(0,\phi_1,\phi_2)$ 
%becomes $(0,\pi-\phi_1,\pi-\phi_2)$.  
%We omit the case for $\phi_1=0$ canonical choices.

Again, we define $\sim$ on $({{A}'-II-III}) \times  \SI^1\times \SI^1\times \SI^1 $
by the above equivalence relations, i.e., equation \eqref{eqn:obviousA'} and the $\SI^1$-action 
and the $\{\Idd, I_A\}$-action, given by same formula as equation \eqref{eqn:ia}.
The $\{\Idd, I_A\}$-action and the $\SI^1$-action restrict to the respective Klein four-group actions on 
$({{A}'-II-III}) \times  \{0\}\times \SI^1\times \SI^1$ and 
$({{A}'-II-III}) \times  \SI^1 \times \{0\} \times \SI^1$.
We define equivalence relations  on these by equation \eqref{eqn:obviousA'} and 
the Klein four-group action.

We obmit the proof of the following: 
\begin{prop}\label{prop:A'}
The induced map \[{\mathcal T}_{{A}'-II-III}:({{A}'-II-III}) \times  \SI^1\times \SI^1\times \SI^1/\sim \ra C^{{A}'-II-III}\] is 
a homeomorphism.
The restrictions \[{\mathcal T}_{{A}'-II-III}:({{A}'-II-III}) \times  \{0\} \times \SI^1\times \SI^1/\sim \ra C^{{A}'-II-III} \hbox{ and }\]
 \[{\mathcal T}_{{A}'-II-III}:({{A}'-II-III}) \times  \SI^1\times  \{0\} \times \SI^1/\sim \ra C^{{A}'-II-III}\] are
homeomorphisms.
\end{prop}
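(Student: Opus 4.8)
The plan is to follow verbatim the strategy of Proposition \ref{prop:B} and Proposition \ref{prop:A}, transported to the region $A'-II-III$. First I would record that ${\mathcal T}_{{A}'-II-III}$ is surjective: this is immediate from the definition of $C^{{A}'-II-III}$ as the set of characters in ${\mathcal C}_0$ arising from $({A}'-II-III)\times T^3$, together with the explicit geometric construction (given just before the statement) of $h(c_1), h(c_2), h(d_1), h(d_2)$ from a segment configuration with $v_0=v_1$ and $v_2$ and pasting angles $(\phi_0,\phi_1,\phi_2)$.

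The heart of the argument is injectivity, which I would split according to whether the target character is abelian or not. For the abelian locus, which by the analogue of Proposition \ref{prop:nonabelianA} is exactly the list \eqref{eqn:abelianBppp}, I would check that the relations \eqref{eqn:obviousA'} together with the $\SI^1$-action already identify any two configurations producing the same abelian character; the only nonobvious point is the ``same third angle'' clause, which is precisely the spherical-trigonometry identity \eqref{eqn:thirdangleA} guaranteeing that two segments of different lengths $l(0), l(0)'$ with the prescribed vertex angles yield the same isometry class. For a nonabelian $h$ the fixed points $\pm w_2$ of $h(d_2)$ are well defined and, by the $A'$-analogue of Proposition \ref{prop:nonabelianA}, distinct from the common vertex $v_0=v_1$. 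I would then use the $\SI^1$-action to pass to a canonical representative with $\phi_0=0$, so that $v_2=w_2$; at this point the configuration is completely determined by the antipodal pair carrying the common vertex $v_0=v_1$, the fixed point $w_2$, and the rotation angles of $h(d_1)$ and $h(d_2)$. Two such normalized configurations give the same character if and only if they agree or differ by simultaneously replacing the common vertex $v_0=v_1$ by its antipode (the $I_{{A}}$-move \eqref{eqn:ia}) and/or $w_2$ by $-w_2$ (the internal $\bZ_2$ of the $\SI^1$-action, i.e.\ $s_2 \mapsto -s_2$). These generate exactly the relation $\sim$, so ${\mathcal T}_{{A}'-II-III}$ descends to an injection.

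To upgrade the continuous bijection to a homeomorphism I would argue as in Proposition \ref{prop:B}: the map preserves the angle coordinate $v(0)$ up to the involution $v(0)\mapsto \pi-v(0)$ induced by $I_{{A}}$ (equivalently, it preserves $\mathrm{tr}\,h(c_0)$), hence it is proper and restricts on each tie $\{v(0)=\theta\}\times T^3/\!\sim$ to a continuous bijection from a compact space onto the Hausdorff space $C^{{A}'-II-III}$; a continuous bijection from a compact space to a Hausdorff space is a homeomorphism, and properness then globalizes this conclusion. Finally, for the two restricted maps I would observe that imposing $\phi_0=0$ (respectively $\phi_1=0$) freezes the continuous $\SI^1$-freedom and leaves only the finite ambiguity, so that the $\SI^1$-action and the two $\bZ_2$-actions restrict to a genuine Klein four-group action on $({A}'-II-III)\times\{0\}\times\SI^1\times\SI^1$ and on $({A}'-II-III)\times\SI^1\times\{0\}\times\SI^1$; the same compactness argument then yields homeomorphisms. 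The step I expect to be the real obstacle is the nonabelian-to-degenerate bookkeeping along the edges $A'\cap II$, $A'\cap III$ and the abelian edges, where the $\SI^1$-action collapses and one must verify both that \eqref{eqn:obviousA'} captures precisely the extra identifications and that all of the group actions extend continuously across these strata.
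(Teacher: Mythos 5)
Your proposal is correct and follows exactly the route the paper intends: the paper explicitly omits the proof of this proposition, deferring to the arguments of Propositions \ref{prop:B} and \ref{prop:A}, and your adaptation (surjectivity by definition of $C^{{A}'-II-III}$, injectivity split into the abelian locus handled by equation \eqref{eqn:obviousA'} and the nonabelian locus handled by the $\SI^1$-normalization to $\phi_0=0$ with residual $I_{{A}}$ and $s_2\mapsto -s_2$ ambiguities, the compactness-plus-Hausdorff properness argument for the homeomorphism, and the restriction of the $\SI^1$- and $\bZ_2$-actions to a Klein four-group action for the two restricted maps) is precisely that argument transported to the region ${A}'-II-III$. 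No gaps.
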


\subsubsection{Regions $A'_\pi$ and $A'_0$} \label{subsec:A'pi}

Again let ${A'}_\pi$ and ${A'}_0$ denote the subsets of $A'$ defined by $\theta_0=\pi$
and $\theta_0=0$ respectively. For these, $I_A$ has an {\em augmented action} as given by  \eqref{eqn:ia2}.
%\begin{eqnarray} \label{eqn:iapi0}
%(\pi, l(0), \phi_0, \phi_1, \phi_2) &\mapsto & (\pi, \pi-l(0), 2\pi-\phi_0, 2\pi-\phi_1, \phi_2).\\
%(0, l(0), \phi_0, \phi_1, \phi_2) &\mapsto & (0, \pi-l(0), 2\pi-\phi_0, 2\pi-\phi_1, \phi_2).
%(0, l_0, \phi_0, \phi_1, \phi_2) &\mapsto& (0, \pi-l_0, 2\pi-\phi_0, 2\pi-\phi_1, \phi_2) \hbox{ and } \nonumber \\
%(\pi, l_0, \phi_0, \phi_1, \phi_2) &\mapsto &(\pi, \pi-l_0, 2\pi-\phi_0, 2\pi-\phi_1, \phi_2). 
%\end{eqnarray}
The equivalence relation $\sim$ on $A'_\pi\times T^3$ is given 
by equation \eqref{eqn:obviousA'} and the $\SI^1$-action and the above augmented $I_A$-action. 
The actions restrict to a Klein four-group action on 
${{A}'_\pi} \times  \{0\}\times \SI^1\times \SI^1$ and 
${{A}'_\pi} \times  \SI^1 \times \{0\} \times \SI^1$.
We define equivalence relations  on these by equation \eqref{eqn:obviousA'} and 
the Klein four-group action. Similarly, we define equivalence relations on $A'_0\times T^3$ 
and $A'_0 \times \{0\}\times \SI^1\times \SI^1$ and $A'_0 \times \{0\}\times \SI^1\times \SI^1$.

By introducing appropriate $\sim$ from equation \eqref{eqn:obviousA'} and 
the $\SI^1$-action and the augmented $\{\Idd, I_A\}$-action, we obtain:
\begin{prop}\label{prop:A'pi} 
\[ {\mathcal T}_{A'_\pi}: {A'}_{\pi} \times  \SI^1\times \SI^1\times \SI^1/\sim \ra H(F_2)\] is 
a homeomorphism. 
Furthermore, the restriction are homeomorphisms
\[{\mathcal T}_{{A'_\pi}}^0: {A'_\pi} \times \{0\}\times \SI^1\times \SI^1/\sim \ra C^{{A'_\pi}}\]
\[{\mathcal T}_{{A'_\pi}}^1: {A'_\pi} \times \SI^1 \times \{0\} \times \SI^1/\sim \ra C^{{A'_\pi}}.\]
%where $\sim$ is given by the equation \eqref{eqn:obviousA'} and the Klein four-group action.
Also, the same statements are true with $A'_0$ replacing $A'_\pi$. 
\end{prop}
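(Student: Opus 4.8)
The plan is to follow the template already established for the other degenerate edge regions, namely Propositions \ref{prop:Bpi}, \ref{prop:B'pi}, and \ref{prop:Api}, and not to reinvent anything. The structural fact that makes the target space $H(F_2)$ (equivalently $C^{A'_\pi}$) is this: on $A'_\pi$ one has $\theta_0=\pi$, so the triangle angles are $v(0)=\pi$, $v(1)=0$, $v(2)=0$, whence each boundary holonomy $h(c_0),h(c_1),h(c_2)$ is a rotation of angle $2\pi$ or $0$, i.e.\ the identity. Thus every character over $A'_\pi$ factors through the free group $F_2=\langle d_1,d_2\rangle$, which is exactly why the image is $H(F_2)$. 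The identical computation with $\theta_0=0$ handles $A'_0$, so I would run both cases through a single argument and note the $A'_0$ case at the end.

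First I would record surjectivity and continuity. On $A'_\pi$ the data reduces to the two rotations $h(d_1)$ and $h(d_2)$ built by the triangle construction of the region $A'-II-III$ subsection: with $v_0=v_1$ fixed, $h(d_1)$ is the rotation about $v_1$ and $h(d_2)$ the rotation about the third vertex $w_2$ of the auxiliary triangle $T_2$. By Proposition \ref{prop:HF2}, $H(F_2)$ is parameterized by the length of a separating arc, two rotation angles, and the configuration of the two antipodal fixed-point pairs; the construction realizes every such configuration, and it depends continuously on the data (as in Remark \ref{rem:contin}), so ${\mathcal T}_{A'_\pi}$ is a continuous surjection onto $H(F_2)$.

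Next I would prove injectivity exactly as in Proposition \ref{prop:Bpi}. For a nonabelian $h$, Proposition \ref{prop:nonabelianA} yields well-defined fixed-point pairs $\pm w_2$ of $h(d_2)$ (while $w_1=v_1$) together with nonzero rotation angles; I would then show that, after normalizing to a canonical representative (say $\phi_0=0$), the element of $A'_\pi\times T^3$ is recovered from $h$ up to precisely the choice of one fixed point from each antipodal pair and the antipodal choice of $v_0=v_1$. These choices are exactly the $\SI^1$-action together with the augmented $\{\Idd,I_A\}$-action of \eqref{eqn:ia2}. For abelian $h$ all identifications are contained in \eqref{eqn:obviousA'}. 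Since $A'_\pi\times T^3/\sim$ is compact and $H(F_2)$ is Hausdorff, the continuous bijection is automatically a homeomorphism.

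For the two restricted maps I would give the explicit coordinate expression in the manner of Proposition \ref{prop:Api}: taking $\phi_0=0$ as canonical representative, ${\mathcal T}^0_{A'_\pi}$ becomes $(\pi,l(0),0,\phi_1,\phi_2)\mapsto (l(0),\phi_1,\phi_2)$, which matches the coordinatization of $H(F_2)$ from Section \ref{subsec:free}; one then checks that the $\SI^1$-action restricts to a Klein four-group action on each $\phi_0=0$ (resp.\ $\phi_1=0$) slice, so the same equivalences survive and the restricted maps are again homeomorphisms. The hardest part will be the bookkeeping in the injectivity step: because the triangle here is doubly degenerate (a segment with $v_0=v_1$ and with $\theta_0=\pi$), I must treat carefully the borderline cases where some $h(d_i)=\Idd$ or where $\pm w_1=\pm w_2$, so as to be sure the listed Klein four-group action accounts for \emph{all} of the ambiguity and that no identification is silently dropped.
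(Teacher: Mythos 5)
Your proposal is correct and takes essentially the same approach as the paper: the paper's own proof consists of the remark that the argument is similar to the $B'_\pi$ case (tracing back to Proposition \ref{prop:Bpi}) together with the coordinate expression $(\pi, l(0), 0,\phi_1,\phi_2) \mapsto (l(0),\phi_1,\phi_2)$ for the restricted maps, which is exactly the template you fill in (surjectivity by triangle constructions, injectivity via the fixed-point recovery argument modulo the $\SI^1$-action and augmented $\{\Idd,I_A\}$-action, compactness plus Hausdorffness, and the explicit coordinates for the slices). The only cosmetic point is that Proposition \ref{prop:nonabelianA} is stated for the region $A-I-IV$, so for $A'$ you should invoke its analogue for $A'$ (which the paper derives ``similarly'') rather than that proposition itself.
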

\begin{proof}
The proof is similar to the case $B'_\pi$. 

The second statement can be also proved by 
the map \[ {\mathcal T}_{A'_\pi}: {A'}_{\pi} \times  \{0\}\times \SI^1\times \SI^1/\sim \ra H(F_2)\] 
is given by \[(\pi, l(0), 0,\phi_1,\phi_2) \mapsto (l(0),\phi_1,\phi_2)\]
\end{proof}

%%% January 22 9:44 pm Continue....

\subsection{Region $C-I-III$}
Finally, we go to $C^{{C-I-III}}$ and $C^{{C}'-II-IV}$.
Suppose that we are over $C$. Then an element of $C$ corresponds to 
a lune with vertices $v_0$ and $v_2=-v_0$ 
and $v_1$ is on an edge.

First, we obtain a map $(C-I-III)\times T^3 \ra C^{C-I-III}$:  Given an element of $(C-I-III)\times T^3$, 
to construct $h(d_1)$ and $h(d_2)$, we consider 
a triangle $T_1$ with vertices $v_1, v_0$ and $w_1$ and with 
angles $\phi_1/2$ and $\pi-\phi_0/2$ at $v_1$ and $v_0$ respectively
for the lune with  vertices $v_0$ and $v_2$ and $v_1$ on the edge.
The vertex $w_1$ is the fixed point of $h(d_1)$ and the vertex angle $\eta_1$ is 
$\pi$ minus the half of the rotation angle of $h(d_1)$ and 
$h(d_2)$ is a rotation about $v_2$ with angle $\phi_2-\phi_0$ and $w_2=v_2$.

\begin{figure}
\centerline{\includegraphics[height=7cm]{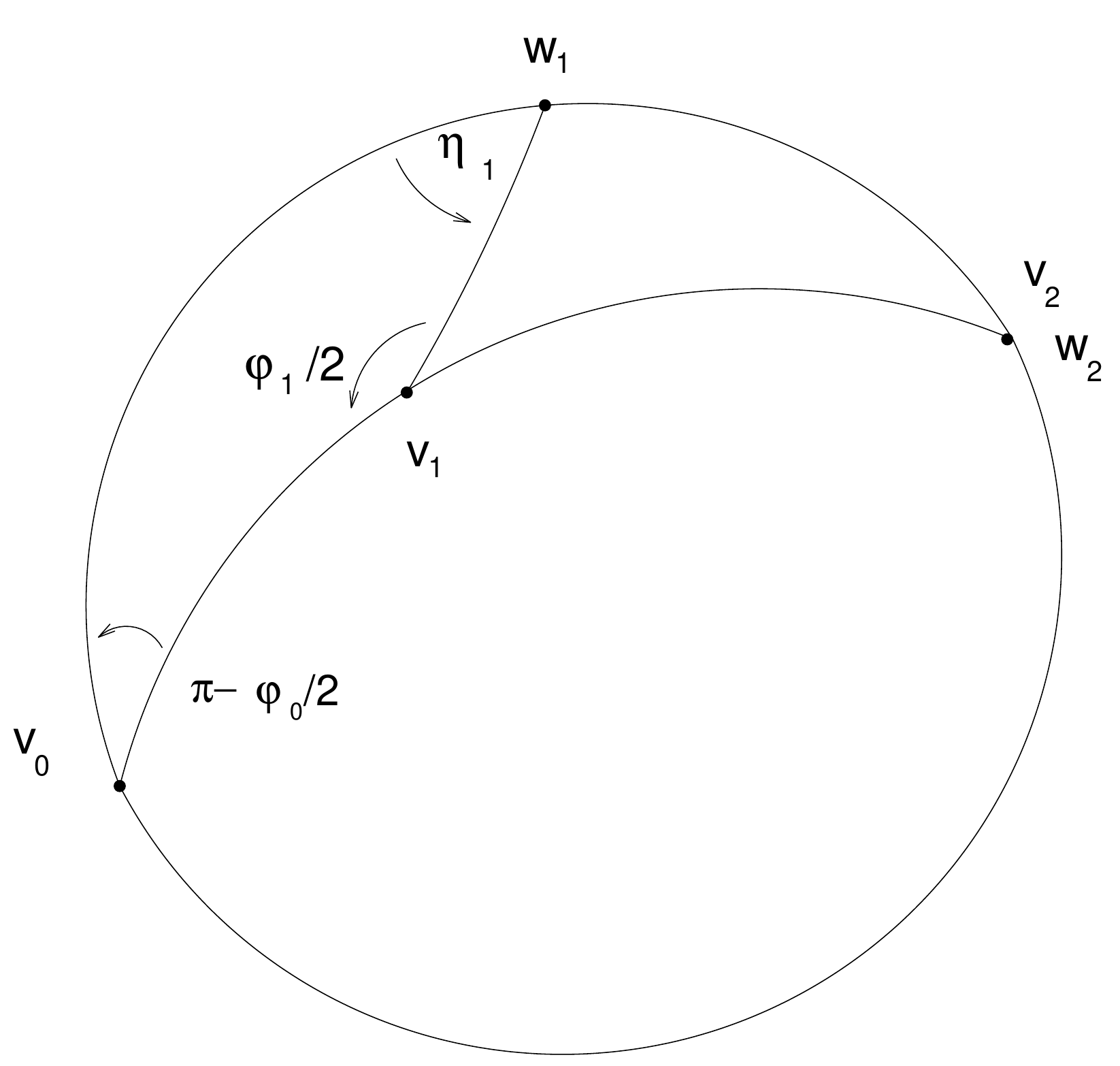}}
\caption{  Finding holonomy of $d_1$ and $d_2$ using triangles.}
\label{fig:mapFC}
\end{figure}

We now discuss the equivalence relation on $C\times T^3$: 
\begin{prop}\label{prop:nonabelianC}
A representation $h$ corresponding to an element of $C \times T^3$ is nonabelian 
if and only if $h(d_1)$ are not identity and does not fix $v_0$ and $v_2$ for any configuration corresponding 
to an element of $C \times T^3$ representing $h$. 
\end{prop}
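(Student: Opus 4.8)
The plan is to reduce everything to the observation that, over region $C$, all generators except $h(d_1)$ are forced onto a single axis, so that only $h(d_1)$ can destroy commutativity. First I would extract the structural data from the construction of the map $(C-I-III)\times T^3 \ra C^{C-I-III}$ preceding the proposition: because $v_1$ lies in the interior of an edge of the lune, its angle is $\pi$ and hence $h(c_1)=R_{v_1,2\pi}=\Idd$; the boundary holonomies $h(c_0)$ and $h(c_2)$ are rotations about $v_0$ and $v_2=-v_0$, so both fix the antipodal axis $\{v_0,v_2\}$; and the construction produces $h(d_2)$ as a rotation about $v_2$ with $w_2=v_2$, so $h(d_2)$ fixes the same axis. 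Thus $h(c_0),h(c_1),h(c_2),h(d_2)$ together generate an abelian group whose nonidentity elements all fix $\{v_0,v_2\}$, and the whole representation $h$ is abelian precisely when $h(d_1)$ commutes with this family.

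Next I would settle the equivalence in the generic case. Two nonidentity rotations of $\SI^2$ that are not both involutions commute exactly when they share an axis; since $h(c_0)=R_{v_0,2\theta_0}$ with $\theta_0=v(0)$, the representation $h$ is abelian if and only if $h(d_1)=\Idd$ or $h(d_1)$ fixes $\{v_0,v_2\}$, which is just the negation of the displayed property. This is configuration-independent because $h(c_0)$ (hence its axis $\{v_0,v_2\}$ up to sign) and $h(d_1)$ are intrinsic to $h$. To handle the clause ``for any configuration'' together with the degenerate generalized triangles, I would run the casework of Proposition \ref{prop:nonabelianB}, reading the outcomes off Proposition \ref{prop:multrule}: when $v_0$ is distinct from $v_1,v_2$ and $0<\phi_0<2\pi$ the vertex $w_1$ of $T_1$ lies off the axis and $h(d_1)\ne\Idd$, so $h$ is nonabelian; when $\phi_0\in\{0,2\pi\}$ the triangle $T_1$ degenerates and $w_1$ collapses onto $v_0$ or $v_2$ (or $h(d_1)=\Idd$ when $\phi_1\in\{0,2\pi\}$), and when $v_0$ meets $v_1$ or $v_2$ we fall into the abelian situations over $a,b,c,d$ from Section \ref{subsec:abcd}; in each of these cases $h$ is abelian and the displayed property fails. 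This bookkeeping is routine.

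The hard part will be the locus where $h(c_0)$ is an involution, i.e. the mid-tie $\theta_0=\theta_2=\pi/2$, where $h(c_0)=h(c_2)=R_{v_0,\pi}$. There the step ``commutes with $R_{v_0,\pi}$ $\Leftrightarrow$ shares the axis $\{v_0,v_2\}$'' breaks down, since an involution $h(d_1)=R_{w_1,\pi}$ about an axis $w_1$ perpendicular to $v_0$ also commutes with $R_{v_0,\pi}$ (the Klein four-group phenomenon); such an $h$ can be abelian even though $h(d_1)\ne\Idd$ does not fix $\{v_0,v_2\}$. I would isolate this exceptional slice, check by hand that the representations in question all sit over the mid-tie of $C$, and verify the abelian/nonabelian dichotomy there separately, confirming that it is absorbed by the equivalence relation and branch structure of Section \ref{subsec:ABC}. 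Controlling this involution case is where the genuine difficulty lies; the common-axis argument disposes of everything else.
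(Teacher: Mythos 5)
Your generic-case argument (everything except $h(d_1)$ is forced onto the axis $\{v_0,v_2\}$, and two nonidentity rotations that are not both involutions commute iff they share an axis) is exactly the intended proof: the paper gives no separate argument for region $C$ and relies on the proof of Proposition \ref{prop:nonabelianB}, which is the same casework you outline. You were also sharper than the paper in spotting that this inference breaks at the mid-tie: the paper's step ``the fixed points $\pm w_1$ and $\pm w_2$ are distinct from $v_1,v_2$, thus $h$ is nonabelian'' is precisely the inference you refuse to make. The gap is in your plan for that exceptional slice. You propose to ``verify the abelian/nonabelian dichotomy there separately, confirming that it is absorbed by the equivalence relation and branch structure,'' but no such verification can succeed, because under the literal meaning of abelian these representations are not a difficulty to be absorbed --- they are counterexamples to the statement. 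Concretely, take the configuration $(v(2),l(2),\phi_0,\phi_1,\phi_2)=(\pi/2,\pi/2,\pi,\pi,\pi)$ in $(C-I-III)\times T^3$: the construction gives $h(c_1)=\Idd$, $h(c_0)=h(c_2)=R_{v_0,\pi}$, $h(d_2)=R_{v_2,\phi_2-\phi_0}=\Idd$, and $h(d_1)=R_{w_1,\pi}$ with $w_1$ the pole of the great circle through $v_0$ and $v_1$ (the triangle $T_1$ is the octant triangle). The image is the Klein four-group generated by $R_{v_0,\pi}$ and $R_{w_1,\pi}$, hence abelian; yet every configuration representing this character has lune axis equal to the axis of $h(c_2)$, i.e.\ $\pm v_0$, while $h(d_1)$ is a nonidentity involution whose axis is perpendicular to it, so the proposition's ``nonabelian'' criterion is satisfied in every configuration. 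Both sides of the claimed equivalence are conjugation-invariant, so no quotient relation changes this. (Reading ``fix $v_0$ and $v_2$'' setwise does not help either: then a lune angle $\ne\pi/2$ with the same $l(2)=\pi/2$, $\phi_1=\pi$ gives a genuinely nonabelian $h$ whose $h(d_1)$ preserves $\{v_0,v_2\}$, violating the other direction.)

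What actually closes the argument is a point you did not exploit: throughout the paper ``abelian'' means that the image fixes a common pair of antipodal points (see Proposition \ref{prop:abelian}(i), the proof of Lemma \ref{lem:perturb}, and the fact that equation \eqref{eqn:abelianC}, derived from this very proposition, omits your exceptional configurations). Under that reading your exceptional slice is empty --- a Klein four-group fixes no antipodal pair, so those characters count as ``nonabelian,'' exactly as the criterion asserts --- and your common-axis argument goes through uniformly. The only remaining delicacy is then not the mid-tie but the ends $C\cap I$ and $C\cap III$, where $h(c_2)=\Idd$ no longer pins down the lune axis and the quantifier ``for any configuration'' must do real work: for an axial character there one exhibits a configuration (e.g.\ with $\phi_1=0$) whose lune axis is the common axis of $h(d_1)$ and $h(d_2)$. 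So as written your proposal neither proves the literal statement (which is false) nor identifies the reinterpretation under which it, and the paper's subsequent use of it, become correct.
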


The subset of $C\times T^3$ corresponding to the abelian representations is 
\begin{gather}\label{eqn:abelianC}
(v(2), 0, \phi_0, \phi_1, \phi_2), (v(2), \pi, \phi_0, \phi_1, \phi_2),
(v(2), l(2), \phi_0, 0,  \phi_2), \nonumber \\
(0, l(2), \phi_0, \phi_1, 2\pi-\phi_0), (\pi, l(2), \phi_0,  \phi_1, 2\pi-\phi_0) \nonumber \\
\mbox{ for } v(2), l(2) \in [0, \pi], \phi_0, \phi_1, \phi_2 \in [0, 2\pi].
\end{gather}

For the region $C \times T^3$, we introduce
the equivalence relation  given by
\begin{equation}\label{eqn:obviousC}
\begin{split}
& (v(2), 0, \phi_0, \phi_1, \phi_2) \sim (v(2), 0, \phi_0+t, \phi_1+t, \phi_2-t) \\
& (v(2), \pi, \phi_0, \phi_1, \phi_2) \sim (v(2), \pi, \phi_0+t, \phi_1-t, \phi_2-t) \\
& (v(2), 0, \phi_0, \phi_1, \phi_2) \sim (v(2), \pi, \phi_0, 2\pi- \phi_1, \phi_2).\\
& (v(2), l(2), \phi_0, 0, \phi_2) \sim  (v(2), l(2)', \phi_0, 0, \phi_2)  \\
& \mbox{ for an arbitrary pair of $l(2)$ and $l(2)'$ and } \\
& (0, l(2), \phi_0, \phi_1, 2\pi-\phi_0) \sim  (0, l(2)', \phi'_0, \phi'_1, 2\pi-\phi'_0) \\ 
& \mbox{ for an arbitrary pair of $l(2)$ and $l(2)'$ and the same third angles} \\
& (\pi, l(2), \phi_0, \phi_1, 2\pi-\phi_0)  \sim   (\pi, l(2)', \phi'_0, \phi'_1, 2\pi-\phi'_0) \\ 
& \mbox{ for an arbitrary pair of $l(2)$ and $l(2)'$ and the same third angles.}
\end{split}
\end{equation}
The same third angle means the triangle angle condition as indicated by
\begin{multline}\label{eqn:thirdangleAp}
-\cos (\pi- \phi_0/2) \cos \phi_1/2 + \sin (\pi- \phi_0/2) \sin \phi_1/2 \cos l(2) = \\
-\cos (\pi-\phi'_0/2) \cos \phi'_1/2 + \sin (\pi-\phi'_0/2) \sin \phi'_1/2 \cos l(2)'.
\end{multline} 

%We may assume without loss of generality that fixed points of $h(d_1)$ and $h(d_2)$ are well-defined.

%Now we may assume that $h$ is nonabelian.
Choose an element of $C \times T^3$. We have a lune with vertices $v_0$ and $v_2=-v_0$ 
and $v_1$ is on an edge.
Again there is an $\SI^1$-action given by choosing a different segment 
$s$ with endpoints $v_0$ and $v_2$ and taking $v_1$ as an intersection 
point of $s$ with a great circle $\SI^1$ through $w_1$ which has $\pi$ minus 
the angle $1/2$ of the rotation angle of $h(d_1)$ with 
respect to the shortest segment from $w_1$ to $v_0$. 
For each abelian edge region, the $\SI^1$-action is given by the restriction of ones in Section \ref{subsec:abcd}
and continuously extends the above. 

If we choose a segment $s$ with endpoints $v_2$ and $v_0$ passing 
through $w_1$, then $v_1=w_1$ and $h(d_1)$ has the rotation angle 
$\phi_1$. Here we obtain $\phi_0=0$. 

We can also make $\phi_2=0$ by making $\phi_0$ have angle 
$2\pi$ minus the angle of $h(d_2)$. 
We call these the {\em canonical} representatives. Thus, every representation over $C$ has
canonical representatives. 

%Therefore we define a map 
%${\mathcal T}_{{C-I-III}}:{C-I-III} \times  \SI^1\times \SI^1\times \SI^1 \ra C^{{C-I-III}}$, which is onto.

%We see that ${\mathcal T}_{{C-I-III}}^0: {C-I-III}\times  \{0\} \times \SI^1\times \SI^1 \ra C^{{C-I-III}}$ is onto 
%since we can let $\phi_0=0$. 
%There are equivalence relations on $\{0\}\times \SI^1\times \SI^1 \times {C}$.
%If we choose $-w_1$ instead of $w_1$, then $(0,\phi_1,\phi_2)$ 
%becomes $(0,2\pi-\phi_1,\phi_2)$. 
%Also, ${\mathcal T}_{{C-I-III}}^2: {C-I-III}\times  \SI^1 \times \SI^1\times \{0\} \ra C^{{C-I-III}}$.

Again $\{\Idd, I_C\}$ acts on these spaces. 
We define the equivalence relation $\sim$ using the above ones by equation \eqref{eqn:obviousC} and 
the $\SI^1$-action and the $\{\Idd, I_C\}$-action.

We obmit the proof of the following: 
\begin{prop}\label{prop:C}
The induced map 
\[{\mathcal T}_{{C-I-III}}: ({C-I-III})\times  \SI^1 \times \SI^1\times \SI^1/\sim \ra C^{{C-I-III}}\] 
is a homeomorphism.
Furthermore, the restrictions 
\[{\mathcal T}_{{C-I-III}}^0: ({C-I-III})\times  \{0\} \times \SI^1\times \SI^1/\sim \ra C^{{C-I-III}} \hbox{ and } \]
\[{\mathcal T}_{{C-I-III}}^2: ({C-I-III})\times  \SI^1 \times \SI^1\times \{0\}/\sim \ra C^{{C-I-III}}\]
where $\sim$ is given by equation \eqref{eqn:obviousC} and the Klein four-group given 
by restricting the $\SI^1$-action and $\{\Idd, I_C\}$-action. 
\end{prop}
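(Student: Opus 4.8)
The plan is to follow the argument of Proposition~\ref{prop:B} (and its variant Proposition~\ref{prop:A}) almost verbatim, adapting it to the geometry of region $C$, where the single distinguished nonboundary generator is $d_1$, with fixed points $\pm w_1$, in place of $d_2$ (region $A$) or both $d_1,d_2$ (region $B$). First I would note that surjectivity of $\mathcal{T}_{C-I-III}$ onto $C^{C-I-III}$ is immediate from the definition of $C^{C-I-III}$ as the image of $(C-I-III)\times T^3$ under $\mathcal{T}$.

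The substance is injectivity, which I would split into the abelian and nonabelian cases. The abelian characters over $C$ are exactly those enumerated in equation~\eqref{eqn:abelianC}, and among them the only identifications are those recorded in equation~\eqref{eqn:obviousC} together with the $\SI^1$-action on the abelian edges (a restriction of the $\SI^1$-action of Section~\ref{subsec:abcd}); this disposes of the abelian case word-for-word as in Proposition~\ref{prop:B}. For a nonabelian $h$, Proposition~\ref{prop:nonabelianC} guarantees $h(d_1)\ne\Idd$ with well-defined fixed points $\pm w_1$ distinct from $v_0$ and $v_2$, so there is a great circle $\SI^1$ through $w_1$ making the prescribed angle $\eta_1=\pi-\tau_1/2$ with the shortest segment from $w_1$ to $v_0$, where $\tau_1$ is the rotation angle of $h(d_1)$. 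Using the $\SI^1$-action (the choice of the segment $s$ from $v_0$ to $v_2=-v_0$, with $v_1$ its intersection with this great circle) I would move any two preimages of $h$ into the canonical form $\phi_0=0$, so that $v_1=w_1$ in each. Then $h$ itself pins down the antipodal pairs $\{v_0,v_2\}$ and $\pm w_1$ and the rotation angles of $h(c_2)$, $h(d_1)$, $h(d_2)$, and the only residual freedom in recovering the labelled configuration is the sign of $s$ (the transformation $s\mapsto-s$, already contained in the $\SI^1$-action) and the simultaneous antipodal choice for $v_0,v_2$, which is precisely the $\{\Idd,I_C\}$-action. Hence the two preimages are $\sim$-equivalent.

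For the homeomorphism I would organise $C^{C-I-III}$ into the one-parameter family of tie-images $C_\theta$ cut out by the constant lune angle $\theta_0=\theta_2=\theta$, $0<\theta<\pi$, noting $C_\theta=C_{\pi-\theta}$ under $I_C$. Since $\mathcal{T}_{C-I-III}$ preserves this value it is proper, each $\sim$-class of a tie is compact, and the continuous bijection supplied by injectivity between these compacta and the Hausdorff space $C^{C-I-III}\subset\rep(\pi_1(\Sigma),\SOThr)$ is therefore a homeomorphism. The two restriction statements then follow because, on the canonical slices $\{0\}\times\SI^1\times\SI^1$ (where $\phi_0=0$) and $\SI^1\times\SI^1\times\{0\}$ (where $\phi_2=0$), the $\SI^1$-action together with $\{\Idd,I_C\}$ cut down to a genuine Klein four-group action whose quotient agrees with the full quotient, which is exactly the reduction used in the closing paragraphs of Propositions~\ref{prop:B} and~\ref{prop:A}.

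I expect the nonabelian injectivity to be the main obstacle: one has to check carefully that the antipodal and sign ambiguities in reconstructing $(v_0,v_1,v_2)$ and the three pasting angles from the isometry class of the fixed-point data are captured \emph{exactly} by the $\SI^1$-action and the $\{\Idd,I_C\}$-action, with no spurious extra identifications and none missing---this is where the precise geometry of the lune with antipodal vertices $v_0,v_2$ (rather than $v_1,v_2$ as in region $B$) must be invoked.
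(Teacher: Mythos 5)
Your proposal is correct and takes essentially the approach the paper intends: the paper actually omits the proof of Proposition \ref{prop:C}, deferring implicitly to the arguments for Propositions \ref{prop:B} and \ref{prop:A}, and your write-up is exactly that argument transplanted to region $C$ --- surjectivity by definition of $C^{C-I-III}$, abelian injectivity via equation \eqref{eqn:obviousC} and the edge $\SI^1$-action, nonabelian injectivity via Proposition \ref{prop:nonabelianC} and the canonical form $\phi_0=0$ (so $v_1=w_1$), with the residual ambiguity absorbed by $s\mapsto -s$ and $\{\Idd,I_C\}$, the homeomorphism via properness along the ties $\theta_0=\theta_2=\theta$, and the restricted statements via the induced Klein four-group action on the slices.
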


\subsubsection{Regions $C_\pi$ and $C_0$} \label{subsec:Cpi}

Again let ${C}_\pi$ and ${C}_0$ denote the subsets of 
$C$ defined by $\theta_0=\pi$ and $\theta_0=0$ respectively. 
The equivalence relations on $C_\pi\times T^3$ and $C_0\times T^3$ are 
defined by equation \eqref{eqn:obviousC} and by the $\SI^1$-action and the {\em augmented} $\{\Idd, I_C\}$-action: 
\begin{eqnarray}\label{eqn:auic}
(0, l(2), \phi_0, \phi_1, \phi_2) & \mapsto & (0, \pi-l(2), 2\pi-\phi_0, \phi_1, 2\pi-\phi_2) \nonumber \\ 
(\pi, l(2), \phi_0, \phi_1, \phi_2) &\mapsto & (\pi, \pi-l(2), 2\pi-\phi_0, \phi_1, 2\pi-\phi_2).  
\end{eqnarray}
Define $\sim$ on ${C}_{\pi}\times \{0\} \times \SI^1\times \SI^1$
by equation \eqref{eqn:obviousC} and an induced Klein four-group action on.
Similarly we define $\sim$ on ${C}_{\pi} \times \SI^1\times \SI^1\times \{0\}$.
We define respective equivalence relations on $C_0$ cases.

\begin{prop}\label{prop:Cpi} 
\[ {\mathcal T}_{C_\pi}: {C}_{\pi}\times \SI^1\times  \SI^1\times \SI^1/\sim \ra H(F_2)\] is 
a homeomorphism. 
Similarly, \[ {\mathcal T}_{C_\pi}: {C}_{\pi}\times \{0\} \times \SI^1\times \SI^1/\sim \ra H(F_2) \hbox{ and }\] 
\[ {\mathcal T}_{C_\pi}: {C}_{\pi} \times \SI^1\times \SI^1\times \{0\}/\sim \ra H(F_2)\] are 
homeomorphisms. The same statements hold with $C_\pi$ replaced by $C_0$. 
\end{prop}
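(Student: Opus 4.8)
The plan is to follow the template already established in Propositions \ref{prop:Bpi}, \ref{prop:Api}, and \ref{prop:A'pi}, adapting the geometric constructions for the region $C-I-III$ to the two degenerate slices $C_\pi = C \cap I$ and $C_0 = C \cap III$. First I would record that $C_\pi$ and $C_0$ are arcs homeomorphic to $[0,\pi]$ (parametrized by $l(2)$) and that every character over them lies in $H(F_2)$: the condition $\theta_0 = \pi$ (resp.\ $\theta_0 = 0$), i.e.\ $v(2)=\pi$ (resp.\ $v(2)=0$), forces the defining lune to be a hemisphere (resp.\ a great segment), so that $h(c_0), h(c_1), h(c_2)$ are all the identity and the character factors through the free group generated by $d_1$ and $d_2$. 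The map ${\mathcal T}_{C_\pi}$ is then simply the restriction of ${\mathcal T}_{C-I-III}$ and is continuous for the reasons given for Proposition \ref{prop:C}.

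For surjectivity I would use canonical representatives. Setting $\phi_0 = 0$, the triangle $T_1$ degenerates so that $w_1 = v_1$ and $h(d_1)$ is a rotation about $v_1$ of angle $\phi_1$, while $h(d_2)$ is a rotation about $v_2 = w_2$ of angle $\phi_2$; the length $l(2)$ records the distance between the two fixed points. Thus, in coordinates, the restriction of ${\mathcal T}_{C_\pi}$ to $C_\pi \times \{0\} \times \SI^1 \times \SI^1$ is $(\pi, l(2), 0, \phi_1, \phi_2) \mapsto (l(2), \phi_1, \phi_2)$, which is exactly the parametrization of $H(F_2)$ recalled in Section \ref{subsec:free} (see Proposition \ref{prop:HF2}). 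Since every pair of rotations in $\SOThr$ with prescribed fixed-point distance and rotation angles is realized this way, all three maps are onto, the $\SI^1$-action making the two nonrestricted angles freely adjustable.

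For injectivity I would argue as in Proposition \ref{prop:Bpi}. Given a nonabelian $h \in H(F_2)$, Proposition \ref{prop:nonabelianC} guarantees that $h(d_1)$ is nontrivial and fixes neither $v_0$ nor $v_2$, while $h(d_2)$ is a rotation about $v_2 = w_2$; hence the antipodal fixed-point pairs $\pm w_1$ of $h(d_1)$ and $\pm w_2$ of $h(d_2)$ are well-defined, and a choice of one point from each pins down $v_2$ (so $v_0 = -v_2$) and then $v_1$ through the great circle determined by the rotation angle of $h(d_1)$. Passing to the canonical representative with $\phi_0 = 0$ (so $v_1 = w_1$) leaves exactly the four sign choices, which are precisely the $\SI^1$-action (replacing the separating segment by its antipode) together with the augmented $I_C$-action of equation \eqref{eqn:auic}; the abelian characters are handled by equation \eqref{eqn:obviousC}. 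Because $C_\pi \times T^3/\sim$ is compact and $H(F_2)$ is Hausdorff, the continuous bijection ${\mathcal T}_{C_\pi}$ is then a homeomorphism. The two restricted statements follow since the $\SI^1$-action and the $\bZ_2$-action cut down to a Klein four-group action on the $\{0\}$-slices, just as in Propositions \ref{prop:Api} and \ref{prop:A'pi}, and replacing $C_\pi$ by $C_0$ changes nothing.

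The step I expect to be the main obstacle is the injectivity bookkeeping in the nonabelian case, because region $C$ is mildly asymmetric: $h(d_2)$ is a rotation directly about the lune vertex $v_2$, whereas $h(d_1)$ is built from an auxiliary third vertex $w_1$. One must check that the fixed-point data of the two rotations reconstructs the lune vertices $v_0, v_2$ and the edge point $v_1$ up to \emph{exactly} the Klein four-group, neither more nor less. Since $A$, $B$, and $C$ are related by a cyclic permutation of indices, I would organize this by transporting the $I_B$-symmetric computation of Proposition \ref{prop:Api} to the present indices rather than redoing the trigonometry from scratch.
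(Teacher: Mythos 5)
Your proposal follows essentially the same route as the paper's own (very terse) proof, which simply declares the argument "similar to the $B_\pi$ case" and records the coordinate expression of the restricted map: your template of canonical representatives with $\phi_0=0$, fixed-point reconstruction up to exactly the Klein four-group generated by the $\SI^1$-sign change and the augmented $I_C$-action, and the compactness-plus-Hausdorff finish is precisely the intended adaptation of Proposition \ref{prop:Bpi}.

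One correction: in region $C$ the parameter $l(2)$ is the length of the edge $l_2$ joining $v_0$ to $v_1$, while the canonical fixed points of $h(d_1)$ and $h(d_2)$ are $v_1$ and $v_2=-v_0$, so the distance between the two fixed points is $\pi-l(2)$ and the restricted map is $(\pi,l(2),0,\phi_1,\phi_2)\mapsto(\pi-l(2),\phi_1,\phi_2)$, as the paper writes, not $(l(2),\phi_1,\phi_2)$. This slip is harmless for your argument, since $(l,\theta_0,\theta_1)\mapsto(\pi-l,\theta_0,\theta_1)$ commutes with the relations \eqref{eqn:F21} and \eqref{eqn:F22} and hence descends to a self-homeomorphism of $H(F_2)$, so surjectivity, injectivity, and the homeomorphism conclusion are unaffected; but the sentence "the length $l(2)$ records the distance between the two fixed points" should be corrected.
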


The proof is similar to the above $B_\pi$ case. 
Again, the second statement can be proved using
the onto map \[ {\mathcal T}_{C_\pi}: {C}_{\pi} \times \{0\}\times \SI^1 \times \SI^1 \ra H(F_2)\] 
given by \[(\pi, l(2), 0,\phi_1,\phi_2) \ra (\pi-l(2),\phi_1,\phi_2).\]

\subsection{Region $C'-II-III$}
We are over $C'$ finally. 
Here, for our configuration, a triangle is a segment with endpoints 
$v_1$ and $v_0=v_2$. 

First, we construct a map $C'\times T^3 \ra C^{C'}$:
To construct $h(d_1)$ and $h(d_2)$ for the representation corresponding to a given configuration, 
we consider a triangle $T_1$ with vertices $v_1, v_0$ and $w_1$ and with 
angles $\phi_1/2$ and $\pi-\phi_0/2$ at $v_2$ and $v_0$ respectively
for a pointed segment with vertices $v_1$ and $v_0=v_2$.
The vertex $w_1$ is the fixed point of $h(d_1)$ and the vertex angle is 
$\pi$ minus the half of the rotation angle of $h(d_2)$. 
$h(d_2)$ is a rotation about $v_2$ with angle $\phi_2-\phi_0$. 

Similarly to Proposition \ref{prop:nonabelianC},
the subset of $C'\times T^3$ corresponding to the abelian characters is 
\begin{gather}\label{eqn:abelianC'}
(v(2), 0, \phi_0, \phi_1, \phi_2), (v(2), \pi, \phi_0, \phi_1, \phi_2), \nonumber\\
(v(2), l(2), \phi_0, 0,  \phi_2), v(2), l(2), \in [0, \pi] ,\phi_0, \phi_1, \phi_2\in [0, 2\pi]. \nonumber\\
(0, l(2), \phi_0, \phi_1, \phi_0), (\pi, l(2), \phi_0,  \phi_1, \phi_0) \nonumber\\
\mbox{ for } v(2), l(2) \in [0, \pi], \phi_0, \phi_1, \phi_2 \in [0, 2\pi].
\end{gather}

For the region $C' \times T^3$, we introduce
the equivalence relation  given by
\begin{equation}\label{eqn:obviousCp}
\begin{split}
& (v(2), 0, \phi_0, \phi_1, \phi_2) \sim (v(2), 0, \phi_0+t, \phi_1+t, \phi_2+t) \\
& (v(2), \pi, \phi_0, \phi_1, \phi_2) \sim (v(2), \pi, \phi_0+t, \phi_1-t, \phi_2+t) \\
& (v(2), 0, \phi_0, \phi_1, \phi_2) \sim (v(2), \pi, \phi_0, 2\pi-\phi_1, \phi_2) \\
& (v(2), l(2), \phi_0, 0, \phi_2) \sim  (v(2), l(2)', \phi_0, 0, \phi_2) \\
& \mbox{ for an arbitrary pair of $l(2)$ and $l(2)'$ and } \\
& (0, l(2), \phi_0, \phi_1, \phi_0) \sim  (0, l(2)', \phi'_0, \phi'_1, \phi'_0) \\ 
& \mbox{ for an arbitrary pair of $l(2)$ and $l(2)'$ and the same third angles}  \\
& (\pi, l(2), \phi_0, \phi_1, \phi_0)  \sim   (\pi, l(2)', \phi'_0, \phi'_1, \phi'_0) \\
& \mbox{ for an arbitrary pair of $l(2)$ and $l(2)'$ and the same third angles.}
\end{split}
\end{equation}
The same third angle condition means the triangle angle condition as indicated by
\begin{multline}\label{eqn:thirdangleApp}
-\cos (\pi- \phi_0/2) \cos \phi_1/2 + \sin (\pi- \phi_0/2) \sin \phi_1/2 \cos l(2) = \\
-\cos (\pi-\phi'_0/2) \cos \phi'_1/2 + \sin (\pi-\phi'_0/2) \sin \phi'_1/2 \cos l(2)'.
\end{multline} 

%We may assume without loss of generality that fixed points of $h(d_1)$ and $h(d_2)$ are well-defined.

%For the region $C' \times T^3$, we introduce as
%the equivalence relation is given by equation \eqref{eqn:obviousC}.
%\begin{eqnarray}\label{eqn:obviousC}
%(v_2, l_2, \phi_0, 0, \phi_2) &\sim & (v_2, l'_2, \phi_0, 0, \phi_2)
%\mbox{ for arbitrary pair of $l_2$ and $l'_2$ and } \\
%(v_2, 0, \phi_0, \phi_1, \phi_2) &\sim& (v_2, \pi, \phi_0, 
%2\pi-\phi_1, \phi_2).
%\end{eqnarray}

We may assume without loss of generality that fixed points of $h(d_1)$ and $h(d_2)$ are well-defined.
Again, there is an $\SI^1$-action. The equivalence relation is given on $C'\times T^3$ by 
the above equation \eqref{eqn:obviousCp} and the $\SI^1$-action and the $\{\Idd, I_C\}$-action.

If we choose a segment $s$ with endpoints $v_2$ and $v_0$ passing 
through $w_1$, then $v_1=w_1$ and $h(d_1)$ has the rotation angle 
$\phi_1$. Here $\phi_0=0$. We can also make $\phi_2=0$. 
We call these the {\em canonical} representatives. 
Thus, every representation over $C'$ has {\em canonical} representatives.

\begin{prop}\label{prop:C'}
The induced map 
\[{\mathcal T}_{{C}'-II-IV}:({C'-II-IV})\SI^1\times \SI^1\times \SI^1\times /\sim \ra C^{{C}'-II-IV}\] 
is a homeomorphism.
Furthermore, the restrictions 
\[{\mathcal T}_{{C'}-II-IV}^0: ({C'}-II-IV)\times  \{0\} \times \SI^1\times \SI^1/\sim \ra C^{{C'}-II-IV} \hbox{ and } \]
\[{\mathcal T}_{{C'}-II-IV}^2: ({C'}-II-IV)\times  \SI^1 \times \SI^1\times \{0\}/\sim \ra C^{{C'}-II-IV}\]
where $\sim$ is given by equation \eqref{eqn:obviousCp} and the Klein four-group given 
by restricting the $\SI^1$-action and the $\{\Idd, I_C\}$-action. 
\end{prop}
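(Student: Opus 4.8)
The plan is to run the argument of Proposition \ref{prop:B} (and its exact analogue Proposition \ref{prop:C}) in the present segment configuration, where the representative generalized triangle is a pointed segment with endpoints $v_1$ and the coincident vertex $v_0=v_2$. Surjectivity of $\mathcal{T}_{C'-II-IV}$ is immediate from the definition of $C^{C'-II-IV}$ as the image of $(C'-II-IV)\times T^3$ under $\mathcal{T}$. The real content is injectivity, and I would split it into the abelian and nonabelian cases exactly as in the worked cases.

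For the abelian characters I would cite the list \eqref{eqn:abelianC'}, which records precisely which points of $C'\times T^3$ give abelian $h$, and show that all coincidences among these representatives are generated by \eqref{eqn:obviousCp}: the two $\SI^1$-translations on the slices $l(2)=0,\pi$, the flip identifying those slices, and the collapsing of the $l(2)$-interval on the abelian edges subject to the same-third-angle condition \eqref{eqn:thirdangleApp}. This is verbatim the abelian bookkeeping from Proposition \ref{prop:B}, only in the present coordinates.

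For the nonabelian characters I would use the analogue of Proposition \ref{prop:nonabelianC}: a nonabelian $h$ over $C'$ has $h(d_1)\ne\Idd$ with fixed axis $\pm w_1$ disjoint from $v_0=v_2$, so the great circle $\SI^1$ through the relevant fixed points is well defined. Using the $\SI^1$-action to set $\phi_0=0$ puts the configuration into the canonical form $v_1=w_1$ with $h(d_1)$ of rotation angle $\phi_1$, and I can further normalize $\phi_2=0$. I would then argue that, with $v_0=v_2$ and the fixed data of $h(d_1),h(d_2)$ prescribed, the only residual ambiguity is the sign choice among the antipodal pairs; this is exactly the $\bZ_2$ generated by $s\mapsto -s$ together with the $\{\Idd, I_C\}$-transformation. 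Hence any two nonabelian preimages of a common character are $\sim$-equivalent, and combined with the abelian case this yields injectivity.

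To promote the continuous bijection to a homeomorphism I would use the tie decomposition: partition $C^{C'-II-IV}$ by the value of the angle $v(2)$, which is recoverable from the trace of $h(c_2)$ and hence preserved by $\mathcal{T}_{C'-II-IV}$ (the role played by $\theta_1=\theta_2$ in Proposition \ref{prop:B}). Since the map preserves this parameter it is proper, and from compactness of each tie-class and the Hausdorff property of $\mathcal{C}_0$ one concludes that the map is closed, hence a homeomorphism. The two restricted maps follow since, on the canonical slices $\{0\}\times\SI^1\times\SI^1$ and $\SI^1\times\SI^1\times\{0\}$, the $\SI^1$-action and the $\bZ_2$ together cut down to a genuine Klein four-group action, so the identical bijection-plus-compactness argument applies. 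The main obstacle is the boundary bookkeeping where $h(d_1)$ degenerates to $\Idd$ or its axis collides with $v_0=v_2$: there the canonical reduction and the circle $\SI^1$ are no longer unique, and one must check that the limiting identifications agree with those already imposed on the abelian edges by \eqref{eqn:obviousCp} --- exactly the delicate continuity point handled implicitly in the analogous proofs.
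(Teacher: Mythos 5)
Your proposal is correct and follows essentially the same route the paper intends: the paper omits the proof of Proposition \ref{prop:C'} precisely because it is the verbatim analogue of the proof of Proposition \ref{prop:B} (surjectivity by definition of $C^{C'-II-IV}$, injectivity split into the abelian case handled by equation \eqref{eqn:obviousCp} and the nonabelian case handled by the $\SI^1$-action canonical form plus the residual $\bZ_2$-choices, then properness via the tie decomposition and compactness/Hausdorff to upgrade to a homeomorphism, with the restricted maps following from the Klein four-group restriction). Your reconstruction, including the use of the analogue of Proposition \ref{prop:nonabelianC} and the observation that the tie parameter is recovered from $\mathrm{tr}\, h(c_2)$ up to the $I_C$-flip $v(2) \leftrightarrow \pi - v(2)$, matches that intended argument.
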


\subsubsection{${C'}_\pi$ and ${C'}_0$} 

Again let ${C'}_\pi$ and ${C'}_0$ denote the subsets of $C$ defined by $\theta_0=\pi$
and $\theta_0=0$ respectively. 
Again equivalence relations on $C'_\pi\times T^3$ and $C'_0\times T^3$ are given by 
equation \eqref{eqn:obviousCp} and the $\SI^1$-action and augmented $\{\Idd, I_C\}$-actions.
as given by equation \eqref{eqn:auic}.

The equivalence relations on
${C'}_{\pi}\times \{0\} \times \SI^1\times \SI^1$ and 
${C'}_{\pi} \times \SI^1\times \SI^1\times \{0\}$ are given by equation \eqref{eqn:obviousCp} 
and the Klein four-group action given by restricting the $\SI^1$-action and the $\{\Idd, I_C\}$-action. 

\begin{prop}\label{prop:C'pi} 
\[ {\mathcal T}_{C'_\pi}: {C'}_{\pi}\times \SI^1\times \SI^1\times \SI^1/\sim \ra H(F_2)\] is 
a homeomorphism. Similarly, 
\[ {\mathcal T}_{C'_\pi}: {C'}_{\pi}\times \{0\} \times \SI^1\times \SI^1/\sim \ra H(F_2) \hbox{ and }\] 
\[ {\mathcal T}_{C'_\pi}: {C'}_{\pi} \times \SI^1\times \SI^1\times \{0\}/\sim \ra H(F_2)\] are 
homeomorphisms. The same statements hold with $C'_\pi$ replaced by $C'_0$. 
\end{prop}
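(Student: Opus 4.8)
The plan is to run the argument of Propositions \ref{prop:Bpi} and \ref{prop:A'pi} essentially verbatim, adapted to the degenerate segment with $v_0 = v_2$ that parameterizes $C'$. First I would note that over $C'_\pi$ (and over $C'_0$) the holonomies $h(c_0), h(c_1), h(c_2)$ are all $\Idd$, so each associated representation factors through the free group $F_2 = \langle d_1, d_2 \rangle$ and therefore lands in $H(F_2)$; continuity of $\mathcal{T}_{C'_\pi}$ is inherited from $\mathcal{T}$. For surjectivity I would argue as in Proposition \ref{prop:calFonto}: by letting the pasting angle $\phi_0$ take the value forcing $h(c_i)=\Idd$ and varying $l(2)$ together with the remaining pasting angles, the triangle construction of Section \ref{subsec:gmult} realizes, up to $\SOThr$-conjugacy, every ordered pair of rotations $(h(d_1), h(d_2))$, hence every point of the $3$-ball $H(F_2)$ from Proposition \ref{prop:HF2}.

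The core of the proof is injectivity, which I would split into the nonabelian and the abelian cases exactly as in Proposition \ref{prop:Bpi}. For nonabelian $h$, the $C'$-analog of Proposition \ref{prop:nonabelianC} gives that $h(d_1)$ and $h(d_2)$ are both nonidentity with well-defined antipodal fixed-point pairs $\pm w_1$ and $\pm w_2$, and $\pm w_1 \neq \pm w_2$. I would then reverse the construction: the rotation angle of $h(d_1)$ determines the great circle through $w_1$ on which $v_1$ must lie, the rotation angle of $h(d_2)$ the circle carrying $v_2 = v_0$, and after fixing the normalization $\phi_1 = 0$ the remaining vertex is pinned down. The only freedom is the choice of one representative from each antipodal pair together with the choice of the segment $s$, and I would check that this is precisely the Klein four-group generated by the $\SI^1$-action and the augmented $\{\Idd, I_C\}$-action of equation \eqref{eqn:auic}. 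For abelian $h$ the collapse is read directly off equation \eqref{eqn:obviousCp} applied to the locus \eqref{eqn:abelianC'}. Since $C'_\pi \times T^3 / \sim$ is compact and $H(F_2)$ is Hausdorff, the resulting continuous bijection is then a homeomorphism.

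For the two restricted maps with a $\{0\}$-factor I would use the canonical representatives (every character over $C'$ admits a representative with $\phi_1 = 0$, and another with $\phi_2 = 0$), so each restriction stays onto, and the $\SI^1$-action together with $\{\Idd, I_C\}$ restricts to a Klein four-group action on the reduced domain, giving the homeomorphism by the same compactness argument. Alternatively, as in the remark following Proposition \ref{prop:Cpi}, one verifies directly that the $C'$-analog of that coordinate formula, namely $(\pi, l(2), 0, \phi_1, \phi_2) \mapsto (l(2), \phi_1, \phi_2)$, is onto $H(F_2)$ in the coordinates $(l, \theta_0, \theta_1)$ of Section \ref{subsec:free} and descends to a bijection after quotienting by $\sim$. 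The statements for $C'_0$ are identical, interchanging the roles of the angle-$\pi$ and angle-$0$ vertex.

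The step I expect to be the main obstacle is the reconstruction in the nonabelian injectivity argument: one must confirm that recovering the configuration from the fixed-point data of $h(d_1)$ and $h(d_2)$ produces no ambiguity beyond the stated Klein four-group. In the segment geometry of $C'$ this requires careful bookkeeping of the infinitesimal edge directions at the collapsed vertex $v_0 = v_2$ and of the possibility that an auxiliary triangle angle exceeds $\pi$ (compare equation \eqref{eqn:cba2}), so that the great circle $\SI^1$ through $w_1$ and $w_2$ demonstrably avoids $v_1$ and all the pasting angles remain well-defined throughout the inversion.
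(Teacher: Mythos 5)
Your proposal is correct and takes essentially the same approach as the paper: the paper's own proof consists of the remark that the argument is ``similar to the $C_\pi$ case'' (which chains back to the $B_\pi$ proof you replicate -- surjectivity via triangle constructions, injectivity split into the nonabelian reconstruction with only Klein four-group ambiguity and the abelian collapse via equation \eqref{eqn:obviousCp}, then compactness of the quotient plus Hausdorffness of $H(F_2)$), together with exactly the coordinate formula $(\pi, l(2), 0,\phi_1,\phi_2) \mapsto (l(2),\phi_1,\phi_2)$ that you cite for the restricted maps. One cosmetic correction: it is the triangle angles of points of $C'_\pi$ (respectively $C'_0$), not any choice of the pasting angle $\phi_0$, that force $h(c_0)=h(c_1)=h(c_2)=\Idd$; also note that in the $C'$ geometry the reconstruction is even simpler than in the $B_\pi$ case you mirror, since $v_0=v_2$ is itself a fixed point of $h(d_2)$ rather than merely lying on a great circle determined by its rotation angle.
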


The proof is similar to the above $C_\pi$ case. 
Again, the second statement can be proved using
the map \[ {\mathcal T}_{C'_\pi}: {C'}_{\pi}\times \{0\} \times \SI^1\times \SI^1/\sim \ra H(F_2)\] 
given by \[(\pi, l(2), 0,\phi_1,\phi_2) \ra (l(2),\phi_1,\phi_2).\]

%% January 5th 9:20... pm... Now go to the next task...
%% January 23 11:12 pm
%% April 11 12:30 Below and go back to C_0 equiv again... 
\subsection{Obtaining the homeomorphism}\label{subsec:obh}
%%% Topological structures of $C_0$.

We will now find the topological structure of ${\mathcal C}_0$ as
an identification space obtained from $\tilde G \times T^3$. 
The equivalence relation is namely the ones we obtained for $a, b, c, d$, $I$, $II$, $III$, $IV$, 
${A}$, $A'$, $B$, $B'$, $C$, and $C'$ and by the action of $\{I,I_{{A}},I_{{B}},I_{{C}}\}$.

One can characterize the characters over regions $I$, $II$, $III$, and $IV$, 
as ones which factor into a free group of rank two character
with a canonical map sending $c_i$s to the trivial 
element and the characters over regions ${A}$, $A'$, $B$, $B'$, $C$, and $C'$ as 
one where two of $c_i$s share the fixed points, 
and characters over regions $a$, $b$, $c$, and $d$ as abelian characters. 
(The representation where two of $c_i$s share the common antipodal pair
of fixed points are said to be the {\em degenerated} representation. )

We will describe the topology in the next section.

\begin{thm}\label{thm:main1}
The identity component ${\mathcal C}_0$ of $\rep(\pi_1(\Sigma), \SOThr)$ is
homeomorphic to $\tilde G \times T^3/\sim$.
Thus ${\mathcal C}_0$ is a topological complex consisting of $3$-dimensional 
$H(F_2)$, the $4$-dimensional complexes $C^A,C^{B},$ and $C^{C}$, the space of 
abelian representations,  coming from the boundary of the $3$-ball $\tilde G$, and 
the $6$-dimensional complex from the interior of ${\mathcal C}_0$
\end{thm}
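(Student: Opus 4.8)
The plan is to show that the continuous surjection $\mathcal T\colon \tilde G\times T^3 \ra {\mathcal C}_0$ constructed above descends to a homeomorphism on the quotient by the equivalence relation $\sim$ assembled from the region-by-region analyses. Write $R$ for the \emph{fiber relation} of $\mathcal T$, so that $xRy$ exactly when $\mathcal T(x)=\mathcal T(y)$; this is automatically an equivalence relation. The content of the theorem is that $\sim$ coincides with $R$, after which the conclusion is formal: $\tilde G\times T^3$ is compact, being the product of the compact $3$-ball $\tilde G$ of Theorem \ref{thm:T} with the torus $T^3$, while ${\mathcal C}_0$ is Hausdorff, and a continuous bijection from a compact space onto a Hausdorff space is a homeomorphism. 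Hence $\tilde G\times T^3/\sim$, which equals $\tilde G\times T^3/R$, is homeomorphic to ${\mathcal C}_0$.

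First I would record that $\mathcal T$ is onto. Its domain is compact and $\mathcal T$ is continuous, so the image is compact and therefore closed in ${\mathcal C}_0$; since by Proposition \ref{prop:map} the restriction to $G^o\times T^3$ already hits every triangular character, and triangular characters are dense by Proposition \ref{prop:tridense0}, a closed set containing them is all of ${\mathcal C}_0$. The inclusion of $\sim$ into $R$ is then immediate and needs no further work: each generator of $\sim$ — the $V$-action of equation \eqref{eqn:iabc4} together with each region relation of Sections \ref{subsec:abcd}, \ref{subsec:regioni}, and \ref{subsec:ABC} — was introduced precisely so as to preserve the associated $\SOThr$-character, hence lies in $R$, and since $R$ is an equivalence relation it contains the equivalence relation these generators produce.

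The substance is the reverse inclusion, that $R$ is contained in $\sim$. Given $x,y$ with $\mathcal T(x)=\mathcal T(y)=[h]$, I would stratify ${\mathcal C}_0$ by the conjugacy type of $h$ and invoke the matching proposition. If $h$ is nonabelian and triangular with every $h(c_i)\ne \Idd$, both $x$ and $y$ lie over $G^o$, and Proposition \ref{prop:map} identifies $R$ on the interior with the $V$-action, which lies in $\sim$. If exactly one $h(c_i)$ is trivial, $[h]$ lies over one of the six edge regions, and Propositions \ref{prop:A}, \ref{prop:A'}, \ref{prop:B}, \ref{prop:B'}, \ref{prop:C}, \ref{prop:C'} capture $R$ within each region through its $\SI^1$- and $\bZ_2$-actions, while the $V$-action glues the primed regions to the unprimed ones. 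If $h$ factors through $F_2$ the point lies over $I,II,III,IV$ and Proposition \ref{prop:calFonto2} applies; if $h$ is abelian it lies over $a,b,c,d$ and Proposition \ref{prop:abelian} applies. The hard part will be the \emph{overlap of strata}: a degenerate $[h]$ is typically realizable over several regions at once — an abelian $F_2$-character, say, lives over $a$, over an abelian edge of each edge region, and over a vertex region — so one must check that all such realizations of a single $[h]$ are actually linked by $\sim$. This is where the boundary propositions \ref{prop:Bpi}, \ref{prop:B'pi}, \ref{prop:Api}, \ref{prop:A'pi}, \ref{prop:Cpi}, \ref{prop:C'pi} enter, pinning down the relation on the intersection loci $B_\pi=B\cap I$, $B_0=B\cap II$, and so on; their compatibility with both the edge-region relation and the $I,II,III,IV$ relation is what forces the several realizations into a single $\sim$-class. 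I expect the bookkeeping of these overlaps to be the genuinely delicate step, the remainder being assembly.

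Once $\sim$ and $R$ are shown equal, the compactness argument yields the homeomorphism ${\mathcal C}_0 \cong \tilde G\times T^3/\sim$. The stratification asserted in the last sentence is then a tally over the regions of $\tilde G$: the interior $G^o$ contributes the top $6$-dimensional piece, quotiented only by the finite $V$-action; the edge regions produce the $4$-dimensional complexes $C^A, C^B, C^C$, a $2$-dimensional region times $T^3$ cut down by one dimension via the $\SI^1$-action; the vertex regions $I,II,III,IV$ produce the $3$-dimensional space $H(F_2)$ of Proposition \ref{prop:HF2}; and the faces $a,b,c,d$ produce the abelian locus described by Proposition \ref{prop:abelian}. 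Because the gluing of these strata is governed by the single relation $\sim$, they fit together into the claimed topological complex.
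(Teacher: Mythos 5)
Your proposal is correct and follows essentially the same route as the paper: surjectivity of $\mathcal T$ from density of triangular characters, injectivity of the induced map by stratifying characters according to which $h(c_i)$ are trivial (triangular, one trivial, all trivial/factoring through $F_2$, abelian) and invoking the region-by-region propositions together with the boundary results of type $B_\pi$, $A_\pi$ on the overlap loci, and finally the compact-to-Hausdorff argument. Your formulation via the fiber relation $R$ and the identity $\sim\, =\, R$ is only a cosmetic repackaging of the paper's direct injectivity check, in which the disjointness of the images of $G^o$, $a^o$, $(A-U)$, $(B-U)$, $(C-U)$, and $I$ (up to the $V$-action) plays the role of your stratification.
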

\begin{proof}

The proof is divided into three parts:
We show that 
\[\mathcal T: \tilde G \times T^3 \ra {\mathcal C}_0 \subset \rep(\pi_1(\Sigma), \SOThr)\]
induces one-to-one onto map from the quotient 
space of $\tilde G \times T^3$ by the equivalence 
relation described above.

We showed that the map $\mathcal T$ is onto in Proposition \ref{prop:map}.
We now verify the injectivity which was partially done in
the above arguments:

First, let us look at the interior:
By Proposition \ref{prop:map}, $\mathcal T|\tilde G^o\times T^3/\sim$ is injective 
onto the set of conjugacy classes of triangular characters. 

We now restrict our attention to $\mathcal T|\partial \tilde G\times T^3/\sim$:
The characters coming from $\partial\tilde G\times T^3/\sim$
do not coincide with any from $\tilde G^o\times T^3/\sim$ since 
the former ones are not triangular.

The injectivity over each of $a\times T^3/\sim$, $b\times T^3/\sim$, $c\times T^3/\sim$, and $d\times T^3/\sim$ 
to the set of abelian characters was shown above. 
The injectivity over each of  $I\times T^3/\sim$, $II\times T^3/\sim$, $III\times T^3/\sim$, 
and $IV\times T^3/\sim$ to $H(F_2)$ is proved 
above also. The injectivity over each ${A}\times T^3/\sim$, $A'\times T^3/\sim$, $B\times T^3/\sim$, $B'\times T^3/\sim$, 
$C\times T^3/\sim$, and $C'\times T^3/\sim$ to the space of 
degenerate characters is also clear and done above. 
(Up to the action of the Klein four-group, these are only three distinct maps.)

%To show injectivity from the configurations over $\delta \tilde G$ to 
%$\rep(\pi,\SOThr)$, t

Recall $U=I\cup II\cup III\cup IV$.
The characters coming from 
the regions $I\times T^3,II\times T^3,III\times T^3,IV \times T^3$ 
do not coincide with ones coming from the elements 
in $(A-U)\times T^3$, $(A'-U)\times T^3$, $(B-U)\times T^3$, $(B'-U)\times T^3$, 
$(C-U)\times T^3$, and $(C'-U)\times T^3$.
The reason is that two of $h(c_i)$ are not identity for the 
later regions. Also, the region $(A\cup A' -U)\times T^3$, 
the region  $(B\cup B'-U)\times T^3$, and
the region $(C\cup C'-U)\times T^3$ map to mutually 
disjoint subspaces of the character space. This follows since the characters 
have $h(c_i)=\Idd$ and $h(c_j)\ne \Idd$ for $j \ne i$ and the index denoted by $i$ are different 
for $A, A'$ and $B, B'$ and $C, C'$. 

Also, $a^o\times T^3$, $b^o\times T^3$, $c^o\times T^3$, and $d^o \times T^3$ do not produce 
characters from the union of the regions $I$, $II$, $III$, $IV$, ${A}$, $A'$, $B$, $B'$, $C$, and $C'$ times 
$T^3$. This is so since all $h(c_i)$ is not identity in the former case 
while in the other $h(c_i)$ is identity for at least one $i$. 

Because of the identification by the $\{I, I_A, I_B, I_C\}$-action, we need to consider maps from 
$a\times T^3$, $I\times T^3$, $A\times T^3,B\times T^3,C\times T^3$, and 
$G^o\times T^3$. We can verify that the respective regions $I^o\times T^3$, $(A-U)\times T^3, (B-U)\times T^3, (C-U)\times T^3$, 
and $a^o\times T^3$ correspond to disjoint character spaces by above.
Since we know that over each of these the map is injective, 
this completes the proof of injectivity. 

Since $\SOThr$ is a compact Lie group, the space $\rep(\pi_1(\Sigma), \SOThr)$ is Hausdorff.
Since $\tilde G \times T^3$ is compact, it follows that $\mathcal T$ is a homeomorphism.

\end{proof}

%\begin{rem} \label{rem:commequiv}
%Finally, the equivalence relation $\sim_0$ which does not include ones generated by $\{I, I_{{A}},I_{{B}},I_{{C}}\}$ 
%is well defined from considering relations on $a$, $b$, $c$, $d$ and $A$, $A'$, $B$, $B'$, $C$, $C'$ and 
%$I$, $II$, $III$, $IV$ separately and respecting the angle coordinates from the pair of pants $S_0$.
%
%We note that $I_{{A}},I_{{B}},I_{{C}}$ then preserves $\sim_0$; that is they send equivalence classes
%to equivalence classes. This can be verified by considering relations one by one.
%
%\end{rem}

%%% Jan 6, 2010 2:20

%The topology of the quotient space
\section{The $\SUTw$-character space}
\label{sec:SUT}

We will first find the topological type of the $\SUTw$-character space of $\pi$
and then in the next section do the same for the $\SOThr$-character space.
The following is the classical result of Narashimhan, Ramanan, and Seshadri \cite{NR}, \cite{NS} which 
we prove using a different method. However, we will need the smoothness result of Hubueschmann \cite{Hueb}.

We first study $\CP^3$ as a quotient space of a tetrahedron times a $3$-torus.
Then we represent each $\SUTw$-character by a generalized triangle and pasting angles 
as in the $\SOThr$-case. 
An $\SUTw$-character of a pair of pants corresponds to a generalized triangle in a one-to-one 
manner except for the degenerate ones. The space of pasting maps in $\SUTw$ is still homeomorphic to 
$\SI^1$ but it double covers the space of pasting maps in $\SOThr$.
Thus, we parametrize the pasting angles by $[0, 4\pi]$ where $0=4\pi$ correspond to $I$
and $2\pi$ represents $-I$ for $I \in \SUTw$. We denote by $\SI^1_2$ the quotient of $[0, 4\pi]$ by 
identifying the endpoints. 

We denote by $T^3_2$ the product of the three copies of $\SI^1_2$. 
We find the equivalence relation on $\tilde G \times T^3_2$ to make 
it homeomorphic to the character space. These equivalence relation ``lifts'' 
the equivalence relation in $\SOThr$-case. We discuss abelian characters 
and discuss the characters over the fattened edges of $\tilde G$
showing that they form solid torus fibrations. 
The main result of this section Theorem \ref{thm:cp3-2} will be proved finally.
We will see that our space is homeomorphic to 
$\CP^3$ with some parts such as points and annuli ``blown-up'' to balls and solid tori.
(Here the term ``blown-up'' is by no means precise and we invite the readers to make this precise.)

The arguments are basically the same as in the $\SOThr$-case in  above Section \ref{sec:C0}.
The only difference is that we do not need to consider the Klein four-group $\{I, I_{{A}},I_{{B}},I_{{C}}\}$-action
although we need a $Z_2$-action on the pasting angle space $T^3_2$. 

\subsection{The complex projective space}\label{subsec:cp3}
Let $\CP^3$ denote the complex projective space. 
According to the toric manifold theory, $\CP^3$ admits a $T^3$-action 
given by 
\begin{equation}\label{eqn:cp2} 
(e^{i\theta_1}, e^{i\theta_2}, e^{i\theta_3})\cdot [z_0, z_1, z_2, z_3] 
= [ e^{i\theta_1}z_0, e^{i\theta_2}z_1, e^{i\theta_3}z_2, z_3] 
\end{equation}
and the quotient map is given by 
\begin{equation}\label{eqn:cp3} 
[z_0,z_1,z_2,z_3] \mapsto \pi (|z_0|^2,  |z_1|^2, |z_2|^2)/\sum_{i=0}^3 |z_i|^2, z_i \in \bC
\end{equation}
The image is a standard $3$-simplex $\tri^*$ in the positive quadrant of $\bR^3$ given by
$x_0+x_1+x_2 \leq \pi$ and the fibers are the orbits of $T^3$-action. 
The fibers are given by $\bR^3$ quotient out by the standard lattice $L^*$ with 
generators  $(2\pi, 0, 0), (0, 2\pi, 0), (0,0,2\pi)$. 
(See Masuda \cite{Mas} for the standard facts about toric manifolds.)

We can realize $\CP^3$ as a quotient space of $\tri^* \times \bR^3$ under 
the equivalence relation given by $L^*$-action on $\bR^3$ fibers 
and over the face given by $x_0 =0$, $(\phi_0,\phi_1,\phi_2) \sim (\phi_0+t,\phi_1,\phi_1)$ 
for any real $t$ and over the face given by $x_1 =0$, 
$(\phi_0,\phi_1,\phi_2) \sim (\phi_0,\phi_1+t,\phi_1)$, $t\in \bR$ and 
over the face $x_2=0$, 
$(\phi_0,\phi_1,\phi_2) \sim (\phi_0,\phi_1,\phi_1+t)$, $t\in \bR$
and over the face $x_0+x_1+x_2=\pi$, 
$(\phi_0,\phi_1,\phi_2) \sim (\phi_0+t,\phi_1+t,\phi_1+t)$, $t\in \bR$.
We prove the homeomorphism of $\CP^3$ with 
the quotient space by using Hausdorff compactness property of both spaces. 
%(See \cite{???} for details.)

%% January 24 11:43
We give another realization of $\CP^3$ for our purposes. 

\begin{prop}\label{prop:cp3}
By considering fibers of faces of $G$, 
we can realize $\CP^3$ as the quotient space $G\times T^3_2$ of under an equivalence 
relation given as follows{\rm :} 
\begin{itemize}
\item In the interior, the equivalence is trivially given.
\item For the face $a$, the equivalence relation on $a\times T^3_2$ is given by 
\[(v,\phi_0,\phi_1,\phi_2) \sim (v', \phi'_0,\phi'_1,\phi'_2)\] if and only if 
$v=v'$ and two vectors 
\[(\phi_0,\phi_1,\phi_2)-(\phi'_0,\phi'_1,\phi'_2) = s(2\pi, 2\pi, 2\pi) \hbox{ for } s \in \bR.\] 
This is given by the $\SI^1$-action generated by a vector normal to $a$.
\item For faces $b$, $c$, and $d$, the equivalence relation is defined \hfill\break
again using the respective $\SI^1_2$-action generated by vectors parallel  to
 \[(-2\pi, 2\pi, 2\pi), (2\pi, -2\pi, 2\pi), (2\pi, 2\pi, -2\pi)\] perpendicular to 
$b, c, d$ respectively.
\item In the edges and the vertices, the equivalence relation is induced 
from the facial ones. 
\item Finally an equivalence relation is given by the $\bZ_2$-action 
sending $(\phi_0, \phi_1, \phi_2)$ to $(\phi_0+2\pi, \phi_1+2\pi, \phi_2+2\pi)$. 
\end{itemize}
There is a natural $T^3_2$-action of $\CP^3$ making it a toric variety
which is induced by the natural $T^3_2$-action on $G\times T^3_2$ which behaves well-under 
the quotient map since the equivalence classes are mapped to equivalence classes under the action.
\end{prop}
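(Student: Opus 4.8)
The plan is to deduce the claim from the standard toric realization of $\CP^3$ as $(\tri^*\times T^3)/\sim'$ established just above, by producing an explicit homeomorphism of quotient spaces. I would look for a map of the form $A\times B$, where $A\colon G\ra \tri^*$ is an affine isomorphism of the base tetrahedra and $B\colon\bR^3\ra\bR^3$ is a linear isomorphism of the fibre directions, chosen so that the pair carries the relation $\sim$ on $G\times T^3_2$ to the relation $\sim'$ on $\tri^*\times T^3$. Since both quotients are compact and $\rep$-type targets are Hausdorff, it then suffices to check that $(A,B)$ descends to a continuous bijection, which is automatically a homeomorphism.

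For the base map I would match faces by their collapse directions. The inward normals of the four faces $a,b,c,d$ of $G$ are $(1,1,1)$, $(-1,1,1)$, $(1,-1,1)$, $(1,1,-1)$, while those of $\tri^*$ are $(1,1,1)$ and $e_1,e_2,e_3$. Thus $A$ should send $a$ to $x_0+x_1+x_2=\pi$ and $b,c,d$ to $x_0=0$, $x_1=0$, $x_2=0$ respectively; on vertices this forces $(\pi,\pi,\pi)\mapsto(0,0,0)$ and fixes $(\pi,0,0),(0,\pi,0),(0,0,\pi)$, determining a unique affine bijection $A$. For the fibre map I would take the linear $B$ with $B(-1,1,1)=e_1$, $B(1,-1,1)=e_2$, $B(1,1,-1)=e_3$; because $(1,1,1)=(-1,1,1)+(1,-1,1)+(1,1,-1)$, this is consistent with sending the $(1,1,1)$-circle to the $(1,1,1)$-circle, and one computes $B=\tfrac12(J-\Idd)$ with $J$ the all-ones matrix, so $\det B=1/4$.

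The heart of the argument, and the step I expect to be the main obstacle, is reconciling the doubled fibre $T^3_2$ and the extra $\bZ_2$-identification with the standard fibre $T^3$. Writing $\Lambda_G=(4\pi\bZ)^3+\bZ\,(2\pi,2\pi,2\pi)$ for the lattice generated by the $T^3_2$-lattice together with the $\bZ_2$-translation, I would verify that $B$ maps $\Lambda_G$ isomorphically onto $(2\pi\bZ)^3$: one checks that $B(4\pi,0,0)=(0,2\pi,2\pi)$ and its cyclic images, together with $B(2\pi,2\pi,2\pi)=(2\pi,2\pi,2\pi)$, lie in and generate $(2\pi\bZ)^3$, and a covolume count ($\Lambda_G$ has covolume $(4\pi)^3/2$ and $|\det B|=1/4$, so $B(\Lambda_G)$ has covolume $(2\pi)^3$) promotes this containment to an equality. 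In particular the $\bZ_2$-generator $(2\pi,2\pi,2\pi)$ is sent to a full lattice vector, so the $\bZ_2$-identification becomes trivial on the target and $B$ induces $T^3_2/\bZ_2\cong T^3$. Face by face this is exactly the matching of collapse circles arranged above, with the interior relation trivial and the edge and vertex relations induced from the facial ones; hence $(A,B)$ carries $\sim$ onto $\sim'$ and descends to the desired continuous bijection, which compactness and Hausdorffness upgrade to a homeomorphism.

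Finally, for the toric statement, I would note that the fibre-translation action of $T^3_2$ on $G\times T^3_2$ commutes with every generator of $\sim$: each facial relation is a collapse along a translation orbit, and the $\bZ_2$-relation is itself a translation, so all are preserved by the abelian $T^3_2$-action. Hence the action permutes $\sim$-classes and descends to $\CP^3$. Under the homeomorphism above it intertwines with the standard toric $T^3$-action through the double cover $T^3_2\ra T^3_2/\bZ_2\cong T^3$ induced by $B$, exhibiting $\CP^3$ with its toric structure.
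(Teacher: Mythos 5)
Your proposal is correct and follows essentially the same route as the paper's proof: an affine isomorphism between the tetrahedra $G$ and $\tri^*$ combined with the (scaled) inverse-transpose linear map on the fibers --- your $B=\tfrac12(J-\Idd)$ is exactly the inverse of the paper's fiber map $J-2\,\Idd$, and your identification $B(\Lambda_G)=(2\pi\bZ)^3$ is the paper's observation that $(-2\pi,2\pi,2\pi)$, $(2\pi,-2\pi,2\pi)$, $(2\pi,2\pi,-2\pi)$ form a basis of the lattice generated by $(4\pi\bZ)^3$ and $(2\pi,2\pi,2\pi)$ --- finished by the same compactness-plus-Hausdorff argument. Your version is if anything more careful (explicit covolume count and face-by-face matching of collapse directions), and by mapping $G\ra\tri^*$ rather than $\tri^*\ra G$ you incidentally avoid the sign slip in the paper's displayed matrix, whose linear part should be $\Idd-J$ rather than $J-\Idd$ for the affine map to carry $\tri^*$ onto $G$.
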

\begin{proof} 
We basically induce a map from the first description of $\CP^3$ to the description of the proposition 
and show the equivalence. 

We first note that the lattice $L$ in $\bR^3$ generated by \[(4\pi,0,0), (0,4\pi,0), (0,0,4\pi), (2\pi, 2\pi, 2\pi)\]
has a basis \[(-2\pi, 2\pi, 2\pi), (2\pi, -2\pi, 2\pi), (2\pi, 2\pi, -2\pi).\] Note that the fiber over 
the interior point in the above is $\bR^3$ quotient out by this lattice.

We define a map $\mathcal D:\tri^* \ra G$ given by the affine map 
\[ \begin{bmatrix} x \\ y \\ z \end{bmatrix} \mapsto 
\begin{bmatrix} 0 & 1 & 1 \\ 1 & 0 & 1 \\ 1 & 1 & 0 \end{bmatrix} \begin{bmatrix}  x \\ y \\ z \end{bmatrix}
+ \begin{bmatrix} \pi \\ \pi \\ \pi \end{bmatrix}\]
Then $L^*$ is sent to $L$ by $2$ times the inverse transpose of the linear part of $\mathcal D$.
We now verify that the quotient space of the toric bundle over $G$ and the quotient space of the toric bundle over $\tri^*$ are 
in one-to-one correspondence by using $\mathcal D$ and the inverse transpose of the linear part for 
the fibers. The compactness and Hausdorff property give us
the homeomorphism property.
\end{proof}

\subsection{The Klein four-group action}

There is a natural Klein four-group 
action generated by 
three transformations $L$ sending a point $[z_0,z_1,z_2,z_3]$ to $[z_1,z_0,z_3,z_2]$ 
and $M$ sending $[z_0,z_1,z_2,z_3]$ to $[z_2,z_3,z_0,z_2]$ 
and $N$ sending $[z_0,z_1,z_2,z_3]$ to $[z_3,z_2,z_1,z_0]$
where $z_i$ are complex numbers.

The above action of $\{I, L, M, N\}$ 
corresponds to our $\{I, I_{{A}},I_{{B}},I_{{C}}\}$-action exactly under this map as given in the following equations
 \begin{eqnarray}\label{eqn:iabc5}
    I_{{A}}:(x, \phi_0, \phi_1, \phi_2) \mapsto 
    (I_{{A}}(x), \phi_0, 4\pi-\phi_1, 4\pi-\phi_2) \nonumber \\
    I_{{B}}:(x, \phi_0, \phi_1, \phi_2) \mapsto
    (I_{{B}}(x), 4\pi-\theta_0, \phi_1, 4\pi-\phi_2) \nonumber \\
    I_{{C}}:(x, \phi_0, \phi_1, \phi_2) \mapsto
    (I_{{C}}(x), 4\pi-\phi_0, 4\pi-\phi_1, \phi_2).
    \end{eqnarray}
(This will be used in the next section.)

%The following gives an information in terms of $T^3$-complex. 

\subsection{The classical theorem recovered}
\begin{thm}\label{thm:cp3-2} 
$\rep(\pi_1(\Sigma),\SUTw)$ is diffeomorphic to $\CP^3$ considered as a $T^3$-fibration over $G$
with the following properties: 
\begin{itemize} 
\item Each edge of $G$ corresponding to the region $A, A', B, B', C, C'$ of $\tilde G$
correspond a solid torus fibration over the interior of edges of $\tilde G$, the closure of which 
is homeomorphic to the quotient space of the solid torus times the interval with a $\bZ_2$-action 
at each boundary component giving an identification. Here, the  solid torus end is identified to a $3$-ball. 
\item Three of them  meet in a $3$-ball over each vertex of $\tilde G$ according to 
the pattern of the edges of $\tilde G$. 
\item The set of abelian characters $\chi_2(\Sigma)$ forms a subspace with an orbifold 
structure with 16 singularities.  It consists of the two-torus fibrations over faces of $G$ which meet
at the boundary components of the above solid torus fibration. 
\item The boundary torus of the solid torus fibration corresponding to an endpoint of the intervals
identifies to the boundary $2$-spheres of vertex $3$-balls by the $\bZ_2$-action.
\end{itemize}
\end{thm}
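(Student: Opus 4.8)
The plan is to run the entire analysis of Section~\ref{sec:C0} once more in the $\SUTw$-setting and then feed the resulting quotient description into Proposition~\ref{prop:cp3}. First I would define a map
\[\mathcal T_2 : \tilde G \times T^3_2 \ra \rep(\pi_1(\Sigma),\SUTw)\]
exactly as the map $\mathcal T$ was built for $\SOThr$, using the $\SUTw$ version of multiplication by geometry recorded in Equation~\eqref{eqn:cba2}: a generalized triangle with assigned angles together with a triple of pasting angles in $\SI^1_2 \times \SI^1_2 \times \SI^1_2$ determines holonomies $h(c_i)$ and $h(d_i)$ in $\SUTw$, with the understanding of Remark~\ref{rem:T} that the constructed $h(c_2)$ is the $-\Idd$-multiple of the naive triangle output. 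The $\SUTw$-analogue of Proposition~\ref{prop:tridense0}, that triangular characters are dense and open, then gives surjectivity of $\mathcal T_2$.

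Next I would establish the equivalence relations making $\mathcal T_2$ injective, region by region over $G^o, a, b, c, d, I, \dots, IV, A, \dots, C'$. The structural simplification is the one announced in the section preamble: because the normal representation of Definition~\ref{defn:normalrep} assigns to each element of $\SUTw$ a well-defined angle in $[0,2\pi]$ (equivalently in $[0,4\pi] \bmod 4\pi$), distinct angle data give distinct conjugacy classes, so the Klein four-group $\{I, I_{{A}}, I_{{B}}, I_{{C}}\}$ identifications of the $\SOThr$ case disappear; what survives is only the diagonal $\bZ_2$-action $(\phi_0,\phi_1,\phi_2)\mapsto(\phi_0+2\pi,\phi_1+2\pi,\phi_2+2\pi)$ coming from the sign of the lifted pasting maps, together with the $\SI^1$-actions and abelian-edge relations carried over verbatim (each $2\pi$ replaced by $4\pi$). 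Every proposition of Section~\ref{sec:C0} has a literal counterpart here, so that since $\tilde G \times T^3_2$ is compact and $\rep(\pi_1(\Sigma),\SUTw)$ is Hausdorff, the induced continuous bijection
\[\tilde G \times T^3_2/\sim \; \ra \; \rep(\pi_1(\Sigma),\SUTw)\]
is a homeomorphism, exactly as in Theorem~\ref{thm:main1}.

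With this quotient description in hand I would match $\tilde G \times T^3_2/\sim$ against the realization of $\CP^3$ in Proposition~\ref{prop:cp3}. Over the open tetrahedron $G^o$ the two restrict to the same free $3$-torus fibration, so the content is entirely in the boundary, where the blow-up $\tilde G \to G$ must be reconciled with the un-blown-up base of Proposition~\ref{prop:cp3}. Over the interior of an edge of $G$ the $\CP^3$-fiber is a circle; I would show that the blow-up replaces that circle-over-a-point by a solid-torus fibration over the corresponding rectangular region ($A, A', B, B', C, C'$) of $\tilde G$, the extra disk direction being precisely the tie/length coordinate along which the collapsing $\SI^1$-action of the $T^3_2$-versions of Propositions~\ref{prop:B}, \ref{prop:A}, \ref{prop:C} degenerates; at an endpoint of the interval of ties the solid torus closes to a $3$-ball, and over a vertex of $G$ the three adjacent solid tori fit together into one $3$-ball, giving the first two bullets. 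The abelian locus, as the $\SUTw$-analogue of Proposition~\ref{prop:abelian}, is a $\bZ_2$-quotient of a $4$-torus with sixteen fixed points, which supplies the $T^2$-fibrations over the faces, the orbifold structure with $16$ singularities, and the boundary $2$-tori and $2$-spheres of the last two bullets. Finally the homeomorphism is promoted to a diffeomorphism by invoking the smoothness of $\rep(\pi_1(\Sigma),\SUTw)$ from Huebschmann~\cite{Hueb}.

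The main obstacle is the boundary bookkeeping of the third step: the honest work is not the interior identification with $\CP^3$ but the verification that the degenerate $\CP^3$-fibers over the edges and vertices of $G$ --- circles and points --- are exactly what the blown-up total space $\tilde G \times T^3_2/\sim$ produces over the regions $A,\dots,C'$ and the vertex triangles, namely solid tori and $3$-balls, and that the $\bZ_2$-identification of a boundary torus of a solid-torus fibration with a boundary $2$-sphere of a vertex $3$-ball is consistent on both descriptions. This forces me to track, for each face of $\tilde G$, which one-parameter subgroup of $T^3_2$ collapses and how the tie coordinate restores the missing disk factor, and to check along the edges and vertices where regions meet that these identifications agree with the lattice and $\SI^1_2$-action relations of Proposition~\ref{prop:cp3}. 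No new geometric input is needed beyond the $\SUTw$-lift of the $\mathcal C_0$ analysis, but the matching of collapsing directions at the corners is where the argument is genuinely delicate.
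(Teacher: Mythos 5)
Your first two steps (the map $\mathcal T_2$, the region-by-region equivalence relations, and the compactness-plus-Hausdorff argument giving $\rep(\pi_1(\Sigma),\SUTw)\cong \tilde G\times T^3_2/\bZ_2/\sim$) coincide with the paper's Proposition \ref{prop:SU} and the subsections preceding it, and your bookkeeping of which identifications survive in the $\SUTw$ setting (only the diagonal $\bZ_2$, the $\SI^1$-actions and the abelian-edge relations, no Klein four-group) is correct. The gap is in your third step, which is the actual content of the theorem. You propose to prove $\tilde G\times T^3_2/\bZ_2/\sim\;\cong\;\CP^3$ by matching the two torus fibrations over the boundary of $G$, tracking which circle direction collapses and checking identifications against Proposition \ref{prop:cp3}. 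No such matching can exist: over a point in the interior of an edge of $G$ the $\CP^3$-fiber is a circle, while the corresponding subset of the character space (the subspace over the tie, Lemma \ref{lem:ABCtop}) is a solid torus; over a vertex the $\CP^3$-fiber is a point, while the character space has a $3$-ball there. So there is no homeomorphism compatible with the projections to $G$; the two spaces are two different compactifications of the common open piece lying over $G$ minus its edges and vertices, and different compactifications of the same open manifold need not be homeomorphic. Verifying the solid-torus/clasping structure of the character space, which is all your step produces, does not by itself yield the homeomorphism --- your sentence asserting that the circles and points of $\CP^3$ ``are exactly'' the solid tori and $3$-balls of the blown-up space is precisely the point at issue, not a verification one can perform.

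What the paper does here is a genuinely new cut-and-paste argument, not a lift of the $\mathcal C_0$ analysis: it removes $T^3_2$-invariant regular neighborhoods of the four vertex points and of the six edge $2$-spheres from $\CP^3$ (the sets $B$ and $N(V)$, leaving $N$), and of the four vertex $3$-balls and the six solid-torus bundles from the character space (the sets $N(L)$ and $N'(J)$, leaving $N'$); it identifies the complements $N-N(V)\cong N'-N'(J)$, where the two descriptions literally agree as torus fibrations with the same relations; it then shows the removed pieces are abstractly homeomorphic --- each edge piece is $\SI^1\times B^4\times I$ and each vertex piece is a $6$-ball --- and reglues, the vertex regluing being unique by the Alexander trick. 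This requires tube theory for compact group actions (Bredon), Conner's results on involutions, and crucially the fact that the character space is an orientable topological manifold, which is exactly where Huebschmann's smoothness result enters: without it one cannot assert that $N'(J)$ is a compact orientable $6$-manifold homotopy equivalent to a circle and hence identify it with $\SI^1\times B^4\times I$. So your placement of Huebschmann only at the final homeomorphism-to-diffeomorphism upgrade is also off, and your claim that ``no new geometric input is needed beyond the $\SUTw$-lift of the $\mathcal C_0$ analysis'' is where the proposal breaks down.
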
 

We begin the proof divided into many subsections:

\subsection{Triangular characters}
A representation of $\pi_1(\Sigma)$ into $\SUTw$ gives us representations of two pairs of pants  
which are identical by the reason similar to Proposition \ref{prop:h0h1}.
By Proposition \ref{prop:pairsu}, the pair of pants representations correspond to generalized triangles. 
The triangles for the restricted representation for the two pairs of pants 
have the same angles in this case since we know that our space $\rep(\pi_1(\Sigma), \SUTw)$ 
is connected and it holds for at least one example. 
Therefore, we can form $\tilde G \times T^3_2$ as in the $\SOThr$-case
and introduce equivalence relations, where the pasting maps are rotations fixing the vertices of the parametrizing triangles
and they give us pasting angles in $T^3_2$. 
(Here, we are using triangles to denote a representation. The actual fixed point of $c_2$ is actually
the opposite the third vertex. But since everywhere, it is reversed, there are no problems.
We use triangles since they are concrete and easy to visualize.)

As an aside, we can consider triangles as a representation of punctured pair of pants with 
the puncture holonomy $-\Idd$. This approach can be followed but we will not pursue it here.

Here given three pasting angles in $T^3_2$, we see that 
the transformation \[(\theta_0,\theta_1,\theta_2) \ra (2\pi+\theta_0,2\pi+\theta_1,2\pi+\theta_2)\]
does not change characters as $d_1$ and $d_2$ do not change while the pasting maps $P_0, P_1, P_2$ are all 
changed by $-P_0,-P_1,-P_2$. 
Thus, we must use the generated $\bZ_2$-action to $\tilde G \times T^3_2$ first.
Actually, we limit ourselves to $\tilde G \times (T^3_2/\bZ_2)$ from now on. 

%%% January 6 7:43

%Recall that a nonidentity element of $\SUTw$ has a unique fixed point in $\SI^2$ with angles in $[0, 2\pi]$:
%To be sure, by the angle restriction, the fixed point is given.

We find a description of $\rep(\pi_1(\Sigma),\SUTw)$ as a quotient space of $\tilde G \times T^3_2/\bZ_2$: 
For the open domain of triangular characters, a representation of $\pi_1(\Sigma)$ gives us 
a unique triangle on $\SI^2$ by Proposition \ref{prop:pairsu} and 
hence a unique set of pasting maps up to $\bZ_2$. Thus, the space of triangular characters is homeomorphic 
to $\tilde G^o \times T^3_2/\bZ_2$.
By density, the map \[\tilde G \times T^3_2/\bZ_2 \ra \rep(\pi_1(\Sigma),\SUTw)\] is onto.

%\marginpar{Is this in the right place?}

Here, the equivalence relation is simply defined as follows, two points of $\tilde G \times T^3_2/\bZ_2$ is 
equivalent if they induce the same $\SUTw$-characters. But we will go through each face of $G$ below. 
Thus $\tilde G\times T^3_2/\bZ_2/\sim$ is homeomorphic to $\rep(\pi_1(\Sigma),\SUTw)$.

\subsection{The space of abelian $\SUTw$-characters}\label{subsec:ab}

We study the faces $a$, $b$, $c$, and $d$ of $G$, which correspond to abelian characters.
We denote by $\chi_2(\Sigma)$ the subspace of abelian $\SUTw$-characters in 
$\rep(\pi_1(\Sigma), \SUTw)$.
Given an abelian $\SUTw$-representation $h$, $h(c_1), h(c_2), -h(d_1),$ and $-h(d_2)$ can be represented 
as a point in $\SI^2$ with rotation angles in $T^4_2$. Since we can choose the fixed point 
to be a common one or its antipode, there is a $\bZ_2$-action on $T^3_2$
given by $(\theta_1,\theta_2,\phi_1,\phi_2)$ to $(4\pi-\theta_1, 4\pi-\theta_2, 4\pi-\phi_2,4\pi-\phi_2)$. 
Thus, the space $\chi_2(\Sigma)$ is homeomorphic to $T^3_2/\bZ_2$, where the 16 singular points are
given by $(\pm 2\pi, \pm 2\pi, \pm 2\pi, \pm 2\pi)$.

Reinterpreting in terms of pairs of pants decomposition, we see that 
each abelian character corresponds to a point of $\partial \tilde G\times T^3_2/\bZ_2/\sim$ by choosing 
$h(c_i)$ to have a fixed point with angles in $[0, 2\pi]$.

For regions, $a$, $b$, $c$, and $d$, 
the circle action is a simply lifted one from the circle action above for the $\SOThr$-case.
\begin{itemize}
\item For the face $a$, the equivalence relation on $a\times T^3_2$ is given by 
$(v,\phi_0,\phi_1,\phi_2) \sim (v', \phi'_0,\phi'_1,\phi'_2)$ if and only if 
$v=v'$ and two vectors \[(\phi_0,\phi_1,\phi_2) - (\phi'_0,\phi'_1,\phi'_2) = s(2\pi, 2\pi, 2\pi) \hbox{ for } s \in \bR.\] 
%that is, if they are same up to the $\SI^1$-action generated by a vector normal to $a$.
\item For faces $b$, $c$, and $d$, the equivalence relation is defined again using the respective $\SI^1$-action
generated by vectors parallel  to $(-2\pi, 2\pi, 2\pi), (2\pi, -2\pi, -2\pi), (2\pi, 2\pi, -2\pi)$ perpendicular to 
$b, c, d$ respectively.
\end{itemize} 

The interiors of $a\times T^3, b\times T^3, c\times T^3$, and $d\times T^3$ are mutually disjoint. 

By an analogous reasoning to Section \ref{subsec:abcd}, 
the fibers over the points of faces are $2$-tori. (The only we have to double the size of 
every vector in the argument here):
Looking at $a\times T^3_2/\sim$ in more detail, 
the equation determining the equivalence class representatives
should be $\phi_0+\phi_1+\phi_2=0$ as $4\pi=0$ in $\SI^1_2\times \SI^1_2\times \SI^1_2$
with an order-three translation action. 
We take the subset of $T^3_2$ given by this equation. Contemplating $T^3_2$ 
as a quotient space of the cube $[0,4\pi]^3$, we see that the plane 
corresponds to the union of two triangles with vertices 
$\tri_1=\tri([4\pi,0,0],[0,4\pi,0],[0,0,4\pi])$ and $\tri_2=\tri([4\pi,4\pi,0],[4\pi,0,4\pi],[0,4\pi,4\pi])$. 
Then two triangles form a $2$-tori $T'$ under the identifications of $T^3$. 
There is an identification by the above $\SI^1$-action which gives us an order-three 
translation action. 
$T'$ is isometric with a torus in the plane quotient out by a lattice generated by 
translation by $(4\pi\sqrt{2},0)$ and $(2\pi\sqrt{2}, 2\pi\sqrt{6})$. 
Then the order-three identification group is generated by a translation by $(2\pi, 2\pi\sqrt{6}/3)$. 
The quotient space $T^2_{2, a}$ is homeomorphic to a $2$-torus. 
Thus, the character space here is in one-to-one correspondence with 
$a\times T^2_{2, a}$. Similarly, we obtain $T^2_{2, b}$, $T^2_{2, c}$, and $T^2_{2, d}$
for respective faces $b$, $c$, and $d$. 

Recall that for regions, $A,B,C,A',B'$, and $C'$, note that fixing all three angles determines a segment, 
a tie, which joins a point of an edge of a face to a point in the edge of another face. 

We take a union of $a\times T^2_{2, a}$, $b\times T^2_{2, b}$, $c\times T^2_{2, c}$, and $d\times T^2_{2, d}$. 
Note that as we cross an edge through a tie from a face to another face, 
we change one of the vertex of a lune triangle to its antipode.
We have four faces of $\tilde G \times T^3_2/\bZ_2$ glued over the edges by isomorphisms
with mappings such as $(\phi_0,\phi_1,\phi_2) \ra (4\pi-\phi_0, \phi_1, \phi_2)$, 
$(\phi_0,\phi_1,\phi_2) \ra (\phi_0, 4\pi-\phi_1, \phi_2)$, and
$(\phi_0,\phi_1,\phi_2) \ra (\phi_0, \phi_1, 4\pi-\phi_2)$,
which occurs if we change one of the vertex to its antipode as we cross an edge.
(See third ones in each set of equations \eqref{eqn:obvious}, \eqref{eqn:obvious2}, \eqref{eqn:obviousA}, \eqref{eqn:obviousA'}, 
\eqref{eqn:obviousC} and \eqref{eqn:obviousCp}.) 
Therefore, the $2$-tori get identified across the edges by homeomorphisms.
(Over the interiors of edges the fibers are still homeomorphic to a $2$-torus.)
Around each vertex the gluing gives $-\Idd$ as the identification holonomy around the vertex, 
we have a $2$-sphere with four singularities which is obtained as $T^3_2/\sim$ by the $\bZ_2$-action 
generated by $-\Idd$. (Here by, $-\Idd$, we mean sending $(x,y)$ to $(4\pi-x,4\pi - y)$,
that is $-\Idd$ mod $4\pi$.)

Hence, we can consider the union as a fibration over $\partial \tilde G$ with fibers homeomorphic to 
$T^2$ except at vertices where the fibers are homeomorphic to a $2$-sphere which is 
the boundary of a three-ball parameterizing $\SUTw$-characters factored by the free group $F_2$ of rank $2$
up to changing some of $h(c_i)=\pm \Idd$ to $\Idd$.
(See Proposition \ref{prop:HFSU2} and Section \ref{subsec:aabbcc}) %in  Section \ref{subsec:free}.)
Here the sphere can be considered a quotient of a torus by $\bZ_2$-action generated by $-\Idd$ 
as in the above paragraph.

Actually, we take a quotient of the first two factors $T^2_2$ of $T^4_2$ first, 
we obtain a sphere with four singularities, corresponding to the vertices of $G$, 
which is identical with $\partial G$ in a natural way. 
In this way, we see naturally that $\chi_2(\Sigma)$ is a fibered space as described above. 
We will see how $\chi_2(\Sigma)$ imbeds in $\rep(\pi_1(\Sigma), \SUTw)$ in later subsections.

%%% April 19th 12:34

\subsection{Regions above $A, A', B, B', C, C'$.}\label{subsec:aabbcc}

We continue the proof from the above:
Now, we go over the regions $A$, $A'$, $B$, $B'$, $C$, and $C'$. 
Recall $U=I\cup II\cup III\cup IV$. 
We reason as follows:

\begin{lem}\label{lem:ABCtop} 
The subspace over a tie in one of the regions $A$, $B$, $C$, $A'$, $B'$, and $C'$ but 
not in $U$ is homeomorphic to $\SI^1\times B^2$.
Thus, over the interior of each of $A, A', B,B', C, C'$, there is a bundle over an open interval 
with fibers homeomorphic to the solid tori. 
If a tie is in $U$, the subspace over it is identical with the subspace over 
$I$, $II$, $III$, or $IV$ respectively and hence is homeomorphic to a $3$-ball
and can be considered as having been obtained from a $\bZ_2$-action on the solid torus.
Hence, the region above each of  $A, A', B,B', C, C'$ is homeomorphic to the quotient space of a solid torus 
times an interval with the solid torus over each end identified with a $3$-ball. 
%And given $I_{{A}}, I_{{B}}, I_{{C}}$ which acts on the subspace, the action is 
%conjugate to sending $(\theta_1,x) \mapsto (-\theta_1,-x)$ where 
%$\theta_1 \in \SI^1$.  
\end{lem}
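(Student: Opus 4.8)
The plan is to fix a tie $t$ inside one of the six edge regions — I will take $B$, the remaining five following by the symmetry interchanging $A,A',B,B',C,C'$ — and to compute the subspace $S_t$ of $\rep(\pi_1(\Sigma),\SUTw)$ lying over $t$. Recall that a tie in $B$ is the segment obtained by fixing the angle $v(1)=\theta$ and letting $l(1)$ range over $[0,\pi]$; it lies in $U$ exactly when $\theta=\pi$ (so $t=B_\pi=B\cap I$) or $\theta=0$ (so $t=B_0=B\cap II$). First I would invoke the $\SUTw$-analogue of Proposition \ref{prop:B} together with the canonical-representative normalization of Remark \ref{rem:L}: using the $\SI^1$-action coming from the choice of the great segment $s_0$, every character over $t$ has a representative with $\phi_1=0$. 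This presents $S_t$ as a quotient of $t\times\SI^1_2\times\{0\}\times\SI^1_2$, the residual identifications being the $\bZ_2$ diagonal $2\pi$-shift of Theorem \ref{thm:cp3-2}, the augmented $I_B$-involution, and the $\SUTw$-lift of the abelian-edge relations modeled on \eqref{eqn:obvious}.

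For $\theta\in(0,\pi)$ (a tie not in $U$) I would then identify $S_t\cong\SI^1\times B^2$. The surviving pasting circle — the rotation angle of $h(d_1)$, say — provides the $\SI^1$-factor, while the $B^2$-factor is assembled from the interval coordinate $l(1)\in[0,\pi]$ together with the remaining pasting circle. The key point is that at exactly one end of the tie the parametrizing segment $s_0$ degenerates and the corresponding circle is collapsed by the abelian-edge relations, turning $[0,\pi]\times\SI^1$ into a disk $B^2$, whereas over the interior of the tie no collapse occurs; hence the total is a trivial $B^2$-bundle over $\SI^1$. Continuity of the construction in $l(1)$, compactness of $S_t$, and the Hausdorff property of the character space then upgrade the resulting continuous bijection to a homeomorphism, exactly as in the proof of Proposition \ref{prop:B}. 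Letting $\theta$ vary over $(0,\pi)$, and noting that the $\SI^1$-action and all relations depend continuously on $\theta$, these solid tori assemble into a locally trivial solid-torus bundle over the open interval $\{v(1)\in(0,\pi)\}$, which is the second assertion.

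For a tie in $U$, i.e.\ $\theta=0$ or $\pi$, I would appeal directly to the $\SUTw$-analogue of Proposition \ref{prop:Bpi}, which identifies the subspace over $B_\pi$ (resp.\ $B_0$) with $H_{\SUTw}(F_2)$ and shows it to be the same subspace as the one over $I$ (resp.\ $II$), and to Proposition \ref{prop:HFSU2}, which gives $H_{\SUTw}(F_2)\cong B^3$. To exhibit this $B^3$ as a $\bZ_2$-quotient of the solid torus, I would observe that as $\theta\to 0,\pi$ the augmented $I_B$-involution of equations \eqref{eqn:ib2} and \eqref{eqn:ib3} ceases to move the tie parameter independently and becomes an honest involution of the limiting solid-torus fiber, whose quotient is a $3$-ball, matching the independently computed $H_{\SUTw}(F_2)$. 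Finally I would glue: the interior ties form $(\text{solid torus})\times(0,\pi)$, and attaching the two end fibers $\theta=0,\pi$ — each the $\bZ_2$-quotient of a solid torus onto $B^3$ — realizes the region over $B$ as the quotient of $(\text{solid torus})\times[0,\pi]$ in which the solid torus over each endpoint is identified with a $3$-ball.

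The hard part will be the second step: pinning down precisely which circle is collapsed at which end of the tie and verifying that \emph{exactly one} collapse occurs, so that $S_t$ is a solid torus rather than a $3$-ball or a space with an $\SI^2$-fiber. This demands a careful reading of the degenerate (abelian-edge) cases of the triangle construction of Remark \ref{rem:L} and of the relations \eqref{eqn:obvious}, tracking where the fixed points $w_1,w_2$ of $h(d_1),h(d_2)$ collide with the antipodal pair $v_1,v_2$. The identical bookkeeping then has to be checked, via the symmetry permuting the six regions, to yield the same conclusion uniformly for $A,A',B',C,C'$, and to be compatible at the end fibers with the $B^3$ produced by Proposition \ref{prop:HFSU2}.
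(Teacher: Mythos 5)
Your overall outline (solid-torus fiber over each interior tie, assembled into a bundle over the open interval, end ties handled via $H_\SUTw(F_2)\cong B^3$ and a clasping involution) is the same as the paper's, but your computation of the fiber over an interior tie --- the step you yourself flag as the hard part --- is wrong, both in the list of residual identifications and in the resulting disk/circle structure. On your slice $\{\phi_1=0\}$ over a tie $\theta\in(0,\pi)$ of $B$, a point $(l(1),\phi_0,\phi_2)$ gives $h(d_1)=P_0^{-1}$, a rotation about $v_0$ determined by $\phi_0$, and $h(d_2)=P_0^{-1}R_{v_2,\phi_2}$; the actual identifications are: (i) the collapse of the entire $l(1)$-interval at the two values of $\phi_0$ for which $h(d_1)=\pm\Idd$ (this is the $\SUTw$-lift of the relation $(v(1),l(1),0,\phi_1,\phi_2)\sim(v(1),l(1)',0,\phi_1,\phi_2)$ of \eqref{eqn:obvious}), and (ii) the involution $(l(1),\phi_0,\phi_2)\sim(\pi-l(1),4\pi-\phi_0,\phi_2)$ coming from the two-fold ambiguity of the canonical representative (conjugation by the rotation of angle $\pi$ about $v_1$, i.e.\ replacing $s_0$ by $-s_0$ --- item (a) of the Klein four-group in Proposition \ref{prop:B}). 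You omit (ii) entirely, and it is indispensable: the quotient by the collapses (i) alone is (two disks glued at two points) times $\SI^1$, whose links are not spheres, so it is not even a manifold; with (ii), the $(l(1),\phi_0)$-annulus folds to a disk, the two collapse arcs shrink to points, and one obtains $B^2\times\SI^1$ with circle factor $\phi_2$. In particular your claimed picture --- the rotation angle of $h(d_1)$ as the $\SI^1$-factor and the $\phi_2$-circle collapsed at exactly one end of the tie --- is backwards: at $l(1)=0,\pi$ the slice map is injective (those abelian characters are separated by their angles), the two ends of the tie are identified \emph{with each other} by (ii), and the collapses consume the $l(1)$-direction at the two extremes of the rotation angle of $h(d_1)$. (Note also that the diagonal $2\pi$-shift does not even preserve the slice $\{\phi_1=0\}$, so it cannot be listed as a relation on it.)

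The second error is listing the augmented $I_B$-involution of \eqref{eqn:ib2}--\eqref{eqn:ib3} among the residual identifications on $S_t$ for an interior tie. In the $\SUTw$-character space $I_B$ is not an identification at all: it carries the fiber over the tie $\theta$ to the fiber over the tie $\pi-\theta$, and for $\theta\ne\pi/2$ these are disjoint subspaces since $\mathrm{tr}\,h(c_1)=2\cos\theta\ne 2\cos(\pi-\theta)$. The paper stresses in Section \ref{sec:SUT} that the $\{I,I_A,I_B,I_C\}$-action is precisely what is \emph{not} quotiented out in the $\SUTw$-case --- quotienting by it (together with $i_a,i_b,i_c$) is what produces the $16$-to-$1$ branched cover of ${\mathcal C}_0$ --- so imposing it over interior ties would produce the $\SOThr$-fiber, not the $\SUTw$-fiber. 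It reappears legitimately only over the end ties, where $h(c_1)=h(c_2)=\pm\Idd$ and the orientation of the great circle through the fixed points of $h(d_1),h(d_2)$ becomes immaterial; there the clasping involution $(l,\theta_1,\theta_2)\mapsto(\pi-l,4\pi-\theta_1,\theta_2)$ folds the limiting solid torus onto $B^3$, and that part of your plan agrees with the paper. To repair the interior-tie step you would essentially have to do what the paper does: parametrize the fiber directly by conjugation invariants $(l,\theta_1,\theta_2)\in[0,\pi]\times\SI^1_2\times[0,2\pi]$, with $l=d(v_1,w_1)$, $\theta_1$ the angle at $v_1$, and $\theta_2$ the rotation angle of $h(d_1)$ in the normal representation of Definition \ref{defn:normalrep}, so that the only relations are the collapses of $l$ at $\theta_2=0,2\pi$, which visibly yield a disk times $\SI^1$ without any slice bookkeeping.
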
 
\begin{proof} 

We work similarly to Section \ref{subsec:ABC}. 
We form $h(d_1)$ and $h(d_2)$ for a representation $h$ 
using triangle constructions from a configuration where we use $-P_0^{-1}$ instead of $P_0$ 
so that the construction is the same. 
Sometimes, we need to use a generalized triangle since we have angles in $[0, 4\pi)$
(See equation \eqref{eqn:cba2}. The generalized 
triangle with one vertex of angle in $[2\pi, 4\pi]$ and the other two angles 
in $[0, 2\pi]$ also need to be considered here in multiplications by geometry. ) 
Here, their fixed points and angles are determined from 
the geometric constructions. The circle action exists and here we do not need to change 
the fixed points of $h(d_1)$ and $h(d_2)$ up to antipodes.

First, suppose that we are on $B-U$. Let $h$ be a representation over a tie in $B-U$.
Then $h(c_1)$ and $h(c_2)$ have a pair of respective fixed points antipodal to each other.
Suppose that $h$ is nonabelian. 
Then $h(d_1)$ and $h(d_2)$ respectively 
have well-defined pairs of fixed points $\pm w_1$ and $\pm w_2$ distinct from this pair. 
Let $w_1$ and $w_2$ be ones with respective rotation angles for $h(d_1)$ and $h(d_2)$ in $[0, 2\pi]$.
(See Figure \ref{fig:topabc}.)
%We mention that $h(d_1)=\pm I$ if and only if $w_1$ is on the great segment containing $s$ with 
%endpoints $v_1$ and $v_2$ and $w_2=v_2$, and it corresponds to an abelian representation.
%Suppose that $h(d_1)$ and $h(d_2)$ are both not $\pm I$ and, 
%thus, $\pm w_1$ and $\pm w_2$ are well-defined.

 \begin{figure}
\centerline{\includegraphics[height=5cm]{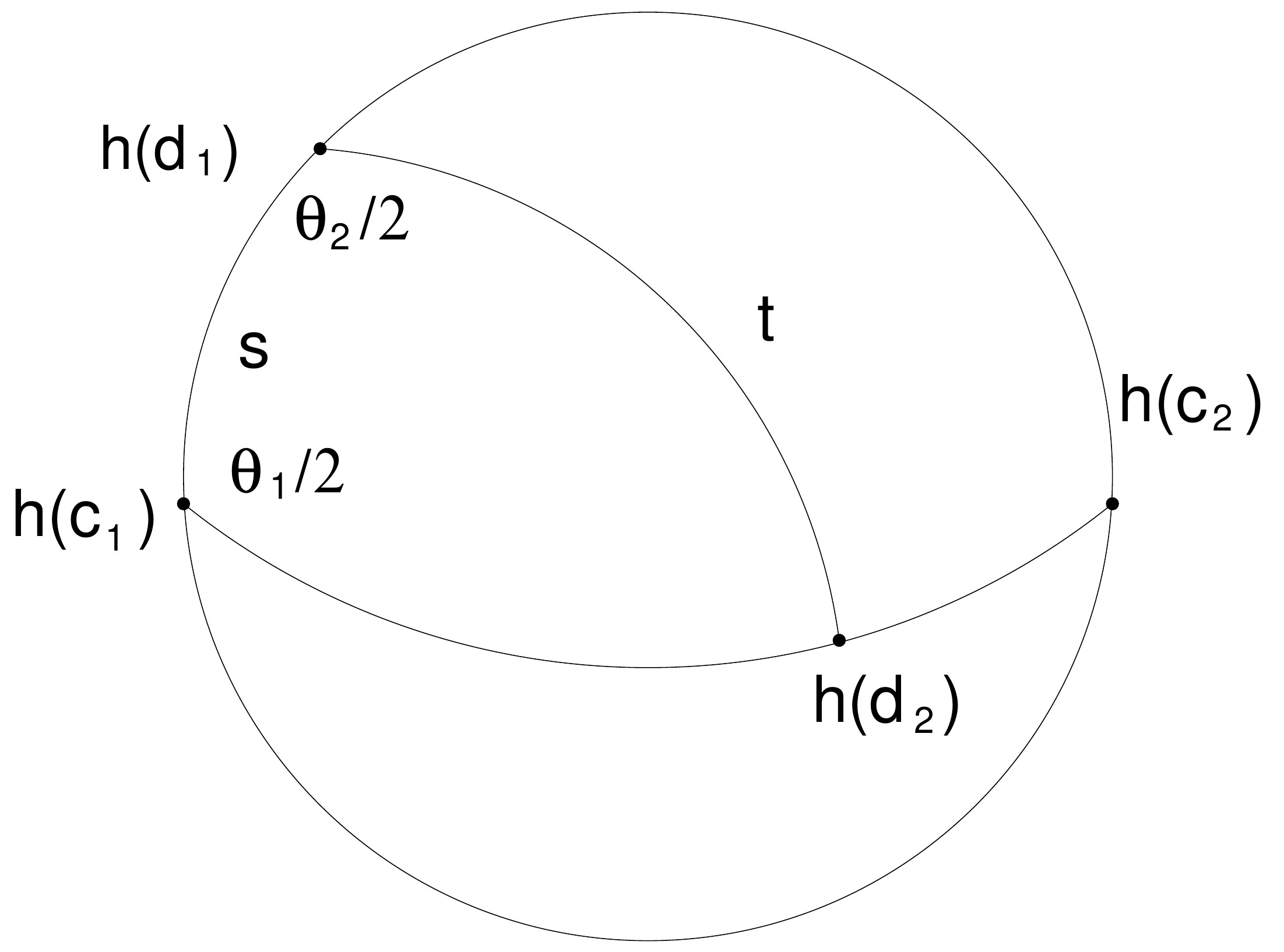}}
\caption{  Finding topology of space over regions $A-U$.}
\label{fig:topabc}
\end{figure}

The subspace of such characters is dense. 
We can parameterize such configurations by 
$(l, \theta_1,\theta_2)$ where $l$ is the length of 
the shortest segment $s$ between the fixed point $v_1$ of 
$h(c_1)$ to $w_1$ and hence $l \leq \pi$
and $\theta_1/2$ is the counter-clockwise angle at $v_1$ between $s$ and 
the shortest segment to the fixed point $w_2$ of $h(d_2)$ for $\theta_1 \in [0, 4\pi] \mod 4\pi$.
And $\theta_2$, $0 \leq \theta_2 \leq 2\pi$, is the angle of rotation of $h(d_1)$ and equals $2\pi$ minus the
twice of the angle at $w_1$ between $s$ and one of the geodesic segment $t$ between
the fixed points $w_1$ and $w_2$ corresponding to the angle of $\theta_1/2$ between $l$ and $s$. 
(The segment $t$ is unique if $w_1$ and $w_2$ are not antipodal.) 

Since the segments $l, s,$ and  $t$ bound a generalized triangle with angles $\theta_1/2$ at a vertex $v_1$ 
and $\theta_2/2$ at $w_1$ and $\theta_3/2$ at $w_2$. 
Then it follows that $2\pi-\theta_2$ is the angle of rotation of $h(d_1)$:
To see this, recall our holonomy constructions for $B$ and we are letting 
the segment $s_0$ between $v_1$ and $v_2$ 
pass through $w_2$, i.e., we are in a canonical position. See Section \ref{subsub:B-I-II}.
%($0\leq \theta_2 \leq \pi$ since $w_2$ is the chosen fixed point.) 
(See Figure \ref{fig:topabc}.) 

Since the fixed point of $h(d_2)$ may lie anywhere in $\SI^2$, 
it follows that $\theta_1$ can vary within $\SI^1_2$. 

The abelian characters over the tie in $B -U$ are in the limit set of the nonabelian characters. 
Thus, our space is $[0,\pi]\times \SI^1_2 \times [0,2\pi]$ quotient by some equivalence relation.
Here the equivalence relation, we have to consider, is when $\theta_2 =0, 2\pi$, i.e., when $h(d_1)$ 
is $I$ or $-I$ and thus the length $l$ is not defined well. 
\[(l,\theta_1,0) \sim (l', \theta_1, 0) \hbox{ and } (l,\theta_1,2\pi) \sim (l', \theta_1, 2\pi). \]
The identification takes place over the abelian subspace and 
we verify easily that there is no other identification. 

%%% The abelian part identification... 
%%% January 7th 6:27 pm... I need to check the above...
%%% January 6 9:47
%One sees that this is all the equivalence relation there is, by considering when $\theta_2$ is not zero, 
%there are no equivalences other than the second one above. 

The quotient space is homeomorphic to a disk times a circle.
The circle corresponds to the second factor and the disk the other two factors.
The boundary region corresponds to the subspace of abelian characters, forming a two-torus. 

For the region $B'-U$, the reasoning is identical and the corresponding results hold.

For the region $A-U$, let $h$ be a representation over it.  
The fixed point $v_1$ of $h(c_1)$  is antipodal to the fixed point $v_0$ of $h(c_0)$. 
Thus, we  choose a fixed point of 
$h(d_1)$ be $v_1$ and the rotation angle to vary in $\SI^1_2$. 
Then the distance from $v_1$ to the fixed point of $h(d_2)$ and 
the rotation angle of $h(d_2)$ give us a parameterization 
$[0,\pi]\times \SI^1_2\times [0,2\pi]$. Again the equivalence relations 
occur when the third coordinate is zero or $2\pi$. The quotient space is 
homeomorphic to a solid torus with boundary containing all of the abelian 
representations in $C^A$. 

For the region $A'-U$, the reasonings are similar and the corresponding results hold. 

For the region $C-U$, let $h$ be a representation over it. 
The fixed point $v_2$ of $h(c_2)$ is antipodal to the fixed point $v_0$ of $h(c_0)$ 
for a triangle with vertices $v_0, v_1, v_2$ representing 
an element of $C-U$.
We choose a fixed point of $h(d_2)$ be $v_2$ and the rotation angle to 
vary in $\SI_2$. The distance from $v_0$ to the fixed 
point of $h(d_1)$ and the rotation angle of $h(d_1)$ and the rotation angle of $h(d_2)$ 
give us a parameterization $[0,\pi]\times \SI^1_2\times [0,2\pi]$.
The equivalence relation occurs when the third factor is equal to $0$ or $2\pi$.

For region $C'-U$, the reasoning is similar. 
 
For each of regions $I$, $II$, $III$, and $IV$, we obtain $3$-dimensional balls by Section \ref{subsec:free}:
For regions $I$, $II$, $III$, and $IV,$ the subspace of $\rep(\pi_1(\Sigma),\SUTw)$ over each 
corresponds to the set of representations factoring to homomorphisms from $F_2$ to $\SUTw$
by changing each representation $h$ by changing $h(c_i)$ to $-h(c_i)$ for $i=0,1,2$ to make it $\Idd$. 
Therefore, each region above $I, II, III,$ or $IV$ is homeomorphic to a $3$-ball. 

They are mutually disjoint and their boundary sets are the subspaces of abelian characters.
Moreover, each of this is parameterized by $(l, \theta_1, \theta_2)$ where $l$ is the distance 
between the fixed points of $h(d_1)$ and $h(d_2)$ and $\theta_i$ is the angle of rotation of 
$h(d_i)$ for $i=1,2$. See Section \ref{subsec:free2} for details. 
%We showed that the subspace is homeomorphic to $B^3$ but mutually disjoint. 

Thus, we showed that over each of the regions $A-U, A'-U, B-U, B'-U, C-U, C'-U$, 
we have a bundle over an open interval with solid torus fibers. 

If a tie is in an edge region intersected with one of a vertex region $I$, $II$, $III$, or $IV$, 
then the subspace over the tie is actually onto the subspace over the vertex region. 
This follows since given a representation $h$ over one of the ties,  
$h(c_i)$ are $\pm \Idd$  and just $h(d_1)$ and $h(d_2)$ are important 
%and we can use $(l, \theta_1, \theta_2)$-coordinate system
and we obtain all possibilities of $h(d_1)$ and $h(d_2)$ on the tie by construction. 

For region $B$ intersected with region $I$,
from the information $(l, \theta_1,\theta_2) \in [0,\pi]\times [0,4\pi]\times [0,2\pi]$, 
we can construct a pair of fixed points  of $h(c_1)$ and $h(c_2)$
provided that we are given an orientation on the great circle containing the fixed point of 
$h(d_1)$ and $h(d_2)$.
Changing the orientation of the great circle 
has the effect $(l, \theta_1,\theta_2) \ra (\pi-l, 4\pi-\theta_1, \theta_2)$. 
%(This corresponds to one of the $\bZ_2$-actions we talked about above.)
Thus, the axis of this transformation is a union of two segments and the quotient space is 
a $3$-ball. %For a tie in $B'$ intersected with vertex regions, the reasoning is similar. 
(This is related to Proposition \ref{prop:Bpi} and the analogous propositions.)
Over the region $B$, the solid torus bundle over the interval gets identified with a $3$-ball by an involution over 
the end. (We call the phenomenon the {\em clasping}.)

For region $A$ intersected with the vertex region $I$, we also
have a transformation $(l, \theta_1,\theta_2) \ra (\pi-l, 4\pi-\theta_1, \theta_2)$. 
This gives a quotient space homeomorphic to $B^3$. 
For region $A'$, the reasoning is similar. 

For regions above ties in regions $C, C'$ intersected with the vertex region $I$, we can again switch $v_0$ and $v_2$. 
The transformation can be written in the same way as above. 

For intersections with vertex regions $II$, $III$, or $IV$, we omit the obvious generalizations of 
the proofs. 
\end{proof} 

%%% Missing parts....

Finally, we need (see Section \ref{subsec:obh} also):
\begin{prop}\label{prop:SU} 
$\rep(\pi_1(\Sigma),\SUTw)$  is homeomorphic to the quotient space of $\tilde G\times T^3_2/\bZ_2$ 
by the above equivalence relations over the regions above faces $a, b, c, d$, 
$A$, $A'$, $B$, $B'$, $C$, $C'$, $I$, $II$, $III$, and $IV$. 
\end{prop}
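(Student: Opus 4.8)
The plan is to follow the architecture of the proof of Theorem \ref{thm:main1} for the $\SOThr$-case, replacing the role of the Klein four-group $\{I, I_{{A}}, I_{{B}}, I_{{C}}\}$ by the single $\bZ_2$-action on $T^3_2$ generated by $(\phi_0,\phi_1,\phi_2)\mapsto (\phi_0+2\pi,\phi_1+2\pi,\phi_2+2\pi)$. Write $\mathcal T$ for the map $\tilde G\times T^3_2/\bZ_2 \ra \rep(\pi_1(\Sigma),\SUTw)$ induced by the triangle-and-pasting-angle construction. Surjectivity is already in hand: by Proposition \ref{prop:pairsu} a triangular $\SUTw$-character of each pair of pants is a \emph{unique} generalized triangle, so the triangular characters form the image of $\tilde G^o\times T^3_2/\bZ_2$, and these are dense. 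Since $\tilde G\times T^3_2/\bZ_2$ is compact and $\rep(\pi_1(\Sigma),\SUTw)$ is Hausdorff, it suffices to prove that $\mathcal T$ descends to an injection on the quotient by the stated equivalence relations; the homeomorphism then follows formally.

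The injectivity is assembled region by region exactly as in Theorem \ref{thm:main1}. Over the interior $\tilde G^o\times T^3_2/\bZ_2$, a triangular character determines its triangle uniquely (Proposition \ref{prop:pairsu}) and hence its pasting angles up to the $\bZ_2$-action, so $\mathcal T$ is injective there. Over the boundary regions the needed one-to-one correspondences have already been established: over the faces $a,b,c,d$ by the abelian-character analysis of Section \ref{subsec:ab}, which identifies the fibers with the tori $T^2_{2,a},\dots,T^2_{2,d}$; over the edge regions $A,A',B,B',C,C'$ (away from the vertex regions) by Lemma \ref{lem:ABCtop}, which realizes each as a solid-torus fibration over an interval; and over the vertex regions $I,II,III,IV$ by the free-group description of Section \ref{subsec:free2} and Proposition \ref{prop:HFSU2}, each giving a $3$-ball $H_\SUTw(F_2)$. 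In each case the stated equivalence relation is precisely the one collapsing the fibers of the construction map onto the character it produces.

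It remains to see that the images of the distinct families of regions are disjoint, so that the local injections glue to a global one; this is read off from the conjugacy invariants of the character exactly as in Theorem \ref{thm:main1}. Characters over $I,II,III,IV$ are those factoring through $F_2$ with all $h(c_i)=\pm\Idd$; those over $A\cup A'$, $B\cup B'$, $C\cup C'$ have exactly one $h(c_i)=\pm\Idd$ with the distinguished index differing among the three pairs; and those over $a,b,c,d$ are abelian with no $h(c_i)=\pm\Idd$. Since triangles are now determined uniquely and not merely up to $\{I,I_{{A}},I_{{B}},I_{{C}}\}$, there is no four-fold collapse to carry out: the only remaining ambiguities are the facial circle actions of Section \ref{subsec:ab}, the clasping involutions of Lemma \ref{lem:ABCtop}, and the overall $\bZ_2$. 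Verifying that these exhaust the preimages completes the argument.

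The main obstacle is bookkeeping rather than a single hard idea: one must check that the separately constructed equivalence relations over the fourteen regions are mutually compatible along the shared ties and edges, and that together with the $\bZ_2$-action they account for every coincidence of $\SUTw$-characters with no spurious identifications. The most delicate point is the interface between an edge region and the vertex region it meets, where the solid-torus fiber of Lemma \ref{lem:ABCtop} clasps onto the $3$-ball fiber via the involution $(l,\theta_1,\theta_2)\mapsto(\pi-l,4\pi-\theta_1,\theta_2)$; confirming that this clasping is forced by, and consistent with, the global identification is where the argument will require the most care.
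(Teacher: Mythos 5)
Your proposal is correct and follows essentially the same route as the paper's own proof: a dense-triangular-characters surjectivity argument, region-by-region injectivity quoting Proposition \ref{prop:pairsu}, Section \ref{subsec:ab}, Lemma \ref{lem:ABCtop} and Proposition \ref{prop:HFSU2}, disjointness of images read off from which $h(c_i)$ equal $\pm\Idd$ and their signs, and the compact-to-Hausdorff argument to upgrade the bijection to a homeomorphism. The paper's proof is organized in exactly this way, including your observation that the Klein four-group collapse of the $\SOThr$-case is replaced by the single $\bZ_2$-action on the pasting angles.
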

\begin{proof} 
Again there is a one-to-one map from $\tilde G\times T^3_2/\bZ_2/\sim$ 
to $\rep(\pi_1(\Sigma),\SUTw)$ defined by sending the configuration to the character it produces  from 
the generalized multiplication by geometry. % using generalized triangles.

The map is clearly onto 
since any character is reached by a triangular path. 
%(The triangular representation in $\SUTw$ is defined the same way.)
Thus, every character can be achieved by a configuration in $\SI^2$. 

The one-to-one property can be achieved by 
first considering the interior of $\tilde G \times  T^3_2/\bZ_2$ which corresponds
to triangular characters. This is achieved as in the $\SOThr$-case. 
For the boundary regions, the injectivity for each region times $T^3_2$ quotient by $\sim$ 
is clear. We note that the map restricted to $(a^o\cup b^o \cup c^o \cup d^o) \times T^3_2/\sim$ is injective
since the angles of $c_0, c_1, c_2$ are all in different ranges in the boundary of $G$. 
%The image is the subspace of abelian characters. 
The restricted maps from 
\begin{multline}
(A-U)\times T^3_2/\sim, (A'-U)\times T^3_2/\sim, (B-U)\times T^3_2/\sim, (B'-U)\times T^3_2/\sim, \\
(C-U)\times T^3_2/\sim \hbox{ and } (C'-U)\times T^3_2/\sim
\end{multline}
have disjoint images since they are characterized 
by angles of $c_0, c_1, c_2$ are in disjoint ranges in $\partial G$.
The restricted maps from 
\[I\times T^3_2/\sim, II\times T^3_2/\sim, III\times T^3_2/\sim, \hbox{ and } IV\times T^3_2/\sim\]
have mutually disjoint images since 
$h(c_i)=\pm \Idd$ for every representation $h$ in each of them, and the signs are different for these regions. 
Furthermore, we see that all these regions list here map to disjoint subsets of the character space.
By the injectivity over $G^o\times T^3_2$, we obtain the injectivity of the map. 

As in the $\SOThr$-case, 
$\tilde G \times T^3_2/\bZ_2/\sim$ is compact and $\rep(\pi_1(\Sigma), \SUTw)$ is Hausdorff
since $\SUTw$ is a compact Lie group. 
Thus, the map is a homeomorphism. 
\end{proof}

%%% August 11 5:36 pm 2010

%% August 28 10:52

\subsection{The proof of Theorem \ref{thm:cp3-2}.} 

We begin by noting that $\CP^3$ is a toric variety of a tetrahedron $G$. 
We described it as a quotient space of $G\times T^3_2/\bZ_2$ as  in Section \ref{subsec:cp3}.
The equivalence relation is the one given by 
the formula same as one on $G \times T^3_2/\bZ_2$ restricted to the faces $a$, $b$, $c$, and $d$ and 
there are no more. (See Proposition \ref{prop:cp3}.)
The interiors of the edges inherit two different equivalence relations and the fibers are homeomorphic to 
$\SI^1$ over each point of the interiors of the edges and the vertex inherit three different equivalence 
relations and the fiber is a point over each vertex. We call these the {\em vertices} of $\CP^3$.

We will now prove Theorem \ref{thm:cp3-2} by comparing it to $\CP^3$:
First, we will realize $\CP^3$ as a $T^3_2/\bZ_2$-fibration over $\tilde G$. 
Then we will remove the regular neighborhoods of the vertices. 
Compare the result to the $\SUTw$-character space 
with regular neighborhoods of vertex regions removed. 
This is accomplished by removing the regular neighborhoods of 
edge regions and comparing and comparing the topological 
types of the edge regions.

%($G\times T^3_2/\sim$ is homeomorphic to $\CP^3$ since we just doubled the size 
%of each directions in $T^3_2$. )

The quotient image of the four faces regions $a,b,c,$ and $d$ times $T^3_2/\bZ_2$ are four complex projective planes respectively
since each face corresponds to a standard simplex.
That is, we see that the standard $2$-simplex in the standard $3$-simplex $\tri^*$ 
correspond to copies of $\CP^2$ by the quotient relations (see equation \eqref{eqn:cp2}).

These faces homeomorphic to $\CP^2$ meet transversally at six $2$-spheres that fiber over the edges. 
Any three of the complex projective 
planes meet at a vertex which lie over a vertex of $G$. Any two of the six $2$-spheres meet at a vertex also. 
Moreover, three of the six $2$-spheres will meet at a vertex if their corresponding edges do.
(These are standard facts from Fulton \cite{Fulton} and Masuda \cite{Mas})

%Note that there a standard map $\tilde G \ra G$ making the edge regions become edges; that is, 
%making each tie become a point of the edge. 
%Furthermore, we can consider $\CP^3$ as a quotient space of $G \times T^3_2/\bZ_2$ 
%where $\bZ_2$ acts on $T^3_2$ as above and under the equivalence 
%relationship given as above:

%% January 8th 8:31 pm

%Let $B$ be the union of closures of the four six balls around the vertices of $\CP^3$. 
Let $F$ be the set of four points over the vertex of $G$. Then we choose $B$ to be an open regular $T^3$-invariant 
neighborhood of $F$ homeomorphic to $6$-dimensional balls as was chosen above. 
Thus $B$ maps to a union $B'$ of four three-balls that are neighborhoods of 
the vertices of $G$. Remove from $\CP^3$ the $T^3_2$-invariant regular neighborhood $B$.

Denote the resulting manifold by $N$ where $\partial N$ is a union of four $5$-spheres. 
This corresponds to removing regular open neighborhoods of four corners of the tetrahedron, the result of which we denote by $D$.
The edges of the tetrahedron correspond to six $2$-spheres meeting at four 
vertices. If we remove points in $B$, the six subspaces are homeomorphic to $\SI^1 \times I$. 
An open $T^3$-invariant regular neighborhood of each of the subspaces in $N$ is homeomorphic to 
$\SI^1\times B^4 \times I$
where the triviality follows since the neighborhood is orientable. 
And hence their boundary is homeomorphic to $\SI^1 \times \SI^3 \times I$. 
This corresponds to removing regular neighborhood of the six edges in $D$.

The above can be seen by the theory of tubes under a compact group action (See Bredon \cite{Bredon}.)

Denote by $M$ the space $\tilde G\times T^3/\bZ_2/\sim$. 
Finally, each of the regions over $I$, $II$, $III$, and $IV$ in $M$
is homeomorphic to $B^3$. 
Let $L$ be their union. Again, we find a neighborhood $N(L)$ of $L$ in $\tilde G\times T^3/\sim$ 
so that $N(L)$ with $L$ removed coincides with $B-V$ over the points 
of $G$ with edges removed. (This can be done by choosing a neighborhood $B''$ of the union of 
the vertex regions in $\tilde G$ corresponding to $B'$ 
under $\tilde G \ra G$  and taking the inverse image.)
Since $L$ has contractible components and any map into $N(L)$ can be
pushed into a contractible regular neighborhood of $L$, it follows that $N(L)$ is homotopy trivial.
Therefore, $N(L)$ is a union of four six-dimensional balls. 

Remove from $M$ the union $N(L)$ and call the resulting space $N'$.

By Lemma \ref{lem:ABCtop}, each of the quotient spaces over the strips $A$, $A'$, $B$, $B'$, $C$, and $C'$ in $N'$ 
is homeomorphic to $\SI^1\times B^2\times I$. Denote by $J_{{A}}$, $J_{{A}'}$, $J_{{B}}$, $J_{{B}'}$, $J_{{C}}$, 
and  $J_{{C}'}$ the corresponding spaces. Let $J$ be their union. 
%We note that changing the pasting angles induces a well-defined map in $N'-J$ by $T^3_2$
%which may not extend naturally onto the whole quotient space yet. 
By identifying $G$ with edges and vertices removed with 
 $\tilde G$ with the strips $A$, ${{A}'}$, $B$, $B'$, $C$, and $C'$ and regions $I$, $II$, $III$, and $IV$ removed, we
 see that $T^3_2$-fibrations are the same ones where defined. 
 
 Being homeomorphic to a character space $\rep(\pi_1(\Sigma), \SUTw)$ by Proposition \ref{prop:SU},  
 our space $\tilde G\times T^3_2/\bZ_2/\sim$ is an orientable manifold. 
 Let $N'(J)$ be the $T^3_2$-invariant set that is the neighborhood of $J$. 
Since we can homotopy any map into $N'(J)$ to a regular neighborhood of $J$ homotopy-equivalent to $J$, 
$N'(J)$ is homotopy equivalent to $J$, and thus,
$N'(J)$ is homeomorphic to the union of six copies of $\SI^1 \times B^4 \times I$

Let us denote the union $V$ of the above six subspaces over the edges homeomorphic to $\SI^1\times I$ in $N$ by $V$,
and let $N(V)$ denote the regular $T^3_2$-invariant neighborhood of $V$,
which is homeomorphic to $\SI^1\times I\times B^4$.
We can choose $N'(J)-J$ and $N(V)-V$ homeomorphic by choosing their basis images 
under the above $T^3_2$-actions to be the same.
Moreover, $N(V)$ and $N'(J)$ are homeomorphic
since they are six-dimensional compact orientable manifolds homotopy equivalent to circles.

Remove from $N$ the regular $T^3_2$-invariant neighborhood $N(V)$ 
and remove from $N'$ the regular $T^3_2$-invariant neighborhood $N'(J)$. 
Now consider the description of $G\times T^3_2/\sim$ and $N-N(V)$ as a torus fibration 
with equivalence relations that are same. (See Section \ref{subsec:ab}.)
By our description, it follows that $N -N(V)$ and $N'-N'(J)$ are homeomorphic. 
We conclude that $N'$ is homeomorphic to $N$.
(We remark that the homeomorphism preserves the parameter $I$. )

Note that $\chi_2(\Sigma)$ in $N'-N'(J)$ correspond to $a''\cup b''\cup c''\cup d'' \times T^3_2/\sim$
where $a''$, $b''$, $c''$, and $d''$ are disks in the faces $a$, $b$, $c$, and $d$ respectively
and hence it is homeomorphic to four $2$-cells times $2$-tori.
They correspond to four copies of $\CP^2$ in $N$.

Adding $N(L)$ to $N'$, we recover $M$. Therefore $M$ is homeomorphic 
to $N$ with four six-balls reattached and hence $M$ is homeomorphic to $\CP^3$. 

Let us discuss about the subspaces over the edge regions $A$, ${{A}'}$, $B$, $B'$, $C$, and $C'$. 
By Lemma \ref{lem:ABCtop} each of the subspace is a union of solid torus time intervals 
over the $1$-skeleton of a tetrahedron meeting each other at a $3$-ball 
over a vertex, where the boundary of the torus fiber and 
$3$-ball fiber here consists of abelian characters. 

%Now, we discuss the abelian parts: 
%Each $5$-sphere corresponds to a disk boundary of a regular neighborhood of corresponding region 
%labelled I,II,III, or IV. 

Let $S$ be one of the $5$-spheres that are a boundary component of $N'$.
Assume without loss of generality it meets $A, B, C$.  
Then $S \cap (N' -N'(J))$ correspond to the circle $k$ in $\partial G$ with three closed 
arcs in $A, B, C$ in the boundary removed, and
$\chi_2(\Sigma) \cap S \cap (N' -N'(J))$ 
is a union of three imbedded copies of $T^2 \times I$. 
The abelian part over the each tie in one of the edge regions
is the boundary $2$-torus of the solid torus. 
As we decrease $N'(J)$ to $J$ by thinning the regular neighborhood, 
three imbedded copies of $T^2 \times I$ will meet on these boundary $2$-tori. 
The two copies get identified across ties in the edge regions by 
a reflection in one of the three gluing parameter coordinates.
Hence, their composition equals $-\Idd$ in $T^3_2$ in gluing parameter coordinates before taking 
the quotients. 
The union is then homeomorphic to $T^2_2 \times I$ by identifying 
the top and the bottom where the identification homeomorphism is $-\Idd$. 
We call the result {\em a twisted $3$-dimensional torus}. 
(See Subsection \ref{subsec:ab} also.)
%Each twisted $3$-dimensional torus has three torus times $I$ attached at  three among $A$, ${{A}'}$, $B$, $B'$, $C$, and $C'$ meeting $S$. 

Now we can vary $S$. 
If we choose a parameter of boundary spheres $S_t$ of $N$ so that the corresponding image circle 
bounds disk $D_t$ in $\tilde G$ and $D_t$
converges to one of the regions $I$, $II$, $III$, and $IV$
as $t \ra 0$. Then the parameter of $5$-spheres $S_t$ converges to the set above each of the regions, i.e., a three-ball
representing the homomorphism factoring into a free group $F_2$-characters. 
The parameter of the twisted $2$-dimensional tori in $S_t$ will geometrically converge as $t \ra 0$ 
to the boundary of the three-ball since the twisted
torus corresponds to abelian characters which must accumulate to abelian 
characters in the subspace of free group characters, i.e., the boundary 
of the three-ball.
Each of the parameters of the three solid tori in $S_t$ geometrically converges to the three-ball as $t\ra 0$
since the solid torus corresponds to the space of characters over the edges which 
must converge to a free group character modulo changing $h(c_i)$ to $\pm h(c_i)$. 
(See Lemma \ref{lem:ABCtop} also for the clasping phenomena.)
%Here the quotient is by the action 
%\[ (l, \theta_1, \theta_2) \mapsto (\pi-l, 2\pi-\theta_1, \theta_2) \] in the notation of 
%the proof of Lemma \ref{lem:ABCtop}. 
This identification of course extends to the boundary 
tori also, i.e., the abelian characters. 

This completes the proof of Theorem \ref{thm:cp3-2}. 
%\qed

\begin{rem}\label{rem:ab}
The characters over $A$, ${{A}'}$, $B$, $B'$, $C$, and $C' $ form a 4-dimensional CW-complex 
meeting in a $3$-dimensional cell over the regions $I$, $II$, $III$, and $IV$. They are twistedly imbedded. 
We can picture them as a solid torus times interval ``clasping'' itself and two or three of them meet 
at the clasped $3$-balls if their regions meet. 

\end{rem}
%%April 11 5:07
%%% January 24 10:06 pm

\section{The topology of the quotient space $\tilde G \times T^3/\sim$ or ${\mathcal C}_0$} 
\label{sec:topC0}

In this section, we will determine the topology of the quotient space
$\tilde G \times T^3/\sim$. We will show that it is branch-covered by 
$\CP^3$. 

First, we discuss the group action by $\bZ_2^4$ by considering 
the Klein four-group action on the fibers and the Klein group action on $\tilde G$. 

Clearly, there is a group $V'$ of order 16 action on $\tilde G\times T^3_2/\bZ_2/\sim$ generated by
the $\{\Idd, I_{{A}},I_{{B}},I_{{C}}\}$-action as given by equations \eqref{eqn:iabc5}
% \begin{eqnarray}\label{eqn:iabc5}
 %   I_{{A}}:(x, \phi_0, \phi_1, \phi_2) \mapsto 
 %   (I_{{A}}(x), \phi_0, 4\pi-\phi_1, 4\pi-\phi_2)\nonumber \\
 %   I_{{B}}:(x, \phi_0, \phi_1, \phi_2) \mapsto
 %   (I_{{B}}(x), 4\pi-\theta_0, \phi_1, 4\pi-\phi_2) \nonumber \\
 %   I_{{C}}:(x, \phi_0, \phi_1, \phi_2) \mapsto
%    (I_{{C}}(x), 4\pi-\phi_0, 4\pi-\phi_1, \phi_2).
 %   \end{eqnarray}
and the Klein four-group acting on each of the fibers $\SI^1_2\times \SI^1_2\times \SI^1_2/\bZ_2$ 
defined by 
\begin{eqnarray}
 i_a &: &(\theta_0,\theta_1,\theta_2) \mapsto (\theta_0+2\pi,\theta_1+2\pi,\theta_2) \nonumber \\
i_b &: & (\theta_0,\theta_1,\theta_2) \mapsto (\theta_0,\theta_1+2\pi,\theta_2+2\pi) \nonumber \\ 
i_c &: & (\theta_0,\theta_1,\theta_2) \mapsto (\theta_0+2\pi,\theta_1,\theta_2+2\pi).
\end{eqnarray}

These transformations preserve equivalence classes:
Each of $I_{{A}}, I_{{B}}, I_{{C}}$ is described as a geometric transformation 
of taking two vertex points by their antipodes (see equation \eqref{eqn:iabc5})
and each of $i_a, i_b$, and $i_c$ amounts to multiplying two of the pasting maps by $-\Idd$ is $\SUTw$.
Hence, they are transformations on the set of the equivalence classes of $\SUTw$-representations
whose quotient is the space of $\SOThr$-representations in ${\mathcal C}_0$. 

The group $V'$ is a Klein four-group extended by a Klein four-group:
We show that $\{I, I_{{A}}i_a, I_{{B}}i_b, I_{{C}}i_c\}$ is a subgroup
by considering the coordinate form of the action on 
$G^o\times T^3_2/\bZ_2/\sim$, we see that the group
is isomorphic to a product group $V\times V$ and hence isomorphic to $\bZ_2^4$.

%We will discuss in detail the fiber action first.
%Second, we consider the Klein group action on $\tilde G$ and the induced action. 
%Finally, we discuss the branch locus. This completes the investigation of the topology of ${\mathcal C}_0$. 

\subsection{The fiberwise group action}

We discuss the fiberwise Klein four-group action over each of the regions. 
%In the domain over the interior of $\tilde G$, the action is free. 
In the regions over the interiors of the faces of $G$, the action is 
free as we can see from the quotient formula.

Each region over $I, II, III$, or $IV$ is a $3$-ball and can be identified with a tetrahedron $G$
since we can change $h(c_i)=\pm \Idd$ to $\Idd$ and obtain a representation of $F_2$ to $\SUTw$
which gives a homeomorphism between these and $H(F_2)$. 
(See Section \ref{subsec:free} and Proposition \ref{prop:pairsu}.)
\begin{prop}\label{prop:actions}
The action of $\{I, i_a, i_b, i_c\}$ on the regions over each $I$, $II$, $III$, and $IV$, can be seen 
as the Klein group action on a tetrahedron if we identify it with a tetrahedron $G$. 
That is, $\{I, i_a, i_b, i_c\}$ that acts fiberwise 
on  regions above $I$, $II$, $III$, and $IV$ as involutary rotations about 
axes parallel to the coordinate direction through $(\pi/2,\pi/2,\pi/2)$.
\end{prop}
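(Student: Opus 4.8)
The plan is to identify each region over $I$, $II$, $III$, $IV$ with $H_{\SUTw}(F_2)\cong G$ and then to trace the effect of $i_a,i_b,i_c$ through the pasting-angle coordinates, showing that they reproduce the axial involutions $I_A,I_B,I_C$ of the tetrahedron. First I would work over the region $I$: here each representation $h$ has $h(c_0),h(c_1),h(c_2)\in\{\pm\Idd\}$, so replacing each $h(c_i)$ by $\Idd$ produces a homomorphism $F_2=\langle d_1,d_2\rangle\to\SUTw$, giving the homeomorphism of this region with $H_{\SUTw}(F_2)\cong G$ of Section~\ref{subsec:free2} and Proposition~\ref{prop:pairsu}. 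The regions over $II$, $III$, $IV$ are carried to that over $I$ by the $\{\Idd,I_A,I_B,I_C\}$-action on $\tilde G$, so it suffices to treat $I$ and transport the conclusion by symmetry.

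Next I would recall that the three fiber coordinates $(\phi_0,\phi_1,\phi_2)\in T^3_2$ are the rotation angles of the pasting maps $P_0,P_1,P_2$, and that by Proposition~\ref{prop:d1d2} the free generators are recovered as $h(d_1)=P_0^{-1}P_1$ and $h(d_2)=P_0^{-1}P_2$. Since adding $2\pi$ to a pasting angle multiplies the corresponding pasting map by $-\Idd$ by \eqref{eqn:minusid}, the transformation $i_a$ acts as $(P_0,P_1,P_2)\mapsto(-P_0,-P_1,P_2)$, while $i_b$ and $i_c$ act by the two remaining choices of multiplying two of the $P_i$ by $-\Idd$.

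The key computation I would then carry out uses the triangular presentation $\mathcal A=h(d_1)$, $\mathcal B=h(d_2)$, $\mathcal C=-(\mathcal B\mathcal A)^{-1}$ of the three boundary holonomies, the last sign coming from the $\SUTw$ relation \eqref{eqn:cba2}. A short check of the sign bookkeeping shows that $i_a$ fixes $\mathcal A$ while multiplying both $\mathcal B$ and $\mathcal C$ by $-\Idd$ (the two sign changes cancel in $h(d_1)$ but reinforce in $\mathcal C$), that $i_b$ multiplies $\mathcal A$ and $\mathcal B$ by $-\Idd$ and fixes $\mathcal C$, and that $i_c$ multiplies $\mathcal A$ and $\mathcal C$ by $-\Idd$ and fixes $\mathcal B$. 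Hence each of $i_a,i_b,i_c$ multiplies exactly two of the three boundary holonomies by $-\Idd$, and these are three distinct such choices.

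Finally I would invoke \eqref{eqn:minusid} once more: multiplication of a holonomy by $-\Idd$ is exactly passage to the antipodal fixed point with the angle replacement $\theta\mapsto 2\pi-\theta$, which is precisely how the generators $I_A,I_B,I_C$ of the Klein four-group act on the tetrahedron $G$ parametrizing $H_{\SUTw}(F_2)$ in \eqref{eqn:iabc} and \eqref{eqn:iabc2} (Proposition~\ref{prop:poprep}). Matching ``flip the fixed points of two of the three holonomies'' identifies $\{\Idd,i_a,i_b,i_c\}$ with $\{\Idd,I_A,I_B,I_C\}=V$ acting on $G$, which by the construction in Section~\ref{sec:glimit} is the group of involutary rotations about the three axes through $(\pi/2,\pi/2,\pi/2)$ parallel to the coordinate directions. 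The main obstacle will be the sign bookkeeping of the third paragraph---correctly pinning down $\mathcal C$ in the $\SUTw$ setting and confirming that the $-\Idd$ factors propagate to $\mathcal C$ so that each $i$ flips exactly two holonomies---together with verifying that the chosen identification of the region with $G$ is genuinely equivariant for these two actions.
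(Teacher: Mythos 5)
Your proposal is correct and follows essentially the same route as the paper's own proof: both convert the $2\pi$-shifts of two pasting angles into multiplication of exactly two of the three holonomies $\mathcal A$, $\mathcal B$, $\mathcal C$ by $-\Idd$, and then use equation \eqref{eqn:minusid} (antipodal fixed point, angle $\mapsto 2\pi-$angle) to recognize the axial Klein involutions $(\theta_i)\mapsto(\theta_i)$ with two coordinates replaced by $\pi-\theta_i$ on the tetrahedron $G\cong H_{\SUTw}(F_2)$. Your sign bookkeeping through $\mathcal C=-(\mathcal B\mathcal A)^{-1}$ and the transport of the conclusion to regions $II$, $III$, $IV$ by $V$-equivariance are in fact slightly more explicit than the paper's one-line "analogous consideration," but they are refinements of, not departures from, its argument.
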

\begin{proof}
Consider a region $I$. 
The transformation $i_a$ adds $2\pi$ to the pasting angle for $c_1$ and $c_2$. 
This means taking the antipodal fixed point in
$\SI^2$ and changing the respective rotation angles to be $2\pi$ minus the original ones.
The third coordinate is the angle of rotation about a product, which does not change in 
this case. This is an involutary rotation about the one of the axis. For $i_b$ and $i_c$, the arguments are similar.
By an analogous consideration for each of the regions $II$, $III$, and $IV$, we are done. 
\end{proof}

Recall from Lemma \ref{lem:ABCtop} that the subspace over a tie determined by 
the angles in each of $A$, $B$, $C$, $A'$, $B'$, and $C'$ with $U=I\cup II\cup III\cup IV$ removed 
admits the quotient space under $\sim$ homeomorphic to $\SI^1\times B^2$.

By a {\em circle} we mean a subspace homeomorphic to the circle in a solid torus where 
the imbedding is a homotopy equivalence. 
An {\em axis} circle is a subset of  fixed points of an involution homeomorphic to a circle. 

%%% January 9th 5:37
\begin{lem}\label{lem:ABCtop2} 
For the Klein four-group $\{I, i_a, i_b, i_c\}$ which acts on the solid torus over each tie in
of  $A$, $B$, $C$, $A'$, $B'$, and $C'$  with $U=I\cup II\cup III\cup IV$ removed.
%the action is generated by a rotation about an axis circle in each of the 
%the solid tori and a translation along the axis circle. 
The maps  are involutions. 
\begin{itemize}
\item $i_a$ has the axis circle in 
the solid torus over each tie in $A-U$ and $A'-U$
where the other two $i_b$ and $i_c$ are without fixed points.
\item $i_b$ has  the axis circle over the solid torus over each tie in $B-U$ and $B'-U$ 
where the other two $i_a$ and $i_c$ have no fixed points.
\item $i_c$ has the axis circle over the solid torus over each tie in $C-U$ and $C'-U$
where  the other two $i_a$ and $i_b$ have no fixed points.
%This extends to action of $I_a, I_b, I_c$ in a continuous manner.
\item The result of the quotient of the each region by 
the action of $\{I, i_a, i_b, i_c\}$ is homeomorphic 
to $\SI^1 \times B^2$, i.e., a solid torus. 
\end{itemize}
\end{lem}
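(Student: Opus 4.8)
The plan is to analyze the three involutions $i_a$, $i_b$, $i_c$ one region at a time, using the explicit parametrization of the solid torus over a tie established in Lemma \ref{lem:ABCtop}. Recall from that lemma that over a tie in $B-U$ (and similarly for the other regions) the solid torus is coordinatized by $(l,\theta_1,\theta_2)\in[0,\pi]\times\SI^1_2\times[0,2\pi]$, where $\theta_1$ is the $\SI^1_2$-circle parameter (coming from the free choice of fixed point of $h(d_2)$ on $\SI^2$) and the pair $(l,\theta_2)$ parametrizes a $2$-disk $B^2$ after the collapsing identifications $(l,\theta_1,0)\sim(l',\theta_1,0)$ and $(l,\theta_1,2\pi)\sim(l',\theta_1,2\pi)$. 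First I would write each of $i_a,i_b,i_c$ in these coordinates. By definition $i_a,i_b,i_c$ multiply two of the three pasting maps $P_0,P_1,P_2$ by $-\Idd$, i.e. they add $2\pi$ to two of the three pasting angles $\phi_0,\phi_1,\phi_2$; I would translate this into the reduced coordinates $(l,\theta_1,\theta_2)$ of the solid torus using the canonical representative conventions (setting $\phi_1=0$ or $\phi_2=0$) recorded in Section \ref{subsec:ABC}.

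The key computation is to see which of the three involutions acts as a rotation of the circle factor $\SI^1_2$ with a fixed circle, and which act freely. For the region $B-U$, I expect $i_b$ (which multiplies the two pasting maps that together produce $h(d_1)$ and $h(d_2)$ at the shared antipodal vertices $v_1,v_2$ by $-\Idd$ in a compensating way) to fix the circle parameter $\theta_1$ while acting trivially on the reduced coordinate — more precisely, $i_b$ should act on $(l,\theta_1,\theta_2)$ by fixing $\theta_1$ and sending $(l,\theta_2)$ to a point identified with itself, producing an axis circle $\{(\tfrac{\pi}{2}?,\theta_1,\cdot)\}$ of fixed points running once around the $\SI^1_2$-direction. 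The other two, $i_a$ and $i_c$, each shift $\theta_1$ by $2\pi$ (half the circumference of $\SI^1_2$), hence act as the free antipodal rotation on the circle factor and therefore have no fixed points on the solid torus. I would verify the freeness by checking that $\theta_1\mapsto\theta_1+2\pi$ has no fixed point in $\SI^1_2=[0,4\pi]/(0\sim4\pi)$ and that the accompanying action on $(l,\theta_2)$ cannot compensate. The analogous statements for $A,A',C,C'$ follow by the symmetry of the construction under permuting the roles of the indices, so I would state them as immediate consequences rather than recomputing.

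For the final bullet, once I know that the acting group $\{I,i_a,i_b,i_c\}$ has exactly one of the three nontrivial elements fixing a circle and the other two acting freely, the quotient is computed as follows. The freely-acting involution, say $i_a$ on the $B-U$ solid torus, already realizes the solid torus as a double cover of its quotient; since $i_a$ rotates the $\SI^1_2$ factor by a half-turn and the orbit space of a free circle-halving action on $\SI^1_2\times B^2$ is again $\SI^1\times B^2$, the quotient by $\{I,i_a\}$ is a solid torus. Then $i_b$ descends to an involution on this quotient solid torus whose fixed-point set is the image of the axis circle, and quotienting by this reflection-type involution with a core circle of fixed points again yields a solid torus $\SI^1\times B^2$ (the $i_c=i_ai_b$ action being determined). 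I would make the last reduction precise by identifying the descended $i_b$-action with the standard involution of $\SI^1\times B^2$ that is the identity on the $\SI^1$ factor and a reflection on $B^2$ fixing a diameter, whose quotient is visibly a solid torus.

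The main obstacle I anticipate is the bookkeeping in passing from the pasting-angle coordinates $(\phi_0,\phi_1,\phi_2)$ to the reduced solid-torus coordinates $(l,\theta_1,\theta_2)$, and in particular confirming that the $2\pi$-shifts behave as claimed after imposing the canonical-representative normalization and the collapsing identifications at $\theta_2=0,2\pi$. There is genuine risk of sign or index errors here, since the distinction between ``fixes the circle'' and ``halves the circle'' is exactly the content of the lemma and must be read off correctly from how each involution redistributes the $-\Idd$ factors among the vertices $v_1,v_2$ versus $v_0$. I would therefore carry out the $B-U$ case in full detail and then invoke the index symmetry to dispatch the remaining five regions, flagging explicitly that the interchange of the two free involutions with the one axis-bearing involution is precisely the permutation of the distinguished index matching the region label.
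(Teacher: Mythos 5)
Your setup and the core computation coincide with the paper's: in the coordinates $(l,\theta_1,\theta_2)\in[0,\pi]\times\SI^1_2\times[0,2\pi]$ of Lemma \ref{lem:ABCtop}, the paper records (for a tie in $B-U$) that $i_b$ multiplies both $h(d_1)$ and $h(d_2)$ by $-\Idd$, giving $(l,\theta_1,\theta_2)\mapsto(\pi-l,\theta_1,2\pi-\theta_2)$ with axis circle $l=\pi/2$, $\theta_2=\pi$, while $i_a$ and $i_c$ send $\theta_1\mapsto\theta_1+2\pi$ and hence act freely --- exactly your claims, and your freeness argument (a $2\pi$-shift on $\SI^1_2$ cannot be compensated by the disk factor) is the right one. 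Your appeal to index symmetry for $A$, $A'$, $C$, $C'$ is acceptable (the paper actually recomputes region $A$ with a slightly different parametrization), but note that the roles of the two free involutions are not permuted cyclically: the pure translation is $i_a$ in regions $B$ and $C$ but $i_c$ in region $A$, with the remaining element the glide-type map. Since the lemma only asserts freeness for both, this costs nothing, but your ``immediate consequence'' should be flagged as such.

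The genuine error is in your final step. The involution that $i_b$ induces on the quotient of the solid torus by the free involution $i_a$ is \emph{not} ``identity on the $\SI^1$ factor times a reflection of $B^2$ fixing a diameter.'' That map is orientation-reversing and its fixed set is an annulus, $\SI^1$ times a diameter, which contradicts both the statement of the lemma and your own (correct) earlier assertion that the fixed set is a single axis circle. The correct model is the identity on $\SI^1$ times the half-turn (rotation by $\pi$) of $B^2$ about its center: in coordinates, $i_b$ acts on each disk fiber by $(l,\theta_2)\mapsto(\pi-l,2\pi-\theta_2)$, a half-turn about $(\pi/2,\pi)$ which swaps the two cone points $\theta_2=0$ and $\theta_2=2\pi$. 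Moreover, the fixed set downstairs is exactly the image of the axis circle: a point with $i_b(x)=i_a(x)$ would satisfy $i_c(x)=x$, impossible since $i_c$ is free. So the identification you propose would fail if carried out; the conclusion survives only because $B^2$ modulo a half-turn is again topologically a disk, so the quotient is still $\SI^1\times B^2$. (The paper reaches the same conclusion in one step, via the fundamental domain $[0,\pi/2]\times[0,2\pi]\times[0,2\pi]$ with the identifications $(\pi/2,\theta_1,\theta_2)\sim(\pi/2,\theta_1,2\pi-\theta_2)$ and $(l,0,\theta_2)\sim(l,2\pi,\theta_2)$.) Replace the reflection model by the half-turn model and your two-step quotient argument is complete.
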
 
\begin{proof} 
%Recall that the regions over $A$, $B$, $C$, ${{A}'}$, $B'$, and $C'$ described as follows:

First, we discuss the region $B-U$. Let $h$ be a representation over $B-U$.
By the circle action described above, we can find a great circle 
in $c^1_1$ containing the two fixed points of 
$h(d_1)$ and $h(d_2)$ and a great circle $c^1_2$ containing the pair $P$ 
of antipodal points fixed by $h(c_i)$ for $i=1,2$. 

%Since we are on one of the regions $A,B,C,{{A}'},B',C'$, 
%we can decide which one of the pair is fixed by each of $h(c_i)$. 

We recall Lemma \ref{lem:ABCtop}.
Suppose first that $h(d_1)$ and $h(d_2)$ have the pairs of fixed points distinct from $P$. 
Then their respective fixed points $f_1$ and $f_2$ are on a great circle $c^1_1$.
We suppose that the circle does not contain $P$ as well. 
The other cases are in the closure of subspaces satisfying this condition as before 
and hence we can use the condition to parameterize our subspace of $B$.

Since we are on a particular segment in $B$ determined by $\theta_1$, we can
choose a fixed point $v_1$ of $h(c_1)$
so that the angle of rotation at $v_1$ is the one given by $\theta_1$.

%The great circle contains $f_1$ and $f_2$ 
%and they appear in the counter-clockwise direction from the view point of $v_1$.
We parameterize such configurations by 
$(l, \theta_1,\theta_2)$ where $l$ is the length of one of
the shortest segment $s$ between the fixed points of 
$v_1$ to the fixed point of $f_1$ and hence $l \leq \pi$
and $\theta_1/2$ is the angle between the great circle containing $s$
the shortest segment to $f_2$
and $\theta_2$ is the angle of rotation of $h(d_1)$. 
(See Figure \ref{fig:topabc}.)
Recall that $\theta_1$ is in $\SI^1_2$ and $\theta_2$ in $[0,2\pi]$.
Thus, our space is parameterized by $([0,\pi]\times \SI^1_2\times [0,2\pi])/\sim$ where 
$\sim$ is given by Lemma \ref{lem:ABCtop} when $\theta_2 = 0, 2\pi$.

Now consider the action by $\{I, i_a, i_b, i_c\}$. 
This amounts to multiplying the pasting angles by $-\Idd$. We start with $i_b$:
Again $i_b$ will transform by multiplying $h(d_1)$ and $h(d_2)$ by $-\Idd$. 
This amounts to changing the fixed point to its antipode and changing the angles 
to $2\pi$ minus the original one by equation \ref{eqn:minusid}. This is identical with sending $f_1$ and $f_2$ to 
its antipodal points. Thus $(l,\theta_1,\theta_2)$ is mapped to $(\pi-l, \theta_1, 2\pi-\theta_2)$. 
Thus an axis circle of fixed point of $i_b$ is a circle given by $l=\pi/2$ and $\theta_2=\pi$. 

Next, $i_a$ transforms by multiplying $h(d_2)$ by $-\Idd$, 
and $(l,\theta_1,\theta_2)$ is mapped to $(l, \theta_1+2\pi, \theta_2)$. 
Finally, $i_c$ transforms by multiplying $h(d_1)$ by $-\Idd$, and 
$(l,\theta_1,\theta_2)$ is mapped to $(\pi-l, \theta_1+2\pi, 2\pi-\theta_2)$
and $i_a$ and $i_c$ do not have any fixed points. 
%Thus, $i_a$ is a rotation about an axis of the solid torus of angle $\pi$ and $i_c$ is a translation by $\pi$ amount. 

The fundamental domain of the action is given by
\[[0,\pi/2]\times [0, 2\pi]\times [0, 2\pi]/\sim,\]
and an equivalence
\begin{eqnarray}
(\pi/2,\theta_1,\theta_2) &\sim & (\pi/2, \theta_1, 2\pi-\theta_2) \nonumber \\
(l, 0, \theta_2) &\sim & (l, 2\pi, \theta_2)
\end{eqnarray}
Thus, the quotient space is homeomorphic to $\SI^1\times B^2$.
%the identification corresponds to collapsing one torus boundary components 
%to an annulus by an involution. 

For region $B'$, the reasoning is very similar, and we also have that $i_b$ is 
a rotation about an axis circle of the solid torus of angle $\pi$ 
and $i_a$ s a translation by $\pi$-amount. $i_c$ is a composition of the two.

For region $A$, we have the vertex denoted by $v_0$ is antipodal to $v_1$ and $v_2$ is on the segment between them.
Let $h$ be a representation over $A$. 
Again, we can assume that the fixed point $f_1$ of $h(d_1)$ and the one $f_2$ of $h(d_2)$ are well-defined
on a dense set. 
Thus, we can choose the fixed point $f_1$ to lie on $v_1$ and 
the fixed point $f_2$ of angle in $[0, 2\pi]$ is on the sphere. We coordinatize by 
taking $l$ to be the distance between $v_0$ and $f_2$ and $\theta_1$ to be 
the rotation angle of $h(d_1)$ at $v_1$ and $\theta_2$ to be the rotation 
angle of $h(d_2)$ at $f_2$. %Thus, our space is $[0,\pi]\times \SI^1_2 \times [0,2\pi]/\sim$. 
Here, since $i_a$ sends $h(d_2)$ to $-h(d_2)$, we have that 
$i_a(l, \theta_1, \theta_2)=(\pi-l, \theta_1, 2\pi-\theta_2)$, and $i_a$ has an axis circle of 
fixed points given by $l=\pi/2, \theta_2 = \pi$. 
Since $i_c$ sends $h(d_1)$ to $-h(d_1)$, it follows that $i_c$ is a translation
$i_c(l, \theta_1, \theta_2)=(l, \theta_1+2\pi, \theta_2)$. 
Again $i_b$ is a composition of the two and hence has no fixed point. 
We have a same result for the region ${{A}'}$. 

For regions $C$ and $C'$, we have that $i_c$ is a rotation about an axis circle of angle $\pi$ and $i_a$ is a translation in
the axis direction and $i_c$ is the composition of the two, similarly. 

From the description of the actions, we see that each of the quotient spaces 
is homeomorphic to a solid torus.
\end{proof} 

%\begin{rem}\label{rem:axis}
%For regions $A$ and ${{A}'}$,
%the respective axes of fixed points are given by 
%$l=\pi/2,\theta_2=\pi$ for $i_a$, and each of $i_b$ and
%$i_c$ has no fixed points. However, when we go to one of the regions $I$, $II$, $III$, and $IV$, 
% we need to identify by $(l,\theta_1,\theta_2) \ra (\pi-l,2\pi-\theta_1, \theta_2)$. We can see that
%$i_b, i_c$ become rotations about three transversal axes in a $3$-ball, which 
%is what we expect for the regions $I$, $II$, $III$, and $IV$.
%The similar statements can be made for all other regions. 
%\end{rem}

%%% January 10 12:08 am

\begin{prop}\label{prop:prequot} 
The fiberwise Klein four-group action on $\tilde G\times T^3_2/\bZ_2/\sim$ is described as follows:
\begin{itemize}
\item The fiberwise Klein four-group action consists of order-two translations
in the open domain above the interior of $\tilde G$ and have no fixed points. 
So is the action on the fibers over the faces of $\tilde G$. 
\item $i_a$ is an involutionary rotation about the union of two disjoint $2$-sphere $\SI^2_{A}$ 
and $\SI^2_{A'}$. Similarly $i_b$ is a rotation about 
the union of two disjoint $2$-spheres $\SI^2_{B}$ and $\SI^2_{B'}$, and 
$i_c$ is one about the union of $2$-spheres $\SI^2_{C}$ and $\SI^2_{C'}$.
\item The spheres $\SI^2_{{A}}$, $\SI^2_{{A}'}$, $\SI^2_{{B}}$, $\SI^2_{{B}'}$, $\SI^2_{{C}}$, and $\SI^2_{{C}'}$ 
 will be above each edge region with the same label. 
\item The spheres meet in the region above $I$, $II$, $III$, and $IV$ 
in the axes of the corresponding action in the $3$-ball. 
\item The intersection pattern is given by the intersection pattern of the axes of the action. 
That is, the two meet transversally at a point if and only if the corresponding edges in $G$ do. 
The three meet transversally at a point if and only if the corresponding edges in $G$ do.
%\item Any two of six $2$-spheres meet in a point according to how the edge regions with 
%the same labels meet. We can assume that they meet orthogonally by giving $\CP^3$ some Riemannian metrics.
%\item Taking quotient under these actions, we obtain that 
%$\tilde G\times T^3/\sim_0$ is homeomorphic to the complex projective space $\CP^3$. 
\item The quotient space has an orbifold structure with singular points the images of the above 
spheres which are again spheres with same incidence relations. The local group 
of the singular points is $\bZ_2$ in the nonvertex point and is a Klein four-group at the vertices where 
any two of them meet. This can be made into a smooth orbifold structure.
\end{itemize}
\end{prop}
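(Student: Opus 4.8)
The plan is to read off the fixed-point data of $i_a,i_b,i_c$ region by region from Proposition \ref{prop:actions} and Lemma \ref{lem:ABCtop2}, and then to glue the local pieces into the six global spheres and verify the incidence and orbifold statements. I would split $\tilde G\times T^3_2/\bZ_2/\sim$ into three kinds of pieces: the part over $\tilde G^o$ together with the parts over the open faces $a,b,c,d$; the parts over the open edge regions $A,A',B,B',C,C'$ with $U=I\cup II\cup III\cup IV$ removed; and the parts over the vertex regions $I,II,III,IV$. The free part is immediate: over $\tilde G^o$ the fiber is $T^3_2/\bZ_2$ and each of $i_a,i_b,i_c$ adds $2\pi$ to two of the three coordinates, which is a fixed-point-free order-two translation of $T^3_2$; composing with the diagonal $\bZ_2$ generated by $(2\pi,2\pi,2\pi)$ only yields another nonzero translation, so the action descends freely to the quotient torus. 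The same computation on the $2$-torus fibers over the interiors of $a,b,c,d$ (Section \ref{subsec:ab}) gives freeness over the faces, which is the first bullet.

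The heart of the argument is the construction of the spheres. Over each tie in $A-U$ (and symmetrically $A'-U$), Lemma \ref{lem:ABCtop2} gives that $i_a$ is a rotation through $\pi$ about an axis circle inside the solid-torus fiber while $i_b,i_c$ act freely, with the analogous statements for $i_b$ over $B-U,B'-U$ and $i_c$ over $C-U,C'-U$. As the tie ranges over the open interval parametrizing an edge region these axis circles sweep out a cylinder $\SI^1\times(0,1)$. The crucial point is the behavior as a tie runs into a vertex region: by Lemma \ref{lem:ABCtop} the solid-torus fiber is there identified with a $3$-ball by the clasping involution $(l,\theta_1,\theta_2)\mapsto(\pi-l,4\pi-\theta_1,\theta_2)$, and one checks that this clasping preserves the axis circle and restricts on it to the reflection $\theta_1\mapsto 4\pi-\theta_1$, folding the boundary circle two-to-one onto an arc. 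That arc is exactly the rotation axis of $i_a$ on the vertex-region $3$-ball predicted by Proposition \ref{prop:actions}. Thus $\mathrm{Fix}(i_a)$ over the closed edge region is a cylinder with each boundary circle folded by a reflection; a CW count ($V-E+F=4-4+2=2$, with each fold point a manifold point, since a half-disk folded at its center is a disk) identifies it as a $2$-sphere $\SI^2_{A}$ meeting each incident vertex region in the corresponding axis arc. Running this over all six edge regions produces the six spheres, and since $i_a$ acts freely off $A\cup A'$ one gets $\mathrm{Fix}(i_a)=\SI^2_{A}\sqcup\SI^2_{A'}$; as $A$ and $A'$ are opposite edges of $G$ they share no vertex, so the two spheres are disjoint. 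This yields the second and third bullets.

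For the incidence pattern I would work entirely inside the vertex-region $3$-balls. By Proposition \ref{prop:actions} the fiberwise Klein four-group acts on each such ball as the three involutory rotations about the three coordinate axes through its center $(\pi/2,\pi/2,\pi/2)$, so the axis arcs of the spheres passing through that region are exactly these three mutually transverse axes, all crossing at the single center point. Hence two spheres meet precisely when their edges meet at a vertex of $G$ (at the center of that vertex region), three meet precisely when their edges are concurrent, and opposite-edge spheres never meet, which is the asserted correspondence with the edge incidence of $G$. That there are no further intersections follows because for spheres of two different generators, say $\SI^2_{A}$ and $\SI^2_{B}$, the intersection lies in $\mathrm{Fix}(i_a)\cap\mathrm{Fix}(i_b)=\mathrm{Fix}\langle i_a,i_b\rangle$, which by freeness over the interior, the faces, and the non-matching edge regions is confined to the center points.

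Finally, for the orbifold structure I would use that after averaging a Riemannian metric the fiberwise Klein four-group acts smoothly (by isometries) on $\rep(\pi_1(\Sigma),\SUTw)\cong\CP^3$, so the quotient is an orbifold whose singular set is the image of the fixed set. At a generic point of $\SI^2_{X}$ only one generator fixes, giving local group $\bZ_2$ and normal action $-1$ on the $4$-dimensional normal bundle; at a center point all generators through that vertex fix, giving the full Klein four-group with local model $\bR^6$ modulo two commuting involutions fixing transverse $2$-spheres. The images of the six spheres are the quotients of the $\SI^2_{X}$ by the residual action of the remaining involutions, which one verifies are again $2$-spheres carrying the same incidence graph. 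The main obstacle I anticipate is the clasping computation of the second paragraph: one must check uniformly, across all six edge regions and all incident vertex regions, that the identification building $\CP^3$ (Lemma \ref{lem:ABCtop}) carries the axis circle of the fiberwise rotation exactly onto the vertex-region axis arc, with the correct two-to-one folding and matching orientations, so that the pieces close up to an embedded sphere rather than a projective plane or a pinched surface; confirming that $\mathrm{Fix}(i_a)$ is exactly these spheres with $-1$ normal action (hence the genuine orbifold singularities of Theorem A(iv)) is where the smoothness input of Huebschmann \cite{Hueb} is convenient.
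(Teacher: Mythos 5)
Your proposal is correct and follows essentially the same route as the paper: freeness over the interior and faces via the translation description, construction of each sphere as an annulus of axis circles over the ties (Lemma \ref{lem:ABCtop2}) capped off by the folded axis segments in the vertex $3$-balls (Proposition \ref{prop:actions}), incidence read off from the meeting of edges at vertices, and the orbifold structure from local linearity of the finite smooth action. The only cosmetic difference is that you certify the capped-off annulus is a $2$-sphere by an Euler-characteristic/fold-point argument where the paper invokes Conner's results on fixed sets of smooth involutions; both are adequate.
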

\begin{proof}

For the interior part, the first item is clear. 

In the regions over the faces, the $\{I, i_a, i_b, i_c\}$-action is also free: consider these actions as 
translations in the tori $T^3_2/\sim$ where $\sim$ is generated by vectors parallel to 
the normal vectors of form $(\pm 2\pi, \pm 2\pi, \pm 2\pi)$ where the signs depend on which 
faces you are on:

Consider the face $a$ of $\tilde G$ for now. Then $i_a, i_b,$ and $i_c$ correspond to 
translations by $(0, 2\pi, 2\pi)$, $(2\pi, 0, 2\pi)$, and $(2\pi, 2\pi, 0)$ respectively. 
Then these vectors can be seen to be parallel to the plane of the quotient tori 
with generators $(4\pi,0,-4\pi)$ and $(0, 4\pi, -4\pi)$ since we can change 
the signs of translation vectors mod $4\pi$. 
We see easily that these translations are not trivial and the fiberwise 
Klein four-group acts freely here.  (See Section \ref{subsec:ab} for details.)
Similar considerations show that the fiberwise Klein four-group action is free over 
the faces $b, c$ and $d$. 

Let us now go over to the regions $A, B,C,A',B'$, and $C'$:
We construct the two sphere $\SI^2_{{B}}$ by taking the axis circle of $i_b$ over each solid tori in $B$ removed with 
the vertex regions, as obtained from the ties. The parameter of axis circles in the parameter of the solid tori 
over the ties in $B -U$ will geometrically converge to 
an axis, a segment in the vertex $3$-ball fixed by $i_b$, by approaching the vertex region and finally 
will double branch-cover the segment with two singular points (see Theorem \ref{thm:cp3-2}). 
Thus, we obtain a sphere by compactifying the two ends of an annulus consisting of axis 
by two segments by Conner's work \cite{Co}. 
($i_a$ and $i_c$ do not have fixed points here and they are not considered. )

The same constructions will give us all other spheres $\SI^2_{{B}'}$, $\SI^2_{{A}},\SI^2_{{A}'},\SI^2_{{C}},$ and 
$\SI^2_{{C}'}$.

Note that the two spheres will meet if their compactifying edges will meet. Hence, the result follows. 

We can assume that they are smooth and have intersections so that their tangent planes meet only at 
the origin. (Thus, we can give an invariant Riemannian structure.)

Now we go over the action. For $i_a$ will fix axes circles above A and {{A}'}. 
Since they fix the axis in the vertex $3$-balls as well,
$\SI^2_{{A}}, \SI^2_{{A}'}$ will be fixed. The action is a rotation of angle $\pi$ in the solid tori and will 
be identity in the complementary $2$-dimensions to the sphere and the solid tori. 
The similar reasoning also hold for $i_b$ and $i_c$. 

Since the quotient of the actions are always locally linear and the local groups are finite, 
we obtain that the quotient has an orbifold structure. 
Since these actions are finite, we can put the smooth orbifold structure on the quotient space.
\end{proof}

%\begin{rem}
%Here we did you Hubuesmann's result \cite{Hu}.
%\end{rem}

\subsection{The topology}

\begin{thm}\label{thm:cp3}
$\tilde G\times T^3/\sim$ is homeomorphic to a quotient of $\CP^3$ under 
the product of the two Klein four-group actions generated by fiberwise and axial action. 
The branch loci of $I_{{A}}, I_{{B}}, I_{{C}}$ are given as follows:
six $2$-spheres corresponding to the axes of $I_A,I_B,$ and $I_C$. 
There are two $2$-spheres over each axis, and 
over each axis, the two $2$-spheres are disjoint. 
Any two $2$-spheres over respectively different axes meet at 
 unique points over the center  $(\pi/2, \pi/2, \pi/2)$ of $\tilde G$.
All three $2$-spheres over different axis meet at the same point as above.  
\end{thm}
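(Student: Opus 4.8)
The plan is to establish the homeomorphism $\tilde G\times T^3/\sim\;\cong\;\CP^3/(V\times V)$ and the branch-locus description in three coordinated stages, reusing the heavy machinery already in place. The essential identification comes from Theorem \ref{thm:main1}, which realizes $\tilde G\times T^3/\sim$ as ${\mathcal C}_0$, together with Theorem \ref{thm:cp3-2}, which identifies $\rep(\pi_1(\Sigma),\SUTw)$ with $\CP^3$ as a $T^3_2/\bZ_2$-fibration over $\tilde G$. The key observation is that the double Klein four-group $V'\cong\bZ_2^4$ defined at the start of Section \ref{sec:topC0} acts on the $\SUTw$-side, and its quotient is exactly the $\SOThr$-side. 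First I would verify that the quotient map $\rep(\pi_1(\Sigma),\SUTw)\to{\mathcal C}_0$ is precisely the quotient by $V'$: each generator $I_{{A}},I_{{B}},I_{{C}}$ and each fiberwise generator $i_a,i_b,i_c$ was shown to preserve $\SUTw$-characters up to producing the same $\SOThr$-character (that is the content of the remarks following equation \eqref{eqn:iabc5} and the discussion of $i_a,i_b,i_c$). Thus on the level of $\tilde G\times T^3_2/\bZ_2/\sim$ the $V'$-orbits are exactly the fibers of the branching map onto $\tilde G\times T^3/\sim$, giving the homeomorphism with $\CP^3/(V\times V)$.

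Next I would assemble the branch locus from the local pieces already computed. The fiberwise Klein four-group $\{I,i_a,i_b,i_c\}$ is analyzed in Proposition \ref{prop:prequot}: $i_a$ is an involutionary rotation fixing $\SI^2_{{A}}\cup\SI^2_{{A}'}$, $i_b$ fixes $\SI^2_{{B}}\cup\SI^2_{{B}'}$, and $i_c$ fixes $\SI^2_{{C}}\cup\SI^2_{{C}'}$, with the six spheres lying over the six edge regions. This directly supplies the six $2$-spheres and the fact that there are two over each axis. To see that the two spheres over the same axis are disjoint, I would note that $\SI^2_{{A}}$ and $\SI^2_{{A}'}$ arise over the disjoint edge regions $A$ and $A'$, and these edge regions meet only through the vertex regions $I,II,III,IV$; but the axis circles in each solid torus over a tie of $A-U$ are parametrized (from Lemma \ref{lem:ABCtop2}) by $l=\pi/2,\ \theta_2=\pi$, so as one approaches a vertex $3$-ball the two families of axis circles limit onto \emph{distinct} segments of the tetrahedron $1$-skeleton. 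Hence $\SI^2_{{A}}$ and $\SI^2_{{A}'}$ never share a point.

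Then I would pin down the intersection pattern, which is the crux. The claim is that spheres over different axes meet only at the center $(\pi/2,\pi/2,\pi/2)$ of $\tilde G$, and all three coincide there. The mechanism is the incidence statement at the end of Proposition \ref{prop:prequot}: two spheres meet transversally at a point if and only if their compactifying edges in $G$ do, and this happens in the vertex regions. The subtlety the author must resolve is that, \emph{for the axial} $I_{{A}},I_{{B}},I_{{C}}$ \emph{action}, the relevant fixed loci are carried over the three mutually disjoint maximal segments (axes) of $G$ through $(\pi/2,\pi/2,\pi/2)$, which all pass through that single center point. I would compute the fixed-point set of each $I_{{A}}$ on the fibered space using the explicit coordinate action \eqref{eqn:iabc4}: the fixed axis through the center of $A$ and $A'$ meets the fixed axes of $I_{{B}}$ and $I_{{C}}$ exactly at the center, and over that center point the fiber analysis shows the three spheres share the single common point. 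The transversality and ``all three meet'' assertions then follow from the tangent-plane-meet-only-at-origin normalization already arranged in Proposition \ref{prop:prequot}.

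The hard part will be the bookkeeping that the \emph{axial} Klein four-group $\{I,I_{{A}},I_{{B}},I_{{C}}\}$ produces branch spheres over the three maximal segments of $G$ (as in part (ii) of Remark \ref{rem:mainA}), as opposed to the fiberwise group $\{I,i_a,i_b,i_c\}$ which produces spheres over the six edges; the theorem statement conflates these and one must carefully track which sphere comes from which involution and confirm that, after passing to the combined $V\times V$-quotient, the images are the asserted six spheres with the stated center-point incidence. I expect this matching of the two distinct $\bZ_2^2$-actions and verifying that their fixed loci intersect only over $(\pi/2,\pi/2,\pi/2)$ to be the main obstacle, since it requires reconciling the edge-parametrized description of Lemma \ref{lem:ABCtop2} with the segment-parametrized (axis) description coming from \eqref{eqn:iabc4}, and then checking that the limiting behavior at the vertex $3$-balls (via Conner's compactification \cite{Co}, already invoked in Proposition \ref{prop:prequot}) yields exactly the intersection combinatorics of the tetrahedron rather than spurious extra intersections.
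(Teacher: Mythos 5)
Your overall frame (identify $\tilde G\times T^3_2/\bZ_2/\sim$ with $\CP^3$ via Theorem \ref{thm:cp3-2}, pass to the $\bZ_2^4$-quotient, and then locate branch loci) matches the paper's, but your second stage rests on a misidentification of which spheres the theorem asserts. The six $2$-spheres $\SI^2_{{A}},\SI^2_{{A}'},\ldots,\SI^2_{{C}'}$ of Proposition \ref{prop:prequot} form the branch locus of the \emph{fiberwise} group $\{I,i_a,i_b,i_c\}$; they lie over the six \emph{edge regions} of $\tilde G$ and meet over the vertex regions according to the edge-incidence of $G$. Theorem \ref{thm:cp3} is about the \emph{axial} involutions $I_{{A}},I_{{B}},I_{{C}}$, whose branch spheres lie over the three mutually disjoint maximal segments (axes) through $(\pi/2,\pi/2,\pi/2)$, two spheres over each single axis, meeting only over the center --- a different set of spheres with a different incidence pattern (compare Remark \ref{rem:mainA}(i) with (ii); the theorem does not conflate the two, contrary to your closing claim). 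Consequently your disjointness argument (``$\SI^2_{{A}}$ and $\SI^2_{{A}'}$ arise over the disjoint edge regions $A$ and $A'$'') and your intersection argument (compactifying edges meeting in vertex regions) are statements about the wrong spheres, and if carried through they would yield the vertex-region incidence combinatorics of Remark \ref{rem:mainA}(i) rather than the center-point incidence the theorem states.

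What is needed, and what the paper actually does, is the fiber-level computation for the axial action, which your proposal defers to the last paragraph as ``bookkeeping'' but never performs. For each of $I_{{A}},I_{{B}},I_{{C}}$: over an interior point of its axis the action on the $T^3_2/\bZ_2$-fiber fixes one pasting-angle coordinate and negates the other two mod $4\pi$, so the fixed set is the four circles where the negated coordinates lie in $\{0,2\pi\}$, i.e., two circles after the $\bZ_2$-quotient; over the two mid-ties at the ends of the axis the action on the solid-torus coordinates is of the form $(l,\theta_1,\theta_2)\mapsto(l,4\pi-\theta_1,2\pi-\theta_2)$, whose fixed set is two segments. The two one-parameter families of circles form two annuli over the open axis, compactified by those segments into two disjoint $2$-spheres by Conner's theorem \cite{Co}, since the involution is smooth; one also needs that $\{\Idd,I_{{B}}\}$ permutes the four $6$-balls over the vertex regions, so that the action on $M$ is conjugate to the corresponding action on $\CP^3$ under the identification of Theorem \ref{thm:cp3-2}. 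Finally, the intersection claims follow because the three axes meet only at the center of $\tilde G$, where the fiber is a $3$-torus: the fixed circles of the three involutions in that torus run in mutually orthogonal coordinate directions, so any two meet in a unique point, and the three circles $\{\phi_1=\phi_2=0\}$, $\{\phi_0=\phi_2=0\}$, $\{\phi_0=\phi_1=0\}$ pass through one common point. Without this computation your proposal establishes neither the existence of the stated six spheres nor their disjointness nor the center-point intersection pattern.
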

\begin{proof} 
%Let us now consider the $\SUTw$-situation first here. 

%The action of $I_{{B}}$ in $N'$ on the region over $B$ or $B'$ and its $T^3$-invariant regular neighborhood 
%coincide with that of $I_{{B}}$ on the solid tori times $I$ over the corresponding edge regions in $N$. 

Now we consider $\{I, I_A, I_B, I_C\}$-action on $\tilde G\times T^3_2/\bZ_2/\sim$.

Let us consider $I_{{B}}$ first.  Recall that a mid-tie is a tie in the middle of the strip, i.e., the half way locus along 
the angular parameterization.
The solid tori in the edge regions are permuted or if the edge region is marked by $B$ or $B'$, 
then $\{\Idd, I_{{B}}\}$ acts on it and in particular $\{\Idd, I_{{B}}\}$ acts on the solid torus over the middle tie.

$\{\Idd, I_{{B}}\}$ acts on the triangles corresponding to a representation above 
the middle tie by sending a triangle to another triangle by replacing $v_1$ by $-v_1$ and $v_2$  by $-v_2$ and fixing $v_0$. 
Also, the pasting angles behave $(\theta_0,\theta_1,\theta_2) \mapsto (\theta_0, 4\pi-\theta_1,4\pi-\theta_2)$. 
%which means $h(d_1)$ and $h(d_2)$ both have to be multiplied by $-\Idd$ and $w_1, w_2 \ra -w_1, -w_2$.
Thus, the solid torus coordinate introduced above the mid-tie in B or B' behaves 
$(l,\theta_1,\theta_2) \ra (l, 4\pi - \theta_1, 2\pi-\theta_2)$. 
Hence, the two fixed segments are given by equations $\theta_1 = 0, 2\pi$ and $\theta_2 = \pi$ 
(as $\theta_2 \in [0, 2\pi]$ an interval).

On the $3$-torus over the fixed axis in the interior of $\tilde G$, the pasting maps are also sent the same way. 
Thus $(\theta_0,\theta_1,\theta_2)$ is sent to $(\theta_0, 4\pi-\theta_1,4\pi-\theta_2)$, 
and there are four fixed axes given by $\theta_1=0,2\pi, \theta_2=0,2\pi$, which are four circles. 
By the $\bZ_2$-action, there are actually only two circles of fixed axes over these points.
Thus, the union of entire fixed axes over the axis of $I_{{B}}$ in $\tilde G$ correspond to 
a union of two annuli compactified by segments to form two $2$-spheres by Conner's work \cite{Co}
as $I_B$ is a smooth involution.

%Now we come back to $\tilde G\times T^3/\sim_0$.
%Given $i_a, i_b, i_c$ action to obtain $\tilde G \times T^3/\sim_0$, 
%there is just one $2$-sphere fixed by $I_{{B}}$, 
%which is actually an orbifold quotient of a $2$-sphere. 

The action of $\{\Idd, I_{{B}}\}$ in $M$ permutes the four $6$-dimensional balls which 
are regular neighborhoods of the subspaces corresponding to the regions $I$, $II$, $III$, and $IV$
In $\CP^3$, this is true with the four $6$-dimensional balls described in the above proof. 
Thus, the action of $\{\Idd, I_{{B}}\}$ on $\CP^3$ is conjugate to that on $M$ by our identification above. 
Therefore, $I_{{B}}$ is an involution with a fixed point a union of two disjoint $2$-sphere.

Similar conclusions can be established about $I_{{A}}$ and $I_{{C}}$. The corresponding spheres meet at a $3$-torus over
at the center $(\pi/2, \pi/2, \pi/2)$ of $\tilde G$. Since they meet the $3$-torus in orthogonal circles, 
we see that they meet at unique points:

\end{proof}

\begin{rem} \label{rem:finalorbstr}
Thus, we see that the final orbifold structure of ${\mathcal C}_0$ is simply given by $\CP^3$ with 
six $2$-spheres denoted by $s_{{A}}, s'_{{A}}, s_{{B}}, s'_{{B}}, s_{{C}}$, and $s'_{{C}}$ corresponding to involutions $I_{{A}}, I_{{B}}$,
and $I_{{C}}$ 
and the six $2$-spheres  $\SI^2_{{B}}$, $\SI^2_{{B}'}$, $\SI^2_{{A}},\SI^2_{{A}'},\SI^2_{{C}}$, and $\SI^2_{{C}'}$
in $\tilde G\times T^3_2/\sim$ which form the branch  locus of the $\{I, i_a, i_b, i_c\}$-action. 
Denote the spheres by $S_{{A}}, S'_{{A}}, S_{{B}}, S'_{{B}}, S_{{C}},$ and $S'_{{C}}$ respectively. 

The three spheres $s_{{A}}, s'_{{A}}, s_{{B}}, s'_{{B}}, s_{{C}},$ and $s'_{{C}}$ all meet at a point in the center of the tetrahedron.
$S_{{A}}, S'_{{A}}, S_{{B}}, S'_{{B}}, S_{{C}},$ and $S'_{{C}}$ also meet at a point in the ``vertex" region. 
We see that $s_{{A}}$ and $S_{{A}}$ meet at a point in the middle of an ``edge" region.
and $s_{{B}}$ and $S_{{B}}$ do and so do $s_{{C}}$ and $S_{{C}}$. Also, we can insert 
primes any where to get the incidences.
There are no other incidence type. 

We can now describe the orbifold structure on ${\mathcal C}_0$.
The twelve spheres above have order two singularities 
everywhere. There are two points with local group $\bZ^3_2$ and there are three
with local group $\bZ^2_2$.

Finally, the image of the first set of six $2$-spheres map to three $2$-spheres and 
and the second set of six $2$-spheres map to another set of three $2$-spheres.

\end{rem}

%% April 9th 10:05 

\section{The other component}\label{sec:other}

In this section, we study the other component ${\mathcal C}_1$. 
We follow the basic strategy as in ${\mathcal C}_0$ case. 
First, we introduce an octahedron $O$ and its blown-up compactification $\tilde O$
to parameterize the characters. This octahedron characterizes the characters of 
the two pairs of pants $S_0$ and $S_1$.
Again, we compactify the octahedron by blowing up vertices into squares
to prepare for the study of the characters of the fundamental group of
the surface itself. 
Then we introduce equivalence relation 
so that $\tilde O \times T^3/\sim$ becomes homeomorphic to ${\mathcal C}_1$. 
This will be done by considering the interior and each of the boundary regions as in the previous sections.
In the next sections, we will show that the quotient space is homeomorphic to an octahedral 
manifold.

\subsection{The triangle region for the characters}

We will now discuss the other component, i.e., the component 
containing a representation $h^*$ in Section \ref{subsec:twocomp} which correspond to 
one with an upper triangle and a lower triangle that are
identical standard lunes with vertices $[1,0,0]$ and $[-1,0,0]$ and two 
edges passing through $[0,-1,0]$ and $[0,0,1]$ where the vertices $v_0, v_1, v_2$ of 
the upper triangle and the vertices $v_0', v'_1, v'_2$ of the lower triangle 
satisfy $v_0=v'_0=[1,0,0], v_1=v'_1=[-1,0,0]$ and $v_2=v'_2=[0,-1,0]$
and pasting maps $P_0$ and $P_2$ identity and 
$P_1$ the rotation of angle $\pi$ about $[0,0,1]$ sending $v_1$ to $-v'_1$. 

A {\em triangular} representation is a representation such that the restriction to 
the fundamental groups of the pairs of pants $S_0$ and $S_1$ are triangular.

We can characterize the characters in triangular representations in ${\mathcal C}_1$ to be ones where 
each corresponding representation gives two triangles where each pasting map 
send a vertex $v_i$ of the first triangle $\tri_1$ to the corresponding vertex $v'_i$
of the second triangle $\tri_2$ or to the antipode $-v'_i$ of the vertex of the second 
triangle and at least one pasting map $P_i$ sending $v_i$ to $-v_i$ exists always.

A Klein group $V$ acts on the upper triangle and the lower triangle 
by sending the vertices of the upper triangle $v_0, v_1, v_2$ to  the vertices
$\pm v_0, \pm v_1, \pm v_2$ for a triangle where 
there are two negatives or no negatives and sending 
the vertices of the lower triangle $v'_0, v'_1, v'_2$ to 
$\pm v'_0, \pm v'_1, \pm v'_2$ with corresponding sign as the upper triangle case. 
In $G^o$, we have the formula as in the $\SOThr$-case.

%The property of ${\mathcal C}_1$ is that a pasting map sending 
%$v_i$ to $-v'_i$ always exists for any choice of the triangles 
%up to the action of the Klein four-groups.

By the Klein group action, we can make the triangular representation be so that 
the pasting map $P_0$ sends $v_0$ to $v'_0$, $P_1$ sends $v_1$ to $-v'_1$ 
and $P_2$ sends $v_2$ to $v'_2$ and hence satisfy the angle condition \eqref{eqn:anglecon}.

%Note we do not have a canonical choices for $P_i$
%which we need to get a coordinate system as of yet. 

\begin{lem}\label{lem:nearh} 
The subset of triangular characters in ${\mathcal C}_1$ is a dense open subset. 
Any character whose associated triangles 
is degenerate can be pushed to triangular ones by a path of 
deformations. 
Every character in the component ${\mathcal C}_1$ is associated 
with generalized triangles $(\tri_0, \tri_1)$ whose associated angles are 
\begin{equation}\label{eqn:anglecon}
(\theta_0, \theta_1, \theta_2) \hbox{ and } 
(\theta_0, \pi-\theta_1, \theta_2).
\end{equation} 
\end{lem}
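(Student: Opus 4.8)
The plan is to mirror the structure of Lemma \ref{lem:perturb}, which established the analogous statement for the single pair of pants, and combine it with the two-component decomposition from Section \ref{subsec:twocomp} together with the restriction formula in Proposition \ref{prop:h0h1}. First I would fix a representation $h$ in ${\mathcal C}_1$ and restrict it to obtain $h_0 : \pi_1(S_0) \ra \SOThr$ and $h_1 : \pi_1(S_1) \ra \SOThr$, each determined up to conjugation. By Lemma \ref{lem:perturb}, each $h_i$ is associated with a (possibly degenerate) generalized triangle, and since $h$ lies in ${\mathcal C}_1$ the two triangles have the same angle data up to the $V$-action, exactly as in the ${\mathcal C}_0$ analysis of Proposition \ref{prop:h0h12}. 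The content specific to ${\mathcal C}_1$ is the relation between the two triangles' vertices: the Stiefel-Whitney obstruction forces the pasting maps $P_0, P_1, P_2$ to realize an odd number of antipodal identifications, so at least one $P_i$ sends $v_i$ to $-v'_i$. Using the Klein four-group action on the upper and lower triangles described just above the lemma, I would normalize so that $P_1$ sends $v_1$ to $-v'_1$ while $P_0$ and $P_2$ send $v_0, v_2$ to $v'_0, v'_2$; this is precisely the normalization yielding the angle pair \eqref{eqn:anglecon}, where the sign flip at $v_1$ converts $\theta_1$ into $\pi-\theta_1$ by the angle-change formula for $I_B$ in equation \eqref{eqn:iabc2}.

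Next I would establish the density and openness. Following the proof of Lemma \ref{lem:perturb}, if the triangles associated to $h$ are nondegenerate then $h$ is triangular by definition, and any sufficiently nearby representation still has nondegenerate triangles (the angles stay away from the degenerate values $0, \pi$), so the triangular characters form an open set. For density, I would invoke Proposition \ref{prop:dense0}, which guarantees that nondegenerate generalized triangles are dense in $\hat G$: given a character whose associated triangle is degenerate, I perturb the angle and length data within $\tilde G$ to a nearby nondegenerate configuration, keeping the constraint \eqref{eqn:anglecon} intact, and read off a deformation of $h$. The key point is that the perturbation must preserve membership in ${\mathcal C}_1$; since ${\mathcal C}_1$ is a connected component and the deformation is continuous (hence cannot jump to ${\mathcal C}_0$), this is automatic. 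Producing an explicit analytic path, rather than merely a nearby point, I would obtain by moving along a one-parameter family of angle data, say a straight segment in the angle parameters pushed into $G^o$, again exactly as sketched in Lemma \ref{lem:perturb} and Proposition \ref{prop:tridense0}.

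The main obstacle I anticipate is the careful bookkeeping of the sign conditions on the pasting maps in the degenerate cases, i.e., verifying that every character of ${\mathcal C}_1$ — including the abelian ones and those where some $h(c_i)$ is the identity — genuinely arises from a generalized triangle pair satisfying \eqref{eqn:anglecon}. When $h(c_i) = \Idd$ the fixed point of the corresponding vertex is ambiguous, so one must check that the antipodal-identification count defining the component is still well-defined and that the normalization to the single sign flip at $v_1$ can always be achieved using the $V$-action. I would handle this by the same case analysis as in Lemma \ref{lem:perturb}: if one generator is the identity the triangle falls into an edge region of $\tilde G$, if two or more are the identity into a vertex region, and if the holonomy is abelian into a face region; in each case the generalized triangle constructions of Section \ref{sec:glimit} together with Proposition \ref{prop:multrule} supply the required degenerate triangle, and the $-\Idd$ obstruction computed in Remark \ref{rem:otherc} confirms the parity that pins down the sign pattern. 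Once this is in place, the density and path-deformation statements follow verbatim from the nondegeneracy density in Proposition \ref{prop:dense0}.
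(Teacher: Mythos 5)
Your overall architecture reverses the paper's, and the reversal is where the trouble lies. The paper proves density \emph{first}, without knowing that every character of ${\mathcal C}_1$ carries a triangle pair: it exhibits an explicit six-parameter family of triangular deformations of $h^*$ (perturbing the angles by $\delta,\delta',\delta''$ and varying the three pasting angles), concludes that the trace functions $f_i([h])=\mathrm{tr}\,h(c_i)$ are non-constant on ${\mathcal C}_1$, and then invokes real analyticity: $\{f_i=3\}$ and the abelian locus are proper analytic subsets, so their complement --- the triangular characters --- is dense, open, and reachable by real analytic arcs. The angle condition \eqref{eqn:anglecon} for an \emph{arbitrary} character is then deduced afterwards, by taking a sequence of triangular characters converging to it and passing to a limit of the associated configurations (compactness of the configuration space, plus the fact that condition \eqref{eqn:anglecon} is closed). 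You run the argument the other way: part 3 first, by restriction plus a parity claim, and then density by perturbing a representing configuration. That logic would be sound \emph{provided} part 3 is established for every character, including the degenerate ones --- and that is exactly where your proposal has a genuine gap.

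Concretely, the pivotal assertion ``the Stiefel--Whitney obstruction forces the pasting maps to realize an odd number of antipodal identifications'' is only meaningful, and only provable with the tools available at this point, for triangular characters: there one can argue that an even-parity configuration can be normalized by the $V$-action to the pattern $(+,+,+)$, exhibiting the character as lying in the image of the connected ${\mathcal C}_0$-construction map of Section \ref{sec:C0}, which contains the identity character, contradicting ${\mathcal C}_0\cap{\mathcal C}_1=\emptyset$. For a degenerate character --- the only case that parts 1 and 2 of the lemma are about --- the vertices, the generalized triangles, and the pasting maps are all non-unique (when some $h(c_i)=\Idd$ the vertex $v_i$ and the map $P_i$ are arbitrary, and for abelian characters such as $h^*$ itself everything degenerates), so ``parity'' is not even well defined until one \emph{constructs} a configuration realizing the normalized pattern, i.e.\ lying over $\clo(O)$ with angle data \eqref{eqn:anglecon}. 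The references you cite do not supply this: Remark \ref{rem:otherc} computes the lifting obstruction for the single representation $h^*$ only, and Proposition \ref{prop:multrule} concerns multiplication of rotations, not pasting maps. Carrying out the case analysis you gesture at amounts to the region-by-region study of $\tilde O\times T^3$ that the paper performs only later (Sections \ref{subsec:comp}--\ref{subsub:AA}); it does not follow ``verbatim'' from Proposition \ref{prop:dense0}, and since your density argument takes part 3 as input, the gap propagates to the whole proof. A smaller but related imprecision: your openness argument says nearby characters have angles ``away from $0,\pi$,'' but degeneracy is not only angles hitting $0$ or $\pi$ --- pointed points with angles summing to $\pi$ are degenerate too --- so the correct statement is that the angle triple stays in the open octahedron $O^o$, which is still an open condition since the angles depend continuously on the character.
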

\begin{proof} 
%Certainly $h^*$ satisfies the angle condition. 
%We will show that so do sufficiently nearby representations.

Choose a one near $h^*$ that is triangular
and hence it is represented by two generalized triangles 
$(\tri_0, \tri_1)$ with the angles $(\pi/2, \pi/2, \pi-\eps)$ and 
$(\pi/2, \pi/2, \pi-\eps)$ respectively and with vertices 
$v_0, v_1, v_2$ of $\tri_0$ and $v_0', v'_1, v'_2$ of $\tri_1$. 
Let us put them into standard positions. 
%At each vertex, the isometry associated with 
%is holonomies of one of the $c_i$s. 
Thus, a pasting map must send the corresponding vertex to 
a vertex, say $v$, in the other triangle or to the antipodal point 
$-v$. 
Use the pasting maps $P_0$ and $P_2$ identity and 
$P_1$ a rotation at $[0,0,1]$ sending $v_1$ to $-v'_1$. 
The associated representation $h_1$ is clearly near $h^*$. 

For some real numbers $\delta, \delta'$, and $\delta''$ with $|\delta|,|\delta'|,|\delta''| << \eps$, we can form 
two triangles with respective angles 
$(\pi/2+\delta, \pi/2-\delta', \pi-\eps+\delta'')$ and $(\pi/2+\delta, \pi/2+\delta', \pi-\eps+\delta'')$
with respective vertices $(v_0, v_1, v_2)$ and $(v'_0, v'_1, v'_2)$.
Put these triangles 
in standard positions with the pasting map $P_0$ fixing $v_0$ and the corresponding $P_1$ 
sending $v_1$ to $-v'_1$ and $P_2$ sending $v_2$ to $v'_2$. 
If the pasting angles are different, the representation is different. 
One can form a near triangular representation $h'$ to $h_1$ with triangles 
of angle $(\pi/2+\delta, \pi/2-\delta', \pi-\eps+\delta'')$ and $(\pi/2+\delta, \pi+\delta', \pi-\eps+\delta'')$
respectively. Therefore, we obtain six parameters of characters near the character of $h_1$.

Therefore, the subset of triangular characters is not empty in ${\mathcal C}_1$. 
Moreover, the function $f_i:{\mathcal C}_1 \ra R$ given by 
$[h] \mapsto \mathrm{tr} h(c_i)$ for each $i=0,1,2$ is not constant. 
Therefore, we see that the subset defined by $f_i=3$ is an algebraic subset 
whose complement is dense and is locally path-connected. 
The subspace of the abelian characters are also $4$-dimensional algebraic subset. 
Thus, any character can be pushed to a triangular character 
by a real analytic arc which is in triangular part except at the end. 
This proves the first part. 

The set of triangular characters with the angle conditions forms an open subset of ${\mathcal C}_1$
since any nearby triangular character should have uniquely defined triangles up to isometry 
and uniquely pasting maps nearby. 

Since all triangular characters satisfy the above angle condition, and 
these form a dense set, this completes the proof of the second part. 
\end{proof}

The set of possible nondegenerate triangles for $\tri_0$ and $\tri_1$ is then 
described as the intersection of 
$\tilde G^o \cap \kappa(\tilde G^o)$ where $\kappa$ is the map 
sending $(\theta_0,\theta_1,\theta_2)$ to 
$(\theta_0, \pi-\theta_1, \theta_2)$. 
Since $\tilde G^o$ is given by
\begin{eqnarray}
    \theta_{0}+\theta_{1}+\theta_{2} & >&  \pi \nonumber \\
    \theta_{0} & < & \theta_{1} + \theta_{2} - \pi, \nonumber \\
    \theta_1 &< & \theta_2 + \theta_0 -\pi \nonumber \\
    \theta_2 &<& \theta_0 +\theta_1 -\pi
    \label{e:tri2}
    \end{eqnarray}
it follows that our domain is an octahedron $O$
given by eight equations 
\begin{eqnarray}
    \theta_{0}+\theta_{1}+\theta_{2} & >&  \pi : (a) \nonumber \\
    \theta_0+\theta_2 &>& \theta_1 : (a') \nonumber \\
    \theta_{0} & < & \theta_{1} + \theta_{2} - \pi, : (c) \nonumber \\
    \theta_0+\theta_1 &<& \theta_2 : (c') \nonumber \\ 
    \theta_1 &< & \theta_2 + \theta_0 -\pi : (b) \nonumber \\
    2\pi &< & \theta_0+\theta_1+\theta_2 : (b') \nonumber \\ 
    \theta_2 &<& \theta_0 +\theta_1 -\pi : (d) \nonumber \\
    \theta_1+\theta_2 &<& \theta_0 : (d')
    \label{e:oct}
    \end{eqnarray}
    The paranthesis indicates the name of the corresponding sides. See Section \ref{subsec:comp}.
      
    For a character $h$ in ${\mathcal C}_1$, 
 $\tri_0$ and $\tri_1$ are in the octahedron and they are related by $\kappa$.

%%% August 24 9:16 pm
%\marginpar{I think I need to be more logical here...}

\subsection{The compactification $\tilde O$}\label{subsec:comp}

We define $\tilde O$ as the subset of $\clo(O)$ by 
additional requirements 
\[\eps \leq \theta_0 \leq \pi-\eps, \eps \leq \theta_1 \leq \pi-\eps, \eps \leq \theta_2 \leq \pi-\eps.\]
We obtain six additonal faces corresponding to vertices
\begin{multline}
B_B: \theta_0 = \eps, B'_{B'}: \theta_0=\pi-\eps, C_{C'}: \theta_1 =\eps, C'_C: \theta_1 = \pi-\eps, \\
A_A: \theta_2 = \eps, A'_{A'}: \theta_2 = \pi-\eps, \hbox{ for some } 0< \eps < 1/10
\end{multline}
where respective faces are given by $\clo(O)$ intersected with the planes of the respective equations. 
Actually, by $\tilde O$ we only mean the combinatorial facial structure of the above space.

We compactify $O$ into $\tilde O$: We map $O$ into $\tilde G \times \tilde G$ 
by sending $\tri_0$ to $(\tri_0, \tri_1)$  by a map $(\Idd, \kappa)$ 
where $\tri_1$ is the associated triangle 
with angles obtained by $\kappa$. 
Again the compactification is given by taking the closure of 
the image of $O$ under the map $(v(0), v(1), v(2), l(0), l(1), l(2),v(0)', v(1)', v(2)', l'(0), l'(1), l'(2)) $
where $v(i)$ and $l(i)$ denote the angle at $v_i$ and the length of $l_i$ of the upper triangle, 
and $v(i)'$ and $l(i)'$ denote the angle at $v_i$ and the length of $l_i$ of the lower triangle.
We show that the result has the same combinatorial structures as $\tilde O$. 
(See Figure {fig:tildeO}.) 
%We claim that $\tilde O$ can be imbedded as a truncation of the closure of $O$:
%For the vertex $[\pi,\pi,\pi]$, there is a coordinate plane $S$ tangent to it. 
%We take a plane $P$ parallel to $S$ of distance $\eps >0$ for sufficiently small 
%$\eps$ passing through $O$. We truncated $\clo(O)$ by $P$. 
%We do the same for other six vertices as well. 

Since geodesic rays in $\tilde G^o$ correspond to points of 
the boundary of $\tilde G$, and only the edge points of $G$ or $\kappa(G)$ are blown-up in $\tilde G$ or $\kappa(\tilde G)$, 
we see that $O$ compactifies in $\tilde G\times \kappa(\tilde G)$ 
so that each vertex gets expanded to a quadrilateral and no other point of the boundary of $O$ is changed:
A ray in $O$ gives us a ray in $\tilde G$ and a ray in $\kappa(\tilde G)$. 
Therefore, a sequence of upper triangles and the corresponding sequence of 
the lower triangles in a given 
ray may converge to two different generalized triangles respectively.
Recall that a ray $a$ in $\tilde G^o$ ending at the interior of an edge
has a limit in a segment times the edge in $\tilde G$. That is, a point 
is expanded to a segment.
Considering a ray $a$ and $\kappa(a)$, we see that at each vertex of $O$, 
a ray can converge to any point in an interval times an interval
where the corresponding parameters of 
upper triangles and lower triangles converges respectively
coordinate-wise. (See the proof of Theorem \ref{thm:T}.)
Thus $O$ compactifies to 
a blown-up octahedron, i.e., an octahedron where each vertex is 
expanded to a square. The proof is similar to Theorem \ref{thm:T} and
we omit it.

\begin{figure}

\centerline{\includegraphics[height=5cm]{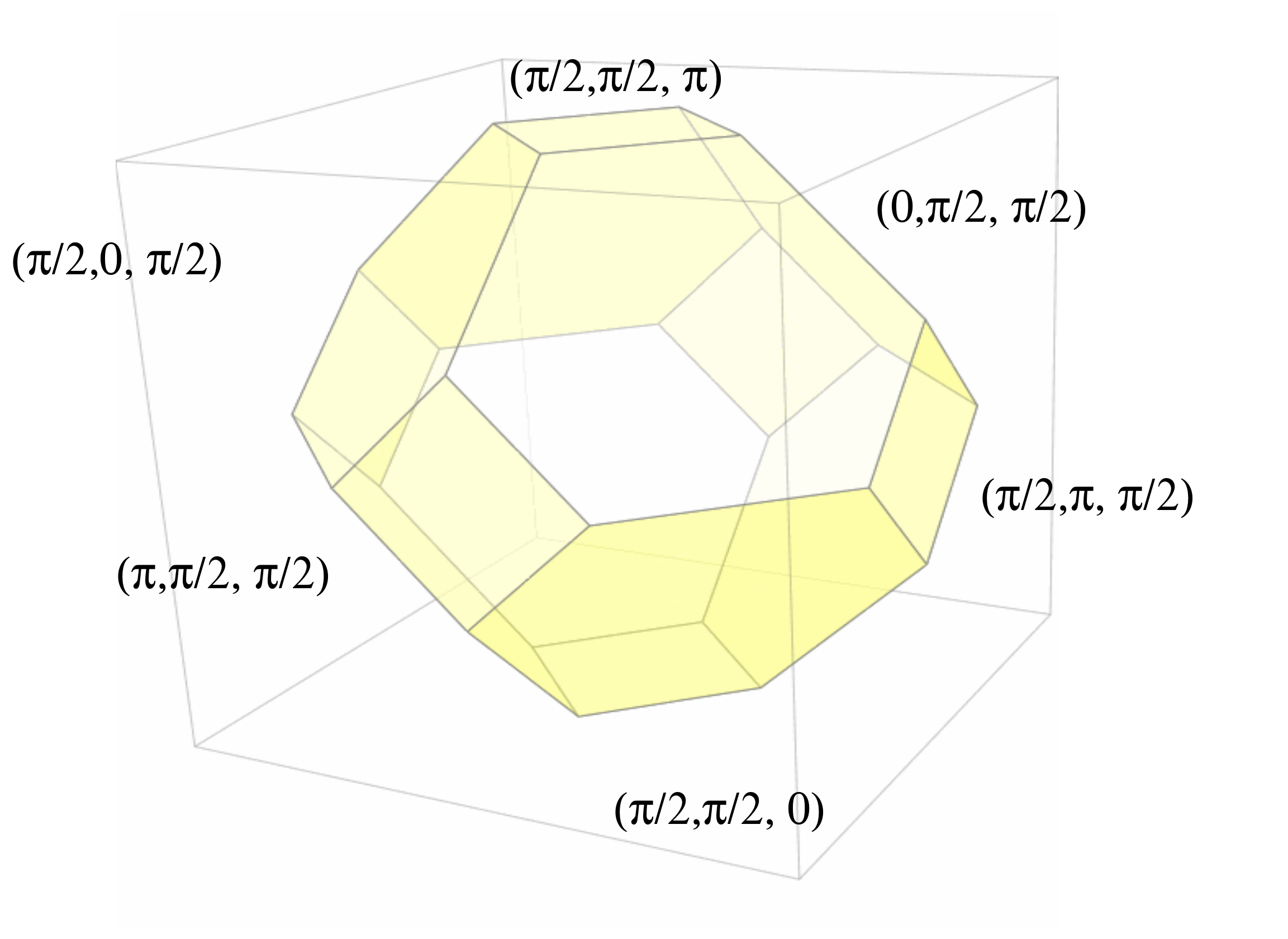}}

\centerline{\includegraphics[height=9cm]{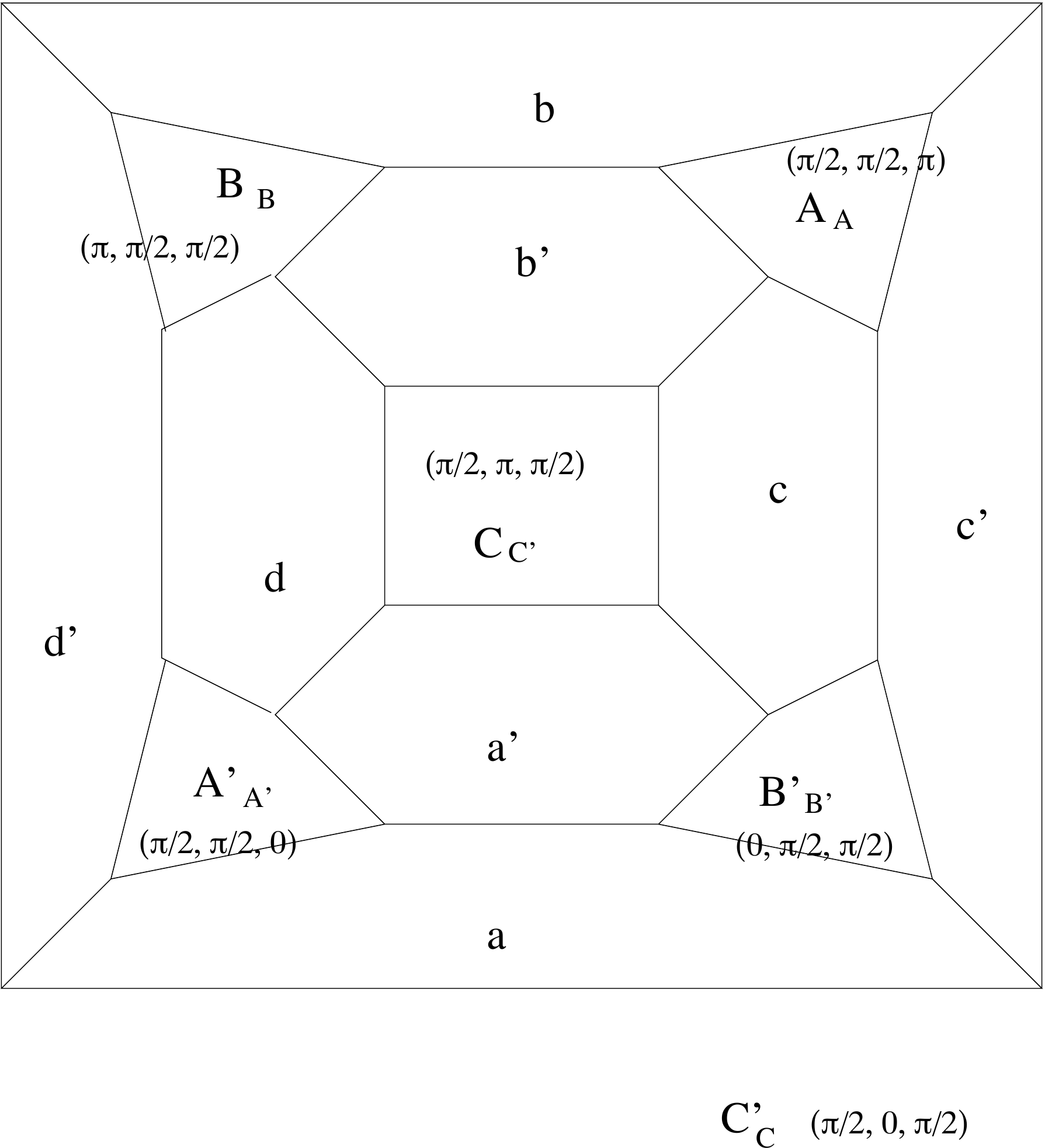}}
\caption{ $\tilde O$ and 
the faces of $\tilde O$ drawn topologically and on a plane using stereographic 
projection. 
%The fixed axis of involutions are 
%indicated by semi-arcs in the half-space below our plane
%or the vertical line in the center
}
\label{fig:tildeO}
\end{figure}

%We can visualize the octahedron as 
%an octahedron cut off at each of the top and bottom vertices by $\eps$-near planes parallel 
%to $\theta_1, \theta_2$-plane and each of the four vertices expanded to 
%a segment transverse to the edges of $\tilde G$ it is in.

The Klein four-group $\{I, I_{{A}}, I_{{B}}, I_{{C}}\}$ acts on the resulting polyhedron $\tilde O$ isometrically.
They are obtained by replacing $v_i$ to $-v_i$ for some $i=0,1,2$, and they have the same formula
as in the ${\mathcal C}_0$ case.
Of course, we need to verify these extensions but are analogously proven 
to the $\tilde G$ case. See Figure \ref{fig:tildeO} to see the topological actions.

They are as follows in terms of coordinates
%\begin{equation}
\begin{multline}
I_A: (v(0), v(1), v(2), l(0), l(1), l(2),v(0)', v(1)', v(2)', l'(0), l'(1), l'(2)) \mapsto \\
(\pi-v(0),\pi-v(1), v(2),  \pi-l(0),\pi-l(1), \\ l(2), \pi-v(0)',\pi-v(1)', v(2)', \pi-l(0)',\pi-l(1)',l(2)')
\end{multline} 
%\end{equation}
where $v(i)$ and $l(i)$ denote the angle at $v_i$ and the length of $l_i$ of the upper triangle, 
and $v(i)'$ and $l(i)'$ denote the angle at $v_i$ and the length of $l_i$ of the lower triangle.
The map $I_{{B}}$ changes the triangle with vertices $v_0, v_1$, and $v_2$ 
to one with $v_0, -v_1$, and $-v_2$:
%\begin{equation}
\begin{multline}
(v(0), v(1), v(2), l(0), l(1), l(2), v(0)', v(1)', v(2)', l(0)', l(1)', l(2)') \mapsto \\
(v(0),\pi-v(1), \pi-v(2), l(0),\pi-l(1), \pi-l(2), \\
v(0)',\pi-v(1)', \pi-v(2)', l(0)',\pi-l(1)', \pi-l(2)').
\end{multline} 
%\end{equation}
The map $I_{{C}}$ changes the triangle 
with vertices $v_0, v_1$, and $v_2$ to one with 
$-v_0, v_1,$ and $-v_2$:
%\begin{equation}
\begin{multline}
(v(0), v(1), v(2), l(0), l(1), l(2), v(0)', v(1)', v(2)', l(0)', l(1)', l(2)') \mapsto \\
(\pi-v(0),v(1), \pi-v(2), \pi-l(0),l(1),\pi-l(2),\\
 \pi-v(0)',v(1)', \pi-v(2)', \pi-l(0)',l(1)',\pi-l(2)').
\end{multline} 
%\end{equation}

%%% January 25 8:47
Since $\kappa$ is an involution, we have $\kappa^{-1}=\kappa$. 
We label the regions by first what region in $\tilde G$  it is from 
and second what region in $\kappa(\tilde G)$  it is from.
For faces, from $\tilde G$, we will simply denote it by $a$, $b$, $c$, or $d$ according 
to where it came from. For faces from $\kappa(\tilde G)$, we denote it 
by $a'$, $b'$, $c'$, or $d'$ according to where it came from among $a$, $b$, $c$, or $d$ 
by $\kappa$ respectively.
Of course, here the orderings in labels are important.
In the equation \eqref{e:oct}, the equations correspond to 
$a$, $a'$, $c$, $c'$, $b$, $b'$, $d$, and $d'$ respectively.

We describe the faces briefly. The parameters of each quadrilateral 
is given by tie parameters in $\tilde G$ and $\iota(\tilde G)$
since the rectangles are really a product of the two middle ties. 

\begin{itemize}
\item[($A_A$)] This region is a quadrilateral. 
The top triangle of a point over
this region is a lune with vertices $v_0$ and $v_1$, of angle $\pi/2$,
and $v_2$ is on the segment between $v_0$ and $v_1$. The bottom triangle is
a lune with vertices $v_0$ and $v_1=-v_0$ and $v_2$
on the segment between $v_0$ and $v_2$.
A point in corresponds to two generalized triangles 
$(\tri_0, \kappa(\tri_0))$. The functions $l(1)$ measures the length of the edge 
between $v_0$ and $v_2$ and $l'(1)$ measures the length of the edge 
between $v'_0$ and $v'_2$ parameterize the square with 
values in $[0,\pi]\times [0,\pi]$. 
Here the lune and the map above for $h^*$ correspond to the center of the square. % as 
%the corresponding bottom element is a segment of length $\pi/2$. 
\item[(${{A'_{A'}}}$)] The upper triangle is a segment with one endpoint $v_0=v_1$ 
both of angle $\pi/2$ and $v_2$ the other endpoint. 
The lower triangle is a segment with one endpoint $v_0=v_1$ both of angle $\pi/2$ 
and $v_2$ the other endpoint. 
\item[($B_B$)] This region is again a quadrilateral. 
The top triangle is a lune with vertices $v_1$ and $v_2$ antipodal to each other 
and $v_0$ is on an edge. The bottom one is a lune of same type.
\item[($B'_{B'}$)] This is again a quadrilateral and the top and the bottom
triangles are segments with vertices $v_1=v_2$ and $v_0$.
\item[($C_{C'}$)] This is again a quadrilateral and the top triangle is 
a lune with vertices $v_0$ and $v_2$ antipodal to each other. The bottom
triangle is a segment with an endpoint $v_0=v_1$ and the other endpoint $v_1$.
\item[($C'_C$)] This is again a quadrilateral and the top triangle and the bottom
one are reversed ones for $C_{C'}$.
\item[($b$)] This region is a hexagon. 
A point correspond to a representation where 
the top triangle is a pointed lune 
and $v_0=v_2$ and $v_1$ is antipodal to them. 
The bottom is a nondegenerate triangle except when we are at the boundary of $b$.
The boundary consists of segments in $A_{A}$, $B_{B}$, $C_{C'}$,and in $b',c',d'$.
This means that the bottom triangles are as in $A$, $B$, $C'$,and $b,c,d$
respectively.
\item[($c$)] The region is a hexagon. Again analogous statements to (b) hold.
\item[($d$)] The region is a hexagon. Again analogous statements to (b) hold.
\item[($a$)] The region is a hexagon. Analogous statements to (b) hold.
\item[($a',b',c',d'$)] These are hexagons and the additional edges 
are among $A_A$, $B_B'$, and $C_{C'}$. The triangles of the points are 
easily learned from from $a$, $b$, $c$, and $d$ respectively.
\end{itemize}

The regions from the faces of $O$ are said to be {\em faces} still, and 
the regions from the vertices of $O$ are said to be {\em square regions.}
Notice that edges are not blown-up here. 

Note that $\{\Idd, I_{{A}}\}$ acts on $A_A$ and $A'_{A'}$ but does not act on other rectangular regions. 
Similarly $\{\Idd, I_{{B}}\}$ acts on $B_B$ and $B'_{B'}$ but does not act on other regions
and $\{\Idd, I_{{C}}\}$ acts on $C_{C'}$ and $C'_C$ but does not act on other regions.
Finally, $I_{{A}}$ sends faces  $b$ to $c$ and $b'$ to $c'$ and $a$ to $d$ and $a'$ to $d'$ and vice versa.
$I_{{B}}$ sends faces $a$ to $c$ and $a'$ to $c'$ and $b$ to $d$ and $b'$ to $d'$ and vice versa.
 $I_{{C}}$ sends faces $c$ to $d$ and $a'$ to $b'$ and $a$ to $b$ and $c'$ to $d'$ and vice versa. 
 The pasting angles are sent as in the ${\mathcal C}_0$ cases. One can verify this using 
 a straightforward process.

\subsection{Equivalence relations on $\tilde O$}\label{subsec:tildeO}

We will now introduce equivalence relations on $\tilde O \times T^3$ so that 
the result is homeomorphic to the other component ${\mathcal C}_1$. 

First we define a continuous map $ \mathcal T:\tilde O\times T^3 \ra {\mathcal C}_1$:
We are given two triangles $(\tri_0,\tri_1)$ and 
three pasting angles, we define a representation. 
Put $\tri_0$ and $\tri_1$ into standard positions in the closed hemisphere $H$ 
as defined above. That is, for vertices $v_0, v_1, v_2$, we have $v_0 =[1,0,0]$ and $v_1$ on the closure $L$ of 
the component of $\partial H - \{[1,0,0],[-1,0,0]\}$ containing $[0,-1,0]$
and $v_2 \in H$. 

Let $v_0,v_1$, and $v_2$ be the vertices of $\tri_0$ and 
$v'_0,v'_1$, and $v'_2$ be those of $\tri_1$. 
Let $(\theta_0, \theta_1, \theta_2)$ denote the angles of $\tri_0$.

We always have $v_0 =[1,0,0]$ and $v_1 \in L$. 
Thus, given a pasting angle $(\phi_0, \phi_1, \phi_2)$, we define the pasting map $P_0$ 
to be $R_{v_0, \phi_0}$. 
Given $\phi_1$, we define the pasting map $P_1$ sending $v_1$ to $-v'_1$
to be the \[R_{-v_1',\phi_1} R_{[0,0,1],d'(v_1,-v_1')}
= R_{[0,0,1],d'(v_1,-v_1')} R_{v_1,\phi_1}\]
where $d(v_1, -v_1')$ is the counter-clockwise arclength between $v_1$ and $-v_1'$.  

Denote by $L_0$ the segment in $H$ with endpoints $v_0$ and $v_2$ in all cases
that has angle $\theta_0$ with $L$ at $v_0$. Let $\SI^1_0$ be the great circle containing $L_0$.
(Note that $\theta_0 = \pi/2$ when we are on $A_A, A'_{A'}, C_{C'},$ or $C'_C$ and 
$\theta_0=\pi$ when we are on $B_B$ or $B'_{B'}$.)

Suppose that at least one of the triangles is nondegenerate. Then
one of $v_2$ and $v_2'$ is in $H^o$ and 
hence $v_2$ and $v'_2$ are not antipodal, and hence there exists 
a unique isometry $R$ sending $v_2$ to $v_2'$ preserving 
the great circle through $v_2$ and $v'_2$ if $v_2 \ne v'_2$. 
If $v_2=v'_2$ we let $R$ be the identity. 
We define $P_2=R_{v'_2,\phi_2} R = R R_{v_2, \phi_2}$. 
Since $v_2$ and $v'_2$ lie on $L_0$, $R$ preserves  $\SI^1_0$. 

%If $v_2$ and $v'_2$ are antipodal, then they have to be on $\partial H$
%and must be over the region C_{C'} or C'_C.
%We let $R$ be the rotation about $[0,0,1]$ sending $v_2$ to $v'_2$. 

Suppose that both triangles are degenerate, and they are on 
one of the interiors of the six squares corresponding to the six vertices of $O$.

Suppose that $(\tri_0, \tri_1)$ is in the square $A_A$.
Put the two triangles into standard positions.
They are both lunes with vertices on a segment $L_0$. Then $R$ must 
preserve $\SI^1_0$. Therefore, we define the pasting maps $P_0, P_1, P_2$ as in the two paragraphs ago. 
If  $(\tri_0, \tri_1)$ is in the square ${{A'_{A'}}}$, we do similarly to $A_A$. 

Suppose that $(\tri_0, \tri_1)$ is in the square $B_B$. 
In this case, $v_0$ has a $\pi$-angle and 
$v_1$ is in $L$ and $v_2$ is in $-L$.  The pasting maps $P_0$ and $P_1$ are defined as above. 
For the pasting map $P_2$, we take as $R$ the rotation preserving $\SI^1_0=\partial H$ as prescribed two paragraphs ago. 
%For all other cases, we can define $P_0,P_1,P_2$ similarly.
For case $B'_{B'}$, the arguments are similar. 

For case $C_{C'}$, $\tri_0$ is a lune with vertices $v_0=[1,0,0]$ and $v_2=-v_0$ 
and $v_1$ is in $L$. We have that $\tri_1$ is a segment with an endpoint $v'_0=v'_2=[1,0,0]$ 
and $v'_1$ is in $L$. Then the pasting maps $P_0$ and $P_1$ are defined as above. 
For the pasting map $P_2$, we use the composition of $R$ preserving the great circle $\SI^1_0$
with a rotation at $v_2=[-1,0,0]$. 
For $C'_C$, the definitions are similar. 

%By Proposition \ref \label{prop:stdp}, 
The continuous dependences of $P_0, P_1,$ and $P_2$ on $\tilde O\times T^3$ follows from Proposition \ref{prop:stdp}.
%The continuous dependence of $P_2$ at regions over $A_A$, $A'_{A'}$, $B_B$, $B'_{B'}$, $C_{C'}$, and $C'_C$ 
%follows from the fact that the used isometries that
%depend continuously on $\SI^1_0$, and hence on $\theta_0$, and $v_2$ and $v'_2$ and, 
%thus, on the all of the coordinates of $\tilde O \times T^3$.
Since $h^*$ is represented by two triangles with vertices
$[1,0,0]$, $[0,1,0],$ and $[0,0,1]$ where $\phi_0,\phi_1$, and $\phi_2$ are 
all zero, it follows that our map $\mathcal T$ is into ${\mathcal C}_1$. 
Since the subspace of triangular characters are dense in ${\mathcal C}_1$ 
and $\tilde O$ is compact, and our map is 
onto the triangular subspace, $\mathcal T$ is onto ${\mathcal C}_1$.

We now extend the action of $\{\Idd, I_A, I_B, I_C\}$ on $\tilde O \times T^3$ 
from the action on $\tilde O$ in Section \ref{subsec:comp}
as in equation \eqref{eqn:iabc4}. This is our first equivalence relation.

\begin{prop}\label{prop:calt} 
Using pasting map construction as above, we have a continuous onto map
\[ \mathcal T: \tilde O \times T^3 \ra {\mathcal C}_1.\]
Furthermore, we have 
\[\mathcal T \circ I_A = \mathcal T \circ I_B = \mathcal T \circ I_C =\mathcal T.\]
\end{prop}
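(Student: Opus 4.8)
Since the continuity and surjectivity of $\mathcal{T}$ were already established in the paragraphs preceding the statement, the plan is to verify only the invariance $\mathcal{T}\circ I_A = \mathcal{T}\circ I_B = \mathcal{T}\circ I_C = \mathcal{T}$. The strategy copies the one used for ${\mathcal C}_0$ in the discussion following equation \eqref{eqn:iabc4}: I will show that each of $I_A, I_B, I_C$ merely records an alternative choice of fixed points for the boundary holonomies $h(c_i)$, so that the actual $\SOThr$-elements produced by the pasting construction are unchanged and hence the resulting character is the same.

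First I would recall, from Section \ref{subsec:comp}, the geometric content of the action on $\tilde O$: $I_A$ sends the vertices $v_0, v_1$ of the upper triangle $\tri_0$ (and the matching $v'_0, v'_1$ of the lower triangle $\tri_1=\kappa(\tri_0)$) to their antipodes while fixing $v_2, v'_2$, and $I_B, I_C$ do the same for the other two pairs of indices. On the $T^3$-factor the action is \eqref{eqn:iabc4}, so $I_A$ fixes $\phi_0$ and sends $\phi_1 \mapsto 2\pi-\phi_1$, $\phi_2 \mapsto 2\pi-\phi_2$. The heart of the argument is the identity $R_{x,\theta} = R_{-x, 2\pi-\theta}$ in $\SOThr$: replacing a fixed point $v_i$ by $-v_i$ and simultaneously replacing the measured angle $\phi_i$ by $2\pi-\phi_i$ leaves the underlying rotation unaltered. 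I would verify vertex by vertex that the boundary holonomies $h(c_i)$ and the pasting maps $P_0, P_1, P_2$ built from the transformed data $I_A(x,\phi_0,\phi_1,\phi_2)$ are literally the same isometries as those built from $(x,\phi_0,\phi_1,\phi_2)$, the angle flips in \eqref{eqn:iabc4} exactly cancelling the antipodal change of vertices (recall also that the triangle angle $\theta_i\mapsto \pi-\theta_i$ doubles to $2\theta_i\mapsto 2\pi-2\theta_i$, so $h(c_i)$ is preserved as well).

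The step I expect to be the main obstacle is the treatment of $P_1$ and $P_2$, which are not plain rotations about a single vertex: $P_1 = R_{-v_1',\phi_1}\,R_{[0,0,1],d'(v_1,-v_1')}$ carries $v_1$ to $-v_1'$, and $P_2 = R_{v_2',\phi_2}\,R$ where $R$ moves $v_2$ to $v_2'$ along the great circle $\SI^1_0$. Here I would use that $\kappa$ commutes with negating a pair of vertices (a direct check on the angle coordinates), so both triangles transform consistently and the auxiliary rotations $R_{[0,0,1],d'(v_1,-v_1')}$ and $R$ are carried to the corresponding maps for the transformed configuration; combined with $R_{x,\theta}=R_{-x,2\pi-\theta}$ this reproduces the same composite isometries $P_1, P_2$. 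Since $h(d_1)=P_0^{-1}P_1$ and $h(d_2)=P_0^{-1}P_2$ by Proposition \ref{prop:d1d2} are then unchanged, the only remaining discrepancy between $\mathcal{T}'(I_A(x,\phi_0,\phi_1,\phi_2))$ and $\mathcal{T}'(x,\phi_0,\phi_1,\phi_2)$ is the global $\SOThr$-conjugation needed to re-standardize the transformed triangles; this is absorbed by the conjugation defining $\mathcal{C}_1$, giving $\mathcal{T}\circ I_A = \mathcal{T}$. The proofs for $I_B$ and $I_C$ are identical after permuting the roles of the indices $0,1,2$.
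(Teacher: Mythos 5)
Your route is the same as the paper's: the paper's entire argument for the invariance is that $I_A$ ``acts simultaneously on the upper and the lower triangles and on the pasting angles'' and that ``by geometry'' the constructed character is unchanged, and your proposal is precisely an attempt to make that geometry explicit via $R_{x,\theta}=R_{-x,2\pi-\theta}$, the doubling $2\theta_i\mapsto 2\pi-2\theta_i$ for the boundary holonomies, and a re-standardizing conjugation. That is the intended argument, and your remark that $\kappa$ commutes with negating a pair of vertices is correct and is genuinely needed.

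However, the cancellation you assert fails with the two formulas you actually quote, and this is a concrete gap. Section \ref{subsec:comp} has $I_A$ negating $v_0,v_1$ (and $v'_0,v'_1$) while fixing $v_2,v'_2$, whereas equation \eqref{eqn:iabc4} has $I_A$ fixing $\phi_0$ and flipping $\phi_1,\phi_2$. Cancellation via $R_{x,\theta}=R_{-x,2\pi-\theta}$ requires each flipped pasting angle to sit at a negated vertex, and with the quoted pairing this fails at index $0$: the vertex is negated but $\phi_0$ is kept, so after re-standardization the map $P_0=R_{v_0,\phi_0}$ is replaced by the isometry corresponding to $R_{-v_0,\phi_0}=R_{v_0,2\pi-\phi_0}\neq P_0$. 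It fails again at index $2$, where the vertex is kept but $\phi_2$ is flipped, so $P_2$ changes as well. Since $h(d_1)=P_0^{-1}P_1$ and $h(d_2)=P_0^{-1}P_2$ by Proposition \ref{prop:d1d2}, and since for a generic nondegenerate configuration the conjugator is already forced by matching the $h(c_i)$'s, the two characters genuinely differ for generic $\phi_0,\phi_2$; so the ``vertex by vertex'' check you promise would refute, not confirm, the displayed formulas. The mismatch is inherited from the paper itself: \eqref{eqn:iabc4} was written for the convention of Section \ref{sec:C0}, where $I_A$ is the ``opposite triangle at the vertex $v_0$,'' i.e.\ negates $v_1,v_2$, while Section \ref{subsec:comp} uses the convention negating $v_0,v_1$. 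Your proof goes through once you realign them, pairing negation of the vertices with indices $i,j$ with the flips $\phi_i\mapsto 2\pi-\phi_i$, $\phi_j\mapsto 2\pi-\phi_j$. A secondary point your ``global conjugation'' gloss hides: unlike the ${\mathcal C}_0$ case, the upper and lower triangles here are different (related by $\kappa$), so they re-standardize by different isometries $g$ and $g'$; you must check that the pasting maps built from the transformed data equal $g'P_ig^{-1}$, so that $g'$ cancels in the products defining $h(d_1),h(d_2)$ and a single conjugation by $g$ remains.
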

\begin{proof} 
The continuity and the surjectivity were proved above. 
Consider the transformation $I_A$ on $\tilde O$. It acts simultaneously on the upper and the lower
triangles and on the pasting angles. By geometry, we see that this does not change the character constructed. 
The similar arguments hold for $I_B$ and $I_C$. 
$\quad\quad$
\end{proof}

We aim to introduce an equivalence relation on $\tilde O\times T^3$ so that 
the induced map is one-to-one onto.

\subsubsection{The equivalence relations on faces  $a,b,c,d,a',b',c',$ and $d'$}
\label{subsub:abcda'b'}

There are equivalence relations on
the faces $a,b,c,d,a',b',c',$ and $d'$. 
We will consider $a$ first. Then here the top triangle is a pointed point and 
the bottom triangle a nondegenerate triangle.

($a$) We consider $a$ first. 
The equivalence relation is given by the fact that a representation of 
a pair of pants corresponding to a pointed point %or a lune with every $v_i$ equal to the vertices of the lune 
has a stabilizer in $\SOThr$ isomorphic 
to a circle. Thus the pasting maps $P_0$, $P_1, P_2$ are determined only up to 
a right multiplication by an isometry fixing the point. This changes the pasting 
angles to be added by  the same angle. 
This changes $P_i$ to $P_i R_{[1,0,0],\theta}$ as the pointed point is 
at $[1,0,0]$ in the standard position. We rewrite it as 
$P_i R_{[1,0,0],\theta} P_i^{-1} P_i$. The first three terms 
give us a rotation of angle $\theta$ at $P_i([1,0,0])$. This changes 
the rotation angles by $\theta$, and gives us an $\SI^1$-action 
given by sending \[(\phi_0,\phi_1,\phi_2) \mapsto (\phi_0+\theta,\phi_1+\theta,\phi_2+\theta)
\hbox{ for } \theta \in \SI^1.\]  This action gives us the equivalence relation on $a$. 
(This section is similar to Section \ref{subsec:abcd}.)
%So the equation determining the equivalence class representatives
%should be $\phi_0+\phi_1+\phi_2=0$ as $2\pi=0$ in $\SI^1\times \SI^1\times \SI^1$
%with an order-three translation action. 
%We take the subset of $T^3$ given by this equation. Looking at $T^3$ 
%as a quotient space of the cube $[0,2\pi]^3$, we see that the plane 
%corresponds to the union of two triangles with vertices 
%$\tri_1=\tri([2\pi,0,0],[0,2\pi,0],[0,0,2\pi])$ and $\tri_2=\tri([2\pi,2\pi,0],[2\pi,0,2\pi],[0,2\pi,2\pi])$. 
%Then two triangles form a $2$-tori $T'$ under the identifications in $T^3$. 
%There is an identification by the above $\SI^1$-action. 
%$T'$ is isometric with a torus in the plane quotient out by a lattice generated by 
%translation by $(2\pi\sqrt{2},0)$ and $(\pi\sqrt{2}, \pi\sqrt{6})$. 
%The identification group is generated by a translation by $(\pi, \pi\sqrt{6}/3)$,
%which is of order three. 
The quotient space is homeomorphic to a $2$-torus, which we denote by $T^2_{a, 2}$
as in the $\mathcal C_0$ case.
Thus, the character space here is in one-to-one correspondence with 
$a\times T^2_{a, 2}$. There are no more equivalences here since the lower triangle 
is nondegenerate.

Furthermore, the $\SI^1$-action extends to $a \times T^3$ and 
still gives equivalent pastings. 

For regions $b,c$, and $d$, these are all equivalent to $a$ by the
the action of the group generated by $I_{{A}},I_{{B}},I_{{C}}$. 
Thus, similar conclusions hold. 
\begin{itemize}
\item[($b$)] The $\SI^1$-action
\[(\phi_0,\phi_1,\phi_2) \mapsto (\phi_0+\theta,\phi_1-\theta,\phi_2+\theta)
\hbox{ for } \theta \in \SI^1.\]
The equation for the equivalence class representatives is $\phi_0 -\phi_1 +\phi_2=0$.
\item[($c$)] The $\SI^1$-action
\[(\phi_0,\phi_1,\phi_2) \mapsto (\phi_0-\theta,\phi_1+\theta,\phi_2+\theta)
\hbox{ for } \theta \in \SI^1.\]
The equation for the equivalence class representatives is $-\phi_0 +\phi_1 +\phi_2=0$.
\item[($d$)] The $\SI^1$-action
\[(\phi_0,\phi_1,\phi_2) \mapsto (\phi_0+\theta,\phi_1+\theta,\phi_2-\theta)
\hbox{ for } \theta \in \SI^1.\] 
The equation for the equivalence class representatives is $\phi_0 +\phi_1 -\phi_2=0$.
\end{itemize}
There is again the order $3$ translation group action giving us a $2$-torus as a quotient in each case. 
Here the $\pm$ of $\pm \theta$ corresponds to the action of $\SI^1$ on the pasting 
angles. (These $\SI^1$-actions all extend to the boundary where they all give 
equivalent pastings. )

The upper and lower triangles are related by the relation in $O$:
\[(\theta_0, \theta_1, \theta_2) \leftrightarrow
(\theta_0, \pi-\theta_1, \theta_2).\]
For regions $a^{\prime}, b^{\prime}, c^{\prime}$, and $d^{\prime}$, these 
are related to $a$, $b$, $c$, and $d$ by switching the upper triangle 
with the lower ones and taking inverses of the pasting maps $P_0,P_1,P_2$. 
\begin{itemize}
\item[($a'$)] Here the upper triangle is nondegenerate in the interior of $a'$ 
and the lower one is a pointed point, say $v$. Let $v_0, v_1, v_2$ denote 
the vertices of the upper triangle. 
The pasting map $P_1$ maps $v_1$ to the antipodal point of $v$ and $P_0,P_2$ sends $v_0, v_2$ to $v$. 
By reasoning like the case $a$, we obtain that
$\SI^1$-action fixing $v$, gives us an $\SI^1$-action 
\[(\phi_0,\phi_1,\phi_2) \mapsto (\phi_0+\theta,\phi_1-\theta,\phi_2+\theta)
\hbox{ for } \theta \in \SI^1.\] 
The equation for the equivalence class representatives is $\phi_0 -\phi_1 +\phi_2=0$
\item[($b'$)] By a similar reason to above,
the $\SI^1$-action is given by 
\[(\phi_0,\phi_1,\phi_2) \mapsto (\phi_0+\theta,\phi_1+\theta,\phi_2+\theta)
\hbox{ for } \theta \in \SI^1.\] 
The equation for the equivalence class representatives is $\phi_0 +\phi_1 +\phi_2=0$
\item[($c'$)] The $\SI^1$-action is given by 
\[(\phi_0,\phi_1,\phi_2) \mapsto (\phi_0-\theta,\phi_1-\theta,\phi_2+\theta)
\hbox{ for } \theta \in \SI^1.\] 
The equation for the equivalence class representatives is $-\phi_0 -\phi_1 +\phi_2=0$
\item[($d'$)] The $\SI^1$-action is given by 
\[(\phi_0,\phi_1,\phi_2) \mapsto (\phi_0+\theta,\phi_1-\theta,\phi_2-\theta)
\hbox{ for } \theta \in \SI^1.\] 
The equation for the equivalence class representatives is $\phi_0 -\phi_1 -\phi_2=0$.
\end{itemize}
Again, there is an order-$3$ translation group action in each case giving us a $2$-torus as a quotient. 

The $\SI^1$-actions can be described as orthogonal directional one to the faces 
and the representative equations are tangent directions to the faces. 
Also, $I_{{A}},I_{{B}},I_{{C}}$ permute $\SI^1$-actions
and the representative directions equations. 

In the interior of $a, b, c, a', b', c'$, the equivalence relation is precisely the ones given above since
one of the upper and lower triangles is nondegenerate and the vertices are well-defined. 

Up to the Klein group action, there are only two faces, say $a'$ and $c$.

%%% January 26 9:31

\subsubsection{Equivalence relations over the edges.}\label{subsub:edge}

When two faces meet, $\SI^1$-actions induced from the two faces 
are different ones and they generate a $T^2$-action which gives equivalent points
over the edges. %These will be the equivalence relations we will consider. 
In the interior of each edge, these two will be the only relations we need to consider:  
The upper triangle and the lower triangle are either a pointed-point or a pointed segment of 
length $\pi$ with all the vertices equal to the endpoints. Hence there are two 
$\SI^1$-actions corresponding to the upper triangle and the lower one. 
In the interior of the edge, these point or segments are well-defined 
up to isometries of $\SI^2$. 

Here we obtain as the quotient space a circle over each point of the interiors of edges as the quotient space, and 
the circles form annuli over the interiors of the edges. We obtain $12$ open annuli. 

Up to the Klein four group action, we only have three edges and three open annuli.

%\begin{rem} \label{rem:octahedral}
%Suppose we ignore $I_{{A}},I_{{B}},I_{{C}}$ actions.
%Let $O$ be the octahedron obtained by collapsing back the vertex rectangles to points. 
%Consider the $O \times T^3$ 
%and we extend $\SI^1$-actions over the faces to torus over edges and vertices.
%Then we obtain a toric variety over the octahedron.
%This is well-studied algebraic variety. (See Fulton \cite{Fulton}.) 
%Let us call it octahedral variety.
%\end{rem}

%%% January 26 9:46

\subsubsection{The equivalence relations on faces  $A_A$,$A'_{A'}$,$B_B,B'_{B'},C_{C'},C'_C$.}
\label{subsub:AA}
We will introduce equivalences over the six regions, $A_A$,$A'_{A'}$,$B_B,B'_{B'},C_{C'},C'_C$.
Each equivalence relation is simply the identification producing 
the same characters. We show that each of the quotient spaces is homeomorphic to $\SI^3$ 
and its tubular neighborhood is homeomorphic to $\SI^3\times B^3$. 
Also, we show that 
there is a branch double-cover by $\SI^3$ and by $\SI^3\times B^3$.

First, we go to the square $A_A$.  
Here the top triangle is a lune with vertices $v_0$ and $v_1$ and 
$v_2$ on an edge of a lune. The bottom triangle is a lune with
vertices $v'_0$ and $v'_1=-v'_0$ and $v'_2$ on an edge of the lune. 
The vertex angles are all $\pi/2$.
These are put into standard positions.

We note that $d_1=P_0^{-1}P_1$ and $d_2=P_0^{-1}P_2$ for the pasting maps $P_0, P_1,$ and $P_2$.
We will find preferred fixed points of $d_1$ and $d_2$ respectively on $\SI^2$.

%%% Probl?
\begin{figure}
\centerline{\includegraphics[height=7cm]{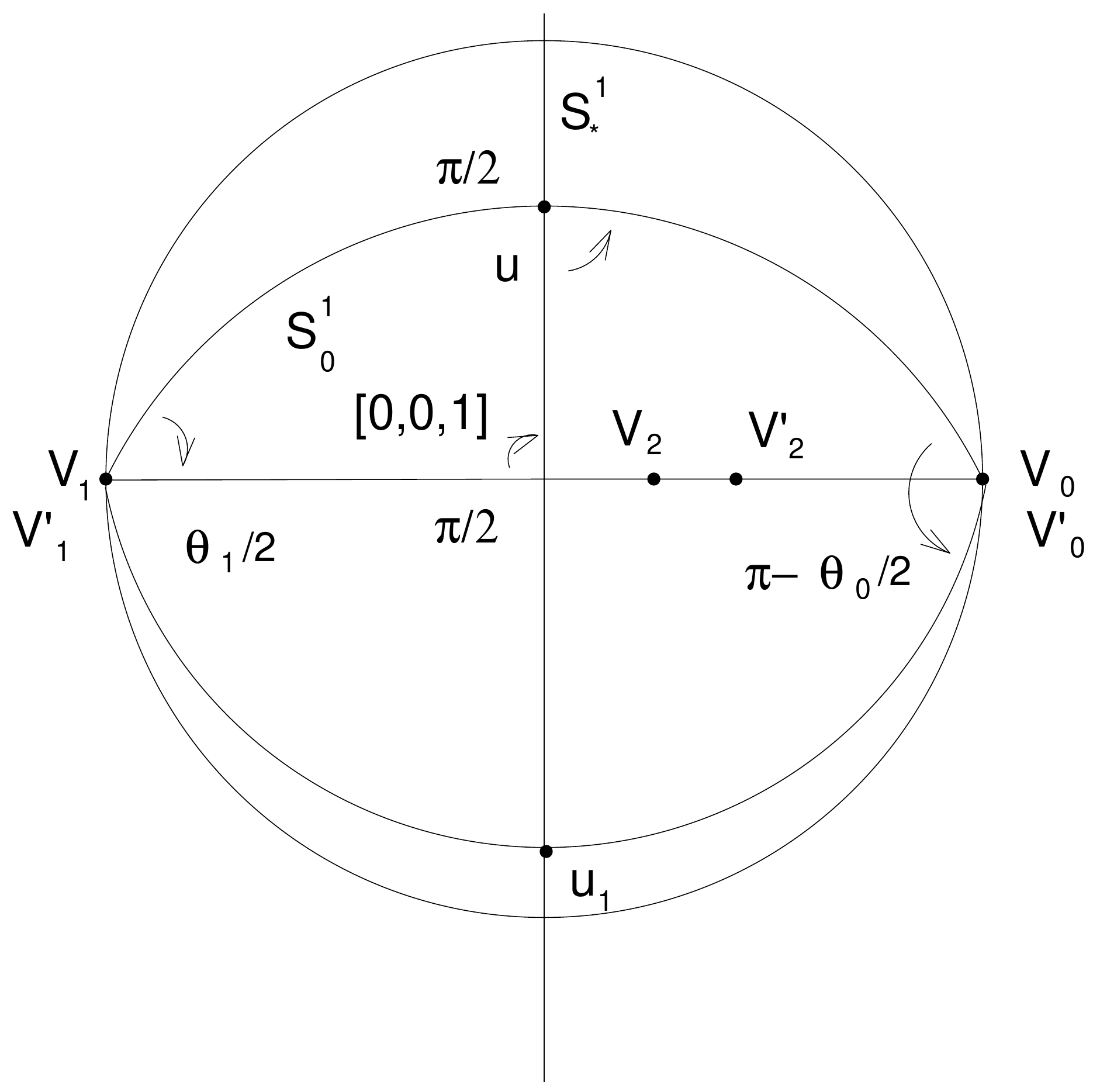}}
\caption{How to find a fixed point of $h(d_1)$ for region $A_A$.}
\label{fig:figA}
\end{figure}

First, the pasting map $P_0$ is a rotation $R([1,0,0],\theta_0)$. 

And $P_1$ is a composition of rotation of angle $\theta_1$ at $[-1,0,0]$ 
with $R([0,0,1],\pi)$. %(Recall we are allowed to change the order of composition here.)
Thus, $P_1$ sends $[1,0,0]$ to $[-1,0,0]$ 
and has $-1$ as an eigenvalue. 
A fixed point is given as a vertex of a triangle with vertex 
$[0,0,1]$ and $[-1,0,0]$ with respective angles $\pi/2$ and 
$\theta_1/2$ respectively. We let $u$ be this vertex
which lies on the great circle $\SI^1_o$ passing through $[\pm 1,0,0]$ 
and perpendicular to $\SI^1_*$ passing through $[0,1,0]$ and $[0,0,1]$.

%This means that there is a fixed point $u$ 
%determined up to antipodes
%An invariant great circle $\SI^1_2$ containing $[\pm 1,0,0]$. 

To obtain the fixed point of $d_1$, we draw a triangle with vertex $u$ 
and $[1,0,0]$ with respective angles $\pi/2$ and $\pi-\theta_0/2$. 
The third vertex $u_1$ of this triangle is a fixed point of $d_1$. 
Again $u_1$ is on $\SI^1_*$. The rotation angles is $\pi$ clearly 
as the angle of the the vertex is $\pi/2$.
Also, $u_1$ is the intersection point of $\SI^1_*$ and the segment 
with vertices $[1,0,0]$ and $[-1,0,0]$ having 
angle $\pi-\theta_0/2-\theta_1/2$ with the segment 
connecting $[1,0,0]$ with $[0,0,1]$. 
Thus, $u_1$ could be any point on $\SI^1_*$. (See Figure \ref{fig:figA}.)
From construction, the rotation angle of $h(d_1)$ is $\pi$ always. 

Now, $[1,0,0]$ and $u_1$ will give us a coordinate system $\kappa_{u_1}$ on $\SI^2$ so that 
$[1,0,0]$ has coordinates $[1,0,0]$ and $u_1$ the coordinates $[0,0,1]$.

Consider $\SI^3$ as a double cover of $\SOThr$, i.e., as a ball of radius $2\pi$ in $\bR^3$ 
with the outer sphere identified to a point. 
The coordinate expression of $d_2$ with a choice of the set of fixed points and the angle
correspond to a point of $\SI^3$.
The coordinates of $d_2$ under the coordinate system $\kappa_{u_1}$ give us 
a two-valued function on ${A_A}\times T^3 \ra \SI^3$. 
The function depends on the choice of the fixed point $u_1, -u_1$ for $h(d_1)$ 
and the choice among the pair in the set of fixed points of $h(d_2)$ as well. 
This generates a Klein four-group $V$ action on $\SI^3$. Therefore, we obtain a map
$A_A\times T^3 \ra \SI^3/V$. 
(We remark that if we are on $\SUTw$, we can find a canonical choice.)

%Since our representation space over $A_A$ can be seen as a configuration 
%space of the fixed points of $h(d_1)$ of rotation angle $\pi$ and $h(d_2)$ and the angle of the latter 
%and the choice of $v_0, v_1$, we see that the above map is a one-top-one correspondence. 

By an easy topological argument, $V$ acts by two transformations $S$ and $T$:
$S$ on $\SI^3$ is induced by a rotation of angle $\pi$ at $[1,0,0]$; 
that is, sending an isometry to the isometry of same rotation angle but with the fixed point 
changed by the rotation of angle $\pi$ at $[1,0,0]$  and by the transformation $T$ equal to 
the antipodal map on $\SI^3$. 
Then the set of fixed points of $S$ is the circle through the identity, consisting of all rotations 
with fixed points $[1,0,0]$ and $[-1,0,0]$. The set of fixed points of $T$ is empty. 
The set of fixed points of $S T$ is given by isometries with fixed points 
at the circle though $[0,1,0]$ and $[0,0,1]$ of rotation angle $\pi$.

\begin{rem}\label{rem:4d} 
Here, it is useful to see $\SI^3$ as a unit sphere in $\bR^4$. 
Then $S$ is represented as a diagonal matrix with entries $1,-1,-1,1$
and $T$ equals $-I$ and $ST$ is a diagonal matrix with entries $-1,1,1,-1$. 
Thus, the set of fixed points of $S$ is given by the $xt$-plane intersected with $\SI^3$ and 
the set of fixed points of $S T$ is given by the $yz$-plane intersected with $\SI^3$. 
Hence, the branch locus is a link consisting of two trivial knots 
with one crossing point under a projection. 
\end{rem}

Finally, $\{I, I_A\}$ acts on $A_A$. This is the action sending $v_0$ to $v_1$ and $v_1$ to $v_0$ and fixing $v_2$ 
and $v'_0$ to $v'_1$ and $v'_1$ to $v'_0$ and fixing $v'_2$. 
The angles change $\theta_0 \ra 2\pi-\theta_0$ and $\theta_1 \ra 2\pi-\theta_1$ but $\theta_2$ is not changed. 
(See Section \ref{subsec:comp} for equations of $I_A$.)

\begin{lem}\label{lem:IA}
$I_A$ acts on $\SI^3$ as an involution with a circle as a fixed point set. 
The fixed points form a circle meeting each of the above two fixed circles of $V$ twice. 
Let $V'$ be the group generated by $V$ and $I_A$, the resulting space $\SI^3/V'$ is homeomorphic to $\SI^3$. 
%By Remark \ref{rem:4d},
%the quotient map $\SI^3 \ra \SI^3/V$ is a branch-covering with the branch-locus equal to a link 
%of two unknotted circles with the linking number $1$. 
%resulting space $\SI^3/V'$ is homeomorphic to $\SI^3$. 
\end{lem}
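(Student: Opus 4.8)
The plan is to carry out everything in the linear model of $\SI^3\subset\bR^4$ fixed in Remark \ref{rem:4d}, where the Klein four-group $V=\{I,S,T,ST\}$ already acts by the diagonal orthogonal matrices $S=\mathrm{diag}(1,-1,-1,1)$, $T=-I$, and $ST=\mathrm{diag}(-1,1,1,-1)$, so that the fixed circle of $S$ is $C_{xt}=\SI^3\cap\{y=z=0\}$ and that of $ST$ is $C_{yz}=\SI^3\cap\{x=t=0\}$, while $T$ is fixed-point-free. Since $I_A$ preserves characters ($\mathcal T\circ I_A=\mathcal T$ by Proposition \ref{prop:calt}) and normalizes the choice-ambiguity group $V$, it descends to an involution of $\SI^3/V$ and lifts to a self-map of $\SI^3$; the first task is to show this lift is again orthogonal and to identify it.

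First I would trace the geometric recipe for the two-valued map $A_A\times T^3\to\SI^3$ through the transformation $I_A$. Recall that $I_A$ replaces the antipodal vertex pair $v_0,v_1=-v_0$ by $-v_0,-v_1$ (equivalently swaps $v_0\leftrightarrow v_1$), fixes $v_2$, and sends the pasting angles by the rule of equation \eqref{eqn:iabc4}; after restandardizing by the rotation $R([0,0,1],\pi)$ the isometries $h(d_1),h(d_2)$ are unchanged, while the preferred fixed point $u_1\in\SI^1_*$ of $d_1$ and hence the frame $\kappa_{u_1}$ are altered in a controlled way. Reading off the resulting change of the coordinates of $d_2$, I expect to find that $I_A$ acts on $\SI^3\subset\bR^4$ as a diagonal orthogonal involution with a two-dimensional $+1$-eigenspace, namely (up to multiplication by $T=-I\in V$) the double flip $\mathrm{diag}(1,1,-1,-1)$, whose fixed locus is the circle $C_{xy}=\SI^3\cap\{z=t=0\}$. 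This computation is the main obstacle: one must keep the frame-change and the pasting-angle sign changes consistent so that the induced map is visibly linear, rather than merely a topological involution.

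Granting this, the fixed-circle incidences are immediate: $C_{xy}\cap C_{xt}=\{(\pm1,0,0,0)\}$ and $C_{xy}\cap C_{yz}=\{(0,\pm1,0,0)\}$, two points in each case, which is the second assertion. (The alternative value $\mathrm{diag}(-1,-1,1,1)=T\cdot\mathrm{diag}(1,1,-1,-1)$ fixes the circle $C_{zt}$ and has the same incidence pattern; since it differs from the previous one by $T\in V$, it generates the same $V'$, so the argument is insensitive to this ambiguity.) Because $I_A$ is an involution commuting with the diagonal group $V$ and is not itself in $V$ (its fixed circle is neither $C_{xt}$, $C_{yz}$, nor empty), the group $V'=\langle V,I_A\rangle$ is exactly the order-$8$ subgroup of $O(4)$ consisting of the diagonal sign matrices $\mathrm{diag}(\epsilon_1,\epsilon_2,\epsilon_3,\epsilon_4)$ with $\prod_i\epsilon_i=1$.

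Finally I would compute $\SI^3/V'$ by comparison with the full sign group $W$ of order $16$. The squaring map $q\colon(x,y,z,t)\mapsto(x^2,y^2,z^2,t^2)$ identifies $\SI^3/W$ with the $3$-simplex $\Delta=\{(a,b,c,d)\ge0:\ a+b+c+d=1\}$, a $3$-ball whose boundary is a $2$-sphere $\partial\Delta$. Since $V'$ has index two in $W$, the induced map $\SI^3/V'\to\Delta$ is a double branched cover: over the interior of $\Delta$ (all coordinates nonzero) the nontrivial coset $W\setminus V'$ acts freely and the fibre has two points, while over $\partial\Delta$ (at least one coordinate vanishing) the reflection in a vanishing coordinate lies in $W\setminus V'$ and stabilizes the point, so the fibre is a single point. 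Thus $\SI^3/V'$ is the double of the ball $\Delta\cong B^3$ along $\partial\Delta\cong\SI^2$, which is $\SI^3$. This establishes the last assertion of the lemma.
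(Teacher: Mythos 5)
Your write-up correctly sets up the linear model of Remark \ref{rem:4d}, but the first assertion of the lemma --- the one carrying all of its content --- is never actually proved: you say you \emph{expect} $I_A$ to act as $\mathrm{diag}(1,1,-1,-1)$ up to an element of $V$, and you yourself flag this as the main obstacle, but the proposal stops there. That identification is precisely what the paper's proof consists of, and it does not come for free. Concretely, one must trace the construction of Section \ref{subsub:AA}: under $I_A$ the pasting maps transform in the $\SUTw$ lift as $P_0\mapsto -P_0$, $P_1\mapsto -P_1$, $P_2\mapsto P_2$ (by equation \eqref{eqn:minusid}), so that $h(d_1)$ is unchanged while $h(d_2)\mapsto -h(d_2)$ --- already at odds with your claim that both $h(d_1)$ and $h(d_2)$ are unchanged after restandardizing, although your ``up to $T$'' caveat happens to absorb the sign. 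Next, the new configuration must be put back into standard position by $\tilde R = R_{[0,0,1],\pi}$, which moves the preferred fixed point $u_1$ to $\tilde R(u_1)$ and replaces the frame isometry $L$ (the rotation fixing $[1,0,0]$ and carrying $u_1$ to $[0,0,1]$) by $L^{-1}$. The coordinate of the new $h(d_2)$ is therefore $-A(\,\cdot\,)A^{-1}$ applied to the old coordinate, where $A=L^{-2}(L\tilde R L^{-1})$ a priori depends on $L$, hence on the point of $A_A\times T^3$. The crux of the paper's argument is the spherical-triangle identity $L^{-2}L\tilde R L^{-1}=\tilde R$, which makes $A$ independent of the configuration and exhibits the induced map as the single orthogonal involution $g\mapsto -\tilde R g\tilde R^{-1}$, i.e.\ $\mathrm{diag}(1,1,-1,-1)$ in the coordinates of Remark \ref{rem:4d}. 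Without this point-independence, all you know (from Proposition \ref{prop:calt} and the two-valuedness of the coordinate map) is that $I_A$ induces \emph{some} involutive homeomorphism commuting with $V$; Smith theory would then only give a mod-$2$ homology sphere as fixed set, and neither the fixed circle, nor the incidence counts, nor the structure of $V'$ follows.

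Granting the linear form, the rest of your argument is correct, and in one respect it goes beyond the paper. The incidence computation (the circle $\{z=t=0\}$ meets the fixed circle $\{y=z=0\}$ of $S$ in $(\pm1,0,0,0)$ and the fixed circle $\{x=t=0\}$ of $ST$ in $(0,\pm1,0,0)$), the observation that any two lifts of $I_A$ differ by an element of $V$ and hence generate the same $V'$, and the identification of $V'$ with the order-$8$ group of diagonal sign matrices of determinant $1$ are all right. Your proof that $\SI^3/V'$ is homeomorphic to $\SI^3$ --- via the squaring map onto the $3$-simplex, with the nontrivial coset of $V'$ in the full sign group acting freely over the interior and trivially over the boundary, so that the quotient is the double of a $3$-ball --- is complete and settles a claim that the paper's own proof never addresses (it ends with the matrix identification). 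So your proposal would become a full proof, indeed a somewhat more self-contained one than the paper's, exactly when the frame-change computation above is supplied; as it stands, that step is a genuine gap.
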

\begin{proof}
For the proof we work geometrically and the maps here are maps on $\SI^2$ and 
hence in $\SUTw = \SI^3$ for the duration of the proof.

If we now construct the representation while the two triangles are changed as above. Then $P_2$ is not changed 
but $P_0$ is changed to be 
\[R_{2\pi-\theta_0}(-v_0) = -R_{\theta_0}(v_0)=-P_0\] by equation \ref{eqn:minusid}.
and $P_1$ is changed to be 
\[R_{2\pi-\theta_1}(v_1)  R_\pi([0,0,1])= -R_{\theta_1}(-v_1) R_\pi([0,0,1])=-P_1\] now for the new $v_1$ that is $-v_1$.
 Hence $h(d_1)$ is not changed  and $h(d_2)$  changed to $-h(d_2)$.
Define $\tilde R = R_\pi([0,0,1])$ with $\tilde R^{-1}=\tilde R$. 
Then we obtain $\tilde R  h(d_1)  \tilde R$ and $-\tilde R  h(d_2)  \tilde R$ 
to find coordinates. Our new $u_1$ is $\tilde R (u_1)$.

Suppose that we need an isometry $L$ to move old $u_1$ to $[0,0,1]$ fixing $[1,0,0]$. To move $\tilde R(u_1)$ to $[0,0,1]$,
we need $L^{-1}$. To find the coordinate of $-\tilde R  h(d_2)  \tilde R^{-1}$, we conjugate 
by $L^{-1}$ to obtain $-L^{-2} L \tilde R  L^{-1}  L h(d_2)  L^{-1} L \tilde R^{-1} L^{-1} L^2$. 
Thus, we act by $L^{-2} L \tilde R L^{-1}$ to obtain the fixed point of $-h(d_2)$ read from the new $u_1$ coordinates. 
Since $L \tilde R L^{-1}$ has a fixed point $L([0,0,1])$ and angle $\pi$ and $L^{-2}$ is a rotation at $[1,0,0]$ of an angel $2l$ for 
the distance $l$ between old $u_1$ and $[0,1,0]$. We can use a triangle with angle $\pi/2$ at $L([0,0,1])$ and angle $l$ 
at $[1,0,0]$ to show that $L^{-2}  L \tilde R L^{-1} = \tilde R$ in fact. 
Thus the coordinate of new $h(d_2)$ is changed by $\tilde R$ from the old one. 
Thus, $I_A$ is a left multiplication by $-\tilde R$ in this case. 
In the above coordinates, $I_A$ corresponds to the diagonal matrix with entries $1,1, -1, -1$ in the notation of 
 Remark \ref{rem:4d}.
\end{proof}

%Actually 
%the two components can be considered two circles in $\SI^3 \subset \bR^4$ corresponding to
%$xy$-coordinate planes and $zt$-coordinate planes if $\bR^4$ is given an orthogonal coordinate system 
%$(x,y,z,t)$.

Hence, we obtain a map \[{A_A}\times T^3 \ra \SI^3/V'\cong \SI^3.\]
This map is seen to be surjective, which follows from 
the geometric constructions; essentially, any element of $\SOThr$ can arise 
as $h(d_2)$. We show this by showing that the set of $h(d_2)$ with $h(d_1)$ fixed that arises is an open set
by controlling $P_2$ in an open set fixing the angles $\theta_0, \theta_1$.
The control of $P_2$ in an open set can be achieved by the multiplication by geometry construction. 
The set of $h(d_2)$ that arises with $h(d_1)$ fixed is a closed set since this is the image of a continuous map. 
(See the proof of Theorem \ref{thm:filledSU} also.)
Furthermore, under the equivalence relation we assigned, 
\[{A_A}\times T^3/\sim \ra \SI^3/V'\cong \SI^3\] is a homeomorphism. 

%We see that ${\mathrm{$A_A$}}\times T^3/Z_2$ has a double branched cover 
%$\SI^3/Z_2 \cong \SI^3$. The regular neighborhood $N_{A,A'}$ of the image 
%of ${\mathrm{$A_A$}}\times T^3/Z_2$ is fibered by $B^3$ and is homeomorphic 
%to $B^3\times \SI^3$ since all $\SI^2$-bundles over $\SI^3$ is trivial. 

%There exists a double cover of the neighborhood $N^d_{\mathrm{$A_A$}}$ which is 
%homeomorphic to $B^3\times \SI^3$ again.

%The $I_{{A}}$ acts on the region above $A_A$ in the following way: 
%\[I_{{A}}:(l_1, l_2, \theta_0,\theta_1,\theta_2) \mapsto 
%(\pi-l_1,\pi-l_2, 2\pi-\theta_0, 2\pi-\theta_1, \theta_2).\] 
%Let $R$ be a rotation at $[0,1,0]$ of angle $\pi$. 

%Recall that $I_{{B}}, I_{{C}}$ sends $A_A$ and ${{A'_{A'}}}$ to a different regions. 

%\subsubsubsection{A'-A}
For the region $A'_{A'}\times T^3$, we can do the similar argument:
Hence, we obtain a map \[A'_{A'}\times T^3 \ra \SI^3/V\cong \SI^3.\]
This map is seen to be surjective, and 
 \[A'_{A'}\times T^3/\sim \ra \SI^3/V'\cong \SI^3\] is a homeomorphism 
 for our equivalence relation $\sim$. 

%\subsubsubsection{B_B'}
We discuss $B_B$ where $I_B$ acts on. Here the top triangle is a lune with vertices 
$v_1$ and $v_2$ with $v_0$ on a segment. The vertex angle of the lune is $\pi/2$.
The bottom triangle is a lune with an endpoint $v'_1$ and $v'_2=-v'_1$ 
and $v'_0$ in the segment. 

%Put them in the standard positions, and we obtain that $v_0=v'_0=[1,0,0]$ 
%and $v_1$ and $v'_1$ are on $L$
%and $v_2$ and $v'_2$ on $[-1,0,0]$.

%We change the positions so that we put $v_1$ to $[1,0,0]$ 
%and $v_2$ on the lower segment of the boundary of $H$ 
%and $v_0$ in the interior of $H$. 

We change generators so that we use $c_0$ and $c_2$ for generators and 
$d'_1=P_1^{-1}P_2$ and $d'_2=P_1^{-1}P_0$.
Then $d'_1=d_2^{-1}$ and $d'_2=d_1^{-1}d_2$.
This is a change of index sending $(0,1,2)$ to $(2,0,1)$. 
%This involves making the pasting map $P_0$ to send $v_0$ to $-v'_0$
%and $P_1$ fixes $v_1= v'_1$. 
%This is equivalent to changing the axis of 
%the upper and lower triangle by 
%fixing $v_2$ and sending $v_1$ to $-v_1$ and $v_2$ to $-v_2$
%and fixing $v'_2$ and sending $v'_1$ to $-v'_1$ and $v_2$ to to $-v'_2$. 
Thus, the situation again reduces to the case $A_A\times T^3$ with $I_B$ corresponding to $I_A$. 

Let $V''$ be generated by $V$ and $I_B$.
We obtain a map \[B_{B}\times T^3/\sim \ra \SI^3/V''\cong \SI^3,\]
and  \[B_{B}\times T^3/\sim \ra \SI^3/V''\cong \SI^3\]
is a homeomorphism.

For the region $B'_{B'}\times T^3$, we can do the similar argument:
Hence, we obtain a map \[B'_{B'}\times T^3/\bZ_2 \ra \SI^3/V\cong \SI^3.\]
This map is seen to be surjective, and 
\[B'_{B'}\times T^3/\sim \ra \SI^3/V''\cong \SI^3.\]

Next, we go to the region $C_{C'}$.  Here the vertex $v_0$ is antipodal to $v_2$ and
$v_0$ and $v_2$ are the vertices of a lune, and  
$v_1$ is on the segment of the lune.
The lower triangle is a segment with vertices $v'_0=v'_2$ and $v'_1$. 

In this case, the pasting map $P_2$ sends $v_2$ to $v'_2=-v_2$
which is a composition of rotation of angle $\pi$ at $[0,1,0]$
and a rotation at $[1,0,0]=v'_2$.
Since $P_0$ is a rotation at $[1,0,0]$, 
it follows that $h(d_2)$ has a fixed point in the great circle $\SI^1_*$ containing 
$[0,1,0]$ and $[0,0,1]$ by an argument similar to $A_A$ case. 
$h(d_1)$ can be any isometry. 
Again $I_C$ can be studied as in the $I_A$ case.
Let $V'''$ denote the group generated by $V$ and $I_C$. 
 \[C_{C'}\times T^3 \ra \SI^3/V'''\cong \SI^3,\]
is surjective also, and  \[C_{C'}\times T^3/\sim \ra \SI^3/V'''\cong \SI^3\]
is a homeomorphism. 

Thus, we obtained the desired conclusions. 
The same can be said for the region $C'_C$.

We define the equivalence relation over the faces $A_A$,$A'_{A'}$,$B_B$, $B'_{B'}$, $C_{C'}$, $C'_C$
so that the above maps are injective. Here, we really have only three regions up to the $V$-action, say 
$A_A, B_B, C_{C'}$. 

Also, we use $\{\Idd, I_{A}, I_{B}, I_{C}\}$-action 
to identify characters. 

\begin{thm}\label{thm:C1} 
The map $ \mathcal{T}:\tilde O \times T^3_2 \ra {\mathcal C}_1$ induces 
a homeomorphism $\tilde O \times T^3_2/\sim \ra {\mathcal C}_1$
where $\sim$ is given as above. 
\end{thm}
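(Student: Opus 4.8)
The plan is to follow the scheme of the proof of Theorem \ref{thm:main1} for $\mathcal{C}_0$, replacing the blown-up tetrahedron $\tilde G$ by the blown-up octahedron $\tilde O$. By Proposition \ref{prop:calt} the map $\mathcal{T}\colon \tilde O\times T^3\to\mathcal{C}_1$ is continuous and onto, and it satisfies $\mathcal{T}\circ I_A=\mathcal{T}\circ I_B=\mathcal{T}\circ I_C=\mathcal{T}$, so it descends to a continuous surjection $\tilde O\times T^3/\!\sim\;\to\mathcal{C}_1$. Since $\SOThr$ is a compact Lie group, $\rep(\pi_1(\Sigma),\SOThr)$ is Hausdorff and hence so is $\mathcal{C}_1$; as $\tilde O\times T^3$ is compact, its quotient is compact. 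Therefore it suffices to prove that the induced map is injective, because a continuous bijection from a compact space to a Hausdorff space is automatically a homeomorphism.

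The injectivity argument is organized stratum by stratum, exactly as the regions of $\tilde O$ were analyzed. First, on the interior $\tilde O^o\times T^3$ the characters are triangular, and by Lemma \ref{lem:nearh} a triangular character in $\mathcal{C}_1$ determines the ordered pair of generalized triangles $(\tri_0,\tri_1)$ subject to the angle relation \eqref{eqn:anglecon} together with the three pasting angles, uniquely up to the Klein four-group $\{\Idd,I_A,I_B,I_C\}$; this is precisely the equivalence we have imposed, so the map is injective there. Next, over each face $a,b,c,d,a',b',c',d'$ the induced $\SI^1$-action (Section \ref{subsub:abcda'b'}), over each open edge the $T^2$-action (Section \ref{subsub:edge}), and over each square region $A_A,A'_{A'},B_B,B'_{B'},C_{C'},C'_C$ the identifications making the maps to $\SI^3/V'$, $\SI^3/V''$, $\SI^3/V'''$ of Section \ref{subsub:AA} injective, were all arranged so that $\mathcal{T}$ restricted to a single region modulo $\sim$ is a bijection onto its image. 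Thus within each stratum injectivity is already in hand.

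What remains is to check that no identification is missed across distinct strata, i.e. that the images of the various regions under $\mathcal{T}$ are pairwise disjoint except where the combinatorics of $\tilde O$ forces them to agree. As in the $\mathcal{C}_0$ case this is done with the character invariants $f_i\colon[h]\mapsto\mathrm{tr}\,h(c_i)$ together with the fixed-point data of the $h(c_i)$. The faces $a,b,c,d,a',b',c',d'$ give abelian characters; the square regions give characters in which two of the $h(c_i)$ share a fixed-point pair while the remaining data is nonabelian; and the interior gives genuinely nondegenerate triangular characters. These conditions are mutually exclusive, and among the square regions the distinguished index $i$ differs, so their images are disjoint; the $\{\Idd,I_A,I_B,I_C\}$-action is exactly what collapses the six squares to three and the eight faces to two, matching the imposed relation. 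Combining the within-stratum injectivity with this disjointness yields global injectivity of the induced map.

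The main obstacle I expect is the bookkeeping at the square regions, where $\mathcal{T}$ factors through the branched quotients $\SI^3/V'$ and the fixed-point circles of $S$, $T$, $ST$ and of $I_A$ (Remark \ref{rem:4d}, Lemma \ref{lem:IA}) must be tracked so that the boundary tori and the abelian edge characters glue consistently with the adjacent faces. In particular one must confirm that the degenerate pasting-map construction, where the $\SI^1$-ambiguity of the stabilizer appears, produces exactly the characters obtained as limits from the interior, so that the strata fit together without extra or missing identifications. Once this compatibility is verified the homeomorphism follows formally from the compact-to-Hausdorff principle stated in the first paragraph.
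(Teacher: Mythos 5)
Your proposal follows the same skeleton as the paper's proof of this theorem (surjectivity and $V$-invariance from Proposition \ref{prop:calt}, injectivity stratum by stratum via Sections \ref{subsub:abcda'b'}, \ref{subsub:edge}, \ref{subsub:AA}, disjointness of the images of the strata, then the compact-to-Hausdorff principle), but the cross-strata disjointness step, as you state it, is wrong. You claim that the faces $a,b,c,d,a',b',c',d'$ give \emph{abelian} characters. That is the ${\mathcal C}_0$ picture (Proposition \ref{prop:abelian}); it is false for ${\mathcal C}_1$. Over a face of $\tilde O$ exactly one of the two triangles is a pointed point while the other is nondegenerate (Section \ref{subsub:abcda'b'}), so the restriction of the character to one pair of pants is triangular and the character is generically nonabelian; the abelian characters of ${\mathcal C}_1$, such as the basis representation $h^*$ itself, actually sit over the square regions (the paper places $h^*$ at the center of $A_A$). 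Worse, your proposed mutually exclusive conditions do not in fact exclude one another: over a face all three $h(c_i)$ fix a common point, so in particular two of them share a fixed-point pair, which is precisely your criterion for the square regions. With these invariants you cannot rule out that a face character coincides with a square character, so global injectivity does not follow from your argument as written.

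The repair is the invariant the paper actually uses, and which you name but then abandon: the rotation angles of $h(c_0),h(c_1),h(c_2)$ (equivalently the traces $f_i$), which are well defined on characters up to the $\{\Idd,I_A,I_B,I_C\}$-ambiguity. Under the blow-down $\tilde O \ra O$ the angle triple of a character lies in $O^o$ for the interior stratum, in the boundary faces and edges of $O$ minus its vertices for the face and edge strata, and is constant equal to a vertex of $O$ over each square region (for instance the lune angles are frozen over $A_A$, and only the lengths $l(1), l'(1)$ vary). These three subsets of $O$ are disjoint, so the images of $O^o\times T^3_2$, of $\bigl(\partial\tilde O$ minus the six squares$\bigr)\times T^3_2$, and of the six squares times $T^3_2$ are mutually disjoint up to the Klein four-group action; the same angle values also separate the three $V$-orbits of squares from one another. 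Substituting this for your abelian/shared-fixed-point trichotomy, and keeping your within-stratum injectivity and the compactness/Hausdorff conclusion, yields exactly the paper's proof.
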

\begin{proof}
We showed that $ \mathcal T$ is onto by Proposition \ref{prop:calt}.
Let us show that $\mathcal T$ is one-to-one.
The interior $O\times T^3/\sim$ injects into 
triangular characters
since triangular characters uniquely determine
the triangles up to isometry and the choices of 
fixed points up to antipodal pairs
and rotation angles are also determined by the standard positions. 

The map from the boundary of $O$ with vertex rectangles removed 
is also injective by Subsection \ref{subsub:abcda'b'}. % and 
%the facts that two of the corresponding holonomies of $c_i$s, which are not $\Idd$ or 
%conjugate to the diagonal matrix with entries $1,-1,-1$ to show this and the fact that the distinct angles 
%give distinct generalized triangles in these cases.
The maps from the regions above the square regions are injective up to the $\{I, I_A, I_B, I_C\}$-action by
Section \ref{subsub:AA}.
The injectivity of the total map follows from these results and the fact that the images of 
$O\times T^3$ and $\partial \tilde O -( A_A \cup A'_{A'} \cup B_B \cup B'_{B'} \cup C_{C'} \cup C'_C )\times T^3$
and $A_A \times T^3, A'_{A'}\times T^3, B_B \times T^3, B'_{B'} \times T^3, C_{C'} \times T^3, C'_C )\times T^3$
are mutually disjoint up to action of $\{I, I_A, I_B, I_C\}$ from considering the angles of the triangles.

Since $\SOThr$ is compact, the quotient space $\rep(\pi_1(\Sigma_1),\SOThr)$ is Hausdorff. 
Since $\tilde O\times T^3/\sim$ is compact and ${\mathcal C}_1$ is Hausdorff, 
we have a homeomorphism. 

\end{proof}

\section{The $\SUTw$-pseudo-characters for the above component} \label{sec:su2o}
% SU(2) characters....

We define the $\SUTw$-character space $\rep_{-I}(\pi_1(\Sigma_1),\SUTw)$ of 
a punctured genus 2 surface $\Sigma_1$
with the puncture holonomy $-I$ as the quotient space 
of the subspace of $\Hom(\pi_1(\Sigma_1),\SUTw)$ where $h(c)= -I$ 
by conjugations where $c$ is a simple closed curve around the puncture. 

A representation corresponding to a point of $\rep_{-I}(\pi_1(\Sigma_1),\SUTw)$ has 
a centralizer if and only if it is abelian. But no abelian character is in it. 
Thus, the $\SUTw$-character space is a manifold by Proposition 3.7 and 
the modified Proposition 1.2 of \cite{Gsymp}, where the relation 
is modified to equal $-I$. 

Let $T^3_2$ be the product of three copies of $\SI^1_2$.
Let $\bZ_2$ acts on it as before and we obtain $T^3_2/\bZ_2$.
We will show that this space is homeomorphic to an octahedral manifold. 

\begin{thm}\label{thm:filledSU} 
$\rep_{-I}(\pi_1(\Sigma_1),\SUTw)$ is homeomorphic 
to the filled octahedral manifold. %This space branch-covers ${\mathcal C}_1$ in $8$ to $1$ manner.
\end{thm}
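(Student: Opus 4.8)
The plan is to run the $\SUTw$ analogue of the proof of Theorem \ref{thm:C1}, replacing the pasting-angle torus $T^3$ by the doubled torus $T^3_2$ exactly as Section \ref{sec:SUT} does for the closed surface, and then to recognize the resulting fibered space as the filled octahedral manifold of the introduction. Throughout I would keep the combinatorial octahedron $\tilde O$ of Section \ref{subsec:comp} fixed, since the defining relation $[c_1,d_1][c_2,d_2]=-I$ is precisely the obstruction to lifting the ${\mathcal C}_1$-representations of $\SOThr$ to $\SUTw$ (cf.\ Remark \ref{rem:otherc}), so the same octahedral parameter space governs both.

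First I would set up the triangular model. By Proposition \ref{prop:pairsu} and the $\SUTw$ multiplication rule \eqref{eqn:cba2}, restricting a representation $h$ with $h(\partial)=-I$ to the two pairs of pants $S_0,S_1$ yields two generalized triangles, where the $-I$ boundary holonomy is exactly the $-I$ produced by the triangle construction in \eqref{eqn:cba2}; as in Lemma \ref{lem:nearh} the two triangles are related by $\kappa:(\theta_0,\theta_1,\theta_2)\mapsto(\theta_0,\pi-\theta_1,\theta_2)$, so the pair lies in the octahedron $O$. Copying the density argument of Lemma \ref{lem:nearh} (the trace functions $f_i$ are nonconstant and the abelian locus is a proper analytic subset) shows the triangular pseudo-characters are dense and open. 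I would then mirror Section \ref{subsec:tildeO}: put $\tri_0$ and $\kappa(\tri_0)$ in standard position and build pasting maps $P_0,P_1,P_2\in\SUTw$ by the same formulas, obtaining a continuous onto map $\mathcal T:\tilde O\times T^3_2\to\rep_{-I}(\pi_1(\Sigma_1),\SUTw)$. The essential simplification over the $\SOThr$ case is that no $\{\Idd,I_A,I_B,I_C\}$-identification is needed, because by the normal representation of Definition \ref{defn:normalrep} each nontrivial $\SUTw$-element has a canonical fixed point and angle in $(0,2\pi)$; but, exactly as in Section \ref{sec:SUT}, one must still quotient $T^3_2$ by the $\bZ_2$ adding $2\pi$ to all three pasting angles, since this multiplies every $P_i$ by $-I$ and leaves $h(d_1),h(d_2)$ and the boundary holonomy fixed.

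Next I would transcribe the face-by-face equivalence relations of Sections \ref{subsub:abcda'b'}, \ref{subsub:edge}, and \ref{subsub:AA} (the $\SI^1$-actions over faces, the $T^2$-relations over edges, and the square-region relations), doubling the angle parameters in each case, and prove that the induced map from $\tilde O\times T^3_2/\bZ_2/\sim$ is injective by the same disjointness-of-images bookkeeping used in Theorem \ref{thm:C1} and Proposition \ref{prop:SU}. Since $\rep_{-I}(\pi_1(\Sigma_1),\SUTw)$ is Hausdorff --- indeed a manifold by \cite{Gsymp} --- and the source is compact, this gives a homeomorphism. I would then read off the fibration: over the interior of $O$ the fiber is $T^3_2/\bZ_2\cong T^3$, over the open faces a $2$-torus, over the open edges a circle, over the interior of the octahedron nothing singular --- this is exactly the toric variety of the solid octahedron away from its six singular vertices. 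The six square regions $A_A,A'_{A'},B_B,B'_{B'},C_{C'},C'_C$ blow up the vertices of $O$, and running the analysis of Section \ref{subsub:AA} in the $\SUTw$ setting gives a homeomorphism onto $\SI^3$ rather than onto a quotient $\SI^3/V'$, precisely because of the canonical $\SUTw$ choice. Hence each such region contributes an $\SI^3$ whose $T^3_2$-invariant regular neighborhood is $\SI^3\times B^3$ (triviality of the normal bundle from orientability, via the tube theorem as in \cite{Bredon}); these are exactly the six submanifolds glued in to replace the deleted singularities, so $\tilde O\times T^3_2/\bZ_2/\sim$ is the filled octahedral manifold.

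The hardest part will be the square-region computation: verifying that the $\SUTw$-coordinate of $h(d_2)$ read off in the frame determined by the canonical fixed point of $h(d_1)$ is a well-defined homeomorphism onto \emph{all} of $\SI^3$, where surjectivity requires the open-and-closed image argument (one controls $P_2$ in an open set by multiplication-by-geometry while fixing the triangle angles, and the image is closed as a continuous image of a compact set), and then checking that the six filling neighborhoods $\SI^3\times B^3$ glue to the toric complement exactly along their common $\SI^3\times\SI^2$ boundaries to reconstruct the octahedral manifold. Matching the lattice data of the octahedron's fibers with the $T^3_2/\bZ_2$ identifications over faces and edges, so that the toric structure is genuinely recovered off the vertices, is the remaining bookkeeping.
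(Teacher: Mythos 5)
Your proposal follows essentially the same route as the paper's own proof: parametrize by $\tilde O\times T^3_2/\bZ_2$ via the triangular model and density, impose the doubled face/edge equivalence relations without any $\{\Idd, I_A, I_B, I_C\}$-identification, use the canonical $\SUTw$ fixed-point choice so that each square region maps homeomorphically onto an imbedded $\SI^3$ (with surjectivity by the open-and-closed argument controlling $P_2$), invoke the manifold property from \cite{Gsymp} to get $\SI^3\times B^3$ tubular neighborhoods, and reassemble the toric variety over the octahedron minus its vertices with these six filled neighborhoods. You have also correctly isolated the same delicate step the paper relies on, namely the square-region coordinate computation and the gluing along the $\SI^3\times\SI^2$ boundaries.
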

\begin{proof} 
%First, we claim that this space is actually a smooth manifold since 
%there are no reducible representations, and hence by the result of Goldman
%this is implied. 
%\marginpar{Actually, we need to generalize his result to $-I$ case. Need to look 
%at the proof itself.}

As before, we need to show that $\rep_{-I}(\pi_1(\Sigma_1),\SUTw)$
is homeomorphic to a quotient space of $\tilde O\times T^3_2/\bZ_2$. 
Then we need to show that the quotient space is homeomorphic to 
the desired space.  

Realize $\Sigma_1$ as a union of two pairs of pants $S_0$ and $S_1$ and a punctured annulus $A$
inserted in place of $c_1$. Thus, the angles for $c_1$ in $S_0$ and $c'_1$ in $S_1$ are related 
by $\theta$ to $2\pi-\theta$. 

The triangular subset $D_1$ of ${\mathcal C}_1$ can be represented by 
$O^o\times T^3_2/\bZ_2$. 
We define a triangular part $D_2$ of $\rep_{-I}(\pi_1(\Sigma_1),\SUTw)$
as the subset where the restrictions of each element to $\pi_1(S_0)$ and $\pi_1(S_1)$ are both 
triangular characters to $\SUTw$, i.e., represented by a nondegenerate
triangle. Using the trace functions for $c_i$, we show that $D_2$ is a dense open set
with the complement equal to an algebraic subset. 
More precisely, there is a 16 to 1 map 
from $O^o \times T^3_2/\bZ_2$ to $D_1$ and $D_2$ is an open connected subset.
It follows that we can lift $O^o\times T^3_2/\bZ_2$ into the dense open set $D_2$ in $\rep_{-I}(\pi_1(\Sigma_1),\SUTw)$.
The lift is injective since in $O^o$ the two triangles are not degenerate and hence 
the isometry classes of the two triangles and the pasting angles 
are uniquely determined. (See Proposition \ref{prop:calt} for details.)

%The last $\bZ_2$-action comes from
%the same fact as in the ${\mathcal C}_0$ case. 

%The interior of the octahedron times $T^3_2$ branch-covers the Klein four-group quotient of 
%itself. 
%$O^o\times T^3_2/\bZ_2/$
%branch covers the open dense triangular subset $D_$ of ${\mathcal C}_1$ as before in $16$ to $1$ manner. 

By a geometric limit argument, we see that 
each character in $\rep_{-I}(\pi_1(\Sigma_1),\SUTw)$ induces 
a representation where two pairs of pants are represented as two triangles (generalized) 
related by the angle conditions. Conversely, such a pair corresponds to 
a character of $\pi_1(\Sigma_1)$ to $\SUTw$. 
Therefore, we have a surjective map from $\tilde O\times T^3/\bZ_2$ to $\rep_{-I}(\pi_1(\Sigma_1),\SUTw)$.
(Note that as before, we are using triangles, not the fixed point themselves, to represent these.)

Now we introduce equivalence relation $\sim$ on this space, which does not 
include ones given by $I_{{A}},I_{{B}},I_{{C}}$. 

We will use the same equivalence relation on regions above $a$, $b$, $c$, $d$, $a'$, $b'$, $c'$, and $d'$ 
as in the $\SOThr$-case except that now 
the fibers are $T^3_2/\bZ_2$. The equivalence relation is given by identifying all 
elements proportional to $(2\pi,2\pi,2\pi)$-vector in $T^3_2$ to $O$ for face $a \times T^3_2$, 
by identifying all 
elements proportional to $(2\pi,-2\pi,2\pi)$-vector in $T^3_2$ to $O$ for face $a' \times T^3_2$, 
and so on. (See \ref{subsub:abcda'b'} for more details.)
%To see more details, let us look at the region above the interior of $a$:
%So the equation determining the equivalence class representatives
%should be $\phi_0+\phi_1+\phi_2=0$ as $2\pi=0$ in $\SI^1\times \SI^1\times \SI^1$
%with an order-three translation action. (The $\bZ_2$-action here is trivial because of 
%the equivalence relation making all vectors parallel to $(2\pi, 2\pi, 2\pi)$ equivalent to $(0,0,0)$.)
%We take the subset of $T^3_2$ given by this equation. Looking at $T^3_2$ 
%as a quotient space of the cube $[0,4\pi]^3$, we see that the plane 
%corresponds to the union of two triangles with vertices 
%$\tri_1=\tri([4\pi,0,0],[0,4\pi,0],[0,0,4\pi])$ and $\tri_2=\tri([4\pi,4\pi,0],[4\pi,0,4\pi],[0,4\pi,4\pi])$. 
%The two triangles form a $2$-torus $T'$ under the identifications of $T^3_2$. 
%There is an identification by the $\SI^1$-action. 
%$T'$ is isometric with a torus in the plane quotient out by a lattice generated by 
%translation by $(4\pi\sqrt{2},0)$ and $(2\pi\sqrt{2}, 2\pi\sqrt{6})$. 
%Then the identification group is generated by a translation by $(2\pi, 2\pi\sqrt{6}/3)$, 
%which is of order three, and the quotient space is homeomorphic to a $2$-torus. 
Thus, the character space over the interior of a face $a^o$ is in one-to-one correspondence with 
$a^o\times T^2$. There are no more equivalences here since the lower triangle 
is nondegenerate. 
For all other faces, the similar consideration gives us that over the interior of a face, there is a $2$-torus fibration.
Over the interior of edges, the equivalence relation is one given by the two adjacent faces. 
Let $\tilde O'$ be the $\tilde O$ with six rectangular regions removed.
Now, $\tilde O' \times T^3_2/\bZ_2/\sim$ with the same equivalence relation as induced by faces 
extended to edges and vertices is the octahedral variety:
This follows since this is $\tilde O' \times \bR^3$ quotient by the lattice 
generated by $(4\pi, 0,0), (0, 4\pi, 0), (0,0,4\pi), (2\pi, 2\pi, 2\pi)$, 
which has generators $(\pm 2\pi, \pm 2\pi, \pm 2\pi)$. 
By this description, we see that the quotient space is homeomorphic to the octahedral variety with six singular 
points removed.
(See the page 50 in Fulton \cite{Fulton}).

This space is actually described as follows. 
$\CP^3$ is a toric variety over a tetrahedron $G$. 
Then there are six spheres mapping to six edges of the tetrahedron.
Over each vertex of $G$, the fiber is a singleton.  
We blow-up each of the four singleton mapping to a vertex of $G$ to 
a two-dimensional sphere and now the six spheres do not meet. 
Now blow-down each of the  spheres to a point. 
Then remove the six points, we obtain a manifold $\mathcal O$.
(See page 50, Section 2.6, of Fulton \cite{Fulton}.)

We see from this picture that
if we remove the union of contractible $T^3_2/\bZ_2$-invariant 
cone-neighborhoods of the six points, then 
the remaining is a manifold  so that the closure in $\mathcal O$ is compact 
and has boundary a union of six $5$-manifolds each homeomorphic to 
an $\SI^3$-fibration over a $2$-sphere.

For regions, $A_A$, $B_B, C_{C'}$, $A'_{A'}$, $B'_{B'}$, and $C'_C$, 
we will use a different but similar identification to the $\SOThr$-cases.
Let us take the case $A_A$ first. Here the pasting angles are in $\SI^1_2$. 
By an extended multiplication 
by geometry for $\SUTw$, we obtain the fixed point 
$u_1$ of $-h(d_1)$ and $-h(d_2)$ uniquely determined in this case. 
Therefore, by reading the coordinates of $-h(d_2)$ in terms of $u_1$, we obtain 
a map $A_A\times T^3_2 \ra {\mathcal C}_1$ whose image is an imbedded $3$-sphere since given all points of 
$\SI^2$ arise as a point and all rotation angles occur by our constructions, 
where again we used our control of $P_2$ with pasting angles fixed at $v_0$ and $v_2$.
Moreover, $A_A\times T^3_2/\sim \ra {\mathcal C}_1$ is an embedding
since $\sim$ is chosen for this to hold.

%The fiberwise Klein four-group element such as $i_a$ will change some of $P_i$ 
%to $-P_i$. Their effect is the changing of the signs of $h(d_i)$, and is precisely
%the Klein four-group we introduced in Section \ref{subsub:AA}.
%Thus the above branched covering is induced by the $\{I, i_a, i_b, i_c\}$-action 
%where each one has a circle of fixed point or no fixed point. %Actually, 
%two of them are so and one of them is equivalent to $-I$ in $\SI^3$. 

Since $\rep_{-I}(\pi_1(\Sigma_1),\SUTw)$ is topologically a manifold, 
the tubular neighborhood of the region above $A_A$ is homeomorphic to $\SI^3\times B^3$ and covers 
the tubular neighborhood of the $3$-sphere in $\SOThr$ case $4$ to $1$. 

In all other cases $B_B$, $C_{C'}$,${{A'_{A'}}}$, $B'_{B'}$, and $C'_C$, similar arguments show that 
the rectangle times $T^3_2$ maps to a $3$-sphere in ${\mathcal C}_1$. %with $\{I, i_a, i_b, i_c\}$ actions 
%similar to $A_A$ case
and its tubular neighborhood homeomorphic to $\SI^3\times B^3$ branch covers 
the tubular neighborhood of the $3$-sphere in ${\mathcal C}_1$ in $4$ to $1$ manner. 

Finally, $\tilde O\times T^3_2/\bZ_2/\sim$ is homeomorphic to $\rep_{-I}(\pi_1(\Sigma_1),\SUTw)$. 
Again, this follows from the injectivity and compactness and the Hausdorff property. 
(See Theorem \ref{thm:C1} for the analogous proof.)

Now, we go over to the topology of  $\tilde O\times T^3_2/\bZ_2/\sim$: 
%Let $V$ be the union of square regions over $A_A$, $B_B$, $C_{C'}$, ${{A'_{A'}}}$, $B'_{B'}$, and $C'_C$. 
%Suppose that we remove the six square regions. 
%Consider $(\tilde O - V)\times T^3_2/\bZ_2$ 
%with the equivalences relations on the faces only. 
%We can double branch cover by $(O-V)\times T^3_2/\sim$ where the equivalence $\sim$
%is the relations on the faces given by the lifted equivalence relations given by 
%the same formulas.  
%The branch sets is one over the union of faces exactly since $\bZ_2$ acts trivially over 
%the union of faces. 
%Therefore, $(O-V)\times T^3_2/\bZ_2/\sim$ is homeomorphic to 
%$(O-V)\times T^3/\sim$ as we can see from the fact that outside 
%the branch set is a double cover which is diffeomorphic 
%to what is covered and the action on the branch set is trivial. 
The space $\tilde O'\times T^3_2/\bZ_2/\sim$ is a toric variety ${\mathcal O}'$ over the octahedron with vertices removed.
%since the fiber over the interiors of the faces and the interiors of the edges satisfy the desired conditions. 
By removing a thin neighborhood of square regions 
$A_{A}$,$B_{B}$,$C_{C'}$,$A'_{A'}$, $B'_{B'},$ and $C'_C$ from $\mathcal O$, 
and considering the region $M$ in our quotient space over it, we obtain a toric variety over an octahedron with 
neighborhoods of the ends removed. 

Since the boundary of the closure $M$ is a union of six 
five-dimensional manifolds homeomorphic to $\SI^3\times \SI^2$.
Thus, we can glue six neighborhoods of the three spheres over 
$A_{A}$,$B_{B}$,$C_{C'}$,$A'_{A'}$, $B'_{B'},$ and $C'_C$,
homeomorphic to $\SI^3\times B^3$ to obtain the octahedral manifold. 
Hence, it follows that $\tilde O \times T^3_2/\bZ_2/\sim$ is homeomorphic to the octahedral manifold. 

\end{proof}

\section{The topology of the other component ${\mathcal C}_1$}\label{sec:C1}

In this section, we will describe the topology of the other component ${\mathcal C}_1$. 

We first discuss the fiberwise action and then the axial action induced from that of the octahedron $O$. 
We consider our space $T^3_2$-fibers over the octahedron with 
the equivalence relations $\tilde O\times T^3_2/\bZ_2/\sim$ as determined in Section \ref{sec:other}.

\subsection{The fiberwise action}
Again, we define $i_a, i_b,$ and $i_c$ on $T^3_2/\bZ_2$:
\begin{eqnarray}
i_a &:& (\phi_0, \phi_1, \phi_2) \mapsto (\phi_0+2\pi, \phi_1+2\pi, \phi_2) \nonumber \\
i_b &:& (\phi_0, \phi_1, \phi_2) \mapsto (\phi_0, \phi_1+2\pi, \phi_2+2\pi) \nonumber \\
i_c &:& (\phi_0, \phi_1, \phi_2) \mapsto (\phi_0+2\pi, \phi_1, \phi_2+2\pi).
\end{eqnarray}
Again, this amounts to changing some of the pasting maps by multiplications by $-\Idd$. 
The action sends equivalence classes to equivalence classes for 
$\rep_{-I}(\pi_1(\Sigma_1),\SUTw)$. 

%We defer the $\{I, I_{{A}},I_{{B}},I_{{C}}\}$-action for the moment: 

%for regions outside the square vertex region $(O-V)\times T^3_2/\bZ_2/\sim$, 
%the free action of $i_a,i_b,i_c$ gives $(O-V)\times T^3/\sim$ where 
%the equivalence relation is given by the same formula as in Section \ref{subsec:abcd}. 
%Thus, it is diffeomorphic to the complement of the union of singular vertices
%the octahedral toric variety. 

In the domain over the interior of $\tilde O$, the $\{I, i_a, i_b, i_c\}$-action is free and there is no branch locus. 

In the regions over the interiors of faces, the $\{I, i_a, i_b, i_c\}$-action is also free: consider this action as 
translations in the tori $T^3_2/\sim$ where $\sim$ is generated by vectors parallel to 
the normal vectors of form $(\pm 2\pi, \pm 2\pi, \pm 2\pi)$ where the signs depend on which 
faces you are on. 

Consider $a^o$ for now. Then $i_a, i_b$, and $i_c$ correspond to 
translations by $(2\pi, 2\pi, 0)$, $(0, 2\pi, 2\pi)$, and $(2\pi, 0, 2\pi)$ respectively. 
Then these vectors can be seen to be parallel to the plane of the quotient tori 
with generators $(4\pi,0,-4\pi)$ and $(0, 4\pi, -4\pi)$ since we can change 
the signs of translation vectors mod $4\pi$. 
We see easily that these translations are not trivial even up to the order $3$ translation group
action. Therefore, the fiberwise Klein four-group acts freely over each point of $a^o$.

Similar considerations shows that the fiberwise Klein four-group action is free over the interiors of the other faces
$b, c, d, a', b', c', d'$.

%%% January 15 3:44 pm
The twelve edges of $O$ form three classes given by three respective equations $\theta_0=\pi/2, \theta_1=\pi/2, \theta_2=\pi/2$.
That is, four edges satisfy $\theta_0=\pi/2$, the next four edges satisfy $\theta_1=\pi/2$ and the last four edges 
$\theta_2=\pi/2$.
 Over the interiors of edges, the equivalence relation gives us circles. Thus, we have twelve annuli over 
 the interiors of the edges. 

Over the interiors of edges, by considering the equivalence classes, we deduce that
if the edge is in the class given by $\theta_0=\pi/2$, then the circle over the interior is fixed under $i_b$ and 
each of $i_a$ and $i_c$ has no fixed points. This follows by looking at the equivalence classes. 
%$i_b$ corresponds to changing the pasting maps $P_0, P_1, P_2$ to 
%$P_0, -P_1, -P_2$ and changing $(h(d_1), h(d_2))$ to $(-h(d_1), -h(d_2))$ and hence 
%the basepoint $u_1$ is send to $-u_1$. 
%We can compute that $i_b$ corresponds to $S T$ in Remark \ref{rem:4d}.

Similarly, if the edge is in the class given by $\theta_1=\pi/2$, then the circle over the interior of it is a fixed under $i_c$ 
and each of $i_a$ and $i_b$ has no fixed points. 

If the edge is in the class given by $\theta_2=\pi/2$, then 
the circle over the interior of it is fixed under $i_a$ and each of $i_b$ and $i_c$ has no fixed points. 

For $i_b$, the four open annuli over the interiors of the edges given by $\theta_0=\pi/2$, 
will extend to a circle in each of the four vertex $3$-spheres where $\theta_0=\pi/2$: 
$A_A, A'_{A'}, C_{C'}, C'_C$.
(See Section \ref{subsub:AA} and the proof of 
Theorem \ref{thm:filledSU}.)
Hence, we see that the four annuli patch up to 
a $2$-torus since $i_b$ is a smooth involution and the fixed points form a submanifold 
(See Conner \cite{Co}) as our manifolds and the maps are semialgebraic. 

%%% Some problem here... Must figure out why fibers do not degenerate.

Similarly, $i_a$ fixes a union of four annuli forming a torus over the edges given 
by $\theta_1=\pi/2$.  Finally, $i_c$ fixes a union of four annuli forming a torus over the edges given 
by $\theta_2=\pi/2$.  Denote these torus by $t_0, t_1, t_2$ respectively.

\begin{lem}\label{lem:STST}
Each of $i_a, i_b,$ and $i_c$ equals $S, T, ST$ in some coordinates of 
each of the square regions $A_A, {A'}_{A'}, B_B, B'_{B'}, C_{C'},$ and $C_{C'}$. 
\end{lem}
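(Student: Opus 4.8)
The plan is to compare the three fiberwise involutions with the transformations $S$ and $T$ of $\SI^3$ through the explicit coordinate system set up on each square region in Section \ref{subsub:AA} and in the proof of Theorem \ref{thm:filledSU}. Recall that over $A_A$ the homeomorphism $A_A\times T^3_2/\!\sim\, \ra \SI^3$ is obtained by reading off the $\SUTw$-element $-h(d_2)$ in the frame $\kappa_{u_1}$ determined by the (now unambiguous, by Definition \ref{defn:normalrep}) fixed point $u_1\in \SI^2$ of $-h(d_1)$; recall also that $T$ is the antipodal map $g\mapsto -g$ of $\SI^3=\SUTw$ and that $S$ sends an isometry to the one with the same rotation angle but with fixed point moved by the $\pi$-rotation $R_\pi$ about $[1,0,0]$, i.e.\ $S(g)=R_\pi\, g\, R_\pi^{-1}$ (Remark \ref{rem:4d}). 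First I would record, using Proposition \ref{prop:d1d2} together with the fact that $i_a,i_b,i_c$ multiply the pairs of pasting maps $\{P_0,P_1\}$, $\{P_1,P_2\}$, $\{P_0,P_2\}$ respectively by $-\Idd$, the effect on the pair with $h(d_1)=P_0^{-1}P_1$ and $h(d_2)=P_0^{-1}P_2$:
\[
i_a:(d_1,d_2)\mapsto(d_1,-d_2),\qquad i_b:(d_1,d_2)\mapsto(-d_1,-d_2),\qquad i_c:(d_1,d_2)\mapsto(-d_1,d_2),
\]
where these follow from centrality of $-\Idd$ and $(-P_0)^{-1}=-P_0^{-1}$.

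Next I would translate each of these into the $\SI^3$-coordinate. Since $i_a$ leaves $h(d_1)$, hence $u_1$ and the frame, fixed and replaces $h(d_2)$ by $-h(d_2)$, the recorded element $-h(d_2)$ is replaced by its negative, so $i_a$ is the antipodal map $T$. Since $i_c$ leaves $h(d_2)$ fixed but sends $h(d_1)$ to $-h(d_1)$, equation \eqref{eqn:minusid} moves the fixed point of $-h(d_1)$ from $u_1$ to its antipode $-u_1$; as $u_1$ and $-u_1$ both lie on the great circle $\SI^1_*$ perpendicular to the $[\pm 1,0,0]$-axis, the resulting change of frame is exactly conjugation by $R_\pi$, i.e.\ $S$, so $i_c=S$. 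As $i_b$ performs both moves and $S$, $T$ commute, $i_b=ST$. Thus $\{i_a,i_b,i_c\}=\{T,S,ST\}$ on $A_A$, which is the assertion for this region.

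For the remaining five square regions I would invoke the index-relabeling already used in Section \ref{subsub:AA}: each such region carries a coordinatization built from generators $c_i,c_j$ and curves $d_1=P_i^{-1}P_j$, $d_2=P_i^{-1}P_k$ for a permutation $(i,j,k)$ of $(0,1,2)$ (for $B_B$ this is $(1,2,0)$, with the analogous swaps of the roles of $d_1,d_2$ for $C_{C'}$ and $C'_C$). The three involutions negate the index pairs $\{0,1\}$, $\{1,2\}$, $\{0,2\}$, and the same elementary computation shows in general that negating $\{i,j\}$ fixes $d_1$ and negates $d_2$ (giving $T$), negating $\{i,k\}$ negates $d_1$ and fixes $d_2$ (giving $S$), and negating $\{j,k\}$ negates both (giving $ST$). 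Hence in every square region the three maps $i_a,i_b,i_c$ realize exactly the three nontrivial elements $S,T,ST$ of $V$, the specific labelling depending on the region through this relabeling.

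The main obstacle I anticipate is bookkeeping the $\SUTw$ sign conventions: one must be careful that ``negating $h(d)$'' means multiplication by $-\Idd$ and apply \eqref{eqn:minusid} consistently, so that the fixed-point swap $u_1\leftrightarrow -u_1$ genuinely produces the frame change $S$ and not its composite with $T$. The single geometric point that needs care is the verification that the change of frame induced by $u_1\mapsto -u_1$ is precisely $R_\pi(\,\cdot\,)R_\pi^{-1}$ — using that $\SI^1_*$ is the equator for the $[1,0,0]$-axis, so that $R_\pi$ carries $u_1$ to $-u_1$ while fixing $[\pm 1,0,0]$; everything else is the routine permutation argument above.
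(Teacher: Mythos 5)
Your proposal is correct and follows essentially the same route as the paper's own (very terse) proof: the paper simply observes that $i_a,i_b,i_c$ change $h(d_i)$ to $\pm h(d_i)$ for $i=1,2$ and points to Remark \ref{rem:4d}, which is exactly the sign computation via $d_1=P_0^{-1}P_1$, $d_2=P_0^{-1}P_2$ that you carry out. Your extra care in checking that negating $h(d_1)$ induces precisely the frame change $g\mapsto R_\pi\, g\, R_\pi^{-1}=S$ (using that $u_1$ lies on the equator $\SI^1_*$ of the $[\pm1,0,0]$-axis), and the permutation bookkeeping for the other five square regions, just fills in details the paper leaves implicit.
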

\begin{proof} 
The action correspond to changing $h(d_i)$ to $\pm h(d_i)$ for $i=1,2$. 
See Remark \ref{rem:4d}. 
\end{proof}

The tori are mutually disjoint in % as they are of form $S, T, S\circ T$ in Remark \ref{rem:4d} in 
each of the $3$-spheres over the vertex regions: 
First, the respective sets of fixed points of $S, T,$ and $S T$ are disjoint. 
$i_a, i_b,$ and $i_c$ are distinguished over each of the spheres and equal one of
$S, T, S T$. Hence, the respective fixed sets of $i_a, i_b, $ and $i_c$ are mutually disjoint.

%%%% January 27 9:33... 

%Finally, $t_0$ and $t_1$ meet on the $3$-spheres over $A_A$ and $A'_{A'}$ at one point on each. 
%$t_2$ and $t_0$ meet on the $3$-spheres over $B_B$ and $B'_{B'}$ at one point on each. 
%And $t_3$ and $t_1$ meet on the $3$-spheres over $C_{C'}$ and $C'_C$ at one point on each. 

%We know that the image of each square vertex region is homeomorphic to $\SI^3$ with 
%a tubular neighborhood $\SI^3\times B^3$. The Klein four-group given by $i_a, i_b, i_c$ 
%acts on it to give us the branched-covered manifold diffeomorphic to $\SI^3\times B^3$. 

%Since outside the vertex region we know that the action is free so that the only 
%branched locus occur on the vertex part. 

%Therefore, we see that we can paste to $V$ 
%six copies of $\SI^3\times B^3$. We obtain $V'$ which is obtained by
%filling up the ends of $V$. 
%The result is a filled octahedral manifold.

%Here the branched sets of $i_a, i_b, i_c$ actions are in the vertex regions homeomorphic to 
%a union of two disjoint circles in $\SI^3$ which are trivial as knots. 
%The two circles from a link with one linking point under a projection.  

\subsection{The axes action} \label{subsec:axes}
%Now introduce $I_{{A}},I_{{B}},I_{{C}}$ and extend them over the six copies of 
%$\SI^3\times B^3$ as above. 

Finally, let us study what are the branch loci of $I_{{A}}, I_{{B}}, I_{{C}}$. 
This is defined by equations \eqref{eqn:iabc5}. We note that these send equivalence classes to
equivalence classes in $\tilde O \times T^3_2$. 
%For this, we go back to $\rep_{-I}(\pi,\SUTw)$ to study this question. 

The branch locus is as follows: $I_{{A}}$ has a fixed axis in $\tilde O$. 
$\{\Idd, I_{{A}}\}$ acts by taking the upper triangle with vertices $v_0, v_1, v_2$ and lower triangle with vertices $v'_0, v'_1, v'_2$ 
so that $v_0$ and $v_1$ are changed to their antipodes 
and $v'_0$ and $v'_1$ are changed to their antipodes also and the corresponding pasting angles 
are changed by the map $\phi_i \ra 4\pi-\phi_i$ for $i=0,2$. 
We aim to find an expression of $I_A$:
%If one computes $h(d_i)$ in this setting, we obtain 
%that pasting maps $P_0, P_1, P_2$ changes to $-P_0, P_1, -P_2$ and hence
%$h(d_1)$ is changed to $-h(d_1)$ and $h(d_2)$ are not changed. 

We need to consider the point over its axis in $O^o$ and $A_A$ and $A'_{A'}$ since 
other points on $\tilde O$ are sent to different points. 

In a fiber in $O^o \times T^3_2/\bZ_2$ over the axis part in $O^o$, the fixed points of $I_{{A}}$ is a union of 
two circles given by $\phi_0= 0, 2\pi$ and $\phi_2=0, 2\pi$ mod the $\bZ_2$-action.  
The circle is parameterized by $\phi_2$. 
In $A_A$, we show that $I_A$ fixes a circle as we approach $A_A$,
that is transversal to the fixed circles of $S$ and $S T$
and meeting each twice as in Section \ref{subsub:AA}. This is also true in $A'_{A'}$. 
Thus, in $O^o\times T^3/\bZ_2$, the set of fixed points is a union of two annuli 
and extends in $A_A$ to the circle  and similarly, we can show it extends in $A'_{A'}$ to a circle. 
Hence, the fixed subspace is homeomorphic to a $2$-torus 
again using the smoothness of the involution and the manifold by Conner's work \cite{Co}.
%(Note that $V$ and $I_A$ generate a group isomorphic to $\bZ^3$ which covers $\SI^3/V$ in Section \ref{subsub:AA} in 
%$8$ to 1$-manner.)

Similarly, the fixed set of $I_{{B}}$ is a $2$-torus meeting the fixed torus of $I_A$ 
at the fiber over $(\pi/2,\pi/2,\pi/2)$ at two points $(0, 0, 0)$ and $(2\pi, 0, 2\pi)$ modulo $4\pi$.
The fixed set of $I_{{C}}$ is a $2$-torus meeting the two other tori at  the fiber 
over $(\pi/2, \pi/2, \pi/2)$ at the two pairs of points. The three $2$-tori meet 
 in the fiber torus over $(\pi/2, \pi/2, \pi/2)$ in pairs. 
%Two of them meet at the same pair of points.

Denote these tori by $T_{{A}}, T_{{B}}, T_{{C}}$ respectively.
Then $T_{{A}}$ meets each of $t_0$ and $t_1$ at a pair of points respectively in regions $A_A$ and ${{A'_{A'}}}$
by Lemmas \ref{lem:IA} and \ref{lem:STST}.
Similarly, $T_{{B}}$ meets each of $t_1$ and $t_2$ at a pair of points respectively in regions $B_B$ and $B'_{B'}$, 
and $T_{{C}}$ meets each of $t_2$ and $t_0$ at a pair of points respectively in regions $C_{C'}$ and $C'_C$.
(We now have the complete intersection pattern.) 

%To understand the fixed points of these $I_{{A}}, I_{{B}},I_{{C}}$ actions, we take a quotient 
%with the fiberwise Klein four-group action.

%Then $i_a$ which sends $(\theta_0,\theta_1,\theta_2)$ to $(\theta_0+2\pi, \theta_1+2\pi, \theta_0)$ 
%sends $h(d_1)$ to $-h(d_1)$ and fixes $h(d_2)$. Thus, it equals $T$ in $\SI^3$. 
%Similarly, $i_c$ correspond to $S$ and $i_b$ correspond to $S\circ T$. 
%We need to understand the fixed points of $I_{{A}}, i_aI_{{A}}, i_bI_{{A}}, i_cI_{{A}}$ which 
%becomes $R, T\circ R, S\circ R, S\circ T\circ T$. The fixed points correspond to 
%annuli given by equations $\theta_0=0,\pi,2\pi,3\pi$ and $\theta_1=0,\pi,2\pi,3\pi$ parameterized 
%by $\theta_2$ and the axis of $I_{{A}}$. By $\bZ_2$-action,
%we obtain eight annuli over the axis of $I_{{A}}$ and they extend to four respective circles of fixed points
%by $R, T\circ R, S\circ R, S\circ T\circ T$. Compactifying into four tori.
%Under the fiberwise Klein four-group action, we can show that the four tori becomes 
%four spheres. (Note that two of the spheres meets at a point in the $3$-spheres over 
%$A_A$ or ${{A'_{A'}}}$. No three will meet and their intersection patter is a circular one.)

%Therefore, $\tilde O\times T^3_2/\bZ_2/\sim/\{I, i_a, i_b, i_c\}$ is an octahedral 
%manifold where $I_{{A}}$ acts with fixed locus a union of four spheres 
%over the fixed axis in $\tilde O$ and similarly, $I_{{B}}$ acts with fixed locus 
%a union of four spheres over the fixed axis and $I_{{C}}$ also. 
%Any two of the fixed spheres for different transformation meets 
%over $(\pi/2, \pi/2, \pi/2)$ at a point. 

\begin{thm}\label{thm:filled} 
The other component ${\mathcal C}_1$ is homeomorphic to the quotient of filled octahedral manifold with 
the product Klein four-group action given by the action of $I_{{A}},I_{{B}},I_{{C}}$, $i_a$, $i_b,$ and $i_c$. 
\end{thm}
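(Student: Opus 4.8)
The strategy is to assemble Theorem~\ref{thm:filled} from the two major results already established: Theorem~\ref{thm:C1}, which identifies $\mathcal{C}_1$ with $\tilde O \times T^3/\sim$ (where $\sim$ includes the $\{\Idd, I_A, I_B, I_C\}$-action together with the facial and square-region equivalences), and Theorem~\ref{thm:filledSU}, which identifies $\rep_{-I}(\pi_1(\Sigma_1),\SUTw)$ with the filled octahedral manifold as a quotient of $\tilde O \times T^3_2/\bZ_2/\sim$. The key observation is that both spaces are quotients of the same base configuration space $\tilde O$ times a torus fiber, differing only in whether the fiber is $T^3$ (the $\SOThr$ pasting angles) or $T^3_2/\bZ_2$ (the $\SUTw$ pasting angles). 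First I would make precise that the covering $\SUTw \to \SOThr$ induces a fiberwise $16$-to-$1$ relation: the fiberwise Klein four-group $\{I, i_a, i_b, i_c\}$ defined at the start of Section~\ref{sec:C1} together with the $\{\Idd, I_A, I_B, I_C\}$-action from equation~\eqref{eqn:iabc5} generate a $\bZ_2^4$-action, exactly as the double Klein four-group was assembled in Section~\ref{sec:topC0} for $\mathcal{C}_0$.

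\textbf{Main steps.} The proof would proceed as follows. First, I would verify that all six generators $I_A, I_B, I_C, i_a, i_b, i_c$ genuinely descend to well-defined transformations of the octahedral manifold $\rep_{-I}(\pi_1(\Sigma_1),\SUTw)$ and that they preserve the equivalence relation $\sim$; this is largely done piecemeal in Sections~\ref{subsec:axes} and the fiberwise-action subsection, where each $i_\bullet$ is shown to correspond to multiplying pasting maps by $-\Idd$ and each $I_\bullet$ to replacing triangle vertices by antipodes. Second, I would confirm that the group they generate is a double Klein four-group $\bZ_2^4$, by checking the commutation relations in coordinates on the interior $O^o \times T^3_2/\bZ_2$, just as the group $V'$ was shown to be $V \times V \cong \bZ_2^4$ in Section~\ref{sec:topC0}. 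Third, and this is the crux, I would show that the quotient of the octahedral manifold by this $\bZ_2^4$-action is precisely $\tilde O \times T^3/\sim$ as it appears in Theorem~\ref{thm:C1}. The fiberwise part of the quotient, $(T^3_2/\bZ_2)/\{I,i_a,i_b,i_c\}$, recovers $T^3$ because the three $i_\bullet$ translations by $2\pi$-vectors, modulo the $\bZ_2$ generated by $(2\pi,2\pi,2\pi)$, collapse the double-length circles $\SI^1_2$ down to the single-length circles $\SI^1$; this reflects the kernel $\{I,-I\}$ of $\SUTw \to \SOThr$ applied in each pasting coordinate. The axial part $I_A, I_B, I_C$ then supplies exactly the identifications on $\tilde O$ already present in Theorem~\ref{thm:C1}.

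\textbf{The obstacle.} The hard part will be matching the combined quotient over the six square regions $A_A, A'_{A'}, B_B, B'_{B'}, C_{C'}, C'_C$, where the fibers are $3$-spheres rather than tori and the fiberwise action realizes $S, T, ST$ (Lemma~\ref{lem:STST}) while the axial $I_A, I_B, I_C$ act by the additional involution of Lemma~\ref{lem:IA}. I must check that the quotient $\SI^3/(\bZ_2^4)$ over each square region agrees with the corresponding $\SI^3/V'$-type quotient appearing in the $\SOThr$-identification of Section~\ref{subsub:AA}, and that the gluing of these quotiented $3$-spheres to the quotient of the interior-and-faces region is consistent. Since both the source (octahedral manifold) and the target ($\mathcal{C}_1$) are already known to be quotients of $\tilde O \times T^3_2/\bZ_2/\sim$ by disjoint pieces of the same group, the argument reduces to verifying that the full $\bZ_2^4$-quotient map is well-defined, surjective, and injective region-by-region, then invoking compactness of the octahedral manifold together with the Hausdorff property of $\mathcal{C}_1$ (which holds since $\SOThr$ is compact). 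This final compactness-plus-Hausdorff step is routine and mirrors the concluding arguments of Theorems~\ref{thm:C1} and~\ref{thm:filledSU}; the genuine content lies entirely in the bookkeeping over the square regions and in confirming the intersection pattern of the six branch tori $T_A, T_B, T_C, t_0, t_1, t_2$ established in Section~\ref{subsec:axes} descends correctly.
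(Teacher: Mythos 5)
Your proposal follows essentially the same route as the paper: Section \ref{sec:C1} proves Theorem \ref{thm:filled} precisely by combining Theorem \ref{thm:C1} (realizing ${\mathcal C}_1$ as the quotient of $\tilde O$ times the pasting-angle torus) with Theorem \ref{thm:filledSU} (realizing the octahedral manifold as $\tilde O \times T^3_2/\bZ_2/\sim$), checking that $i_a, i_b, i_c$ and $I_A, I_B, I_C$ descend to the octahedral manifold, that the fiberwise translations collapse $T^3_2/\bZ_2$ to $T^3$ (the kernel $\{\Idd,-\Idd\}$ in each pasting coordinate), and analyzing the action over the square regions via $S$, $T$, $ST$ and Lemma \ref{lem:IA} before concluding with compactness and the Hausdorff property. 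Your lattice computation for the fiber quotient and your region-by-region bookkeeping over $A_A, \ldots, C'_C$ match the paper's argument, so the proposal is correct and essentially identical in structure.
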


\section*{Acknowledgments}

We thank Oscar Garcia-Prada, Bill Goldman, and Sean Lawton for helping us to understand the many issues in this area. 
We also thank the AIM, Palo-Alto for organizing a conference in March, 2007 where we were able to 
work on some of the ideas presented here.  We also thank Sadayoshi Kojima and 
the Department of Mathematical and Computing Sciences %Graduate School of Information Science and Engineering
at Tokyo Institute of Technology for their great hospitality while this work was being carried out. 
We should say that this work was inspired by the view points of the configuration points from 
the works of many people including Thurston, Kojima, Kapovich, and Millson. We are also indebted to the referees 
for many suggestion to make this article better. 

This work was supported by the Korea Research Foundation Grant funded 
by the Korean Government (MOEHRD)" (KRF-2007-314-C00025).

\bibliographystyle{plain}

\end{document}